\nonstopmode
\documentclass[12pt]{amsart}
\usepackage[all, cmtip]{xy}
\usepackage{amsmath}
\usepackage{amscd} 
\usepackage{amssymb}
\usepackage{mathtools}
\usepackage{diagrams}
%[UglyObsolete,tight,heads=LaTeX]
% The following causes equations to be numbered within sections

\numberwithin{equation}{section}

% We'll use the equation counter for all our theorem environments, so
% that everything will be numbered in the same sequence.

%       Theorem environments

\theoremstyle{plain} %% This is the default, anyway

\newtheorem{theorem}[equation]{Theorem}
\newtheorem{lemma}[equation]{Lemma}
\newtheorem{proposition}[equation]{Proposition}
\newtheorem{corollary}[equation]{Corollary}

\theoremstyle{definition}

\newtheorem{definition}[equation]{Definition}

\newtheorem{remark}[equation]{Remark}
\newtheorem*{remark*}{Remark}
\newtheorem*{remarks*}{Remarks}
\newtheorem{examples}[equation]{Examples}
\newtheorem{example}[equation]{Example}

\newtheorem*{VariableNoNum}{{\VariableText}}
\newtheorem{Variable}[equation]{{\VariableText}}

\theoremstyle{definition}
\newtheorem*{VariableNoNumBold}{{\VariableText}}
\newtheorem{VariableBold}[equation]{{\VariableText}}

\theoremstyle{definition}

\newlength{\asidelength}

\def\Changed/{\ifvmode\else\vadjust{%
\vbox to 0pt{\vskip -\baselineskip%
\hbox to 0pt{\hss\vrule height 0pt depth 1.2\baselineskip\hskip 1em}\vss}}\fi}
\def\CHanged{\ifvmode\else\vadjust{%
\vbox to 0pt{\vskip -\baselineskip%
\hbox to 0pt{\hss\vrule height 0pt depth 1.2\baselineskip\hskip 1em}\vss}}\fi}

% Macro which allows a math macro to be used either in math mode or not
% in math mode, with an optional delimiter / after it if not in math
% mode (the delimiter, if present, is removed). 

\def\Math#1{\def\MathString{#1}\futurelet\MathDelim\MathChoose}
\def\MathChoose{\ifmmode\let\MathDo\MathString%
              \else\let\MathDo\MathSkip\fi%
              \MathDo}
\def\MathSkip{\ifx\MathDelim/\def\MathDo{$\MathString$\EatOne}%
              \else\def\MathDo{$\MathString$}\fi%
              \MathDo}

% Macro which allows a macro to be used in text mode either with
% or without the delimiter / after it. The delimiter, if present
% is removed.

\def\Text#1{\def\TextString{#1}\futurelet\TextDelim\TextSkip}
\def\TextSkip{\ifx\TextDelim/\def\TextDo{\TextString\EatOne}%
              \else\let\TextDo\TextString\fi%
              \TextDo}
\def\EatOne#1{}

% Macros which allow convenient macro replacement of phrases for which
% the usage can vary according to
%         (1) capitalization
%         (2) usage of ``a'' or ``an'' as an article
%         (3) formation of plurals

\def\SkipToEndScan#1\EndScan{}
\def\Scan#1#2#3{\ifx#1#2#3\expandafter\SkipToEndScan\fi\Scan#1}
\def\Upper#1{%
\Scan#1aAbBcCdDeEfFgGhHiIjJkKlLmMnNoOpPqQrRsStTuUvVwWxXyYzZ#1#1\EndScan}
\def\Phrase#1 #2/#3/#4=#5 #6/#7/#8.{%
\expandafter\edef\csname#2#3\endcsname{\noexpand\Text{#6#7}}
\expandafter\edef\csname\Upper#2#3\endcsname{\noexpand\Text{\Upper#6#7}}
\expandafter\edef\csname#1#2#3\endcsname{\noexpand\Text{#5 #6#7}}
\expandafter\edef\csname\Upper#1#2#3\endcsname{\noexpand\Text{\Upper#5 #6#7}}
\expandafter\edef\csname#2#4\endcsname{\noexpand\Text{#6#8}}
\expandafter\edef\csname\Upper#2#4\endcsname{\noexpand\Text{\Upper#6#8}}
}
%
% Example of the above macro
%
%\Phrase an equivalence//s=a \relax\relax$\noexpand\Zmodp$-equivalence//s.

% A couple of standard macros

    % marginal notes

\newcommand{\orb}[1]{{#1\kern-2pt}{\scriptscriptstyle\rm{-orb}}}

\newcommand{\Smash}{\wedge}

   % @<#1<<
 % @>#1>>

\newcommand{\GL}{\operatorname{GL}}

\newcommand{\HH}{\operatorname{H}}

\renewcommand{\star}{\operatorname{St}}
\newcommand{\link}{\operatorname{Lk}}
\newcommand{\Part}{\operatorname{Part}}
\newcommand{\rk}{\operatorname{rk}}
\newcommand{\hhh}{\operatorname{h}\!}
\newcommand{\thhh}{\tilde{\operatorname{h}}}
\def\HomotopyOrbit#1on#2/{\ensuremath{#2_{\hhh#1}}}
\def\RedHomotopyOrbit#1on#2/{\ensuremath{#2_{\thhh#1}}}

   % only between coauthors

%\DeclareMathOperator{\gcd}{gcd}
\DeclareMathOperator{\Lie}{\Lcal}
\DeclareMathOperator{\Lierep}{Lie}

%\DeclareMathOperator{\colim}{colim}

%\DeclareMathOperator{\GL}{GL}

%\DeclareMathOperator{\Hom}{Hom}
 %\DeclareMathOperator{\hocolim}{hocolim}

   % as in "strong"

% \def\Under#1#2{%
%    \setbox0=\hbox{$#1$}%
%    \setbox1=\hbox to \wd0{\hfil}%
%    \dp1=\dp0%
%    \hbox to 0pt{$#1$\hss}%
%    \raise #2\dp0\hbox{\underline{\box1}}}

% Create calligraphic letter commands that use \mathcal
% Name of command is \Acal, \Bcal, etc. 
\def\doCal#1{%
\ifx#1\doAllCalEnd\def\doAllCal{\relax}\else%
 \expandafter\edef\csname#1cal\endcsname{{\noexpand\mathcal #1}}\fi}
\def\doAllCal#1{\doCal#1\doAllCal}
\doAllCal ABCDEFGHIJHLMNOPQRSTUVWXYZ\doAllCalEnd

% Create barred letter commands that use \overline
% Name of command is \Abar,\Bbar, etc. 
\def\doBar#1{%
\ifx#1\doAllBarEnd\def\doAllBar{\relax}\else%
 \expandafter\edef\csname#1bar\endcsname{{\noexpand\overline{#1}}}\fi}
\def\doAllBar#1{\doBar#1\doAllBar}
\doAllBar ABCDEFGHIJHLMNOPQRSTUVWXYZ\doAllBarEnd
%
% But L works better with a strut in it

% Create letter commands with tilde
% Name of command is \Awiggle,\Bwiggle, etc. 
\def\doWiggle#1{%
\ifx#1\doAllWiggleEnd\def\doAllWiggle{\relax}\else%
 \expandafter\edef\csname#1wiggle\endcsname{{\noexpand\tilde{#1}}}\fi}
\def\doAllWiggle#1{\doWiggle#1\doAllWiggle}
\doAllWiggle ABCDEFGHIJHLMNOPQRSTUVWXYZabcdefghijklmnopqrstuvwxyz\doAllWiggleEnd

%\newcommand{\Ibold}{{\bf{I}}}

% \DeclareMathAlphabet{\mathitbf}{OML}{cmm}{b}{it}
% \newcommand{\kay}{{\mathitbf{k}}}
% \newcommand{\Kay}{{\mathbf{K}}}

%\newcommand{\m}{{\underline{m}}}
\newcommand{\n}{ {\mathbf{n}} }
\newcommand{\m}{{\mathbf{m}}}

\newcommand{\pgroupsntof}[1]{\Scal_{p}\kern-1pt\left(#1\right)}
\newcommand{\pgroupsof}[1]{{\overline{\Scal}_{p}}\kern-1pt\left(#1\right)}

\newcommand{\reals}{{\mathbb{R}}}
\newcommand{\integers}{{\mathbb{Z}}}
\newcommand{\naturals}{{\mathbb{N}}}

\newcommand{\field}{{\mathbb{F}}}

%\Phrase a bad//s=an awkward//s.
%\Phrase a good//s=a harmless//s.
%\Phrase a bad//s=a regular//s.
%\Phrase a good//s=a $\noexpand\Pcal_{n}$-acyclic//s.
%\newcommand{\superfluous}{prunable}
\Phrase a superfluous//=a prunable//.
%\Phrase a good//s=a $\Pcal_{n}$-acyclic//s.

%%%%%%%%%%%%%%%%%%%%%%%%%%%%%%%%%%%%%%%%%%
%%% WGD macros

\begin{document}
% Topmatter
%$Modified: Thu Nov 22 14:39:23 2012 by wgd $
%%% In the title, use a double backslash "\\" to show a linebreak:
%%% Use one of the following two forms:
%%% \title{Text of the title}
%%% or
%%% \title[Short form for the running head]{Text of the title}
\title{A branching rule for partition complexes}

\author{G. Z. Arone}
\address{Kerchof Hall, U. of Virginia, P.O. Box 400137,
         Charlottesville VA 22904 USA}
\curraddr{Department of Mathematics, Stockholm University, SE - 106 91 Stockholm, Sweden}         
\email{zga2m@virginia.edu}
\urladdr{http://www.math.virginia.edu/~zga2m}
%%%%%

%%% To have the current date inserted, use \date{\today}:
\date{\today}

%%% To include a table of contents, uncomment the next line:
% \tableofcontents

\begin{abstract} 
Let $\Sigma_n$ be the symmetric group, and let $\Sigma_{n_1}\times \cdots\times \Sigma_{n_k}\subset \Sigma_n$ be a Young subgroup. Let $\Pi_n$ be the complex of partitions of $\n$. Our main result is a $\Sigma_{n_1}\times \cdots\times \Sigma_{n_k}$-equivariant decomposition of $|\Pi_n|$. As an application, 
%we describe the fixed point space $|\Pi_n|^G$ in terms of subgroup posets, for every subgroup $G\subset \Sigma_n$. For another application, 
we obtain new information about the quotient space of $|\Pi_n|$ by a Young subgroup. 
\end{abstract}
\maketitle

\section{Introduction}

Let $\Pi_{n}$ denote the poset of (proper,
nontrivial) partitions of the set $\n=\{1, \ldots, n\}$. Let $|\Pi_n|$ be the geometric realization of $\Pi_n$. It is well-known that for $n\ge 3$ there is a homotopy equivalence \[|\Pi_n|\simeq \bigvee_{(n-1)!} S^{n-3}.\]

However the poset $\Pi_n$ has a natural action of the symmetric group $\Sigma_n$, which is not preserved by this equivalence. The space $|\Pi_n|$, with its natural action of $\Sigma_n$, arises in various contexts. For example, the homology group $\HH_{n-3}(|\Pi_n|)$ is closely related to the Lie representation. 

Let $\Sigma_{n_1}\times \cdots \times \Sigma_{n_k}\subset \Sigma_n$ be a  Young subgroup. In this paper we investigate the $\Sigma_{n_1}\times \cdots \times \Sigma_{n_k}$-equivariant homotopy type of $|\Pi_n|$. To state our result, we need to recall one more definition. Let $\Lie[x_1, \ldots, x_k]$ be the free Lie algebra (over $\integers$) on $k$ generators. Let $M(n_1, \ldots, n_k)\subset \Lie[x_1,\ldots, x_k]$ be the subgroup generated by commutators that contain $x_i$ exactly $n_i$ times, for all $1\le i\le k$. It is well-known that $M(n_1, \ldots, n_k)$ is a free Abelian group. The rank of $M(n_1, \ldots, n_k)$ is given by a famous formula due to Witt \cite{witt}:
\begin{equation}\label{equation: witt}
\frac{1}{n}\sum_{d|\gcd(n_1, \ldots, n_k)} \mu(d){\frac{n}{d} \choose \frac{n_1}{d},\ldots,\frac{n_k}{d}}.
\end{equation}
Here $d$ ranges over positive common divisors of $n_1, \ldots, n_k$. Later we will use a particular basis $B(n_1, \ldots, n_k)$ of $M(n_1, \ldots, n_k)$. For now, all that matters is that $B(n_1, \ldots, n_k)$ is a finite set with size given by Witt's formula~\eqref{equation: witt}.

The following is our main theorem. By a $G$-equivariant equivalence we mean a map of $G$ spaces that induces an equivalence of $H$-fixed points for every subgroup $H$ of $G$. 
\begin{theorem}\label{theorem: main}
Suppose $n=n_1+\cdots+n_k$. Let $g=\gcd(n_1, \ldots, n_k)$. There is a $\Sigma_{n_1}\times \cdots \times \Sigma_{n_k}$-equivariant homotopy equivalence
\begin{equation}\label{equation: main}
|\Pi_n| \longrightarrow \bigvee_{d|g} \left(\bigvee_{B(\frac{n_1}{d}, \ldots, \frac{n_k}{d})} \Sigma_{n_1}\times \cdots \times {\Sigma_{n_k}}_+ \wedge_{\Sigma_d} S^{n-d-1} \wedge |\Pi_d|^\diamond\right).
\end{equation}
Here $d$ ranges over positive divisors of $g$.
\end{theorem}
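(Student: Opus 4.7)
My plan is to realize the decomposition \eqref{equation: main} by constructing, for each pair $(d,\beta)$ with $d$ a divisor of $g$ and $\beta\in B(n_1/d,\ldots,n_k/d)$, an explicit $(\Sigma_{n_1}\times\cdots\times\Sigma_{n_k})$-equivariant map from the corresponding wedge summand into $|\Pi_n|$, and then verify that the assembled map is an equivariant equivalence by checking $H$-fixed points for every subgroup $H$ of the Young subgroup.

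For the map associated with a pair $(d,\beta)$, I would first identify $\beta$ with a rooted binary tree $T_\beta$ having $n/d$ leaves, colored so that color $i$ appears $n_i/d$ times. A point in $\Sigma_{n_1}\times\cdots\times{\Sigma_{n_k}}_+\wedge_{\Sigma_d} S^{n-d-1}\wedge |\Pi_d|^\diamond$ then encodes three pieces of data: (i) a way to group the elements of $\n$ into $n/d$ monochromatic blocks of size $d$, absorbed into the induction from $\Sigma_d$ up to the Young subgroup; (ii) a chain of further coarsenings guided by the subtree structure of $T_\beta$, yielding the sphere factor $S^{n-d-1}$; and (iii) internal partition data on one distinguished size-$d$ monochromatic block, giving the $|\Pi_d|^\diamond$ factor, with the remaining size-$d$ blocks determined by $\Sigma_d$-equivariance. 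This data assembles naturally into a chain in $\Pi_n$, hence a point in $|\Pi_n|$.

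To verify the assembled map is an equivalence, I would check $H$-fixed points. The $H$-fixed subcomplex $|\Pi_n|^H=|\Pi_n^H|$ is the realization of the sub-poset of $H$-invariant partitions of $\n$; stratifying this sub-poset by the $\gcd$ of block sizes within $H$-orbits on the color classes produces a decomposition that matches, term-by-term, the $H$-fixed pieces of the right-hand side of \eqref{equation: main}. The M\"obius inversion built into Witt's formula \eqref{equation: witt} ensures that the multiplicities of $\Sigma_d$-orbits arising from such $H$-invariant partitions line up exactly with the cardinality of the basis $B(n_1/d,\ldots,n_k/d)$.

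The main obstacle, I expect, is the bookkeeping in this last step: establishing a cell-preserving bijection between $H$-invariant partition chains in $\Pi_n$ and the indexing data of the wedge summands on the right. This essentially upgrades Witt's formula from a numerical identity counting Lie words to a combinatorial-geometric identity of equivariant cell complexes, and is where the novel equivariant content of the theorem lies.
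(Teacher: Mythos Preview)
Your proposal has the direction of the map reversed: the theorem asserts a map \emph{from} $|\Pi_n|$ \emph{to} the wedge, and the paper realizes it as a collapse map. For each $(d,w)$ with $w\in B(n_1/d,\ldots,n_k/d)$ one embeds a subcomplex $\Pi_w\subset\Pi_n$ (the star of a specific binary chain determined by $w$), shows these subcomplexes have pairwise disjoint interiors, and collapses the complement. The ``assemble data into a chain'' map you describe is essentially the inclusion of these stars, which is not itself the equivalence.

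More seriously, your fixed-point strategy is not workable as stated. The paper does \emph{not} check $|\Pi_n|^H$ for all $H$; it uses the standard reduction to isotropy subgroups and then proves (Lemma~\ref{lemma: isotropy}) that the only isotypical isotropy groups are iterated wreath products $\Sigma_{d_1}\wr\cdots\wr\Sigma_{d_l}$. For $l>1$ both sides have contractible fixed points; for $l=1$ a direct analysis of $|\Pi_n|^{\Sigma_{d_1}}$ reduces the question to the non-equivariant equivalence for $|\Pi_{n/d_1}|$. That non-equivariant equivalence is the heart of the matter, and it is established via the algebraic Hilton--Milnor theorem (a decomposition of $\Lie[V_1\oplus\cdots\oplus V_k]$) combined with Barcelo's isomorphism $\Lierep_n\cong\widetilde\HH^{n-3}(\Pi_n)^\pm$. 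Your appeal to Witt's formula and M\"obius inversion is purely numerical: it tells you ranks agree, but gives no mechanism for producing a homotopy equivalence of fixed-point complexes, nor any ``cell-preserving bijection'' --- indeed no such bijection exists, since the equivalence is a collapse and not a cellular isomorphism. The missing idea is precisely the Hilton--Milnor input, which upgrades the numerical identity to an isomorphism of $\Sigma_{n_1}\times\cdots\times\Sigma_{n_k}$-modules and thereby to a cohomology isomorphism of spaces.
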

Here is a brief explanation of the terms in equation~\eqref{equation: main}. Throughout the paper, $n=n_1+\cdots+n_k$, and $\Sigma_{n_1}\times \cdots\times \Sigma_{n_k}\subset \Sigma_n$ is the subgroup of permutations that leave invariant the blocks 
\[\{1, \ldots, n_1\}, \{n_1+1, n_1+2, \ldots, n_1+n_2\}, \ldots,\{n_1+\cdots+n_{k-1}+1, \ldots, n\}.\]
If $d>0$ divides all of $n_1, \ldots, n_k$, then $d$ divides $n$, and 
\[
\underbrace{\Sigma_d\times \cdots\times \Sigma_d}_{\frac{n}{d}}
\] 
is a subgroup of $\Sigma_{n_1}\times\cdots \times\Sigma_{n_k}$.  We have group inclusions
\[
\Sigma_d\hookrightarrow {\Sigma_d}^{\frac{n}{d}}\hookrightarrow \Sigma_{n_1}\times \cdots\times \Sigma_{n_k}\hookrightarrow \Sigma_n.
\]
We identify $\Sigma_d$ with its image under each one of these inclusions.
%In this case, we identify $\Sigma_d$ with its image under the diagonal map $\Sigma_d\hookrightarrow (\Sigma_d)^{\frac{n}{d}}$. In this way, we identify $\Sigma_d$ with a subgroup of $\Sigma_{n_1}\times\cdots \times\Sigma_{n_k}$ and of $\Sigma_n$.

If $X$ is a topological space, $X^\diamond$ denotes the unreduced suspension of $X$. We have to use the unreduced suspension of $|\Pi_d|$ in the main theorem, because $|\Pi_d|$ does not have a $\Sigma_d$-invariant basepoint. However, note that if $\Sigma_{n_1}\times\cdots\times\Sigma_{n_k}$ is a proper Young subgroup of $\Sigma_n$, then $|\Pi_n|$ does have a $\Sigma_{n_1}\times\cdots\times\Sigma_{n_k}$-invariant basepoint. The map~\eqref{equation: main} is in fact a pointed equivariant homotopy equivalence. 

By  $S^{n-d-1}$ we mean a desuspension of the sphere $S^{n-d}=(S^d)^{\Smash \frac{n-d}{d}}$, equipped with the natural action of $\Sigma_d$. Note that since $d$ divides $n$, it divides $n-d$, so this is well-defined. 
There is a $\Sigma_d$-equivariant homeomorphism $S^d\cong S^1\Smash \widehat S^{d-1}$, where on the right hand side $\Sigma_d$ acts trivially on $S^1$, and acts on $\widehat S^{d-1}$ via the reduced standard representation. It follows that there is a $\Sigma_d$-equivariant homeomorphism
$$S^{n-d-1}\cong S^{\frac{n-d}{d}-1}\Smash (\widehat S^{d-1})^{\Smash \frac{n-d}{d}}.$$
%where $\widehat S^{d-1}$ denotes the sphere $S^{d-1}$ equipped with the standard action of $\Sigma_d$ (in other words, $\widehat S^{d-1}$ is the one point compactification of the reduced standard representation of $\Sigma_d$).
%the sphere that is the one-point compactification of the $\Sigma_d$-representation
%\[
%\reals^{n-d-1}\cong \reals^{\frac{n-d}{d}-1}\oplus (\widehat{\reals^{d-1}})^{\frac{n-d}{d}}
%.\] 
%Note that since $d$ divides $n$, it divides $n-d$, so this is well-defined. Here, and throughout the paper, $\widehat{\reals^{d-1}}$ stands for the reduced standard representation of $\Sigma_d$. 

It follows that $S^{n-d-1} \wedge |\Pi_d|^\diamond$ is a space with a pointed action of $\Sigma_d$. The space 
\[
\Sigma_{n_1}\times\cdots\times {\Sigma_{n_k}}_+ \wedge_{\Sigma_d} S^{n-d-1} \wedge|\Pi_d|^\diamond
\]
is homeomorphic to a wedge sum of $\Sigma_{n_1}\times\cdots\times \Sigma_{n_k}/{\Sigma_d}$ copies of $S^{n-d-1} \wedge |\Pi_d|^\diamond$, and it has a natural action of $\Sigma_{n_1}\times\cdots\times \Sigma_{n_k}$.

We should note that our definition of $\Pi_d$ only works well for $d\ge 2$. For $d=2$, $|\Pi_2|$ is the empty set, which we identify with the $-1$-dimensional sphere, since its unreduced suspension $|\Pi_2|^\diamond$ is homeomorphic to $S^0$. We decree that $|\Pi_1|$ is the $-2$-dimensional sphere. In practice this means that the space $|\Pi_1|$ is undefined, but for $l\ge 0$, $S^{l} \wedge |\Pi_1|^\diamond := S^{l-1}$. In particular, the wedge summand corresponding to $d=1$ in equation~\eqref{equation: main} is equivalent to 
\[
\bigvee_{B(n_1, \ldots, n_k)} {\Sigma_{n_1}\times \cdots\times\Sigma_{n_k}}_+ \wedge S^{n-3}.
\]

\begin{examples}
If $\gcd(n_1, \ldots, n_k)=1$, Theorem~\ref{theorem: main} says that $|\Pi_n|$ is $\Sigma_{n_1}\times\cdots\times\Sigma_{n_k}$-equivariantly equivalent to a wedge sum of copies of $S^{n-3}$, that are freely permuted by $\Sigma_{n_1}\times\cdots\times\Sigma_{n_k}$. In the special case of the subgroup $\Sigma_{n-1}\times \Sigma_1\subset \Sigma_n$, we note that $|B(n-1, 1)|=1$ and conclude that that there is a $\Sigma_{n-1}$-equivariant homotopy equivalence 
\[|\Pi_n|\simeq_{\Sigma_{n-1}}{\Sigma_{n-1}}_+ \wedge S^{n-3}.
\] 
This equivalence is a well-known result. For example, see~\cite{donaumatching} for a closely related statement.

In the case of the subgroup $\Sigma_{n-2}\times \Sigma_2$, where $n$ is even, one easily calculates that $|B(n-2, 2)|= \frac{n-2}{2}$ and $|B(\frac{n-2}{2}, 1)|=1$. We conclude that there is a $\Sigma_{n-2}\times \Sigma_2$-equivariant equivalence
\begin{equation}\label{equation: second case}
|\Pi_n|\simeq \left(\bigvee_{\frac{n-2}{2}} \Sigma_{n-2}\times {\Sigma_2}_+ \wedge S^{n-3}\right) \vee \Sigma_{n-2}\times {\Sigma_2}_+\wedge_{\Sigma_2} S^{n-3}.
\end{equation}
Note that in the last summand $S^{n-3}$ has an action of $\Sigma_2$: it is homeomorphic to the smash product
$S^{\frac{n}{2}-2}\wedge (\hat{S}^1)^{\Smash{\frac{n}{2}-1}}$, where $S^{\frac{n}{2}-2}$ is a sphere with a trivial action of $\Sigma_2$ while $ (\hat{S}^1)^{\Smash{\frac{n}{2}-1}}$ is a sphere on which $\Sigma_2$ acts by the suspension of the antipodal action. Formula~\eqref{equation: second case} was first obtained by Ragnar Freij by different methods. See~\cite{freij}, Theorems 5.3 and 5.5. 

In the case $n=4$ we obtain a $\Sigma_2\times\Sigma_2$-equivariant equivalence
\[
|\Pi_4|\to \Sigma_{2}\times {\Sigma_2}_+ \wedge S^{1} \vee \Sigma_{2}\times {\Sigma_2}_+\wedge_{\Sigma_2} \hat S^{1}
\]
where $\hat S^1$ denotes a circle on which $\Sigma_2$ acts by reflection. This equivalence is illustrated in Figure~\ref{figure: n=4}. $|\Pi_4|$ is a one-dimensional complex. The map is defined by collapsing to a point a $\Sigma_2\times\Sigma_2$-equivariantly contractible subcomplex, depicted with thin lines. The subcomplex is drawn separately on the left in a form that makes its contractibility apparent. The reader is invited to check that the subcomplex of thin lines is $\Sigma_2\times\Sigma_2$-invariant, and that the quotient space of $|\Pi_4|$ by this subcomplex is homeomorphic to $\Sigma_{2}\times {\Sigma_2}_+ \wedge S^{1} \vee \Sigma_{2}\times {\Sigma_2}_+\wedge_{\Sigma_2} \hat S^{1}$.
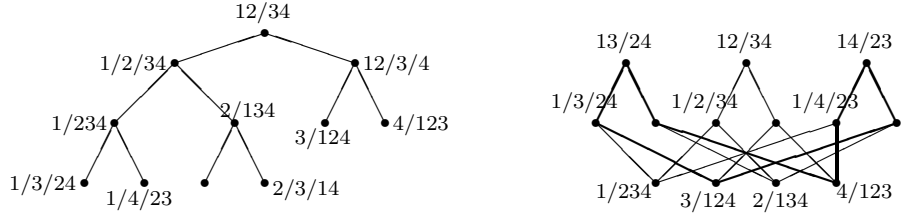
\begin{figure}\label{figure: n=4}
\caption{The $\Sigma_2\times \Sigma_2$-equivariant equivalence $|\Pi_4|\to \Sigma_{2}\times {\Sigma_2}_+ \wedge S^{1} \vee \Sigma_{2}\times {\Sigma_2}_+\wedge_{\Sigma_2} \hat S^{1}$ is defined by collapsing the contractible subcomplex drawn with thin lines. The collapsed subcomplex is drawn separately on the left.}

\setlength{\unitlength}{0.4cm}
\begin{picture}(40, 8)(-22, -4)
%\put(-13,2){\circle*{0.3}}
\multiput(-19,-3)(2,0){4}{\circle*{0.3}}
\multiput(-19, -3)(4,0){2}{\line(1,2){1}}
\multiput(-17, -3)(4,0){2}{\line(-1,2){1}}
\multiput(-18, -1)(4,0){2}{\circle*{0.3}}
\put(-18, -1){\line(1,1){2}}
\put(-14, -1){\line(-1,1){2}}
\put(-9, -1){\circle*{0.3}}
\put(-9, -1){\line(-1,2){1}}
\put(-11, -1){\circle*{0.3}}
\put(-11, -1){\line(1,2){1}}
\multiput(-16, 1)(6,0){2}{\circle*{0.3}}
\put(-16, 1){\line(3,1){3}}
\put(-10, 1){\line(-3,1){3}}
\put(-13,2){\circle*{0.3}}

\put(-14, 2.5){\tiny 12/34}
\put(-18.5, 0.75){\tiny 1/2/34}
\put(-9.75, 0.75){\tiny 12/3/4}
\put(-20.1, -1.25){\tiny 1/234}
\put(-14.5, -0.8){\tiny 2/134}
\put(-12, -1.7){\tiny 3/124}
\put(-8.75, -1.25){\tiny 4/123}

\put(-21.5, -3.25){\tiny 1/3/24}
\put(-18.3, -3.75){\tiny 1/4/23}
\put(-12.75, -3.4){\tiny 2/3/14}

%%%%%%%%%%%%%%

\multiput(-1,1)(4,0){3}{\circle*{0.3}}
\multiput(-2,-1)(2,0){6}{\circle*{0.3}}
\multiput(0, -3)(2,0){4}{\circle*{0.3}}

\put(-2, 1.5){\tiny 13/24}
\put(2, 1.5){\tiny 12/34}
\put(6, 1.5){\tiny 14/23}
\put(-3.5, -0.5){\tiny 1/3/24}
\put(0.5, -0.5){\tiny 1/2/34}
\put(4.5, -0.5){\tiny 1/4/23}
\put(-2, -3.5){\tiny 1/234}
\put(0.8, -3.8){\tiny 3/124}
\put(3.2, -3.8){\tiny 2/134}
\put(6, -3.5){\tiny 4/123}

%\dottedline{3}(-4, -4)(4, 4)

\put(2,-1){\line(1, 2){1}}
\put(4,-1){\line(-1, 2){1}}
\put(-2,-1){\line(1,-1){2}}
\put(0,-1){\line(2,-1){4}}
\put(2,-1){\line(1,-1){2}}
\put(2,-1){\line(-1,-1){2}}
\put(4,-1){\line(1,-1){2}}
\put(4,-1){\line(-1,-1){2}}
\put(6,-1){\line(-3,-1){6}}
\put(8,-1){\line(-2,-1){4}}
\thicklines
\linethickness{0.4mm}
\put(-2, -1){\line(1, 2){1}}
\put(0,-1){\line(-1, 2){1}}
\put(6, -1){\line(1, 2){1}}
\put(8,-1){\line(-1, 2){1}}

\put(-2,-1){\line(2,-1){4}}
\put(0,-1){\line(3,-1){6}}
\put(2, -3){\line(3,1){6}}
\put(6, -1){\line(0,-1){2}}
\end{picture}
\end{figure}
\end{examples}
Our Theorem~\ref{theorem: main} is a strengthening of the main result of~\cite{A-K}. There is an overlap of methods as well. In both papers, the proof uses the relationship between $|\Pi_n|$ and Lie algebras, along with a decomposition result for free groups, or free Lie algebras. In [loc. cit], the authors used the Hilton-Milnor theorem, which is a decomposition result for the topological free group generated by a wedge sum of connected spaces. Using the Hilton-Milnor theorem, it was possible to prove a weak version of Theorem~\ref{theorem: main}. In this paper we use an algebraic version of the Hilton-Milnor theorem, which is a decomposition result for a free Lie algebra on a direct sum. Using this algebraic version enables us to have a point-set level description of the map in~\eqref{equation: main}, and to prove that it is an equivariant homotopy equivalence.
%
%It follows from the results of that paper that the equivalence~\eqref{equation: main} holds after one (a) makes the group action pointed-free, (b) applies the suspension spectrum functor. In other words, it follows from~\cite{A-K} that there is a $\Sigma_{n_1}\times\cdots\times\Sigma_{n_k}$-equivariant equivalence of spectra 
%\begin{multline*}
%\Sigma^\infty \, |\Pi_n| \wedge E\Sigma_{n_1}\times\cdots\times {\Sigma_{n_k}}_+ \simeq \\ \Sigma^\infty \,\bigvee_{d|g} \bigvee_{B(\frac{n_1}{d}, \ldots, \frac{n_k}{d})} \Sigma_{n_1}\times \cdots \times {\Sigma_{n_k}}_+ \wedge_{\Sigma_d} S^{n-d-1} \wedge |\Pi_d|^\diamond \wedge E\Sigma_{n_1}\times\cdots\times {\Sigma_{n_k}}_+
%\end{multline*}
%This equivalence is not stated in~\cite{A-K}, but it does follow easily from Theorem 0.1 in that paper. The map defining this equivalence was not constructed in [loc. cit.]. In this paper we construct explicitly a space level map as in~\eqref{equation: main}, and we prove that it induces a homotopy equivalence of $G$-fixed point spaces, for every subgroup $G\subset \Sigma_{n_1}\times \cdots\times \Sigma_{n_k}$ that occurs as an isotropy group of one of the spaces involved. This proves Theorem~\ref{theorem: main}.

The proof of the main theorem also depends on describing the fixed points space $|\Pi_n|^G$ for certain subgroups $G\subset \Sigma_n$. Then once proved, Theorem~\ref{theorem: main} can be used to describe the fixed points space for {\it all} subgroups $G\subset \Sigma_n$, in terms of posets of subgroups. These results are not really new (see Remark~\ref{remark: lukas} below), but they are perhaps not widely known, so we will give a brief outline.

Suppose $G$ acts effectively on $\n$. Each orbit of the action of $G$ on $\n$ is a $G$-invariant subset of $\n$. We say that the action of $G$ on $\n$ is {\it isotypical} if all the orbits of $G$ are pairwise isomorphic as $G$-sets. The following is proved in the paper as Lemma~\ref{lemma: not isotypical}.
\begin{lemma}\label{lemma: nondiagonal}
Suppose $G$ acts on $\n$, and the action is not isotypical. Then $|\Pi_n|^G\simeq *$.
\end{lemma}
We learned recently that this lemma was observed independently by Markus Hausmann. We suspect that it had been observed by others as well, but we do not know of a published reference. 

It remains to analyze the fixed points of isotypical actions. An important special case of isotypical actions is that of transitive actions. If $G$ acts transitively on $\n$, then $\n$ can be identified, as a $G$-set, with the coset space $G/H$, where $H$ is a subgroup of $G$ (determined up to conjugacy). For transitive actions, it is not difficult to show that 
the poset of $G$-invariant partitions of $G/H$ is isomorphic to the poset of subgroups of $G$ that contain $H$. This is Lemma~\ref{lemma: transitive} (it had appeared in the paper of White-Williamson~\cite[Lemma 3]{ww}, and they attributed the result to Klass~\cite{klass}).

Using the transitive case together with the main theorem, one can describe the fixed point space $|\Pi_n|^G$ for a general isotypical action of $G$ on $\n$ as follows. Suppose $G$ acts isotypically on $\n$. Then there exists a unique $d$ dividing $n$ such that $G$ is a transitive subgroup of $\Sigma_d$, embedded diagonally in $\Sigma_n$. Our description of $|\Pi_n|^G$ depends on this $d$. The following is proved in the paper as~\ref{proposition: fixed points}.
\begin{proposition}\label{proposition: fixed points intro}
Suppose that $\Sigma_d$ is embedded diagonally in $\Sigma_n$ and $G$ is a transitive subgroup of $\Sigma_d$. Let $C_{\Sigma_d}(G)$ be the centralizer of $G$ in $\Sigma_d$. There is an equivalence ($*$ denotes join)
\[
|\Pi_n|^G \simeq C_{\Sigma_d}(G)^{\frac{n}{d}}_+ \wedge_{C_{\Sigma_d}(G)} \Sigma {|\Pi_d|}^G * |\Pi_{\frac{n}{d}}|.
\]
\end{proposition}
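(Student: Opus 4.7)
The plan is to analyze the poset of proper nontrivial $G$-invariant partitions of $\n$ directly, and identify its realization with the claimed formula.

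Write $\n = X_1 \sqcup \cdots \sqcup X_m$ where $m = n/d$ and each $X_i \cong \{1,\ldots,d\}$ as a $G$-set; let $H \subseteq G$ be the point-stabilizer. For a $G$-invariant partition $\mathcal{P}$ of $\n$, define $\taubar(\mathcal{P})$ to be the coarsening whose blocks are the $G$-orbits of the blocks of $\mathcal{P}$. Each block of $\taubar(\mathcal{P})$ is $G$-invariant, hence a union of $X_i$'s, so $\taubar$ gives a monotone poset map $\taubar : \Pi_n^G \to \close{\Pi_m}$, where $\close{\Pi_m}$ is $\Pi_m$ augmented by a top and a bottom element.

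The first step is to analyze the fibers of $\taubar$. Over $\sigma \in \close{\Pi_m}$ with blocks $B_1,\ldots,B_s$, the fiber is a product, indexed by the $B_j$, of posets of $G$-equivariant surjections $\bigsqcup_{i \in B_j} X_i \cong (G/H)^{|B_j|} \twoheadrightarrow G/K$ with $H \subseteq K \subseteq G$. Extending Lemma~\ref{lemma: transitive} from $r = 1$ to $r = |B_j|$ disjoint copies of $G/H$, each such surjection is classified by the subgroup $K$ together with $|B_j| - 1$ gluing cosets in $N_G(H)/H$, which is canonically identified with $C_{\Sigma_d}(G)$ since the latter acts as the full $G$-automorphism group of $G/H$. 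Summed across all blocks, these gluings assemble into the $C_{\Sigma_d}(G)^{n/d}_+ \wedge_{C_{\Sigma_d}(G)}$ twisting factor.

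The second step is to assemble the fibers via a Quillen-type homotopy colimit argument over $\close{\Pi_m}$. The realization $|\Pi_m|$ emerges from the interior of $\close{\Pi_m}$, while the top and bottom of $\close{\Pi_m}$ contribute the extremal pieces: the bottom (discrete partition of orbits) parameterizes independent $G$-invariant refinements of each $X_i$, and the top (all orbits merged) parameterizes transitive $G$-equivariant quotients of all of $\n$. By the fiber analysis of Step 1, both extremal pieces are built out of $|\Pi_d|^G$ together with the gluing data, and careful bookkeeping identifies their combined contribution with a single suspension $\Sigma|\Pi_d|^G$, joining with the interior to yield $\Sigma|\Pi_d|^G * |\Pi_m|$. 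The main obstacle is the assembly: justifying the classification of fibers in Step 1, verifying that the extremal contributions combine into exactly one suspension of $|\Pi_d|^G$ rather than two unrelated factors, and confirming that the $C_{\Sigma_d}(G)^{n/d}$-twisting accumulated across all blocks and all $\sigma$ matches the expression in the statement.
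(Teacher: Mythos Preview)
Your approach is genuinely different from the paper's. The paper does not analyze $\Pi_n^G$ directly for an arbitrary transitive $G\subset\Sigma_d$. Instead it first proves the main equivariant splitting (Theorem~\ref{theorem: main}) for the Young subgroup $(\Sigma_d)^{n/d}$, and then simply takes $G$-fixed points of both sides. Since $G$ is transitive in $\Sigma_d$, every wedge summand with $d'<d$ has contractible $G$-fixed points, and the $d'=d$ summand is identified using Corollary~\ref{corollary: simplified fixed points}; what remains reduces to the known collapse equivalence $|\Pi_{n/d}|\simeq\bigvee_{B(1,\ldots,1)}S^{n/d-3}$. The paper \emph{does} carry out a direct poset argument, but only for $G=\Sigma_d$ and for iterated wreath products (Lemma~\ref{lemma: fixed points}), because those are exactly the isotypical isotropy groups needed as input to the proof of the main theorem. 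For $G=\Sigma_d$ that direct argument is greatly simplified by the fact that $|\Pi_d|^{\Sigma_d}=\emptyset$.

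Your outline is plausible in principle---indeed the paper notes that Brantner obtained the result directly via discrete Morse theory---but as written it has a real gap, and it is located exactly where you say the ``main obstacle'' is. The fiber description in Step~1 is essentially correct (and the identification $C_{\Sigma_d}(G)\cong\Aut_G(G/H)\cong N_G(H)/H$ is fine), but Step~2 is where all of the content lies and you have not supplied it. Two specific issues. First, $\taubar$ is just a poset map, and a ``Quillen-type homotopy colimit over $\close{\Pi_m}$'' does not come for free: the strict fibers $\taubar^{-1}(\sigma)$ are not the comma-category fibers one needs for Quillen's Theorem~A or a Grothendieck/Thomason decomposition, so you must either verify a fibration-type condition or produce an explicit collapsing/matching. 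Second, the assertion that the contributions over $\hat0$ and $\hat1$ ``combine into exactly one suspension $\Sigma|\Pi_d|^G$'' is the crux and is left unargued. For general transitive $G$, the poset $\Pi_d^G$ is typically nonempty (unlike the case $G=\Sigma_d$), so the contractibility of $|\Pi_n^G\setminus T|$ that drives the paper's direct argument in Lemma~\ref{lemma: fixed points} is not available; something new is required, and the proposal does not yet provide it.
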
 
\begin{example}
%Consider the case $G=\Sigma_d$. Then the action of $G$ on $\mathbf d$ is the action of $\Sigma_d$ on $\Sigma_d/\Sigma_{d-1}$. Since there are no intermediate groups between $\Sigma_{d-1}$ and $\Sigma_d$, $(|\Pi_d|^\diamond)^{\Sigma_d}\cong S^0$ by the case of transitive action. The center of $\Sigma_d$ is trivial if $d>2$ and is $\Sigma_2$ if $d=2$. It follows that $|\Pi_n|^{\Sigma_d}$ is equivalent to $|\Pi_{\frac{n}{d}}|^\diamond$ if $d>2$, and to a wedge sum of $2^{\frac{n}{2}-1}$ copies of $|\Pi_{\frac{n}{2}}|^\diamond$ if $d=2$.
There is some overlap between this part of the paper and~\cite{adl2}. In that paper, the authors analyzed the fixed point space of $|\Pi_n|^P$ where $P$ is a $p$-subgroup of $\Sigma_n$. More precisely, it was shown that the only $p$-groups $P$ for which the fixed points space $|\Pi_n|^P$ is not contractible are elementary abelian groups acting freely on $\n$. It also was observed in [loc. cit.] that if $n=p^k$ and $P\cong({\integers}/p)^k$ acts freely and transitively on $\n$, then $|\Pi_n|^P$ is isomorphic to the poset of proper non-trivial subgroups of $P$, which is closely related to the Tits building for $\GL_k(\field_p)$. We complete the calculation in Corollary~\ref{corollary: el abelian}, where we describe the fixed points for elementary abelian groups that act freely but not necessarily transitively. 
\end{example}

\begin{remark}\label{remark: lukas}
After making this paper public we learned that our results about fixed points of the partition poset (in particular Proposition~\ref{proposition: fixed points intro} and its consequences) were discovered independently by Lukas Brantner in the spring of 2014. He proved Proposition~\ref{proposition: fixed points intro} directly, using discrete Morse theory~\cite{brantner}.
\end{remark}

In addition to fixed points, we consider quotient spaces of $|\Pi_n|$ by an action of a Young subgroup. Here we seem to have something genuinely new to say. Quotient spaces of $|\Pi_n|$ by subgroups of $\Sigma_n$ have received some attention over the years. For starters, it is known that the quotient space of $|\Pi_n|$ by the full symmetric group is contractible for $n \ge 3$ (see Kozlov~\cite{kozlov}). Results about quotient spaces by certain subgroups of $|\Sigma_n|$ were obtained, for example, by Patricia Hersh~\cite{hersh} and Ralf Donau~\cite{donaumatching}.
Here we observe that Theorem~\ref{theorem: main} has the following immediate consequence regarding quotient by action of a Young subgroup.
\begin{proposition}\label{proposition: orbits}
Suppose $n=n_1+\cdots+n_k$. Let $g=\gcd(n_1, \ldots, n_k)$. There is a homotopy equivalence
\[
_{\Sigma_{n_1}\times\cdots\times\Sigma_{n_k}}\!\!\backslash|\Pi_n| \longrightarrow \bigvee_{d|g}\bigvee_{B(\frac{n_1}{d}, \ldots, \frac{n_k}{d})} {_{\Sigma_d}}\backslash \! \left(S^{n-d-1} \wedge |\Pi_d|^\diamond\right).
\]
\end{proposition}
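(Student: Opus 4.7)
The proof plan is to deduce Proposition~\ref{proposition: orbits} directly from Theorem~\ref{theorem: main} by applying the orbit-space functor ${_{\Sigma_{n_1}\times\cdots\times\Sigma_{n_k}}\backslash}(-)$ to both sides of the equivariant equivalence~\eqref{equation: main}. Once the hypotheses of equivariant Whitehead apply, the statement is essentially a bookkeeping exercise, so no genuinely new ideas are needed beyond the main theorem.

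First I would invoke Theorem~\ref{theorem: main} to obtain the $\Sigma_{n_1}\times\cdots\times\Sigma_{n_k}$-equivariant equivalence of $|\Pi_n|$ with the wedge on the right-hand side of~\eqref{equation: main}. By the convention adopted in the paper, this is a map that induces an equivalence on $H$-fixed points for every subgroup $H\subseteq \Sigma_{n_1}\times\cdots\times\Sigma_{n_k}$. Both source and target carry natural $\Sigma_{n_1}\times\cdots\times\Sigma_{n_k}$-CW structures (the source coming from the simplicial structure on $|\Pi_n|$, the target from induced cell structures on each wedge summand), so equivariant Whitehead promotes this weak equivalence to a genuine $G$-homotopy equivalence. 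Since a $G$-homotopy between $G$-maps descends to an ordinary homotopy between the induced maps on orbits, we obtain a homotopy equivalence
\[
{_{\Sigma_{n_1}\times\cdots\times\Sigma_{n_k}}\backslash}|\Pi_n| \longrightarrow {_{\Sigma_{n_1}\times\cdots\times\Sigma_{n_k}}\backslash}\bigvee_{d|g}\bigvee_{B(\frac{n_1}{d},\ldots,\frac{n_k}{d})} \Sigma_{n_1}\times\cdots\times{\Sigma_{n_k}}_+ \wedge_{\Sigma_d} S^{n-d-1}\wedge|\Pi_d|^\diamond.
\]

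Next I would identify the right-hand side. The orbit-space functor on pointed $G$-spaces commutes with wedge sum (the basepoint is $G$-fixed, so no identification is destroyed), so it suffices to compute the orbit space of a single summand $\Sigma_{n_1}\times\cdots\times{\Sigma_{n_k}}_+\wedge_{\Sigma_d} Y$ with $Y = S^{n-d-1}\wedge|\Pi_d|^\diamond$. The elementary identification $(G_+\wedge_H Y)/G \cong Y/H$ (every orbit contains a unique representative of the form $[1,y]$, modulo the residual $H$-action on $Y$) reduces this summand to ${_{\Sigma_d}\backslash}(S^{n-d-1}\wedge|\Pi_d|^\diamond)$, yielding the stated formula. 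The only things to verify are the standard facts that an equivariant weak equivalence between $G$-CW complexes descends to an equivalence of orbit spaces, and that the orbits of an induced pointed space $G_+\wedge_H Y$ give $Y/H$; both are routine, so I do not anticipate any serious obstacle.
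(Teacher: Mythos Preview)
Your proposal is correct and matches the paper's approach: the paper states this proposition as an ``immediate consequence'' (respectively ``immediate corollary'') of Theorem~\ref{theorem: main} without giving any further argument, and the details you supply---equivariant Whitehead to promote the weak equivalence to a genuine $G$-homotopy equivalence, then the identifications ${_G\backslash}(\bigvee X_i)\cong\bigvee{_G\backslash}X_i$ and ${_G\backslash}(G_+\wedge_H Y)\cong{_H\backslash}Y$---are exactly the routine steps one fills in.
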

Proposition~\ref{proposition: orbits} reduces the problem of calculating quotient spaces of  $\Pi_n$ by Young subgroups of $\Sigma_n$ to the problem of calculating quotient spaces of the form 
\begin{equation} \label{equation: orbits}
_{\Sigma_d}\!\backslash S^{ld-1}\wedge|\Pi_d|^\diamond.
\end{equation}
Here $l\ge 1$, and $S^{ld-1}$ is equipped with an action of $\Sigma_d$: desuspension of the natural action on $S^{ld}$. The following proposition summarizes our results about these quotient spaces.
\begin{proposition}\label{proposition: orbits omnibus}
The quotient space $_{\Sigma_d}\!\backslash S^{ld-1}\wedge |\Pi_d|^\diamond$ is:
\begin{enumerate}
\item rationally contractible unless $d=1$ or $d=2$ and $l$ is even.
\item equivalent to $\Sigma^l\reals P^{l-1}$ if $d=2$.
\item equivalent to the $p$-localization of the orbit space $_{\Sigma_p}\backslash S^{lp-1}$ if $d=p=$ an odd prime, and $l$ is odd.
\item equivalent to the $p$-localization of the homotopy cofiber of the quotient map $S^{lp-1}\to {_{\Sigma_p}}\backslash S^{lp-1}$ if $d=p=$ an odd prime, and $l$ is even.
\item contractible if $l=1$
\item contractible if $l=2$ and $d$ is an odd prime.
\end{enumerate}
\end{proposition}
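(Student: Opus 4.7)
The plan is to handle the six parts separately, using the classical identification (Klyachko/Stanley) of $\widetilde{H}_{d-3}(|\Pi_d|;\mathbb{Q})$ with the Lie representation $\mathrm{Lie}_d\cong\mathrm{Ind}_{C_d}^{\Sigma_d}\chi$ (up to a sign twist), for a faithful character $\chi$ of the cyclic group $C_d\subset\Sigma_d$.

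For (1) and (2), everything is direct. For (1), the K\"unneth formula shows that $\widetilde{H}_*(S^{ld-1}\wedge|\Pi_d|^\diamond;\mathbb{Q})$ is concentrated in a single degree as a $\Sigma_d$-representation, of the form $\mathrm{sgn}_d^l\otimes\mathrm{Lie}_d$, the $\mathrm{sgn}_d^l$ factor coming from the top class of $(\widehat{S}^{d-1})^{\wedge l}$ as recorded in the introduction. Frobenius reciprocity identifies the $\Sigma_d$-invariants with the $C_d$-invariants of $\chi\cdot\mathrm{sgn}_d^l|_{C_d}$; since $\mathrm{sgn}_d^l|_{C_d}$ is $\pm 1$-valued whereas $\chi$ evaluates to a primitive $d$-th root of unity on a $d$-cycle, the two characters coincide only when $d\le 2$, with the case $d=2$ forcing $l$ to be even. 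For (2), $|\Pi_2|^\diamond\cong S^0$ with trivial $\Sigma_2$-action, so the space is $S^{2l-1}\cong S^{l-1}\wedge(\widehat{S}^1)^{\wedge l}$, on which $\Sigma_2$ acts trivially on $S^{l-1}$ and by basepoint-fixing antipode on $(\widehat{S}^1)^{\wedge l}\cong S^l$; the orbit space of $S^l$ under this action is the unreduced suspension of $\mathbb{RP}^{l-1}$ (the two fixed points becoming the cone points, the free part quotienting to $\mathbb{RP}^{l-1}$), and smashing with $S^{l-1}$ gives $\Sigma^l\mathbb{RP}^{l-1}$.

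For (3) and (4), where $d=p$ is an odd prime, (1) handles the rational vanishing, so the content is at the prime $p$. My plan is to exploit a $\Sigma_p$-equivariant $p$-local equivalence $|\Pi_p|^\diamond\simeq_{(p)}{\Sigma_p}_+\wedge_{N}W$, where $N=N_{\Sigma_p}(C_p)$ is the normalizer of the Sylow $p$-subgroup and $W$ is an $N$-sphere realizing $\mathrm{Ind}_{C_p}^{N}\chi$; this is the geometric lift of Klyachko's identification and should be extractable from the main theorem specialized at $n=p$. Since $[N:C_p]=p-1$ is coprime to $p$, passage to $\Sigma_p$-orbits $p$-locally factors as $C_p$-orbits followed by $(N/C_p)$-invariants. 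A character calculation parallel to (1) identifies which $(N/C_p)$-isotypic summand of $_{C_p}\backslash S^{lp-1}$ survives: for $l$ odd, the surviving summand is precisely $_{\Sigma_p}\backslash S^{lp-1}$, yielding (3); for $l$ even, the $\mathrm{sgn}_p^l$ character shift shifts the isotypic piece to the cofiber of the quotient map $S^{lp-1}\to{_{\Sigma_p}}\backslash S^{lp-1}$, yielding (4).

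Finally, (5) and (6) collapse from the above. For (5), $l=1$: when $d=2$, (2) gives $\Sigma^1\mathbb{RP}^0\simeq\ast$; when $d=p$ is an odd prime, (3) reduces to $_{\Sigma_p}\backslash\widehat{S}^{p-1}$, which is contractible because the $\Sigma_p$-action on the reduced standard representation is by isometries with fundamental domain a closed cone, so the orbit space of the one-point compactification is a closed disk; for general composite $d$, one combines these prime cases with rational contractibility from (1) via an arithmetic fracture square, or gives a direct equivariant null-homotopy. For (6), $l=2$ and $d=p$ odd prime: (4) presents the space $p$-locally as the cofiber of $S^{2p-1}\to{_{\Sigma_p}}\backslash S^{2p-1}$, and a direct $p$-local analysis of this orbit map (using that $\Sigma_p$ acts mostly freely on $S^{2p-1}$ away from a low-dimensional fixed set) shows this cofiber is contractible. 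The main obstacle is parts (3) and (4): setting up the $p$-local equivariant splitting of $|\Pi_p|^\diamond$ precisely enough to carry out the character calculation, and carefully tracking the $\mathrm{sgn}_p^l$ sign twist through the Frobenius reduction.
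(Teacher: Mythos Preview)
Your treatment of parts (1) and (2) is fine and, for (1), is in fact cleaner than the paper's: the paper passes to homotopy orbits and invokes Kozlov's theorem and a Serre spectral sequence argument, whereas your Klyachko--Frobenius computation of the $\Sigma_d$-invariants in $\mathrm{sgn}^{?}\otimes\mathrm{Lie}_d$ gets there directly. (Watch the sign bookkeeping: the Barcelo isomorphism already carries a sign twist, so the representation is $\mathrm{sgn}^{l+1}\otimes\mathrm{Lie}_d$ rather than $\mathrm{sgn}^{l}\otimes\mathrm{Lie}_d$; this does not change the conclusion.)

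The real problems are in (3)--(5). For (3) and (4) you propose a $\Sigma_p$-equivariant $p$-local equivalence $|\Pi_p|^\diamond\simeq_{(p)}{\Sigma_p}_+\wedge_N W$ and say it ``should be extractable from the main theorem specialized at $n=p$.'' It is not. The main theorem describes the restriction of $|\Pi_n|$ to a \emph{proper} Young subgroup; for $n=p$ prime every such subgroup has $\gcd(n_1,\ldots,n_k)=1$, so the theorem only tells you $|\Pi_p|$ is Young-subgroup-equivariantly free, and says nothing about the full $\Sigma_p$-equivariant homotopy type or its restriction to the normalizer $N$. The splitting you want does exist in the literature, but it requires separate input (essentially the $p$-group fixed-point analysis of \cite{adl2}), and your character calculation downstream of it is only sketched. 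The paper's route is entirely different: it uses the cofibration $|\Pi_p|_+\to S^0\to |\Pi_p|^\diamond$ smashed with $S^{lp-1}$, together with a homotopy pushout square comparing strict and homotopy $\Sigma_p$-orbits. The main theorem enters only to show that proper Young subgroups of $\Sigma_p$ act essentially freely on $|\Pi_p|$, which forces the comparison map to be an equivalence on the free locus of $S^{l(p-1)-1}$. The $l$ odd/even dichotomy then comes from orientability of a Thom bundle, not from a character shift.

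For (5) with $d$ composite your argument collapses: parts (3) and (4) only apply when $d$ itself is prime, so there is nothing to feed into a fracture square for $_{\Sigma_d}\backslash(S^{d-1}\wedge|\Pi_d|^\diamond)$ when $d$ is composite, and ``a direct equivariant null-homotopy'' is not a proof. The paper handles all $d$ uniformly and elementarily: every isotropy group $G$ of $|\Pi_d|^\diamond$ either equals $\Sigma_d$ or contains a nontrivial Young subgroup and sits inside its normalizer, and in either case $_G\backslash S^{d-1}$ is contractible by a Weyl-chamber argument (the quotient of $|\overline{\mathcal B_d}|$ by $\Sigma_d$ is the nerve of a chain). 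Writing $_{\Sigma_d}\backslash(S^{d-1}\wedge|\Pi_d|^\diamond)$ as a pointed homotopy colimit of such orbit spaces finishes (5) in one stroke, with no appeal to the prime case or to localization.
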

In other words, we manage to analyze $_{\Sigma_d}\!\backslash S^{ld-1}\wedge |\Pi_d|^\diamond$ when $d$ is a prime (or $l=1$). We have no doubt that more complicated values of $d$ are within reach, and we are hopeful that eventually these spaces will be well-understood for all $l$ and $d$. In the meantime, propositions~\ref{proposition: orbits} and~\ref{proposition: orbits omnibus} enable us to 
\begin{enumerate}
\item describe the torsion-free part of $\HH_*(_{\Sigma_{n_1}\times\cdots\times\Sigma_{n_k}}\!\!\backslash|\Pi_n|)$ in all cases (but this is not new). This is Corollary~\ref{corollary: rational}. 
\item prove that $\widetilde\HH_*(_{\Sigma_{n_1}\times\cdots\times\Sigma_{n_k}}\!\!\backslash|\Pi_n|)$  has $p$-primary torsion only for primes $p$ that satisfy $p\le \gcd(n_1, \ldots, n_k)$ (Lemma~\ref{lemma: torsion}).
\item describe the homotopy type of $_{\Sigma_{n_1}\times\cdots\times\Sigma_{n_k}}\!\!\backslash|\Pi_n|$ when $\gcd(n_1, \ldots, n_k)$ equals $1$ or a prime number (Corollary~\ref{corollary: gcd one} and~Proposition~\ref{proposition: prime}).
\item describe the homotopy type of $_{\Sigma_{p^2}\times\Sigma_{p^2}}\!\!\backslash |\Pi_{2p^2}|$ where $p$ is a prime.
\item classify all Young subgroups for which the quotient space is equivalent to a wedge of spheres (Corollary~\ref{corollary: wedge of spheres})
\end{enumerate}
\subsubsection*{Organization of the paper}  In Section~\ref{section: generalities} we review background material on simplicial complexes (with emphasis on the partition complex), the join construction and equivariant topology. In Section~\ref{section: hilton} we review some generalities regarding free Lie algebras, and state the algebraic Hilton-Milnor theorem. 

In Section~\ref{section: lie} we show how the algebraic Hilton-Milnor theorem gives rise to a branching rule for the Lie representation. This is an algebraic analogue of our main theorem. 

In Section~\ref{section: lie to partitions} we review Barcelo's theorem on the isomorphism between the Lie representation and the cohomology of the partition complex. In Section~\ref{section: collapse} we define the map~\eqref{equation: main}, and prove that it induces an isomorphism in cohomology, by comparing it with the isomorphism that we constructed earlier in Section~\ref{section: lie}. 

In Section~\ref{section: fixed points} we analyze the fixed point space of $|\Pi_n|$ for some groups, including all groups that occur  as an isotropy of either the source or target of the map~\eqref{equation: main}. In Section~\ref{section: main proof} we use the calculations in the preceding section to show that the map in~\eqref{equation: main} induces an equivalence of fixed points spaces for every group that occurs as an isotropy of either the source or target of the map. We conclude that the map is an equivariant equivalence, thus proving the main theorem. 

In Section~\ref{section: application fixed} we use the main theorem to describe (in terms of subgroup posets) the fixed point space $|\Pi_n|^G$ for every $G\subset \Sigma_n$. In Section~\ref{section: application orbit} we use the main theorem to study the quotient space of $|\Pi_n|$ by an action of a Young subgroup.

\subsubsection*{Acknowledgements} The initial impetus for this work was provided by Ralf Donau's paper~\cite{donaumatching} and my desire to understand its connection with~\cite{A-K}. I thank Donau for several useful email exchanges, and in particular for sharing with me the results of some of his computer-aided calculations of the homology groups of quotient spaces of $|\Pi_n|$. 

I also thank the Hausdorff Institute of Mathematics in Bonn, where much of the writing was done, for its hospitality.

\section{Preliminaries} \label{section: generalities}
\subsection{Simplicial complexes and posets}\label{subsection: posets}
%Recall that a simplicial complex is an ordered pair $X=(V, \Psi)$, where $V$ is a set (we will only consider cases where $V$ is finite), and $\Psi$ is a collection of (finite) subsets of $V$ (called simplices of $X$) that is closed under taking subsets. We will assume that the reader is familiar with basic notions of simplicial complexes.
%
%Let $\Pcal$ be a finite poset, i.e., a finite set equipped with a partial order. The {\it order complex} of $\Pcal$ is the abstract simplicial complex $(\Pcal, \mbox{Ch})$ where $\mbox{Ch}$ is the set of increasing chains in $\Pcal$ (including the empty chain). The geometric realization of $\Pcal$, denoted by $|\Pcal|$, is the geometric realization of this simplicial complex. 

%An order-preserving map of posets $f\colon \Pcal\to \Qcal$ induces a map between geometric realizations $|f|\colon |\Pcal|\to |\Qcal|$. A poset map can be thought of as a functor. It is well-known that a natural transformation between functors induces a homotopy between maps of geometric realizations. As a special case, we obtain the following standard lemma.
%\begin{lemma}\label{lemma: Quillen's principle}
%Let $f\colon \Pcal\to \Pcal$ be a self-map of a poset $\Pcal$ suppose that $f(x)\le x$ for all $x\in \Pcal$. Then $|f|\colon |\Pcal|\to |\Pcal|$ is homotopic ot the identity. The same conclusion holds if $f(x)\ge x$ for all $x\in \Pcal$.
%\end{lemma}
We will assume that the reader is familiar with basic notions concerning simplicial complexes, the order complex of a poset, and geometric realizaiton. Given a poset $\Pcal$, let $|\Pcal|$ denote its geometric realization.

Many posets have an initial and/or final element. We will denote the initial and final element of a poset $\Pcal$ by $\hat0$ and $\hat1$, or $\hat0_{\Pcal}$ and $\hat1_{\Pcal}$ if needed. Given a poset $\Pcal$, we denote the poset $\Pcal\setminus\{\hat 0 , \hat 1\}$ by $\overline{\Pcal}$.
%(in general, one has to use the compactly generated topology rather than the product topology for this to be true, but we will only consider finite posets in this paper, and in this case there is no difference between the topologies).
\begin{example}\label{example: simplex}
Let $\Ccal_m$ be the poset of integers between $1$ and $m$, with the usual linear order. Then $|\Ccal_m|=\Delta^{m-1}$ is the standard $m-1$-dimensional simplex. The simplicial boundary of $\Ccal_{m}$ is the simplicial complex consisting of all the increasing chains in $\Ccal_m$, except for the unique maximal chain $1<\cdots<m$. Its geometric realization is the topological boundary of $\Delta^{m-1}$, i.e. the sphere $S^{m-2}$.
\end{example}
%It is well-known that product commutes with geometric realization: there is a canonical homeomorphism $|\Pcal \times \Qcal|\stackrel{\cong}{\to}|\Pcal|\times |\Qcal|$. See for example~\cite[Theorem 5.1 (a)]{walker}. 
\begin{example}\label{example: sphere}
Let $\Bcal_m$ be the poset of subsets of $\m$. It is easy to see that there is an isomorphism of posets $\Bcal_m \cong (\Bcal_1)^m$. Since $|\Bcal_1|\cong [0,1]$, there is a homeomorphism $|\Bcal_m| \cong [0,1]^m$. Next, consider the posets $\Bcal_m\setminus\{\hat0\}$ -- the poset of non-empty subsets of $\m$, and $\overline{\Bcal_m}$ -- the poset of proper, non-empty subsets of $\m$. It is well-known, and not difficult to check, that the order complexes of $\Bcal\setminus\{\hat 0\}$ and $\overline{\Bcal_m}$ are isomorphic to the barycentric subdivisions of $\Ccal_{m}$ and of its simplicial boundary, respectively (see Example~\ref{example: simplex}). In particular, there is a homeomorphism $|\overline{\Bcal_m}| \cong S^{m-2}$. The action of $\Sigma_m$ on $\Bcal_m$ induces an unpointed action on $|\overline{\Bcal_m}|$, i.e. on $S^{m-2}$. Once again, it is well-known and not difficult to prove that this action is homeomorphic to the action of $\Sigma_m$ on the unit sphere in $\reals^{m-1}$, where $\reals^{m-1}$ is endowed with the reduced standard action of $\Sigma_m$. 
\end{example}

Given two elements $x$ and $y$ of a poset $\Pcal$, their least upper bound, if it exists, is denoted $x\vee y$. The greatest lower bound is denoted $x\wedge y$. In the course of the paper we will use several times the following elementary criterion for the contractibility of a poset.
\begin{lemma}\label{lemma: lattice}
Suppose $\Pcal$ has an element $\theta$ such that $x\vee \theta$ exists for all $x\in \Pcal$ (or alternatively, $x\wedge \theta$ exists for all $x$). Then $|\Pcal|$ is contractible.
\end{lemma}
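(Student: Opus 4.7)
The plan is to exhibit the identity map of $|\mathcal{P}|$ as homotopic to a constant map by producing an intermediate map that dominates both. Assume first that $x \vee \theta$ exists for every $x \in \mathcal{P}$. Define $f \colon \mathcal{P} \to \mathcal{P}$ by $f(x) = x \vee \theta$. First I would verify that $f$ is a poset map: if $x \leq y$, then $y \vee \theta$ is an upper bound for both $x$ and $\theta$, so by the defining property of the join, $x \vee \theta \leq y \vee \theta$. The key properties are that $x \leq f(x)$ and $\theta \leq f(x)$ for all $x$, which are immediate from the definition of $\vee$.

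Next I would invoke the standard fact (which follows from the simplicial approximation via the prism construction on the nerve) that if $g, h \colon \mathcal{P} \to \mathcal{Q}$ are poset maps satisfying $g(x) \leq h(x)$ for all $x$, then $|g|$ is homotopic to $|h|$ as maps $|\mathcal{P}| \to |\mathcal{Q}|$. Applied to the pair of inequalities $\mathrm{id}_{\mathcal{P}} \leq f$ and $\mathrm{const}_\theta \leq f$, this gives a zig-zag of homotopies
\[
|\mathrm{id}_{\mathcal{P}}| \simeq |f| \simeq |\mathrm{const}_\theta|,
\]
so the identity of $|\mathcal{P}|$ is homotopic to the constant map at $\theta$, and $|\mathcal{P}|$ is contractible.

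For the dual statement, where $x \wedge \theta$ exists for all $x$, I would apply exactly the same argument to the opposite poset $\mathcal{P}^{\mathrm{op}}$, in which meets become joins; since $|\mathcal{P}^{\mathrm{op}}| = |\mathcal{P}|$ canonically, the conclusion follows. There is no real obstacle here, beyond citing or briefly justifying the homotopy principle for natural transformations of poset maps; the content of the lemma is essentially that $f$ is a deformation retraction of $\mathcal{P}$ onto the subposet $\{x : x \geq \theta\}$, which has a minimum and is therefore a cone.
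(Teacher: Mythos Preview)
Your proof is correct and follows essentially the same approach as the paper: define $f(x)=x\vee\theta$, use the homotopy principle for comparable poset maps to see that $|f|$ is homotopic to the identity, and then observe that $|f|$ is null-homotopic. The only cosmetic difference is that the paper phrases the second step as ``$f$ factors through $\mathcal{P}_{\ge\theta}$, which has initial element $\theta$,'' while you directly compare $\mathrm{const}_\theta\le f$; these are the same observation.
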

\begin{proof}
The assumption implies that there is a well-defined map of posets $-\vee \theta \colon \Pcal \to \Pcal$. This map has the property that $x\le x\vee \theta$ for all $x\in \Pcal$. In other words, there is a natural transformation from the identity functor to $-\vee \theta$. It is well-known that a natural transformation between functors induces a homotopy between maps of geometric realizations. So the induced map of geometric realizations $|-\vee \theta|\colon |\Pcal| \to |\Pcal|$ is homotopic to the identity map. But this map factors through $|\Pcal_{\ge \theta}|$, where $\Pcal_{\ge\theta}\subset\Pcal$ is the subposet of all elements $x$ satisfying $x\ge \theta$. This poset has an initial element, namely $\theta$, so $|\Pcal_{\ge \theta}|$ is contractible. It follows that the map $|-\vee \theta|$ is null-homotopic, and therefore the identity map of $|\Pcal|$ is null-homotopic. It follows that $|\Pcal|$ is contractible. The proof under the assumption of existence $x\wedge \theta$ is similar.
\end{proof}

\subsubsection{The join operation on spaces and posets}
Next we will review the join operation, since it plays a role in this paper. %We will give two models for the join of posets, both of which are standard. 
First recall that the join of two topological spaces $X$ and $Y$, denoted $X*Y$, is the double mapping cylinder (a.k.a homotopy pushout) of the diagram
\[
X\leftarrow X\times Y \rightarrow Y.
\]
It is easy to check that the operation is associative up to natural isomorphism. It is well-known that if $X$ and $Y$  are pointed spaces, there is a natural homotopy equivalence
\begin{equation}\label{equation: joint}
X*Y\stackrel{\simeq}{\to} S^1\wedge X \wedge Y.
\end{equation}

This fact can be generalized slightly to a situation where only one of the spaces $X$ and $Y$ has a basepoint. Recall that $X^\diamond$ denotes the unreduced suspension of $X$. This space has a canonical basepoint (the ``south pole''). We note that $X^\diamond$ is homeomorphic to the pointed homotopy cofiber of the canonical map $X_+\to S^0$.
\begin{lemma}\label{lemma: join}
Suppose $Y$ is a well-pointed space. Then there is a natural weak equivalence
\[
X*Y\stackrel{\simeq}{\to} X^\diamond \wedge Y.
\]
\end{lemma}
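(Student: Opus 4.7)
The plan is to realize the map $X*Y \to X^\diamond \wedge Y$ as the composition of collapsing a contractible subspace and a natural homeomorphism. Concretely, using the standard model $X*Y = X\times I\times Y/\sim$ (with $(x, 0, y)\sim (x', 0, y)$ and $(x, 1, y)\sim (x, 1, y')$), define $\phi\colon X*Y \to X^\diamond \wedge Y$ on representatives by $(x, t, y) \mapsto [x, 1-t]\wedge y$, where $[x, 1-t]\in X^\diamond$ denotes the image of $(x, 1-t)$ in the unreduced suspension. Compatibility with the identifications of the join is routine: at $t=0$, both $(x, 0, y)$ and $(x', 0, y)$ map to $[x, 1]\wedge y$, which depends only on $y$ since $[x, 1]$ is the north pole of $X^\diamond$ (independent of $x$); at $t=1$, $(x, 1, y)$ and $(x, 1, y')$ both map to $[x, 0]\wedge y = *$, the basepoint, because $[x, 0]$ is the south pole.

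Next, observe that $\phi$ sends the subspace $X*\{*_Y\}\subset X*Y$ entirely to the basepoint, because the $Y$-coordinate is smashed with $*_Y$. Hence $\phi$ factors through the quotient $X*Y/X*\{*_Y\}$, and I claim the induced map $\bar\phi$ is actually a homeomorphism. An explicit inverse sends $[x, s]\wedge y$ to the class of $(x, 1-s, y)$ in $X*Y/X*\{*_Y\}$, and a routine unwinding confirms that this is well-defined (the south pole of $X^\diamond$ maps to the basepoint because the corresponding $(x,1,y)$ lies in $X*\{*_Y\}$ modulo the join identifications, the north pole behaves correctly because the join collapses $X$ at $t=0$, etc.) and that the two composites are the identity.

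Finally, $X*\{*_Y\}$ is homeomorphic to the unreduced cone on $X$, hence contractible. Well-pointedness of $Y$ says that $\{*_Y\}\hookrightarrow Y$ is a cofibration, and by applying this to the pushout presentation of the join (the inclusion of the sub-diagram $X\leftarrow X\times\{*_Y\}\to\{*_Y\}$ into $X\leftarrow X\times Y\to Y$ consists of cofibrations at every term), one deduces that $X*\{*_Y\}\hookrightarrow X*Y$ is a cofibration as well. Collapsing a contractible cofibrant subspace is a homotopy equivalence, so the quotient map $X*Y\to X*Y/X*\{*_Y\}$ is a homotopy equivalence; composing with the homeomorphism $\bar\phi$ yields the desired natural weak equivalence.

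The main technical ingredient, and the only place where the well-pointedness hypothesis on $Y$ is genuinely used, is the verification that $X*\{*_Y\}\hookrightarrow X*Y$ is a cofibration. Everything else is formal bookkeeping with the join.
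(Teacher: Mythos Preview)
Your proof is correct. Both your argument and the paper's identify $X^\diamond\wedge Y$ as the homotopy cofiber of the inclusion $X*\{*_Y\}\hookrightarrow X*Y$, but the presentations differ. The paper encodes this in a $3\times 3$ diagram whose rows are the span diagrams for $X*\{*_Y\}$, $X*Y$, and a third diagram recognized as $(*\leftarrow X_+\to S^0)\wedge Y$; taking homotopy pushouts of the rows yields the desired cofiber sequence. You instead write down the collapse map $\phi$ explicitly, check by hand that $\phi^{-1}(*)=X*\{*_Y\}\cong\operatorname{Cone}(X)$, and verify that the induced map on the quotient is a homeomorphism. Your approach has the virtue of producing a concrete point-set formula for the equivalence, which is useful when one needs to track the map explicitly; the paper's diagrammatic argument is slightly more robust to changes of model (e.g.\ it transparently uses only that smashing with a well-pointed $Y$ preserves homotopy cofibers). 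One small remark: your justification that $X*\{*_Y\}\hookrightarrow X*Y$ is a cofibration (``levelwise cofibration of span diagrams, hence cofibration on double mapping cylinders'') is correct but a bit brisk---a map of spans that is an objectwise cofibration does not automatically induce a cofibration on homotopy pushouts. Here it works because the two relevant squares are themselves pushouts (they arise from products with the cofibration $\{*_Y\}\hookrightarrow Y$), which is exactly the extra condition needed; you might say this explicitly.
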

\begin{proof}
Consider the following diagram
\[
\begin{array}{ccccc}
X & \leftarrow &  X\times * & \rightarrow & * \\
\downarrow & & \downarrow & & \downarrow \\
X & \leftarrow & X \times Y & \rightarrow & Y \\
\downarrow & & \downarrow & & \downarrow \\
* & \leftarrow & X_+ \wedge Y & \rightarrow & Y
\end{array}
\]
The columns in the diagram are homotopy cofibration sequences, so it induces a homotopy cofibration sequence of homotopy pushouts of the rows. The homotopy pushout of the top row is contractible, so we have an equivalence between the pushouts of the middle and bottom rows. The pushout of the middle row is $X*Y$. The bottom row is homeomorphic to the diagram obtained by taking object-wise smash product of the following diagram with $Y$
\[
*  \leftarrow  X_+   \rightarrow  S^0.
\]
Therefore, the homotopy pushout of the bottom row is the smash product of the unpointed homotopy cofiber of the map $X_+\to S^0$ with $Y$. Since the spaces $X_+, S^0$ and $Y$ are well-pointed, the unpointed homotopy cofiber can be replaced with the pointed homotopy cofiber.
\end{proof}
It follows that the expression $S^{n-d-1} \wedge |\Pi_d|^\diamond$ in the main theorem (see~\eqref{equation: main}) can be replaced with $S^{n-d-1} * |\Pi_d|$. This form seems to be more convenient for the purpose of proving the theorem.
\begin{examples}\label{examples: joins}
The empty set is a unit for the join operation. That is, there are natural isomorphisms $X* \emptyset \cong \emptyset * X \cong X$.

Taking join with $S^0$ is the same as taking unreduced suspension. We use notations $S^0* X$, $X^\diamond$ and $\Sigma X$ interchangeably.

A special case of Lemma~\ref{lemma: join} is the well-known equivalence $S^m * S^n\cong S^{m+n+1}$ (in fact these spaces are homeomorphic). We think of the empty set as the $-1$-dimensional sphere. Then the formula remains valid for $m, n\ge -1$.

The join of two simplices $\Delta^i * \Delta^j$ is homeomorphic to $\Delta^{i+j+1}$. It also is easy to see that $\Delta^0 * S^j$, the join of a point and a sphere, is homeomorphic to $\Delta^{j+1}$. It follows that for all $i\ge 0$, $j\ge -1$, $\Delta^i * S^j$ is homeomorphic to $\Delta^{i+j+1}$. 
\end{examples}

There is a version of the join operation for simplicial complexes. Let $(U, \Phi)$ and $(V, \Psi)$ be simplicial complexes. Their simplicial join is defined to be the simplicial complex $(U, \Phi)*(V, \Psi):=(U\coprod V, \Phi * \Psi)$, where $\Phi*\Psi$ is the collection of subsets $S\subset U\coprod V$ satisfying $S\cap U \in \Phi$ and $S\cap V\in \Psi$. It is well known that geometric realization takes simplicial join to space-level join.

Now let $\Pcal$ and $\Qcal$ be posets. The join of $\Pcal$ and $\Qcal$ is defined to be the following poset. On the level of underlying sets, $\Pcal * \Qcal= \Pcal \coprod \Qcal$. The partial order is defined by keeping the old order on $\Pcal$ and $\Qcal$, and decreeing that every element of $\Pcal$ is smaller than every element of $\Qcal$. It is easy to see that the order complex of $\Pcal * \Qcal$ is isomorphic to the simplicial join of the two order complexes. It follows that $|\Pcal * \Qcal|\cong |\Pcal| * |\Qcal|$.

There is another model for the join of two posets, that will be useful to us. First, some notation. For an element $a\in \Pcal$, let $\Pcal_{<a}=\{x\in\Pcal\mid x<a\}$. Define $\Pcal_{>a}, \Pcal_{\le a},$ and $\Pcal_{\ge a}$ analogously. Suppose $b\in \Qcal$. Then $( a, b)\in \Pcal\times \Qcal$. The following lemma is due to Quillen~\cite[Proposition 1.9]{quillen}. See also~\cite[Theorem 5.1 (b), (c)]{walker}. 
\begin{lemma}\label{lemma: poset join}
There are natural homeomorphisms
\[
|(\Pcal\times \Qcal)_{>(a, b)}|\cong |\Pcal_{>a}|*|\Qcal_{>b}|
\]
\[
|(\Pcal\times \Qcal)_{<( a, b)}|\cong |\Pcal_{<a}|*|\Qcal_{<b}|
\]
\end{lemma}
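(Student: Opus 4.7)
The plan is to prove the first homeomorphism directly; the second follows by applying the same argument to the opposite posets $\Pcal\op$ and $\Qcal\op$ and using that $|\Rcal\op|=|\Rcal|$ together with the symmetry of the join.

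The key observation is that $(\Pcal\times\Qcal)_{\geq(a,b)}=\Pcal_{\geq a}\times\Qcal_{\geq b}$, and this product poset has $(a,b)$ as a unique minimum. For any poset $\Rcal$ with minimum $\hat 0$, there is a canonical homeomorphism $|\Rcal|\cong C|\Rcal\setminus\{\hat 0\}|$ with cone point $\hat 0$, because every chain in $\Rcal$ either lies in $\Rcal\setminus\{\hat 0\}$ or is obtained from such a chain by prepending $\hat 0$ (the star of $\hat 0$ is the whole complex). Applying this to $\Pcal_{\geq a}\times\Qcal_{\geq b}$, to $\Pcal_{\geq a}$, and to $\Qcal_{\geq b}$ yields
\[
|\Pcal_{\geq a}\times\Qcal_{\geq b}|\cong C|(\Pcal\times\Qcal)_{>(a,b)}|,\quad |\Pcal_{\geq a}|\cong C|\Pcal_{>a}|,\quad |\Qcal_{\geq b}|\cong C|\Qcal_{>b}|,
\]
each with the cone point being the respective minimum element.

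Next I would invoke the standard homeomorphism $|\Rcal\times\Scal|\cong|\Rcal|\times|\Scal|$ for products of posets (triangulating a product of simplices by shuffles/staircases). Combining gives
\[
C|(\Pcal\times\Qcal)_{>(a,b)}|\cong C|\Pcal_{>a}|\times C|\Qcal_{>b}|,
\]
and under this identification the cone apex $(a,b)$ on the left corresponds to the corner $(\mathrm{apex},\mathrm{apex})$ on the right. Finally I would use the classical homeomorphism $CX\times CY\cong C(X*Y)$, under which the corner apex of $CX\times CY$ maps to the cone point of $C(X*Y)$. Taking the link of the apex in both cones extracts the desired homeomorphism $|(\Pcal\times\Qcal)_{>(a,b)}|\cong |\Pcal_{>a}|*|\Qcal_{>b}|$.

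The main obstacle is bookkeeping: verifying that all three apex/base identifications are compatible, and that the two nontrivial classical homeomorphisms $|\Rcal\times\Scal|\cong|\Rcal|\times|\Scal|$ and $CX\times CY\cong C(X*Y)$ really match up the distinguished vertex $(a,b)$ with the corner of the bicone. Both classical facts rely on the shuffle triangulation of a product of simplices. A more combinatorial alternative would be to parameterize a chain in $(\Pcal\times\Qcal)_{>(a,b)}$ as a triple (a chain in $\Pcal_{\geq a}$, a chain in $\Qcal_{\geq b}$, and an interleaving, subject to the constraint that not both start at the minimum) and compare this directly with the shuffle structure of the simplicial join, but this route requires more elaborate case analysis and I expect is strictly more cumbersome.
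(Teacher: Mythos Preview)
The paper does not prove this lemma; it simply attributes it to Quillen \cite[Proposition 1.9]{quillen} and Walker \cite[Theorem 5.1(b),(c)]{walker}. Your argument is essentially Quillen's, so there is nothing to compare on the level of strategy.

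Your outline is correct, but the final step as phrased has a small gap. A homeomorphism of cones $CZ\cong CW$ taking apex to apex does not, by itself, yield a homeomorphism $Z\cong W$ (only a homotopy equivalence, via $CZ\setminus\{\text{apex}\}\simeq Z$). So ``taking the link of the apex'' is not a formal operation you can apply to an arbitrary apex-preserving homeomorphism. The fix is to avoid the detour through $CX\times CY\cong C(X*Y)$ entirely and instead check directly that the shuffle homeomorphism
\[
|(\Pcal\times\Qcal)_{\geq(a,b)}|\;\stackrel{\cong}{\longrightarrow}\;|\Pcal_{\geq a}|\times|\Qcal_{\geq b}|=C|\Pcal_{>a}|\times C|\Qcal_{>b}|
\]
carries the subcomplex $|(\Pcal\times\Qcal)_{>(a,b)}|$ onto the subspace $\bigl(|\Pcal_{>a}|\times C|\Qcal_{>b}|\bigr)\cup\bigl(C|\Pcal_{>a}|\times|\Qcal_{>b}|\bigr)$. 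This is immediate once you unwind the shuffle map: a chain $(p_0,q_0)<\cdots<(p_k,q_k)$ with $(p_0,q_0)>(a,b)$ has either $p_0>a$ or $q_0>b$, so its projection to at least one factor lands in the base of the cone. Since that subspace is, by definition of the join as a double mapping cylinder, exactly $|\Pcal_{>a}|*|\Qcal_{>b}|$, you are done. This is the argument Quillen gives, and it sidesteps the need to separately invoke $CX\times CY\cong C(X*Y)$ and then worry about compatibility of bases.
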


Let $\Pcal$ be a poset with an initial and final element. Recall that $\overline{\Pcal}=\Pcal\setminus\{\hat0, \hat1\}$. Suppose that $\overline{\Pcal}=\emptyset$. Then there are two possibilities. Either $\Pcal$ has two elements, or just one. In the first case, we identify $|\overline{\Pcal}|$ with the empty space, as expected. But if $\Pcal$ has just one element, which is then both initial and final, then we adopt the convention that $|\overline{\Pcal}|=S^{-2}$  (recall that we identify the empty space with $S^{-1}$). In practice this means that the space $|\overline{\Pcal}|$ is not defined, but any suspension of this space is. In particular $S^0* |\overline{\Pcal}| = S^{-1}=\emptyset$, and in general $S^n * |\overline{\Pcal}|\cong S^{n-1}$.

With this convention in place, we have the following formula for the ``product of closed intervals, with initial and final object removed''.
\begin{proposition}\label{proposition: fancy join}
Let $\Pcal_1, \ldots, \Pcal_n$ be posets, each with an initial and final object. There is a natural homeomorphism
\[
|\overline{\Pcal_1\times\cdots\times\Pcal_n}| \cong S^{n-2}* |\overline{\Pcal_1}| * \cdots * |\overline{\Pcal_n}|.
\]
%There is a natural homeomorphism
%\[
%|(\langle a_1, \ldots, a_n\rangle, \langle b_1, \ldots, b_n\rangle)|\cong S^{n-2}* |(a_1, b_1)| * \cdots *|(a_n, b_n)|.
%\]
Moreover, a permutation of factors on the left hand side corresponds on the right hand side to permutation of factors, and the standard action of a permutation on $S^{n-2}$ (as in Example~\ref{example: sphere}).
\end{proposition}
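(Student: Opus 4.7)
The plan is to apply Lemma~\ref{lemma: poset join} iteratively at the top element $(\hat 1_1,\ldots,\hat 1_n)$ of the product poset, and then to identify both sides of the resulting homeomorphism as cones whose apexes correspond, so that the proposition follows by taking PL links. Iteration of Lemma~\ref{lemma: poset join} gives
\[
\bigl|(\Pcal_1\times\cdots\times\Pcal_n)\setminus\{(\hat 1_1,\ldots,\hat 1_n)\}\bigr| \;\cong\; |\Pcal_1\setminus\{\hat 1_1\}| * \cdots * |\Pcal_n\setminus\{\hat 1_n\}|.
\]
The subposet on the left retains its minimum $(\hat 0_1,\ldots,\hat 0_n)$, so its realization is the simplicial cone on $|\overline{\Pcal_1\times\cdots\times\Pcal_n}|$ with that minimum as apex. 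On the right, each $|\Pcal_i\setminus\{\hat 1_i\}|$ is the cone on $|\overline{\Pcal_i}|$ with apex $\hat 0_i$, whence the right-hand side equals $|\overline{\Pcal_1}|*\cdots*|\overline{\Pcal_n}|*\Delta^{n-1}$, where $\Delta^{n-1}=\{\hat 0_1\}*\cdots*\{\hat 0_n\}$ is the $(n-1)$-simplex spanned by the $n$ cone apexes.

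Under the iterated Quillen homeomorphism, the vertex $(\hat 0_1,\ldots,\hat 0_n)$ is mapped to the barycenter $b$ of $\Delta^{n-1}$. The PL link of $(\hat 0_1,\ldots,\hat 0_n)$ on the left is the base of the simplicial cone, namely $|\overline{\Pcal_1\times\cdots\times\Pcal_n}|$. On the right, $b$ lies in the interior of the simplex $\Delta^{n-1}$, and the standard formula $\mathrm{lk}(b) = \partial\Delta^{n-1} * \mathrm{Link}(\Delta^{n-1})$ for the PL link of an interior point of a simplex in a simplicial complex gives $S^{n-2} * |\overline{\Pcal_1}| * \cdots * |\overline{\Pcal_n}|$. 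Since PL links are invariants of PL homeomorphisms, we conclude
\[
|\overline{\Pcal_1\times\cdots\times\Pcal_n}| \;\cong\; S^{n-2} * |\overline{\Pcal_1}| * \cdots * |\overline{\Pcal_n}|.
\]

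The equivariance assertion is immediate from the construction: a permutation $\sigma \in \Sigma_n$ of the factors of $\Pcal_1 \times \cdots \times \Pcal_n$ induces the corresponding permutation of the join factors $|\overline{\Pcal_i}|$ on the right, together with the corresponding permutation of the apex vertices $\hat 0_1,\ldots,\hat 0_n$ of $\Delta^{n-1}$. This action fixes the barycenter $b$ and restricts on $\partial\Delta^{n-1}\cong S^{n-2}$ to the standard representation of $\Sigma_n$ described in Example~\ref{example: sphere}. The main technical point is justifying the iteration of Lemma~\ref{lemma: poset join} and the identification of $(\hat 0_1,\ldots,\hat 0_n)$ with the barycenter $b$ of $\Delta^{n-1}$, both of which reduce to careful bookkeeping with the explicit description of Quillen's homeomorphism in~\cite{quillen,walker}.
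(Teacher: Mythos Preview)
Your approach is correct but takes a genuinely different route from the paper. The paper's proof is very short: it cites Walker's Theorem~5.1(d) for the case $n=2$ (which gives $|\overline{\Pcal_1\times\Pcal_2}|\cong S^0*|\overline{\Pcal_1}|*|\overline{\Pcal_2}|$ directly), and then inducts on $n$ by writing $\Pcal_1\times\cdots\times\Pcal_n=(\Pcal_1\times\cdots\times\Pcal_{n-1})\times\Pcal_n$ and using $S^0*S^{n-3}\cong S^{n-2}$. No PL-link machinery is invoked.

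Your route instead starts from Lemma~\ref{lemma: poset join} (which is Walker's 5.1(b),(c)) and recovers the statement by a link argument, so you are in effect rederiving Walker's 5.1(d) from his 5.1(b),(c), whereas the paper simply quotes 5.1(d). Two points deserve comment. First, your barycenter claim is correct and can be justified cleanly by naturality: restricting the iterated Quillen homeomorphism along the inclusions $\{\hat 0_i,\hat 1_i\}\hookrightarrow\Pcal_i$ shows that $(\hat 0_1,\ldots,\hat 0_n)$ lands in $\Delta^{n-1}$, and the $\Sigma_n$-symmetry of that special case forces the image to be the barycenter. Second, invoking invariance of PL links produces \emph{a} PL homeomorphism, but extracting a specific \emph{natural} one (as the proposition asserts) requires some care with subdivisions; the paper's inductive argument sidesteps this by composing explicit Walker homeomorphisms throughout. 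Your approach does have the compensating virtue that the $\Sigma_n$-equivariance is visible all at once, whereas the inductive argument breaks the symmetry and implicitly relies on the composite being independent of the order in which factors are peeled off.
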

\begin{proof}
The case $n=2$ is~\cite[Theorem 5.1 (d)]{walker}. The general case is proved by a similar argument, plus induction. 
\end{proof}
Once again, note that if for some $i$ the poset $\Pcal_i$ consists of a single object, there is a homeomorphism
\[
S^{n-2} * |\overline{\Pcal_1}| * \cdots * |\overline{\Pcal_n}| \cong S^{n-3} * |\overline{\Pcal_1}| * \cdots * {|\overline{\Pcal_{i-1}}|} * {|\overline{\Pcal_{i+1}}|}\cdots*|\overline{\Pcal_n}|.
\]

\subsection{Poset of partitions} \label{subsection: partitions} Our main interest is in the poset of partitions (i.e., equivalence relations) of a finite set. For a finite set $S$, let $\Part_S$ be the collection of partitions of $S$, partially ordered by refinement. We say that $\lambda_1<\lambda_2$ if $\lambda_2$ is finer than $\lambda_1$ (this is opposite of the convention we used in some of our previous papers, but it seems convenient here). Throughout the paper, let $\n=\{1, \ldots, n\}$ be the standard set with $n$ elements. We denote the poset of partitions of $\n$ by $\Part_n$.

The poset $\Part_n$ has an initial and a final object. As usual we denote them by $\hat 0$ and $\hat 1$ (we will not include $n$ in the notation for these, trusting that it will not cause confusion). They may be called the ``indiscrete'' and the ``discrete'' partition respectively (note that for $n=1$, $\hat 0 = \hat 1$). 

We often will want to consider the poset of partitions without the initial and final object, so we introduce separate notation for it. For $n> 1$ define $\Pi_n=\Part_n\setminus\{\hat0, \hat 1\}$. It is well-known that for $n\ge 3$ there is a homotopy equivalence $|\Pi_n| \simeq \bigvee_{(n-1)!} S^{n-3}$. For $n=1, 2$ this equivalence can not be true as stated, if only because the right hand side is not defined. Note that $\Pi_2=\emptyset$, a.k.a, $S^{-1}$, which is compatible with the general formula. In keeping with the conventions of section~\ref{subsection: posets}, we identify $|\Pi_1|$ with $S^{-2}$. This means that $|\Pi_1|$ is not defined as a space, but $|\Pi_1|^\diamond=\emptyset$, and for all $n\ge 0$, $S^n *  |\Pi_1|=S^n \wedge|\Pi_1|^\diamond := S^{n-1}$. 

Let $\widetilde \HH^*(X)$ be the reduced cohomology of a space $X$. It is defined using the augmented simplicial or singular chain complex. Since for $n\ge 3$ $\Pi_n$ is a wedge of spheres of dimension $n-3$, it only has reduced homology in this dimension. For $n=2$ $\Pi_2=\emptyset$ and it is still true that $\widetilde \HH^{-1}(\emptyset)\cong \integers$, if one defines cohomology using the augmented chain complex. Since we defined $|\Pi_1|$ to be $S^{-2}$, we also define $\widetilde \HH^{-2}(\Pi_1)=\integers$. This convention will enable us to make statement valid for all $\Pi_n$, without needing to make exceptions for $n=1$.

We notice that the poset $\Part_n$ (and also $\Pi_n$) is equipped with a rank function: for a partition $\lambda$, let $\rk(\lambda)$ be the number of equivalence classes of $\lambda$. We note the following elementary properties of the rank function: if $\lambda_1<\lambda_2$ then $\rk(\lambda_1)<\rk(\lambda_2)$. In particular, distinct partitions of same rank are not comparable. The action of the symmetric group on $\Part_n$ preserves the rank function.

We will need to deal with intervals in $\Part_n$. Let $\lambda_1, \lambda_2$ be partitions, and suppose that $\lambda_1<\lambda_2$. Let us suppose that $\lambda_1$ has $k$ equivalence classes, and label them $c_1, \ldots, c_k$. Since $\lambda_1<\lambda_2$, each equivalence class of $\lambda_1$ is a union of classes of $\lambda_2$. Let us suppose that for $i=1, \ldots, k$, $c_i$ is the union of $m_i$ classes of $\lambda_2$. Then it is easy to see the following formula for the closed interval $[\lambda_1, \lambda_2]$
\[
[\lambda_1, \lambda_2]\cong \Part_{m_1}\times \cdots \times \Part_{m_k}.
\]
It follows that the open interval $(\lambda_1, \lambda_2)$ is isomorphic to the poset $\overline{\Part_{m_1}\times \cdots \times \Part_{m_k}}$. By Proposition~\ref{proposition: fancy join}, there is a homeomorphism
\begin{equation}\label{equation: open interval}
|(\lambda_1, \lambda_2)|\cong S^{k-2}* |\Pi_{m_1}| * \cdots * |\Pi_{m_k}|
\end{equation}
\begin{remark}\label{remark: more than one}
We will need the following simple observation: if $\lambda_1 < \lambda_2$, and $\rk(\lambda_1)<r<\rk(\lambda_2)$ then there are at least two partitions $\lambda$ of rank $r$ satisfying $\lambda_1 < \lambda < \lambda_2$. For example, if $\rk(\lambda_2)-\rk(\lambda_1)>2$ then an intermediate partition will have rank equal to $\rk(\lambda_1)+1$. There are two possibilities: either $\lambda_2$ is obtained from $\lambda_1$ by splitting two equivalence classes in two parts each, or by splitting a single class in three parts. In the first case, there are two intermediate partitions between $\lambda_1$ and $\lambda_2$, in the second case there are three.
\end{remark}
%Let $\lambda$ be a partition of $\n$, with equivalence classes $c_1, \ldots, c_k$. Recall that $(\Pcal_n)_{\ge \lambda}$ is the poset of partitions that are at least as fine as $\lambda$. It is easy to see that disjoint union induces an isomorphism of posets
%\[
%\Pcal_{c_1}\times \cdots\times \Pcal_{c_k} \to (\Pcal_n)_{\ge \lambda}.
%\]
%We are interested in the geometric realization of the poset $(\Pi_n)_{\ge\lambda}$ - the poset of partitions that are at least as fine as $\lambda$, but coarser than the discrete partition (assume that $\lambda$ is not the discrete partition). By the formula above, this poset is isomorphic to $\Pcal_{c_1}\times \cdots\times \Pcal_{c_k}\setminus \{ \hat1\}$, where $ \hat1$ is, as usual, the final object. By Corollary~\ref{corollary: with final}, we obtain a homeomorphism
%\begin{equation}\label{equation: refinements}
%|(\Pi_n)_{\ge\lambda}|\cong \Delta^{k-1}*|\Pi_{c_1}| * \cdots * |\Pi_{c_k}|.
%\end{equation}
%We also well need to consider the poset $(\Pi_n)_{<\lambda}$ - the poset of partitions that are strictly coarser than $\lambda$, but finer than the indiscrete partition (assume that $\lambda$ is not the indiscrete partition). It is easy to see that this poset is isomorphic to $\Pi_c$, where $c$ is the set of equivalence classes of $\lambda$. Indeed, a partition coarser than $\lambda$ is determined by a partition of the set of equivalence classes of $\lambda$.
\subsubsection{Stars and links}
Let $\Pcal$ be a poset. Let $P=(p_0<\cdots < p_k)$ be a non-empty increasing chain in $\Pcal$. The star of $P$ is the poset of elements of $\Pcal$ that are comparable with each one of the elements $p_0, \ldots, p_k$. Denote it $\star(P)$. The star decomposes canonically as a join of posets. More precisely
\[
\star(P)\cong \Pcal_{<p_0} * \{p_0\} * (p_0, p_1) * \{p_1\}*\cdots* \{p_k\} * \Pcal_{>p_k}.
\]
It follows in particular that $|\star(P)|$ is contractible, since the join of any space with a point is contractible.

The link of $P$, denoted $\link(P)$ is the subposet of $\star(P)$ consisting of elements that are distinct from $p_0, \ldots, p_k$. The link also decomposes as a join. More precisely
\[
\link(P)\cong \Pcal_{<p_0} * (p_0,p_1) *\cdots*  \Pcal_{>p_k}.
\] 
%Of course, star and link are defined for simplices of a general simplicial complex, not just the order complex of a poset. The star of a simplex is the union of all simplices that contain it, together with their faces. The link of a simplex is the union of all the subsimplices of the star that are disjoint from the given simplex. 

Let us consider the case of $\Pi_n$. Let $\Lambda=(\lambda_0<\cdots<\lambda_k)$ be an increasing chain of partitions in $\Pi_n$. Then $|\star(\Lambda)|$ is a subspace of $|\Pi_n|$. We will need to identify the boundary of this subspace.
\begin{lemma}\label{lemma: boundary}
The boundary of $|\star(\Lambda)|$ is the geometric realization of the subcomplex of $\star(\Lambda)$ defined by chains that do not contain $\Lambda$ as a subchain.
\end{lemma}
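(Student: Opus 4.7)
The plan is to work purely combinatorially. Since $|\star(\Lambda)|$ is a closed subcomplex of $|\Pi_n|$, standard simplicial topology tells us that a point $p$ in the relative interior of a simplex $\sigma \in \star(\Lambda)$ is a topological boundary point of $|\star(\Lambda)|$ in $|\Pi_n|$ iff $\sigma$ has at least one coface in $\Pi_n$ that is not a simplex of $\star(\Lambda)$ (one direction of this uses the open star of $\sigma$ in $\Pi_n$ as an open neighborhood of $p$). So the lemma reduces to proving that a chain $\sigma \in \star(\Lambda)$ has every coface in $\Pi_n$ belonging to $\star(\Lambda)$ iff $\Lambda$ occurs as a subchain of $\sigma$. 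The set of chains of $\star(\Lambda)$ not containing $\Lambda$ is closed under passing to subchains, so it is a genuine subcomplex, and once this biconditional is established the identification of the topological boundary with its geometric realization follows.

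The easy direction is immediate: if $\Lambda \subseteq \sigma \subseteq \tau$ for a chain $\tau$ in $\Pi_n$, then every $\lambda_i$ lies in $\tau$; since $\tau$ is totally ordered, every element of $\tau$ is comparable with every $\lambda_i$, so $\tau \in \star(\Lambda)$.

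For the converse, I would argue the contrapositive. Suppose some $\lambda_i \in \Lambda$ is not in $\sigma$, and construct a coface of $\sigma$ outside $\star(\Lambda)$. Because $\sigma \subseteq \star(\Lambda)$, each element of $\sigma$ is comparable with $\lambda_i$ and distinct from it, so $\sigma$ partitions into those elements strictly below $\lambda_i$, with maximum $a$, and those strictly above, with minimum $b$; when one of these segments is empty, take $a = \hat 0$ or $b = \hat 1$ accordingly. Then $a < \lambda_i < b$ and $\rk(a) < \rk(\lambda_i) < \rk(b)$. By Remark~\ref{remark: more than one} applied to the open interval $(a, b)$ in $\Part_n$, there is a partition $\mu' \ne \lambda_i$ of rank $\rk(\lambda_i)$ with $a < \mu' < b$. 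Since distinct partitions of equal rank are incomparable, $\mu'$ is incomparable with $\lambda_i$, and $\tau := \sigma \cup \{\mu'\}$ is a chain in $\Pi_n$ strictly containing $\sigma$ that fails to lie in $\star(\Lambda)$, as required.

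The only mild obstacle is handling the degenerate cases where $\sigma$ has no element below or above $\lambda_i$, which require extending the rank inequality to the initial/final partitions; this is clean because $\lambda_i \in \Pi_n$ forces $1 = \rk(\hat 0) < \rk(\lambda_i) < \rk(\hat 1) = n$, so Remark~\ref{remark: more than one} still applies to $(\hat 0, b)$ or $(a, \hat 1)$ as needed.
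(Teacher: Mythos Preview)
Your proof is correct and takes essentially the same approach as the paper's: characterize the boundary via cofaces lying outside $\star(\Lambda)$, handle the easy direction directly, and for the converse use Remark~\ref{remark: more than one} to produce a partition $\mu'$ of the same rank as the missing $\lambda_i$, hence incomparable with it. Your version is simply more explicit about the edge cases where $\sigma$ has no element above or below $\lambda_i$.
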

\begin{proof}
Let $\Theta=(\theta_0<\cdots<\theta_m)$ be an increasing chain in $\star(\Lambda)$. It belongs to the boundary precisely if there exists a partition $\mu$ that is comparable with all the $\theta_j$s, but not all the $\lambda_i$s. If $\Lambda$ is a subchain of $\Theta$ then clearly such a partition does not exists. Conversely, if $\Theta$ does not contain one of the $\lambda_i$s then by remark~\ref{remark: more than one} one can find another partition $\mu$ of the same rank as $\lambda_i$ that is comparable with all the $\theta_j$s. Then $\mu$ is not comparable with $\lambda_i$.
\end{proof}

\begin{definition}\label{definition: binary}
Let us say that an increasing chain of partitions $\Lambda=(\lambda_0<\cdots<\lambda_k)$ is {\it binary} if for all $i\ge 0$, each equivalence class of $\lambda_i$ is the union of at most two equivalence classes of $\lambda_{i+1}$. For $i=k$ this means that each component of $\lambda_k$ has at most two elements. However, note that we do not require that $\lambda_0$ has two equivalence classes. 
\end{definition}
We need to analyze the boundary of $\star(\Lambda)$ for a binary chain. We already observed that
\[
\star(\Lambda)=(\hat0, \lambda_0) * \{\lambda_0\} * (\lambda_0, \lambda_1) * \cdots * \{\lambda_k\}*(\lambda_k, \hat1).
\]
By~\eqref{equation: open interval}, $(\hat0, \lambda_0)\cong \Pi_c$, where $c$ is the set of equivalence classes of $\lambda_0$. The geometric realization of an open interval $(\lambda_{i-1}, \lambda_i)$ is homeomorphic to a join of a sphere and, for each equivalence class of $\lambda_{i-1}$ a copy of $|\Pi_{m_j}|$, where $m_j$ is number of classes of $\lambda_i$ within the given class of $\lambda_{i-1}$. The assumption that $\Lambda$ is binary means precisely that $m_j\le 2$ in all cases. This means that each $\Pi_{m_j}$ is either $S^{-1}$, i.e., empty, if $m_j=2$ or is $S^{-2}$, if $m=1$. Recall that taking join of a sphere with $S^{-2}$ has the effect of lowering the dimension of a sphere by one. This means that $|(\lambda_{i-1}, \lambda_i)|$ is homeomorphic to a sphere of dimension $k-2$, where $k$ is the number of equivalence classes of $\lambda_{i-1}$ that are the union of two classes of $\lambda_i$. It follows that $|\{\lambda_0\} * (\lambda_0, \lambda_1) * \cdots * \{\lambda_k\}*(\lambda_k, \hat1)|$ is homeomorphic to a simplex. Its dimension is easily calculated to be $n-c-1$. We conclude that there are homeomorphisms
\begin{equation}\label{equation: binary star}
|\star(\Lambda)|\cong \star(\Lambda)_{<\lambda_0}*\star(\Lambda)_{\ge \lambda_0}\cong |\Pi_c|* \Delta^{n-c-1}
\end{equation}
Let $\partial\Delta^{n-c-1}$ be the topological boundary of the simplex. It is homeomorphic to $S^{n-c-2}$ of course. We may identify $|\Pi_c|* \partial\Delta^{n-c-1}$ with a subspace of $|\star(\Lambda)|$. Our next goal is to prove the unsurprising assertion that this is in fact the boundary of $|\star(\Lambda)|$ considered as a subspace of $|\Pi_n|$.
\begin{proposition}\label{proposition: boundary}
Using the notation above, the boundary of $|\star(\Lambda)|$ is precisely $|\Pi_c|* \partial\Delta^{n-c-1}$.
\end{proposition}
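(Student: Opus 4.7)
The plan is to combine Lemma~\ref{lemma: boundary} with the join decomposition in~\eqref{equation: binary star}, thereby reducing the claim to identifying the topological boundary of the disk $|\star(\Lambda)_{\ge \lambda_0}| \cong \Delta^{n-c-1}$.

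By Lemma~\ref{lemma: boundary}, the boundary of $|\star(\Lambda)|$ is realized by the subcomplex of $\star(\Lambda)$ consisting of chains that do not contain $\Lambda$ as a subchain. Under the poset join decomposition $\star(\Lambda) \cong (\hat 0, \lambda_0) * \star(\Lambda)_{\ge \lambda_0}$, every chain in $\star(\Lambda)$ splits uniquely into a chain $C_1$ in $(\hat 0, \lambda_0)$ together with a chain $C_2$ in $\star(\Lambda)_{\ge \lambda_0}$, and the union contains $\Lambda$ as a subchain if and only if $C_2$ contains all of $\lambda_0, \ldots, \lambda_k$. Hence the subcomplex of chains not containing $\Lambda$ is the simplicial join $(\hat 0, \lambda_0) * \Lcal$, where $\Lcal \subset \star(\Lambda)_{\ge \lambda_0}$ is the subcomplex of chains that miss at least one $\lambda_i$. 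Passing to realizations (and using that simplicial joins realize as topological joins), the boundary becomes $|\Pi_c| * |\Lcal|$, so it suffices to identify $|\Lcal|$ with $\partial\Delta^{n-c-1}$.

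I would prove this last identity by induction on $k$. For the base case $k=0$, $\star(\Lambda)_{\ge \lambda_0} = \{\lambda_0\} * (\lambda_0, \hat 1)$; by the binary-chain assumption and~\eqref{equation: open interval}, $|(\lambda_0, \hat 1)|$ is a sphere of dimension $n-c-2$, so $|\star(\Lambda)_{\ge \lambda_0}|$ is the cone on this sphere, and its topological boundary is exactly the base $|(\lambda_0, \hat 1)|$, which is precisely $|\Lcal|$ (the chains omitting $\lambda_0$). For the inductive step, decompose $\star(\Lambda)_{\ge \lambda_0} = A * \star(\Lambda')_{\ge \lambda_1}$, where $A = \{\lambda_0\} * (\lambda_0, \lambda_1)$ and $\Lambda' = \lambda_1 < \cdots < \lambda_k$. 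Then $|A|$ is a cone on the sphere $|(\lambda_0, \lambda_1)|$, hence a disk with boundary $|(\lambda_0, \lambda_1)|$, while by induction $|\star(\Lambda')_{\ge \lambda_1}|$ is a disk whose topological boundary realizes the subcomplex $\Lcal'$ of chains in $\star(\Lambda')_{\ge \lambda_1}$ missing some $\lambda_i$ with $i \ge 1$. The PL formula for the boundary of a join of two disks then gives
\[
\partial|\star(\Lambda)_{\ge \lambda_0}| \;=\; \bigl(|(\lambda_0,\lambda_1)| * |\star(\Lambda')_{\ge \lambda_1}|\bigr) \cup \bigl(|A| * |\Lcal'|\bigr).
\]
The first piece realizes the chains in $\star(\Lambda)_{\ge \lambda_0}$ omitting $\lambda_0$, while the second realizes the chains omitting some $\lambda_i$ for $i \ge 1$; together they realize $\Lcal$, which closes the induction.

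The principal technical point is the join-of-disks boundary formula $\partial(D * D') = (\partial D * D') \cup (D * \partial D')$, a standard fact in PL topology which applies here because every realization in sight inherits a canonical PL manifold-with-boundary structure from the order complex of a finite poset and its iterated-join decomposition into points and spheres. Everything else is bookkeeping that matches the simplicial join of subcomplexes with the chain-level splitting coming from the poset decomposition.
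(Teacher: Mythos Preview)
Your argument is correct and shares its opening moves with the paper: both invoke Lemma~\ref{lemma: boundary} and the join decomposition $\star(\Lambda)\cong(\hat 0,\lambda_0)*\star(\Lambda)_{\ge\lambda_0}$ to reduce the statement to showing that the subcomplex $\Lcal\subset\star(\Lambda)_{\ge\lambda_0}$ of chains missing some $\lambda_i$ realizes the topological boundary of the disk $|\star(\Lambda)_{\ge\lambda_0}|\cong\Delta^{n-c-1}$.

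Where you diverge is in how you settle that last claim. The paper argues pointwise: it invokes the criterion that a simplex of a triangulated PL ball lies in the boundary sphere if and only if its link is contractible, and then checks directly that the link of a chain $\Theta$ in $\star(\Lambda)_{\ge\lambda_0}$ is contractible exactly when $\Theta$ omits some $\lambda_i$ (the omitted $\lambda_i$ supplies a cone point in the relevant interval factor of the link). You instead run an induction on $k$, peeling off the cone $A=\{\lambda_0\}*(\lambda_0,\lambda_1)$ and applying the PL identity $\partial(D*D')=(\partial D*D')\cup(D*\partial D')$ to the join $|A|*|\star(\Lambda')_{\ge\lambda_1}|$. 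Your approach trades the link--contractibility criterion for the join--of--disks boundary formula; both are standard PL facts of comparable depth, so neither argument is really more elementary, but yours is perhaps more structural in that it tracks the iterated join decomposition of the disk explicitly. The paper's argument has the mild advantage of giving, in a single stroke, an intrinsic characterization of which chains lie on the boundary, without building up through the factors.
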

\begin{proof}
We already saw that there is a decomposition
\[
\star(\Lambda)=\star(\Lambda)_{<\lambda_0}*\star(\Lambda)_{\ge \lambda_0}\cong \Pi_c * \left(\{\lambda_0\} * (\lambda_0, \lambda_1) * \cdots * \{\lambda_k\}*(\lambda_k, \hat1)\right).
\]
By Lemma~\ref{lemma: boundary}, the boundary of $|\star(\Lambda)|$ is given by the subcomplex spanned by all chains that do not contain $\Lambda$ as a subchain. Let $\operatorname{Bd}(\star(\Lambda)_{\ge \lambda_0})$ be the subcomplex of $\star(\Lambda)_{\ge \lambda_0}$ spanned by chains that do not contain $\Lambda$ as a subchain. Clearly, the boundary of $|\star(\Lambda)|$ is the geometric realization of $\Pi_c* \operatorname{Bd}(\star(\Lambda)_{\ge \lambda_0})$. So we need to show that $\operatorname{Bd}(\star(\Lambda)_{\ge \lambda_0})$ is in fact the topological boundary of $\star(\Lambda)_{\ge \lambda_0}$.

We saw that $\star(\Lambda)_{\ge \lambda_0}$ is combinatorially equivalent to a simplex. It is well-known, and not hard to prove, that a chain belongs to the topological boundary if and only if the geometric realization of the link of the chain is contractible. So we need to prove that the link of a chain in $\star(\Lambda)_{\ge \lambda_0}$ is contractible precisely if it does not contain $\Lambda$ as a subchain.

Let $\Theta=(\theta_0< \cdots < \theta_m)$ be a chain in $\star(\Lambda)_{\ge \lambda_0}$. The link of this chain is the join of intervals
\[
[\lambda_0, \theta_0)* (\theta_0, \theta_1)*\cdots*(\theta_m, \hat1),
\]
where the intervals are taken in $\star(\Lambda)_{\ge \lambda_0}$. If $\Theta$ does not contain $\Lambda$ as a subchain, then one of these intervals contains a $\lambda_i$. Let us suppose that $\lambda_i\in (\theta_{j-1}, \theta_j)$. Then $(\theta_{j-1}, \theta_j)=(\theta_{j-1}, \lambda_i)*\{\lambda_i\}*(\lambda_i, \theta_j)$, and so the geometric realization is contractible. Conversely, if $\Theta$ does contain $\Lambda$ as a subchain then the link is a join of copies of $\Pi_m$ for various $m$, and therefore is not contractible.
\end{proof}
\subsection{Equivariant topology}
Let $G$ be a finite group. We only want to consider nice spaces with an action of $G$. Thus when we say that $X$ is a $G$-space, we assume that $X$ is homeomorphic, as a $G$ space, to the geometric realization of a $G$-simplicial set. Equivalently, we could choose to work with $G$-CW complexes. 

A (finite, pointed) $G$ space $X$ can be written as a (finite, pointed) homotopy colimit of spaces of the form $G/H$, where $H$ is an isotropy group of $X$. 

A $G$-map $f\colon X \to Y$ is a $G$-equivariant weak equivalence if it induces weak equivalences of fixed point spaces $f^H\colon X^H \to Y^H$ for every subgroup $H$ of $G$. It is well-known that for nice spaces, an equivariant weak equivalence is an equivariant homotopy equivalence. 
%Since we will only consider nice spaces, we will not distinguish between weak equivalences and homotopy equivalences.

In fact, it is sometimes not necessary to check the fixed points condition for every subgroup of $G$. It is enough to do it for subgroups that occur as an isotropy of one of the spaces in questions. A proof of the following lemma can be found in~\cite[4.1]{dwyer}, but it is older than that.
\begin{lemma}\label{lemma: dwyer lemma}
Let $f\colon X \to Y$ be a $G$-map. Suppose that $f$ induces an equivalence $f^H\colon X^H \to Y^H$ for every subgroup $H$ that occurs as an isotropy of $X$ or of $Y$. Then $f$ is an equivariant equivalence.
\end{lemma}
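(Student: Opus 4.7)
The plan is to proceed by downward induction on the order of the subgroup $K \leq G$, reducing the fixed-point check to subgroups we already know about. Without loss of generality $X$ and $Y$ are $G$-CW complexes; let $\mathcal{F}$ denote the family of all subgroups of $G$ that occur as an isotropy of either. The hypothesis is that $f^H$ is a weak equivalence for every $H \in \mathcal{F}$, and the goal is to extend this to every $K \leq G$.

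If $K \in \mathcal{F}$ there is nothing to prove. If $K \notin \mathcal{F}$, then no point of $X$ or $Y$ has isotropy group equal (up to conjugacy) to $K$, so any $x \in X^K$ satisfies $G_x \supsetneq K$ with $G_x \in \mathcal{F}$. This gives the crucial identity
\[
X^K \;=\; \bigcup_{H \in \mathcal{F},\ H \supsetneq K} X^H,
\]
and the analogous one for $Y^K$, compatibly with $f$. Moreover $X^H \cap X^{H'} = X^{\langle H, H'\rangle}$ with $\langle H, H'\rangle \supsetneq K$, so iterated intersections of the covering pieces remain among the subcomplexes $X^L$ for various $L \supsetneq K$.

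Proceeding by downward induction on $|G:K|$, the base case $K = G$ is either covered by hypothesis or trivial, since otherwise $X^G = Y^G = \emptyset$. In the inductive step for $K \notin \mathcal{F}$, the inductive hypothesis provides that $f^L$ is a weak equivalence for every $L \supsetneq K$. I would then apply a standard gluing argument, namely iterated Mayer--Vietoris or a cubical pushout, to the finite closed covers $\{X^H\}_{H \supsetneq K,\ H \in \mathcal{F}}$ and $\{Y^H\}$. Each $X^L$ is a subcomplex of $X^K$, so the inclusions are cofibrations and the strict colimit of the cover coincides with the homotopy colimit; termwise equivalences then yield the desired equivalence $f^K \colon X^K \to Y^K$.

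The main obstacle I expect is precisely this last gluing step---verifying that the closed cover $\{X^H\}$ realizes $X^K$ as a homotopy colimit of the diagram $L \mapsto X^L$ indexed by $\{L : L \supsetneq K\}$, so that termwise equivalences force an equivalence of colimits. This is where the $G$-CW hypothesis is essential; the verification reduces, by peeling off subcomplexes $X^H$ one at a time, to the classical fact that an equivalence-at-each-vertex map of pushouts of cofibrations is itself an equivalence.
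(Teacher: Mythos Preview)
The paper does not actually prove this lemma; it simply cites \cite[4.1]{dwyer} and remarks that the result is older. So there is no proof in the paper to compare against.

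Your argument is the standard one and is correct. The key identity $X^K = \bigcup_{H \in \mathcal{F},\, H \supsetneq K} X^H$ for $K \notin \mathcal{F}$ is exactly right (any $x \in X^K$ has $G_x \supseteq K$, and $G_x \in \mathcal{F}$ forces $G_x \supsetneq K$), and the observation $X^H \cap X^{H'} = X^{\langle H, H'\rangle}$ lets you control all intersections by subgroups strictly larger than $K$, which are covered by the inductive hypothesis. The gluing step you flag as the main obstacle is indeed routine once you use that the $X^H$ are CW subcomplexes of $X^K$: a finite cover of a CW complex by subcomplexes expresses the complex as the homotopy colimit of the cubical diagram of intersections, and a map of such diagrams that is a weak equivalence on every term induces a weak equivalence on homotopy colimits. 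One small point worth making explicit is that you should index both covers (of $X^K$ and of $Y^K$) by the \emph{same} set $\{H \in \mathcal{F} : H \supsetneq K\}$, even though some $H$ may be isotropy groups of only one of $X$ or $Y$; this ensures $f$ is a map of diagrams.

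Two minor quibbles. First, ``downward induction on $|G:K|$'' with base case $K = G$ is really upward induction on the index (equivalently, downward induction on $|K|$); the intent is clear but the phrasing is backwards. Second, in the base case $K = G \notin \mathcal{F}$ you assert $X^G = Y^G = \emptyset$; this is correct because $x \in X^G$ forces $G_x = G \in \mathcal{F}$, but it is worth saying that one word.
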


Suppose $K, H$ are subgroups of $G$ and $X$ is a pointed space with an action of $H$. Then one may form the $G$-space $G_+\wedge_H X$. We will need several times to calculate the fixed points space $(G_+\wedge_H X)^K$. The following lemma is proved by a standard calculation with the double cosets formula.
\begin{lemma}\label{lemma: double cosets}
Let $N_G(K; H)=\{g\in G\mid g^{-1}Kg\subset H\}$. Note that $H$ acts on this set on the right.
There is a homeomorphism
\[
(G_+\wedge_H X)^K \cong \bigvee_{[g]\in N_G(K; H)/H} X^{g^{-1}Kg}.
\]
\end{lemma}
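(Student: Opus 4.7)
The plan is to work directly with the point-set description of $G_+\wedge_H X$ and read off the $K$-fixed points. A typical point has the form $[g,x]$ with $g\in G$, $x\in X$, subject to the relation $[gh,x]=[g,hx]$ for $h\in H$, and with $[g,*]$ being the basepoint for every $g$. The $K$-action sends $[g,x]$ to $[kg,x]$. So first I would characterize when such a point is fixed.

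Next I would observe that $[kg,x]=[g,x]$ either when both sides are the basepoint (i.e.\ $x=*$) or when there exists $h\in H$ with $kg=gh$ and $x=h^{-1}x$; equivalently $g^{-1}kg=h\in H$ and $h$ fixes $x$. Therefore a non-basepoint $[g,x]$ is $K$-fixed if and only if $g^{-1}Kg\subset H$, i.e.\ $g\in N_G(K;H)$, and moreover $x\in X^{g^{-1}Kg}$.

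Then I would pick, once and for all, representatives $g_1,\dots,g_r$ of the $H$-orbits $N_G(K;H)/H$. Since $[g_i,x]=[g_ih,h^{-1}x]$, every non-basepoint $K$-fixed point is uniquely of the form $[g_i,x]$ for a unique $i$ and a unique $x\in X^{g_i^{-1}Kg_i}$. This gives a bijection
\[
\bigvee_{i=1}^{r} X^{g_i^{-1}Kg_i}\;\longrightarrow\;(G_+\wedge_H X)^K,\qquad x\in X^{g_i^{-1}Kg_i}\longmapsto [g_i,x],
\]
where different wedge factors share only the basepoint $[*,*]$. If one replaces $g_i$ by another representative $g_ih$ of the same coset, the corresponding wedge summand gets replaced by $X^{h^{-1}g_i^{-1}Kg_ih}$, which is canonically identified with $X^{g_i^{-1}Kg_i}$ via the action of $h$; this is the sense in which the indexing by $[g]\in N_G(K;H)/H$ in the statement is well-defined.

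Finally I would check continuity on both sides. Each individual map $X^{g_i^{-1}Kg_i}\to G_+\wedge_H X$ is the restriction of the continuous map $X\to G_+\wedge_H X$, $x\mapsto [g_i,x]$, and sends the basepoint to the basepoint, so assembling over $i$ gives a continuous map out of the wedge. Conversely, the inverse is continuous because $G_+\wedge_H X$, being a quotient of a coproduct of copies of $X$ indexed by $G/H$, carries the quotient topology, and the preimage of each wedge summand is the component corresponding to $g_iH$ intersected with $X^{g_i^{-1}Kg_i}$. There is no real obstacle here; the only subtle point — and the one I would be most careful about — is the dependence on the choice of coset representatives, which is exactly why the summands must be indexed by classes $[g]$ rather than by elements $g$.
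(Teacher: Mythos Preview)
Your proof is correct and is exactly the ``standard calculation with the double cosets formula'' that the paper alludes to without spelling out; the paper gives no further details beyond that one sentence. Your point-set argument (characterizing non-basepoint $K$-fixed classes $[g,x]$ by the conditions $g\in N_G(K;H)$ and $x\in X^{g^{-1}Kg}$, then choosing coset representatives) is precisely how one makes that calculation explicit.
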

In particular, if $K$ is not subconjugate to $H$ then $(G_+\wedge_H X)^K \cong *$.

We will often have to deal with the following special case: $G\subset \Sigma_d\subset \Sigma_n$, where $d|n$ and $\Sigma_d$ is embedded diagonally in $\Sigma_n$ (we hope that the reader will not be confused by the change in the role of $G$).
\begin{lemma}\label{lemma: closed system}
Suppose $G\subset \Sigma_d\subset \Sigma_n$, where $\Sigma_d$ is embedded in $\Sigma_n$ diagonally. Let $\sigma$ be an element of $N_{\Sigma_n}(G; \Sigma_d)$. Thus $\sigma$ is an element of $\Sigma_n$ that conjugates $G$ to a subgroup of $\Sigma_d$. Then there exists an element $\eta\in \Sigma_d$ such that  conjugation by $\eta$ on $G$ is the same as conjugation by $\sigma$.
\end{lemma}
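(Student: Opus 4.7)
The plan is to exploit the isotypical structure of the diagonal embedding. Write $n=ld$ with $l=n/d$ and identify $\n$ with $\d\times\{1,\ldots,l\}$, so that the diagonal copy of $\Sigma_d$ acts on the first coordinate only and each block $\d\times\{j\}$ is an invariant subset on which $\Sigma_d$ acts via its tautological action. Under this identification, the question becomes: any homomorphism $\rho\colon G\to\Sigma_d$ obtained by conjugating the original inclusion $\alpha\colon G\hookrightarrow\Sigma_d$ by some $\sigma\in\Sigma_n$ is in fact conjugate to $\alpha$ already inside $\Sigma_d$.

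Define $\rho(g)=\sigma^{-1}g\sigma$. By hypothesis, $\rho(G)\subset\Sigma_d$, so $\rho$ factors through the diagonal $\Sigma_d$ and thereby defines a $G$-action on the single block $\d$. First I would observe that $\sigma$ itself intertwines the two $G$-actions on $\n$: the relation $\sigma\,\rho(g)=g\,\sigma$ says precisely that $\sigma\colon(\n,\rho)\to(\n,\alpha)$ is a $G$-equivariant bijection. Next I would use the diagonal nature of both embeddings: because $\alpha$ and $\rho$ both land in the diagonal copy of $\Sigma_d$, we have canonical decompositions of $G$-sets
\[
(\n,\alpha)\cong l\cdot(\d,\alpha),\qquad (\n,\rho)\cong l\cdot(\d,\rho).
\]
By the uniqueness of the decomposition of a finite $G$-set into transitive orbits, the isomorphism class of $l\cdot X$ determines that of $X$. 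Hence the $G$-sets $(\d,\alpha)$ and $(\d,\rho)$ are isomorphic.

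Any such isomorphism is an element $\eta_{0}\in\Sigma_d$ satisfying $\eta_{0}\,\rho(g)\,\eta_{0}^{-1}=\alpha(g)=g$ for every $g\in G$, i.e.\ $\sigma^{-1}g\sigma=\eta_{0}^{-1}g\eta_{0}$ as elements of $\Sigma_d$. Let $\eta\in\Sigma_d\subset\Sigma_n$ be the diagonal embedding of $\eta_{0}$; then $\eta^{-1}g\eta$ acts on each block of $\n$ exactly as $\eta_{0}^{-1}g\eta_{0}$ does on $\d$, which coincides with the action of $\sigma^{-1}g\sigma$ (already a diagonal element), so $\eta^{-1}g\eta=\sigma^{-1}g\sigma$ in $\Sigma_n$.

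The main conceptual step — and the only one that is not bookkeeping — is the passage from a (possibly block-scrambling) $G$-equivariant bijection $\sigma\colon(\n,\rho)\to(\n,\alpha)$ to the existence of a block-level isomorphism $(\d,\rho)\cong(\d,\alpha)$. We do not need $\sigma$ to preserve the block decomposition; what does the work is the unique decomposition of $G$-sets into orbits, applied to two $G$-sets each of which is a priori $l$ copies of a single-block $G$-set.
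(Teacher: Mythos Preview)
Your proof is correct and follows essentially the same approach as the paper's: both translate the conjugation hypothesis into the language of $G$-sets, observe that the diagonal embedding forces $(\n,\alpha)\cong l\cdot(\mathbf d,\alpha)$ and $(\n,\rho)\cong l\cdot(\mathbf d,\rho)$, and then invoke cancellation in finite $G$-sets (unique orbit decomposition) to deduce $(\mathbf d,\alpha)\cong(\mathbf d,\rho)$, yielding the desired $\eta\in\Sigma_d$. Your writeup is somewhat more explicit about the intertwining role of $\sigma$ and the passage from $\eta_0$ to its diagonal image, but the mathematical content is the same.
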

\begin{proof}
An inclusion of $G$ into $\Sigma_n$ is the same thing as an effective action of $G$ on $\n$. If the inclusion factors through $\Sigma_d$, then $\n$ is isomorphic, as a $G$-set, to a disjoint union of $\frac{n}{d}$ copies of the set $\mathbf d$, which equipped with an effective action of $G$. Specifying an element $\sigma$ of $\Sigma_n$ that conjugates $G$ into a subgroup of $\Sigma_d$ is the same as specifying a second action of $G$ on ${\mathbf d}$ (let ${\mathbf d}'$ denote $\mathbf d$ with the second action of $G$), a group isomorphism $\sigma_G\colon G\to G$, and an isomorphism of sets 
\[
\sigma\colon \coprod_{\frac{n}{d}}{\mathbf d}\to \coprod_{\frac{n}{d}}{\mathbf d}'
\]
that is equivariant with respect to the isomorphism $\sigma_G$. Such an isomorphism exists if and only if $\coprod_{\frac{n}{d}}{\mathbf d}$ and $\coprod_{\frac{n}{d}}{\mathbf d}'$ are isomorphic permutation representations of $G$. But this is possible if and only if $\mathbf d$ and ${\mathbf d}'$ are isomorphic permutation representations of $G$, which means that the isomorphism $\sigma_G$ can be realized as conjugation by an element of $\Sigma_d$.
\end{proof}
Let $C_W(G)$ denote the centralizer of $G$ in $W$.
\begin{corollary}\label{corollary: centralizer}
Suppose $G\subset \Sigma_d\subset W\subset \Sigma_n$  where $\Sigma_d$ is embedded diagonally in $\Sigma_n$, and $W$ is any intermediate subgroup. There is an isomorphism of sets, induced by inclusion
\[
C_{W}(G)/C_{\Sigma_d}(G) \stackrel{\cong}{\to} N_{W}(G; \Sigma_d)/\Sigma_d.
\]
\end{corollary}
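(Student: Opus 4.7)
The plan is to verify that the inclusion $C_W(G) \hookrightarrow N_W(G;\Sigma_d)$, which makes sense since any element centralizing $G$ automatically conjugates $G$ into itself and hence into $\Sigma_d$, descends to a bijection on the indicated coset spaces. Well-definedness on cosets is immediate: if $c_1, c_2 \in C_W(G)$ differ by an element of $C_{\Sigma_d}(G) \subset \Sigma_d$, then they represent the same right coset of $\Sigma_d$ in $N_W(G;\Sigma_d)$.

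For surjectivity, given any $w \in N_W(G;\Sigma_d)$, apply Lemma~\ref{lemma: closed system}: since $w$ conjugates $G$ into $\Sigma_d$, there is an element $\eta \in \Sigma_d$ whose conjugation action on $G$ agrees with that of $w$. Then $w\eta^{-1}$ commutes with every element of $G$, so $c \definedas w\eta^{-1}$ lies in $C_W(G)$ (it is in $W$ since $w \in W$ and $\eta \in \Sigma_d \subset W$). Thus $w = c\eta$ represents the same coset as $c$ in $N_W(G;\Sigma_d)/\Sigma_d$.

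For injectivity, suppose $c_1, c_2 \in C_W(G)$ satisfy $c_1 = c_2 \eta$ for some $\eta \in \Sigma_d$. Then $\eta = c_2^{-1} c_1$ is a product of two elements that centralize $G$, hence itself centralizes $G$. Since $\eta$ also lies in $\Sigma_d$, we get $\eta \in C_{\Sigma_d}(G)$, showing $c_1$ and $c_2$ represent the same coset in $C_W(G)/C_{\Sigma_d}(G)$.

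The only nontrivial input is Lemma~\ref{lemma: closed system}, which has already been established, so no step should present a genuine obstacle; the corollary is essentially a formal packaging of that lemma as a statement about cosets.
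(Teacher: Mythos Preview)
Your proof is correct and follows essentially the same approach as the paper: use Lemma~\ref{lemma: closed system} to produce a centralizing representative $w\eta^{-1}$ for each coset (surjectivity), and observe that two centralizing elements differing by an element of $\Sigma_d$ must in fact differ by an element of $C_{\Sigma_d}(G)$ (injectivity). The paper's version is more terse but the argument is identical.
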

\begin{proof}
By definition, $$N_{W}(G; \Sigma_d)=\{w\in W\mid w^{-1} G w\subset \Sigma_d\}.$$ By Lemma~\ref{lemma: closed system} for every such $w$ one can find an $\eta\in \Sigma_d$ such that conjugation by $\eta$ on $G$ coincides with conjugation by $w$. It follows that $w\eta^{-1}\in C_{W}(G)$. Thus ever element of $N_{W}(G; \Sigma_d)/\Sigma_d$ has a representative in $C_{W}(G)$. Clearly two elements of $C_{W}(G)$ differ by an element of $\Sigma_d$ if and only if they differ by an element of $C_{\Sigma_d}(G)$. The corollary follows.
\end{proof}
\begin{corollary}\label{corollary: simplified fixed points}
Suppose $G\subset \Sigma_d\subset W\subset\Sigma_n$ where $\Sigma_d$ is embedded diagonally in $\Sigma_n$, and $W$ is any intermediate subgroup. Suppose $X$ is a space with an action of $\Sigma_d$. Then there is a homeomorphism
\[
(W_+ \wedge_{\Sigma_d} X)^G\cong C_W(G)_+\wedge_{C_{\Sigma_d}(G)} X^G.
\]
In particular, if $X^G$ is contractible, then so is $(W_+ \wedge_{\Sigma_d} X)^G$.
\end{corollary}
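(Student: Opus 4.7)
The plan is to combine Lemma~\ref{lemma: double cosets} with Corollary~\ref{corollary: centralizer}. Applying Lemma~\ref{lemma: double cosets} to the group $W$ (playing the role of $G$ in that lemma) and the subgroup $\Sigma_d \subset W$, with $K = G$ and the $\Sigma_d$-space $X$, we obtain a homeomorphism
\[
(W_+ \wedge_{\Sigma_d} X)^G \cong \bigvee_{[w]\in N_W(G;\Sigma_d)/\Sigma_d} X^{w^{-1}Gw}.
\]
This reduces the problem to identifying the indexing set and checking that the wedge summands are all copies of $X^G$.

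Next, I would invoke Corollary~\ref{corollary: centralizer} to replace the set $N_W(G;\Sigma_d)/\Sigma_d$ with $C_W(G)/C_{\Sigma_d}(G)$. The content of that corollary is precisely that every $\Sigma_d$-orbit in $N_W(G;\Sigma_d)$ admits a representative in $C_W(G)$, and two such representatives agree mod $\Sigma_d$ if and only if they agree mod $C_{\Sigma_d}(G)$. A representative $w \in C_W(G)$ commutes with every element of $G$, so $w^{-1}Gw = G$ and hence $X^{w^{-1}Gw} = X^G$. Therefore all wedge summands in the double-coset decomposition are canonically identified with $X^G$, and we get
\[
(W_+ \wedge_{\Sigma_d} X)^G \cong \bigvee_{[w]\in C_W(G)/C_{\Sigma_d}(G)} X^G \cong C_W(G)_+ \wedge_{C_{\Sigma_d}(G)} X^G,
\]
where the last homeomorphism is the standard description of an induced pointed space as a wedge of copies of the fibre indexed by cosets.

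The ``in particular'' clause is immediate: if $X^G \simeq *$, then a wedge of copies of $X^G$ is a point, so $(W_+ \wedge_{\Sigma_d} X)^G$ is contractible. There is essentially no obstacle here; the only point requiring care is making sure that the isomorphism of Corollary~\ref{corollary: centralizer} is compatible with the identification $X^{w^{-1}Gw} = X^G$ when $w$ is chosen in $C_W(G)$, which is automatic because the action of $w$ on $G$ by conjugation is trivial on such representatives.
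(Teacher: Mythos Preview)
Your proof is correct and follows essentially the same approach as the paper: apply Lemma~\ref{lemma: double cosets} to get the wedge decomposition indexed by $N_W(G;\Sigma_d)/\Sigma_d$, then use Corollary~\ref{corollary: centralizer} to replace this index set by $C_W(G)/C_{\Sigma_d}(G)$ and observe that centralizing representatives make all summands equal to $X^G$. The paper's proof is identical in substance, only adding the remark that the presentation $C_W(G)_+\wedge_{C_{\Sigma_d}(G)} X^G$ records the correct $C_W(G)$-action.
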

\begin{proof}
By Lemma~\ref{lemma: double cosets} there is a homeomorphism
\[
(W_+ \wedge_{\Sigma_d} X)^G\cong \bigvee_{N_W(G; \Sigma_d)/\Sigma_d} X^{w^{-1}Gw}.
\]
By Corollary~\ref{corollary: centralizer}, $N_W(G; \Sigma_d)/\Sigma_d\cong C_{W}(G)/C_{\Sigma_d}(G)$. In particular, $w$ can always be chosen in the centralizer of $G$, so $X^{w^{-1}Gw}= X^G$. It follows that 
\[
(W_+ \wedge_{\Sigma_d} X)^G\cong \bigvee_{C_{W}(G)/C_{\Sigma_d}(G)} X^{G}.
\]
The right hand side is isomorphic to $C_W(G)_+\wedge_{C_{\Sigma_d}(G)} X^G$ but the latter presentation is better in the sense that it gives the correct action of $C_W(G)$.
\end{proof}
\section{Free Lie algebras and the algebraic Hilton-Milnor theorem}\label{section: hilton}
For a free abelian group $V$, let $\Lie[V]$ be the free Lie algebra (over $\integers$) generated by $V$. 
If $V$ has a $\integers$-basis $x_1, \ldots, x_k$, we denote $\Lie[V]$ by $\Lie[x_1, \ldots, x_k]$. 

The multiplication in $\Lie[V]$ satisfies the following well-known relations: the square zero relation $[u, u]=0$ (which implies anticommutativity) and the Jacobi relation $[u, [v, w]] +[w, [u, v]] +[v, [w, u]] =0$. In fact, $\Lie[x_1, \ldots, x_k]$ can be constructed as the quotient of the free non-associative and non-unital algebra on $\{x_1, \ldots, x_k\}$ by the two sided ideal generated by these relations (for example see \cite{reutenauer}, Theorem 0.4 and its proof).

Throughout the paper, monomials are understood to be in non-commutative and non-associative variables. We will also refer to them as parenthesized monomials. Elements of $\Lie[x_1, \ldots, x_k]$ can be represented as linear combinations of parenthesized monomials in $x_1, \ldots, x_k$. As is customary in the context of Lie algebras, we will use square brackets to indicate the parenthesization. For example, $[[x_1, [x_2, x_1]], x_2]$ represents an element of $\Lie[x_1, x_2]$. 

%It follows from the universal property of free Lie algebras, or alternatively from the explicit construction, that the assignment $V\mapsto \Lie[V]$ defines a functor from free Abelian groups to Lie algebras.

Let $(n_1, \ldots, n_k)$ be non-negative integers. We say that a monomial in $x_1, \ldots, x_k$ has multi-degree $(n_1, \ldots, n_k)$ if it contains $n_i$ copies of $x_i$, for all $i=1, \ldots, k$. For example, the monomial $[[x_1, [x_2, x_1]], x_2]$ has multi-degree $(2,2)$. We define $M(n_1, \ldots, n_k)\subset \Lie[x_1, \ldots, x_k]$ to be the $\integers$-submodule generated by monomials of multi-degree $(n_1, \ldots, n_k)$. Note that $M(0,\ldots, 0)$ is the trivial group. 

The following theorem is well-known. 
\begin{theorem}\label{theorem: free Lie}
Additively, $\Lie[x_1, \ldots, x_k]$ is a free abelian group. Moreover, there is an additive isomorphism
\begin{equation}\label{equation: decomposition}
\Lie[x_1, \ldots, x_k] \cong \bigoplus_{n_1, \ldots, n_k\ge 0} M(n_1, \ldots, n_k)
\end{equation}
\end{theorem}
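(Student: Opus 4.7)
The plan is to argue the two assertions separately: first the multi-grading, which is essentially formal, and then the freeness, which requires more substantial input.

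For the decomposition \eqref{equation: decomposition}, I would start from the construction of $\Lie[x_1,\ldots,x_k]$ as the quotient of the free non-associative, non-unital algebra $F$ on $\{x_1,\ldots,x_k\}$ by the two-sided ideal $I$ generated by the square-zero and Jacobi relations, as referenced in the excerpt. The algebra $F$ carries an obvious multi-grading: every parenthesized monomial has a well-defined multi-degree $(n_1,\ldots,n_k)$ counting the occurrences of each generator, and this gives $F=\bigoplus F(n_1,\ldots,n_k)$ as a free abelian group on monomials. The generators $[u,u]$ and $[u,[v,w]]+[w,[u,v]]+[v,[w,u]]$ of $I$ are homogeneous with respect to this multi-grading (in fact one can restrict to multihomogeneous $u,v,w$ without loss of generality, by bilinearity), so $I=\bigoplus (I\cap F(n_1,\ldots,n_k))$. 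Passing to the quotient gives the direct sum decomposition in \eqref{equation: decomposition}.

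For the freeness, my plan is to invoke the Poincaré–Birkhoff–Witt theorem. Consider the tensor algebra $T=T_\integers(x_1,\ldots,x_k)$, which is patently a free abelian group (a direct sum of finitely generated free groups on tensor monomials), and which carries a Lie bracket $[a,b]\definedas ab-ba$. By the universal property of $\Lie[x_1,\ldots,x_k]$, the assignment $x_i\mapsto x_i$ extends to a Lie algebra homomorphism
\[
\varphi\colon \Lie[x_1,\ldots,x_k]\to T.
\]
The crucial point is that $\varphi$ is injective: this is the content of PBW applied to the free Lie algebra, which identifies $T$ with the universal enveloping algebra of $\Lie[x_1,\ldots,x_k]$. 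Since $T$ is a free $\integers$-module and every submodule of a free module over the PID $\integers$ is itself free, it follows that $\Lie[x_1,\ldots,x_k]$ is free abelian. Moreover $\varphi$ respects the multi-grading (the commutator of multihomogeneous elements is multihomogeneous of the sum of multi-degrees), so each summand $M(n_1,\ldots,n_k)$ is free as well.

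The main obstacle in this outline is the injectivity of $\varphi$, i.e., PBW over $\integers$. In the self-contained approach one either gives an explicit basis for $\Lie[V]$ (a Hall basis or Lyndon basis) and verifies that its image in $T$ is linearly independent, or proves PBW abstractly by producing a faithful action of $\Lie[V]$ on a free abelian group of the right size via the bar-type construction. Either route is nontrivial, but for the purpose of this paper I would simply cite a standard reference such as Reutenauer's \cite{reutenauer} or Bourbaki, since the theorem is classical and the author already treats the result as well-known. The multi-graded refinement then comes for free from the multi-homogeneity of $\varphi$ noted above.
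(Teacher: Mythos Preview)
Your proposal is correct and matches the paper's approach: the paper also derives the direct-sum decomposition from the multihomogeneity of the square-zero and Jacobi relations, and for freeness simply cites \cite[Corollary 0.10]{reutenauer} rather than spelling out the PBW argument you sketch. Your ordering (decomposition first, then freeness) is a minor and arguably cleaner variation, since the decomposition does not actually require freeness.
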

\begin{proof}
For the first assertion see, for example,~\cite[Corollary 0.10]{reutenauer}. The second assertion follows from the first and the fact that the square zero relation and the Jacobi relations are generated by relations that are sums of monomials of the same degree.
%We will include a proof of the second assertion.
%  
%Let $F\langle x_1, \ldots, x_n\rangle$ be the free non-associative, non-unital algebra on the specified generators. Additively, $F\langle x_1, \ldots, x_n\rangle$ is the free Abelian group generated by all non-empty monomials in the non-associative non-commutative variables $x_1, \ldots, x_n$. Let ${\widetilde M}(n_1, \ldots, n_k)\subset F\langle x_1, \ldots, x_n\rangle$ be the additive submodule generated by monomials of type $(n_1, \ldots, n_k)$. It is clear that 
%\[
%F\langle x_1, \ldots, x_n\rangle\cong \bigoplus_{n_1, \ldots, n_k\ge 0} {\widetilde M}(n_1, \ldots, n_k).
%\]
%Let $I$ be the two-sided ideal of $F\langle x_1, \ldots, x_n\rangle$ generated by the square zero relations and the Jacobi relations. Then $\Lie[x_1, \ldots, x_k]= F\langle x_1, \ldots, x_n\rangle/I$. The key point is that $I$ is additively generated by elements that are contained in some $ {\widetilde M}(n_1, \ldots, n_k)$. It follows that $I= \bigoplus_{n_1, \ldots, n_k\ge 0} I(n_1, \ldots, n_k)$, where $I(n_1, \ldots, n_k)=I\cap \widetilde M(n_1, \ldots, n_k)$. It now follows that
%\begin{multline*}
%\Lie[x_1, \ldots, x_k]\cong \frac{\bigoplus {\widetilde M}(n_1, \ldots, n_k)}{\bigoplus I(n_1, \ldots, n_k)}\cong \\ \cong\bigoplus _{n_1, \ldots, n_k\ge 0} \frac{{\widetilde M}(n_1, \ldots, n_k)}{I(n_1, \ldots, n_k)}\cong \bigoplus_{n_1, \ldots, n_k\ge 0} M(n_1, \ldots, n_k)
%\end{multline*}
\end{proof}
Note that if $u\in M(n_1, \ldots, n_k)$ and $v\in M(m_1, \ldots, m_k)$ then $[u, v]$ and $[v, u]$ are in $M(n_1+m_1, \ldots, n_k+m_k)$. It follows that isomorphism~\eqref{equation: decomposition} makes $\Lie[x_1, \ldots, x_k]$ into an $\naturals^k$-graded algebra (here $\naturals$ stands for non-negative integers). It also has an $\naturals$-grading, given by the total number of generators in a monomial.

It follows from Theorem~\ref{theorem: free Lie} that each $M(n_1, \ldots, n_k)$ is a free Abelian group. Next, we choose an explicit bases for these groups. There are many known bases for the free Lie algebra. We will use the `basic monomials' defined by M. Hall and P. Hall. We will now recall the construction. 
\begin{definition}
Basic monomials in $x_1, ..., x_k$ and their weights are defined inductively as follows: To begin with, $x_1, \ldots, x_k$ are basic monomials of weight one. Next, suppose we have defined all the basic monomials of weight less than $n$, and ordered them in some way, so that monomials of lower weight precede those of higher weight. Let us say that the basic monomials of weight less than $n$ are $u_1, \ldots, u_\alpha$, for some $\alpha\ge k$. Then basic monomials of weight $n$ are all monomials of the form $[u_i, u_j]$ such that 
\begin{enumerate}
\item $u_i$ and $u_j$ are basic monomials of weight less than $n$, and the sum of their weights equals n, \item $i>j$ \item either $u_i=x_i$ for some $i\le k$, or $u_i=[u_s, u_t]$  and $t\le j$.
\end{enumerate}
Now order the newly formed basic monomial of weight $n$ in some way, append them to the list of basic monomials, and repeat the procedure. 
\end{definition}
It is well-known that basic monomials form a basis for $\Lie[x_1, \ldots, x_k]$ \cite{mhall, phall}. Let $B(n_1, \ldots, n_k)$ be the set of basic monomials of multi-degree $(n_1, \ldots, n_k)$. Then $B(n_1, \ldots, n_k)$ is a basis for $M(n_1, \ldots, n_k)$. The size of $B(n_1, \ldots, n_k)$ is given by the Witt formula~\eqref{equation: witt}.

Last topic in this section is the algebraic Hilton-Milnor theorem. In topology, the Hilton-Milnor theorem is a statement about the homotopy decomposition of the space $\Omega\Sigma(X_1\vee\ldots\vee X_k)$ as a weak product indexed by basic monomials in $k$ variables $x_1, \ldots, x_k$. Equivalently, one can view the theorem as a statement about the homotopy decomposition of the pointed free topological group generated by $X_1, \ldots, X_k$, where $X_i$ are connected spaces. This is so because the functor $\Omega \Sigma$ is equivalent to the pointed free topological group functor. By algebraic Hilton-Milnor theorem we mean an analogous decomposition of a free Lie algebra on a direct sum. Let us point out a couple of differences between the topological and algebraic versions of the theorem: the algebraic version gives an isomorphism, rather than a homotopy equivalence, and it does not require a connectivity hypothesis.

Before we state the theorem, we need to define a certain operation on monomials. 
\begin{definition}\label{definition: resolution}
Let $w\in B(n_1, \ldots, n_k)$ (more generally, $w$ can be any monomial of multi-degree $(n_1, \ldots, n_k)$). By definition, $w$ is a monomial in $x_1, \ldots, x_k$ that contains $n_i$ copies of $x_i$ for each $i$. The {\it resolution} of $w$, denoted $\tilde w$, is the multi-linear monomial in $n$ variables obtained from $w$ by replacing the $n_1$ occurrences of $x_1$ with $x_1, \ldots, x_{n_1}$, in the order in which they occur in $w$, replacing the $n_2$ occurrences of $x_2$ with $x_{n_1+1}, \ldots, x_{n_1+n_2}$, and so on. 
\end{definition}
\begin{example}
Consider the following monomial of multi-degree $(2,2)$ $w(x_1, x_2)=[[[x_2, x_1], x_1], x_2]$. To obtain the resolution of $w$ replace the two occurrences of $x_1$ with $x_1$ and $x_2$, and the two occurrences of $x_2$ with $x_3$ and $x_4$. We obtain the monomial $\tilde w(x_1, x_2, x_3, x_4)=[[[x_3, x_1], x_2], x_4]$ in $\Lierep_4$. 
\end{example}

Now let $V_1, \ldots, V_k$ be free Abelian groups. Let $w\in B(n_1, \ldots, n_k)$ be a basic monomial. We associate with $w$ a group homomorphism
\[
\alpha_w\colon V_1^{\otimes n_1}\otimes \cdots\otimes V_k^{\otimes n_k} \to \Lie[V_1 \oplus \cdots\oplus V_k].
\]
The homomorphism is defined by the following formula 
\[
\alpha_w\left(\otimes_{i=1}^k\otimes_{j=1}^{n_i} v_i^j\right)=\tilde w(v_1^1, v_1^2, \ldots, v_1^{n_1}, v_2^1, \ldots, v_k^{n_k}).
\] 
where $v_i^j\in V_i$ for all $1\le i \le k, 1\le j \le n_i$. Since $\tilde w$ is a multi-linear monomial in the correct number of variables, this is a well-defined homomorphism.
\begin{example}
Let $w=[[[x_2, x_1], x_1], x_2]$. Then $w$ determines a homomorphism $\alpha_w\colon V_1^{\otimes 2}\otimes V_2^{\otimes 2}\to \Lie[V_1\oplus V_2]$, by the formula
\[
\alpha_w(v_1^1 \otimes v_1^2 \otimes v_2^1\otimes v_2^2)= \tilde w(v_1^1, v_1^2, v_2^1, v_2^2) = [[[v_3, v_1], v_2], v_4].
\]
\end{example}
The homomorphism $\alpha_w$ extends to a homomorphism of Lie algebras 
\[
\tilde\alpha_w\colon \Lie[V_1^{\otimes n_1}\otimes \cdots\otimes V_k^{\otimes n_k}] \to \Lie[V_1 \oplus \cdots\oplus V_k].
\]
Taking direct sum over $w\in B(n_1, \ldots, n_k)$ we obtain a homomorphism of abelian groups
\[
\Sigma\, \tilde\alpha_w\colon \bigoplus_{w\in B(n_1, \ldots, n_k)} \Lie[V_1^{\otimes n_1}\otimes \cdots\otimes V_k^{\otimes n_k}] \to \Lie[V_1 \oplus \cdots\oplus V_k].
\]
\begin{remark}
There is an isomorphism 
\[
\bigoplus_{w\in B(n_1, \ldots, n_k)} \Lie[V_1^{\otimes n_1}\otimes \cdots\otimes V_k^{\otimes n_k}] \cong M(n_1, \ldots, n_k) \otimes  \Lie[V_1^{\otimes n_1}\otimes \cdots\otimes V_k^{\otimes n_k}] 
\]
so we constructed a homomorphism 
\[
M(n_1, \ldots, n_k) \otimes  \Lie[V_1^{\otimes n_1}\otimes \cdots\otimes V_k^{\otimes n_k}] \to \Lie[V_1 \oplus \cdots\oplus V_k].
\]
However, we emphasize that this isomorphism is not canonical - it depends on a choice of basis of $M(n_1, \ldots, n_k)$. 
%With a bit of care, it is possible to construct a canonical homomorphism
%\[
%M(n_1, \ldots, n_k) \otimes  V_1^{\otimes n_1}\otimes \cdots\otimes V_k^{\otimes n_k} \to \Lie[V_1 \oplus \cdots\oplus V_k]
%\]
%but such a homomorphism does not extend to $M(n_1, \ldots, n_k) \otimes  \Lie[V_1^{\otimes n_1}\otimes \cdots\otimes V_k^{\otimes n_k}]$ unless one chooses a basis for $M(n_1, \ldots, n_k)$.
\end{remark}

Finally, taking direct sum over all $n$-tuples $(n_1, \ldots, n_k)$, we obtain a homomorphism of abelian groups.
\begin{equation}\label{equation: hilton-milnor map}
\bigoplus_{n_1, \ldots, n_k\ge 0} \bigoplus_{B(n_1, \ldots, n_k)} \Lie[V_1^{\otimes n_1}\otimes \cdots\otimes V_k^{\otimes n_k}] \to \Lie[V_1 \oplus \cdots\oplus V_k].
\end{equation}
This is not a homomorphism of Lie algebras, but notice that it is a homomorphism of graded abelian groups, provided that on the left hand side one endows $V_1^{\otimes n_1}\otimes \cdots\otimes V_k^{\otimes n_k}$ with grading $n_1+\cdots+n_k$.

The following is the algebraic Hilton-Milnor theorem
\begin{theorem}\label{theorem: hilton-milnor}
The homomorphism~\eqref{equation: hilton-milnor map} is an isomorphism.
\end{theorem}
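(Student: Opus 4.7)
The plan is to prove Theorem~3.5 by induction on $k$, using Shirshov's theorem (that every Lie subalgebra of a free Lie algebra is itself free) to peel off one variable at a time.

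For $k=1$, the statement is trivial: Witt's formula~\eqref{equation: witt} gives $|B(n_1)|=0$ for $n_1\ge 2$, leaving only the contribution from $B(1)=\{x_1\}$, which is $\Lie[V_1]$, and the map is the identity. For $k=2$, I would consider the split Lie surjection $\pi\colon\Lie[V_1\oplus V_2]\twoheadrightarrow\Lie[V_1]$ that kills $V_2$. Writing $J=\ker\pi$ for the Lie ideal generated by $V_2$, we get $\Lie[V_1\oplus V_2]\cong\Lie[V_1]\oplus J$ as abelian groups. By Shirshov's theorem, $J$ is a free Lie algebra, and a standard computation using iterated brackets $[\cdots[[v_2,v_1^{(1)}],v_1^{(2)}],\ldots,v_1^{(n_1)}]$ identifies $J$ as $\Lie[V_2\otimes T(V_1)]$, where $T(V_1)=\bigoplus_{n_1\ge 0}V_1^{\otimes n_1}$ is the tensor algebra. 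Iterating Hilton-Milnor on this free Lie algebra on a (countable) direct sum decomposes $J$ further into pieces indexed by basic monomials in the summands, and a combinatorial check against Hall's construction identifies these with the summands $\Lie[V_1^{\otimes n_1}\otimes V_2^{\otimes n_2}]$ indexed by $w\in B(n_1,n_2)$ with $n_2\ge 1$.

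For general $k\ge 3$, one iterates: regard $\Lie[V_1\oplus\cdots\oplus V_k]$ as $\Lie[(V_1\oplus\cdots\oplus V_{k-1})\oplus V_k]$, apply the $k=2$ case, and then apply the inductive hypothesis to each summand $\Lie[(V_1\oplus\cdots\oplus V_{k-1})^{\otimes m_1}\otimes V_k^{\otimes m_2}]$ to distribute the first $k-1$ variables. The combinatorial content---that the iterated decomposition produces exactly the set $B(n_1,\ldots,n_k)$ of $k$-variable basic monomials with the correct tensor-module assignment---follows from Hall's inductive construction of the basis, and a cross-check with Witt's formula~\eqref{equation: witt} confirms the total rank at each stage.

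The main obstacle will be precisely this combinatorial bookkeeping: one must verify that the iteration of two-variable Hilton-Milnor decompositions reproduces Hall's set $B(n_1,\ldots,n_k)$ exactly, with each $w$ contributing the correct tensor module $V_1^{\otimes n_1}\otimes\cdots\otimes V_k^{\otimes n_k}$. A cleaner alternative, avoiding the Shirshov-Witt induction, would be to compare multigraded Poincar\'e series: both sides have the same multigraded rank by an iterated form of Witt's identity, while the Hilton-Milnor map is visibly surjective because its image, after choosing bases of the $V_i$, contains every Hall basic monomial in the induced basis of $V_1\oplus\cdots\oplus V_k$. Since both sides are free abelian groups of the same rank in each multidegree and the map is surjective, it must be an isomorphism.
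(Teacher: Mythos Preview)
Your main line---identify the kernel of $\Lie[V_1\oplus V_2]\twoheadrightarrow\Lie[V_1]$ as $\Lie\bigl[V_2\otimes T(V_1)\bigr]$ via the left-normed brackets, then iterate---is exactly the paper's proof. The paper isolates precisely this kernel identification as its key lemma (citing Neisendorfer; it is also known as Lazard elimination) and then says the remainder ``proceeds exactly as for the classic Hilton-Milnor theorem'' in Milnor's original paper. Your appeal to Shirshov's theorem is harmless but redundant: once you have named the free generators explicitly and checked the map on them is injective, freeness is part of the lemma itself. One point of phrasing to tighten: in your $k=2$ step you write ``iterating Hilton-Milnor on this free Lie algebra on a (countable) direct sum,'' which reads as circular. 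What Milnor actually does, and what you should say, is iterate the \emph{kernel lemma}, not the full theorem; the recursion is well-founded in each fixed total degree because every new generator has degree at least~$1$.

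Your proposed shortcut (equal multigraded ranks plus ``visible'' surjectivity) has a real gap. The claim that every Hall basic monomial in an induced basis $\{v_i^j\}$ of $V_1\oplus\cdots\oplus V_k$ lies in the image is not immediate: collapsing such a monomial by $v_i^j\mapsto x_i$ need not yield a basic monomial in $x_1,\dots,x_k$. For instance $[[v_1^2,v_1^1],v_2^1]$ is Hall basic (ordering $v_1^1<v_1^2<v_2^1$), but its shape $[[x_1,x_1],x_2]$ vanishes; to exhibit this element in the image of $\tilde\alpha_{[[x_2,x_1],x_1]}$ one must first apply Jacobi to rewrite it as $[[v_2^1,v_1^1],v_1^2]-[[v_2^1,v_1^2],v_1^1]$. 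That rewriting is exactly the Hall collection process, i.e., the combinatorial heart of Milnor's iterative argument---so the ``cleaner alternative'' does not in fact bypass the main work.
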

\begin{proof}
Let $V, W$ be free Abelian groups. Consider the natural split surjection of Lie algebras. $\Lie[V \oplus W]\to \Lie[V]$. The key to the proof is the following lemma, which identifies the kernel  of this homomorphism as a free Lie algebra with an explicit set of generators.
\begin{lemma}\label{lemma: neisendorfer}
There is a short exact sequence of free Lie algebras, (which splits on the level of graded abelian groups). 
\[
0\to \Lie\left[\bigoplus_{i=0}^\infty W \otimes V^{\otimes i}\right]  \to \Lie[V\oplus W]\to \Lie[V]\to 0.
\]
Here the first map is determined by the group homomorphisms $W\otimes V^{\otimes i}\to \Lie[V\oplus W]$ that send the element $w\otimes v_1\otimes\cdots\otimes v_i$ to the iterated commutator $[...[w, v_1], v_2], \ldots] v_i]$.
\end{lemma}
The lemma is proved as part of Example 8.7.4 (``The Hilton-Milnor example'') in Neisendorfer's book~\cite{neisendorfer}. It should be noted that Neisendorfer works with graded Lie algebras, and in the proof he uses some lemmas that assume that Lie algebras are zero-connected. However, Lemma~\ref{lemma: neisendorfer} holds for ungraded Lie algebras as well. To see this, notice that we can make our algebras connected by assigning $V$ and $W$ grading $2$, and this does not change anything. Whether $V$ has grading $0$ or $2$, the resulting Lie algebras $L[V]$ are canonically isomorphic as graded abelian groups. 

Once we have Lemma~\ref{lemma: neisendorfer}, the rest of the proof of the theorem proceeds exactly as for the classic Hilton-Milnor theorem. For details, see the proof of Theorem 4 in~\cite{milnor}. The input for that theorem is the exact analogue of Lemma~\ref{lemma: neisendorfer} for topological free groups. The proof of our theorem can emulate Milnor's proof almost word for word. One just needs to replace ``free group'' with ``free Lie algebra'', ``homotopy equivalence'' with ``isomorphism'' and ``complexes'' with ``free Abelian groups''.
\end{proof}
\section{From Lie algebras to the Lie representation}\label{section: lie}
Recall from previous section that $M(1, \ldots, 1)\subset \Lie[x_1, \ldots, x_n]$ is the submodule generated by monomials that contain each generator exactly once. We call such monomials multilinear. By the Witt formula, $M(1, \ldots, 1)$ is a free abelian group of rank $(n-1)!$. Moreover, there is a natural action of $\Sigma_n$, permuting the generators. We arrange it so that it is a right action. We call $M(1, \ldots, 1)$, considered as an integral representation of $\Sigma_n$, the Lie representation, and denote it by $\Lierep_n$

The sequence of representations $\Lierep_1, \Lierep_2, \ldots, \Lierep_n,\ldots$ forms an operad in abelian groups - the Lie operad. Let us recall what this means. Suppose that we have an isomorphism $\phi\colon \mathbf{n_1}\coprod \cdots\coprod \mathbf{n_i} \stackrel{\cong}{\to} \n$. Then there is a homomorphism associated with $\phi$, 
\[
s_\phi\colon\Lierep_i\otimes\Lierep_{n_1}\otimes\cdots\otimes\Lierep_{n_i}\to \Lierep_n.
\]
The homomorphism $s_\phi$ is natural with respect to isomorphisms of the sets $\mathbf{n_1}, \ldots, \mathbf{n_i}$ and also with respect to permutations of these set. Let us describe $s_\phi$ explicitly in terms of monomials. Recall that $\Lierep_n$ is generated by (parenthesized) multilinear monomials in variables $x_1, \ldots, x_n$. Let $$w_i(x_1, \ldots, x_i), w_{n_1}(x_1, \ldots, x_{n_1}), \ldots, w_{n_i}(x_1, \ldots, x_{n_i})$$ be monomials representing elements of $\Lierep_i, \Lierep_{n_1}, \ldots, \Lierep_{n_i}$ respectively. To distinguish between elements of $\mathbf{n_1}, \mathbf{n_2}, $ etc., let $1^j, \ldots {n_j}^j$ be elements of $\mathbf{n_j}$. Then $s_\phi(w_i\otimes w_{n_1}, \ldots, w_{n_i})(x_1, \ldots, x_n)$ equals to
\[
w_i(w_{n_1}(x_{\phi(1^1)}, \ldots, x_{\phi({n_1}^1)}), \ldots, w_{n_i}(x_{\phi(1^i)}, \ldots, x_{\phi({n_i}^i)})).
\] 
In words, $s_\phi(w_i\otimes w_{n_1}, \ldots, w_{n_i})$ is obtained by taking the monomial $w_i(x_1,\ldots, x_i)$, substituting the monomials $w_{n_1}, \ldots, w_{n_i}$ for the variables $x_1, \ldots, x_i$, and then reindexing the variables as prescribed by the isomorphism $\phi$.

Unless said otherwise, we take $\phi$ to be the obvious isomorphism that takes the elements of $\mathbf{n_1}$ to $1, \ldots, n_1$, the elements of $\mathbf{n_2}$ to $n_1+1, \ldots, n_1+n_2$, etc.

Lie algebras are the same thing as algebras over the Lie operad. There is a well-known formula for a free algebra over an operad. In the case of Lie algebras it says that there is a natural isomorphism
\begin{equation}\label{equation: operadic}
\epsilon\colon\bigoplus_{n=1}^\infty \Lierep_n \otimes_{\integers[\Sigma_n]} V^{\otimes n} \to \Lie[V].
\end{equation}
See~\cite{MSS}, Propositions 1.25 and 1.27. The isomorphism $\epsilon$ is described explicitly in terms of monomials as follows. Let $w(x_1, \ldots, x_n)$ be a parenthesized multilinear monomial in $x_1, \ldots x_n$. The group $V^{\otimes n}$ is generated by elements of the form $v_1\otimes \cdots\otimes v_n$. The homomorphism $\epsilon$ sends $w\otimes v_1\otimes\cdots\otimes v_n$ to $w(v_1, \ldots, v_n)$. 

Note that $\epsilon$ preserves the natural grading. Thus $\epsilon$ restricts to an isomorphism of $\Lierep_n \otimes_{\integers[\Sigma_n]} V^{\otimes n}$ with the degree $n$ part of $\Lie[V]$.

The isomorphism $\epsilon$, together with the algebraic Hilton-Milnor theorem, yield a branching rule for the restriction of $\Lierep_n$ to a Young subgroup of $\Sigma_n$. To see this, consider $\Lie[V_1\oplus\cdots\oplus V_k]$, the free Lie algebra on a direct sum of $k$ free abelian groups. Combining the operadic model for the free Lie algebra and the Hilton-Milnor theorem, we obtain a commutative square of isomorphisms
{\footnotesize
\begin{equation}\label{equation: Hilton-Milnor plus operadic}
\begin{array}{ccc}
\underset{\underset{d\ge 1}{\underset{w\in B(n_1,..., n_k)}{n_1, \ldots, n_k\ge 0}}}{\bigoplus} \Lierep_d \otimes_{\Sigma_d}(V_1^{\otimes n_1}\otimes ...\otimes V_k^{\otimes n_k})^{\otimes d} & \to & \underset{n\ge 1}{\bigoplus}\Lierep_n\otimes_{\Sigma_n}(V_1 \oplus ...\oplus V_k)^{\otimes n}\\
\downarrow & & \downarrow \\
\\
\underset{\underset{w\in B(n_1,\ldots, n_k)}{n_1, \ldots, n_k\ge 0}}{\bigoplus}\Lie[V_1^{\otimes n_1}\otimes \cdots\otimes V_k^{\otimes n_k}] & \to & \Lie[V_1 \oplus \cdots\oplus V_k]
\end{array}
\end{equation}
}
Here the top map is defined to be the unique isomorphism that makes the diagram commute. An easy diagram chase shows that this isomorphism preserves multi-degree in the variables $V_1, \ldots, V_k$. This means that both the source and the target of the top map are direct sums of terms of the form $A\otimes_{\Sigma_{n_1}\times\cdots\times\Sigma_{n_k}} V_1^{\otimes n_1}\otimes\cdots V_k^{\otimes n_k}$, where $n_1, \ldots, n_k$ are natural numbers and $A$ is a (free abelian) group with an action of $\Sigma_{n_1}\times\cdots \times \Sigma_{n_k}$. Fixing a $k$-tuple $(n_1, \ldots, n_k)$ and restricting to terms of multi-degree $(n_1, \ldots, n_k)$, we find that the top isomorphism restricts to an isomorphism (natural in the variables $V_1, \ldots, V_k$) of the following form:
\begin{multline*}
\underset{\underset{w\in B(\frac{n_1}{d},..., \frac{n_k}{d})}{d|\gcd(n_1, \ldots, n_k)}}{\bigoplus} \Lierep_d \otimes_{\integers[\Sigma_d]}(V_1^{\otimes \frac{n_1}{d}}\otimes ...\otimes V_k^{\otimes \frac{n_k}{d}})^{\otimes d} \to \\ \to \Lierep_n\otimes_{\integers[\Sigma_{n_1}\times\cdots\times \Sigma_{n_k}]} (V_1^{\otimes n_1}\otimes ...\otimes V_k^{\otimes n_k}).
\end{multline*}
Recall that for each $d$ that divides $n_1, \ldots, n_k$, we view $\Sigma_d$ as a subgroup of $\Sigma_{n_1}\times\cdots\times\Sigma_{n_k}$ via the diagonal inclusion. It follows that the source of this isomorphism can be rewritten as follows
\[
\underset{\underset{w\in B(\frac{n_1}{d},..., \frac{n_k}{d})}{d|\gcd(n_1, \ldots, n_k)}}{\bigoplus} \Lierep_d \otimes_{\integers[\Sigma_d]}\integers[\Sigma_{n_1}\times\cdots\times \Sigma_{n_k}]\otimes_{\integers[\Sigma_{n_1}\times\cdots\times \Sigma_{n_k}]}(V_1^{\otimes n_1}\otimes ...\otimes V_k^{\otimes n_k}).
\]
Any natural isomorphism between this functor and the functor $$ \Lierep_n\otimes_{\integers[\Sigma_{n_1}\times\cdots\times \Sigma_{n_k}]} (V_1^{\otimes n_1}\otimes ...\otimes V_k^{\otimes n_k})$$ is induced by an isomorphism of $\integers[\Sigma_{n_1}\times\cdots\times \Sigma_{n_k}]$-modules
\begin{equation}\label{equation: branching for lie}
\underset{\underset{w\in B(\frac{n_1}{d},..., \frac{n_k}{d})}{d|\gcd(n_1, \ldots, n_k)}}{\bigoplus} \Lierep_d \otimes_{\integers[\Sigma_d]}\integers[\Sigma_{n_1}\times\cdots\times \Sigma_{n_k}]\stackrel{\cong}{\to} \Lierep_n.
\end{equation}
(One way to see this is  to take $(n_1, \ldots,n_k)$-cross-effects of these functors of $V_1, \ldots, V_k$).

Recall that the isomorphism is induced by diagram~\eqref{equation: Hilton-Milnor plus operadic}. Our next goal is to describe the homorphism~\eqref{equation: branching for lie} explicitly. To do this it is enough to describe, for each $d$ that divides $n_1, \ldots, n_k$, a $\Sigma_d$-equivariant homomorphism
\begin{equation} \label{equation: lie diagonal}
\bigoplus_{B(\frac{n_1}{d},\ldots, \frac{n_k}{d})} \Lierep_d \to \Lierep_n.
\end{equation}
Let $w\in B(n_1, \ldots, n_k)$. In Definition~\ref{definition: resolution} we defined the resolution of $w$, denoted $\tilde w$, to be a certain multi-linear monomial in $n$ variables. 
%
%We first describe a function from $B(n_1, \ldots, n_k)$ to the set of monomials that generate $\Lierep_n$. 
%\begin{definition}\label{definition: resolution}
%Let $w\in B(n_1, \ldots, n_k)$ (more generally, $w$ can be any monomial of type $(n_1, \ldots, n_k)$). By definition, $w$ is a monomial in $x_1, \ldots, x_k$ that contains $n_i$ copies of $x_i$ for each $i$. The {\it resolution} of $w$ is the multi-linear monomial in $n$ variables obtained from $w$ by replacing the $n_1$ occurrences of $x_1$ with $x_1, \ldots, x_{n_1}$, in the order in which they occur in $w$, replacing the $n_2$ occurrences of $x_2$ with $x_{n_1+1}, \ldots, x_{n_1+n_2}$, and so on. 
%\end{definition}
%\begin{example}
%Consider the following monomial of type $(2,2)$ $w(x_1, x_2)=[[[x_2, x_1], x_1], x_2]$. To obtain the resolution of $w$ replace the two occurrences of $x_1$ with $x_1$ and $x_2$, and the two occurrences of $x_2$ with $x_3$ and $x_4$. We obtain the monomial $\tilde w(x_1, x_2, x_3, x_4)=[[[x_3, x_1], x_2], x_4]$ in $\Lierep_4$. 
%\end{example}
%
In particular, resolution defines a function $$B(n_1, \ldots, n_k)\to \Lierep_n.$$ We emphasize that $B(n_1, \ldots, n_k)$ is just a set, so this is just a function between sets. Sometimes it is convenient to think of it as a homomorphism $\integers[B(n_1, \ldots, n_k)]\to \Lierep_n$. In a similar way, we have a homomorphism $\integers[B(\frac{n_1}{d},\ldots, \frac{n_k}{d})]\to \Lierep_{\frac{n}{d}}$. 
Now consider the following sequence of homomorphisms
\begin{multline*}
%\bigoplus_{B(\frac{n_1}{d},\ldots, \frac{n_k}{d})} \Lierep_d \stackrel{\cong}{\to} 
\Lierep_d\otimes \integers\left[B\left(\frac{n_1}{d},\ldots, \frac{n_k}{d}\right)\right] \to \Lierep_d\otimes \integers\left[B\left(\frac{n_1}{d},\ldots, \frac{n_k}{d}\right)^d\right] \stackrel{\cong}{\to} \\ \to  \Lierep_d\otimes \integers\left[B\left(\frac{n_1}{d},\ldots, \frac{n_k}{d}\right)\right]^{\otimes d} \to \Lierep_d  \otimes \Lierep_{\frac{n}{d}}^{\otimes d} \to \Lierep_n.
\end{multline*}
Here the first homomorphism is induced by the diagonal $B\left(\frac{n_1}{d},\ldots, \frac{n_k}{d}\right)\to B\left(\frac{n_1}{d},\ldots, \frac{n_k}{d}\right)^d$, the second is the obvious isomorphism, the third is induced by the map $B(\frac{n_1}{d},\ldots, \frac{n_k}{d})\to \Lierep_{\frac{n}{d}}$ that we defined earlier, and the last one uses the operad structure on $\{\Lierep_n\}$. Since the source of this homomorphism is canonically isomorphic to $\bigoplus_{B(\frac{n_1}{d},\ldots, \frac{n_k}{d})} \Lierep_d $, we have constructed a homomorphism as promised in~\eqref{equation: lie diagonal}. The homomorphism is clearly $\Sigma_d$-equivariant.

In words, the homomorphism~\eqref{equation: lie diagonal} can be described as follows. Let $w\in B\left(\frac{n_1}{d},\ldots, \frac{n_k}{d}\right)$. In particular, $w$ is a monomial of multi-degree $(\frac{n_1}{d}, \ldots, \frac{n_k}{d})$. Starting with $w$ define the monomial $\tilde w$ in $\frac{n}{d}$ distinct variables by replacing the $\frac{n_1}{d}$ occurrences of $x_1$ with $x_1, \ldots, x_{\frac{n_1}{d}}$ in the same order as they occur in $w$, replacing the $\frac{n_2}{d}$ occurrences of $x_2$  with $x_{\frac{n_1}{d}+1}, \ldots, x_{\frac{n_1}{d}+\frac{n_2}{d}}$, etc. Then define a homomorphism $\Lierep_d\to \Lierep_n$ in the following way: given a monomial $w_d(x_1, \ldots, x_d)$, send it 
to the monomial 

\begin{equation}\label{equation: explicit formula}
w_d(\tilde w(x_1, x_{1+d},..., x_{1+n-d}), \tilde w(x_2, x_{2+d},..., x_{2+n-d}), \ldots, \tilde w(x_d, x_{2d},..., x_{n}))
\end{equation}
%replace each $x_i$ with $w$, and moreover, within the $i$-th copy of $w$, replace the $\frac{n_1}{d}$ occurrences of $x_1$ with $x_i, x_{i+d}, \ldots, x_{i+n_1-d}$, replace the $\frac{n_2}{d}$ occurrences of $x_2$ with $x_{i+{n_1}}, x_{i+n_1 +d}, \ldots, x_{i+n_1+n_2-d}$, and generally replace the $\frac{n_j}{d}$ occurrences of $x_j$ with $x_{i+n_1+\cdots + n_{j-1}}, x_{i+n_1+\cdots + n_{j-1}+d}, \ldots, x_{i+n_1+\cdots + n_{j}-d}$. 
\begin{example}
Let $n_1=n_2=4$, $d=2$. Then~\eqref{equation: lie diagonal} specializes to a homomorphism
$\bigoplus_{B(2, 2)} \Lierep_2 \to \Lierep_8.$ The set $B(2,2)$ has one element: the basic monomial $w=[[[x_2, x_1], x_1], x_2]$. Then $\tilde w$ is obtained from $w$ by replacing the two occurrences of $x_1$ with $x_1$ and $x_2$, and the two occurrences of $x_2$ with $x_3$ and $x_4$. Thus $\tilde w(x_1, x_2, x_3, x_4)=[[[x_3, x_1], x_2], x_4]$. Now the homomorphism $\Lierep_2\to \Lierep_8$ is defined follows. Suppose $w_2(x_1, x_2)$ is a monomial in $x_1, x_2$, representing an element of $\Lierep_2$. then $w_2$ is sent to the following monomial in $8$ variables: 
$$w_2(\tilde w(x_1, x_3, x_5, x_7), \tilde w(x_2, x_4, x_6, x_8)).$$
In particular our homomorphism does the following 
\[ [x_2, x_1]\mapsto [[[[x_6, x_2], x_4], x_8],[[[x_5, x_1], x_3], x_7]]\] 
\[
[x_1, x_2]\mapsto [[[[x_5, x_1], x_3], x_7], [[[x_6, x_2], x_4], x_8]].\] 
Clearly, the homomorphism is $\Sigma_2$-equivariant (remember that $\Sigma_2$ is embedded in $\Sigma_8$ as the subgroup generated by the cycle $(1,2)(3,4)(5,6)(7,8)$). 
\end{example}
The homomorphism~\eqref{equation: lie diagonal} is $\Sigma_d$ equivariant. Since $\Sigma_d$ is a subgroup of $\Sigma_{n_1}\times\cdots\times\Sigma_{n_k}$, we can induce it up to a $\Sigma_{n_1}\times\cdots\times\Sigma_{n_k}$-equivariant homomorphism. Taking direct sum over all positive $d$ that divide $n_1, \ldots, n_k$, we obtain the homomorphism promised in~\eqref{equation: branching for lie}.

A routine diagram chase, using our explicit descriptions of the Hilton-Milnor isomorphism and the isomorphism~\eqref{equation: operadic}, establishes the following proposition.

%following homomorphism of $\Sigma_{n_1}\times\cdots\times\Sigma_{n_k}$-modules.
%\begin{equation}\label{equation: branching for lie}
%\bigoplus_{d|\gcd(n_1, \ldots, n_k)}\bigoplus_{B(\frac{n_1}{d},\ldots, \frac{n_k}{d})} \Lierep_d \otimes_{\integers[\Sigma_d]} \integers[\Sigma_{n_1}\times\cdots\times\Sigma_{n_k}] \to \Lierep_n.
%\end{equation}

\begin{proposition}\label{proposition: algebraic branching}
The homomorphism~\eqref{equation: branching for lie} that we just described is the isomorphism induced by the top map in diagram~\eqref{equation: Hilton-Milnor plus operadic}. 
\end{proposition}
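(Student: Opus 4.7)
The plan is a direct diagram chase on generators. Both of the maps to be compared --- the restriction of the top isomorphism of diagram~\eqref{equation: Hilton-Milnor plus operadic} to the summand of multi-degree $(n_1,\ldots,n_k)$ indexed by a fixed divisor $d$, and the explicit homomorphism of~\eqref{equation: branching for lie} built from the resolution operation --- are $\integers[\Sigma_{n_1}\times\cdots\times\Sigma_{n_k}]$-linear maps of free modules. By the same cross-effect/Yoneda style argument already invoked to pass from~\eqref{equation: Hilton-Milnor plus operadic} to~\eqref{equation: branching for lie}, it suffices to verify that the two homomorphisms induce the same natural transformation after tensoring on the right with $V_1^{\otimes n_1}\otimes\cdots\otimes V_k^{\otimes n_k}$, and for this it is enough to track a single simple-tensor generator around the square.

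Concretely, I would fix $d\mid g$, a basic monomial $w\in B(n_1/d,\ldots,n_k/d)$, a multilinear monomial $w_d(x_1,\ldots,x_d)$ representing a class in $\Lierep_d$, and vectors $v_i^{j,\ell}\in V_i$ for $1\le i\le k$, $1\le j\le d$, $1\le \ell\le n_i/d$. Setting $u_j=v_1^{j,1}\otimes\cdots\otimes v_k^{j,n_k/d}$, the first path (down via $\epsilon$, then across via the Hilton-Milnor map) sends $w_d\otimes u_1\otimes\cdots\otimes u_d$ to $w_d(u_1,\ldots,u_d)$ and then, since $\tilde\alpha_w$ is a Lie algebra homomorphism, to $w_d(\alpha_w(u_1),\ldots,\alpha_w(u_d))$. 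Unfolding the defining formula for $\alpha_w$ in terms of the resolution $\tilde w$ produces the element
\[
w_d\bigl(\tilde w(v_1^{1,1},\ldots,v_k^{1,n_k/d}),\;\ldots,\;\tilde w(v_1^{d,1},\ldots,v_k^{d,n_k/d})\bigr)
\]
in $\Lie[V_1\oplus\cdots\oplus V_k]$.

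The second path (across via~\eqref{equation: explicit formula}, then down via $\epsilon$) first sends $w_d$ to the monomial $W=w_d(\tilde w(x_1,x_{1+d},\ldots,x_{1+n-d}),\ldots,\tilde w(x_d,x_{2d},\ldots,x_n))\in\Lierep_n$, then, after tensoring with the ordered generator $\bigotimes_{i,j,\ell}v_i^{j,\ell}$ and applying $\epsilon$, substitutes the $v$'s into the variables $x_1,\ldots,x_n$ of $W$ in order. The expected outcome is exactly the same parenthesized element just computed along the first path.

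The only real obstacle is bookkeeping: I must verify that the indexing convention in~\eqref{equation: explicit formula}, which assigns the variables $x_j,x_{j+d},x_{j+2d},\ldots,x_{j+n-d}$ to the $j$-th copy of $\tilde w$, corresponds exactly both to the diagonal embedding $\Sigma_d\hookrightarrow\Sigma_d^{n/d}\hookrightarrow\Sigma_{n_1}\times\cdots\times\Sigma_{n_k}\hookrightarrow\Sigma_n$ governing equivariance in~\eqref{equation: branching for lie}, and to the isomorphism $\phi$ implicit in the operadic composition $s_\phi$ used in the last step of~\eqref{equation: lie diagonal}. Once this consistency is confirmed, the two substitutions yield literally identical parenthesized monomials in the $v_i^{j,\ell}$, so the explicit map of~\eqref{equation: branching for lie} is forced by commutativity of~\eqref{equation: Hilton-Milnor plus operadic} to be the induced isomorphism, and the proposition follows.
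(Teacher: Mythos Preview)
Your proposal is correct and is precisely the ``routine diagram chase'' the paper invokes but omits. You have made the chase explicit by tracking a simple-tensor generator around the square, correctly using that $\tilde\alpha_w$ is a Lie algebra homomorphism to push $w_d$ past it, and you have correctly identified that the only nontrivial point is the bookkeeping matching the shuffle isomorphism $(V_1^{\otimes n_1/d}\otimes\cdots\otimes V_k^{\otimes n_k/d})^{\otimes d}\cong V_1^{\otimes n_1}\otimes\cdots\otimes V_k^{\otimes n_k}$ with the indexing in~\eqref{equation: explicit formula} and the diagonal embedding of $\Sigma_d$.
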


\section{From the Lie representation to the partition complex}\label{section: lie to partitions}
It is well-known that there is a close connection between the Lie representation and the (co)homology of the partition complex. More precisely, there is an isomorphism of integral $\Sigma_n$ representations \[\Lierep_n\cong {\widetilde \HH}^{n-3}(\Pi_n)^{\pm}.\]
The superscript $\pm$ indicates tensoring with the sign representation. This isomorphism was first proved by H. Barcelo~\cite{barcelo}. Our treatment of this subject was influenced by the paper of Wachs~\cite{wachs}, in which Barcelo's isomorphism was generalized, and perhaps simplified.

We will now describe Barcelo's isomorphism, up to a sign indeterminacy. The nerve of $\Pi_n$ is an $n-3$-dimensional simplicial complex. Let us identify top-dimensional simplices of $|\Pi_n|$ with increasing chain of partitions of $\n$ of the form $\lambda_1<\lambda_1<\cdots<\lambda_{n-2}$. Since this is a maximal sequence of partitions, each $\lambda_{i+1}$ is obtained from $\lambda_i$ by dividing one of the equivalence classes of $\lambda_i$ in two parts. For $i=1, \ldots, n-2$, $\lambda_i$ has $i+1$ equivalence classes.

We work with simplicial homology and cohomology. A top-dimensional simplex of $|\Pi_n|$ represents an element of $\HH^{n-3}(|\Pi_n|)$ via the cochain (which is automatically a cocycle) that takes value $1$ on this simplex and is zero otherwise.

Later in the paper we will need the following simple lemma.
\begin{lemma}\label{lemma: nonzero}
Every top-dimensional simplex of $|\Pi_n|$ represents a non-divisible (in particular non-zero) element of $\HH^{n-3}(|\Pi_n|)$.
\end{lemma}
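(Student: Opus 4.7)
The plan is to exhibit, for the given top-dimensional simplex $\sigma$, an $(n-3)$-cycle $z$ in the simplicial chain complex of $|\Pi_n|$ whose coefficient on $\sigma$ is $\pm 1$; then $\phi_\sigma(z)=\pm 1$, which forces $[\phi_\sigma]$ to be non-divisible (any $d>1$ dividing it would have to divide this pairing). Equivalently, via the long exact sequence of the pair $(|\Pi_n|,A)$ with $A=|\Pi_n|\setminus\mathrm{int}(\sigma)$, together with $H_{n-3}(|\Pi_n|,A)\cong\mathbb{Z}\langle\sigma\rangle$ (since the quotient $|\Pi_n|/A$ is $S^{n-3}$) and $\tilde H_{n-4}(|\Pi_n|)=0$ (as $|\Pi_n|$ is a wedge of $(n-3)$-spheres), the claim reduces to showing that $\partial\sigma$ bounds in $A$, i.e.\ that the class $[\partial\sigma]\in H_{n-4}(A)$ vanishes.

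To produce an explicit $(n-3)$-chain in $A$ whose boundary is $\partial\sigma$, I would invoke Remark~\ref{remark: more than one} iteratively: for each $i\in\{1,\ldots,n-2\}$ the remark supplies a partition $\lambda_i'\neq\lambda_i$ with $\lambda_{i-1}<\lambda_i'<\lambda_{i+1}$ (setting $\lambda_0=\hat 0$, $\lambda_{n-1}=\hat 1$), hence a top simplex $\sigma^{(i)}=(\lambda_1<\cdots<\lambda_i'<\cdots<\lambda_{n-2})$ in $A$ that shares with $\sigma$ the codimension-one face opposite $\lambda_i$. The idea is to cancel the corresponding term of $\partial\sigma$ using $\pm\sigma^{(i)}$, then cancel the newly introduced boundary terms by further alternative facets, continuing until the process closes up. Termination is guaranteed because $|\Pi_n|$ is $(n-4)$-connected, so any intermediate $(n-4)$-cycle that appears must eventually bound.

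The main obstacle will be the bookkeeping in this iterative cancellation, since each substitution introduces several fresh codimension-one faces that themselves must be dealt with. A cleaner parallel approach I would pursue is a general-position / cellular-approximation argument showing $A$ is $(n-4)$-connected directly: the entire $(n-4)$-skeleton of $|\Pi_n|$ lies in $A$, so any map $S^k\to|\Pi_n|$ with $k\le n-4$ (in particular the cycle $\partial\sigma$) can after a cellular homotopy be taken to miss $\mathrm{int}(\sigma)$, and likewise for null-bounding chains in the range where cellular approximation applies. The delicate point is the borderline dimension $k=n-4$, where a null-bounding disk has the same dimension as the ambient complex; handling this requires the specific combinatorics of $\Pi_n$ (again Remark~\ref{remark: more than one}) rather than a pure dimension count, and is exactly where the two approaches meet.
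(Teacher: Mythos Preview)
Your reduction is correct but you have not closed the gap you yourself flag. The equivalence
\[
[\phi_\sigma]\text{ primitive}\iff H_{n-3}(|\Pi_n|)\twoheadrightarrow H_{n-3}(|\Pi_n|,A)\cong\integers\iff [\partial\sigma]=0\text{ in }H_{n-4}(A)
\]
is fine, but it is a tautological reformulation, not a proof. Your termination argument for the iterative cancellation is circular: $(n-4)$-connectedness of $|\Pi_n|$ only tells you that an intermediate $(n-4)$-cycle bounds in $|\Pi_n|$, and the obvious bounding chain is $\sigma$ itself, which is not in $A$. Writing such a bound as $c'+m\sigma$ with $c'$ in $A$ only yields $(1-m)[\partial\sigma]=0$ in $H_{n-4}(A)$, with no control over $m$. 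Your cellular-approximation alternative has the same failure in the critical dimension, as you note; Remark~\ref{remark: more than one} supplies local moves but does not by itself organize them into a global null-homotopy.

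The paper's proof avoids this by writing down an explicit cycle. To each tree $T$ on vertex set $\n$ one associates an embedding of the boolean lattice $\overline{\Bcal}_E$ of its edge set (opposite order) into $\Pi_n$; its realization is $S^{n-3}$, and the image of the fundamental class is a simplicial $(n-3)$-cycle in which each maximal chain coming from a linear order on $E$ appears with coefficient $\pm 1$. Given a maximal chain $\Lambda=(\lambda_1<\cdots<\lambda_{n-2})$, one builds a specific tree $T_\Lambda$ by choosing, for each splitting step $\lambda_{i}\to\lambda_{i+1}$, one edge joining the two new parts; one checks $T_\Lambda$ is connected with $n-1$ edges, hence a tree, and that $\Lambda$ is one of the chains in the resulting cycle. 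This gives directly a cycle $z$ with $\langle\phi_\sigma,z\rangle=\pm 1$. If you want to rescue your approach, this tree construction is exactly the missing ingredient that makes the ``iterative cancellation'' close up.
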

\begin{proof}
Let $\Lambda$ be a top-dimensional simplex of $|\Pi_n|$, and let $c_{\Lambda}$ be the cochain represented by $\Lambda$. It is enough to show that there exists an $n-3$-dimensional simplicial cycle $\sigma$ of $|\Pi_n|$ such that $\langle c_\Lambda, \sigma\rangle=\pm 1$. Such a cycle $\sigma$ is an integral linear combination of top-dimensional simplices, in which $\Lambda$ appears with coefficient $\pm 1$.

We will now describe a way to construct top-dimensional cycles of $|\Pi_n|$. We learned it from the paper of Wachs~\cite{wachs}. Let $T$ be a tree with vertex set $\{1, \ldots, n\}$. Let $E$ be the set of edges of $T$. This is a set of size $n-1$. A subset $U\subset E$ determines a partition of $\{1, \ldots, n\}$ - the connected components of the subgraph of $T$  with vertex set $\{1, \ldots, n\}$ and edge set $U$. The empty set and $E$ correspond to the discrete and the indiscrete partition respectively. Let $\overline{\Bcal}_E$  the poset of proper non-empty subsets of $E$. We have an inclusion of posets from the opposite of $\overline{\Bcal}_E$ into $\Pi_n$. We saw in Example~\ref{example: sphere} that the geometric realization of $\overline{\Bcal}_E$, and therefore also of its opposite, is homeomorphic to $S^{n-3}$. Passing to geometric realizations, we obtain a map $S^{n-3}\to |\Pi_n|$. Choose a fundamental simplicial cycle in dimension $n-3$ of $|\overline{\Bcal}_E|$. Its a linear combination of maximal simplices of $|\overline{\Bcal}_E|$, where each simplex appears with coefficient $\pm 1$. It determines a cycle of $|\Pi_n|$. Thus we associated with every tree $T$ with vertex set $\{1, \ldots, n\}$ a simplicial cycle of $\Pi_n$. By construction, this cycle is a linear combination of maximal simplices of $|\Pi_n|$, where each simplex occurs with coefficient $\pm 1$ or $0$.

It remains to show that for every maximal simplex $\Lambda$ of $|\Pi_n|$, one can find a tree $T_\Lambda$, such that $\Lambda$ occurs in the cycle associated with $T_\Lambda$ with a non-zero coefficient. Let $\lambda_1<\lambda_2 <\cdots <\lambda_{n-2}$ be the chain of partitions comprising $\Lambda$. Let us augment $\Lambda$ by adding the indiscrete and the discrete partition, to obtain the chain $\hat0=\lambda_0<\lambda_1<\lambda_2 <\cdots <\lambda_{n-2}<\lambda_{n-1}=\hat 1$. Each $\lambda_i$ has $i+1$ equivalence classes, and each $\lambda_{i+1}$ is obtained from $\lambda_i$ by breaking up one equivalence class in two. Define the tree $T_\Lambda$ as follows: the vertex set is $\{1, \ldots, n\}$. For each $i$, let $S_i$ be the equivalence class of $\lambda_i$ that is broken in two parts in $\lambda_{i+1}$. Let the two parts of $S_i$ by $U_i$ and $U_i'$. Then for each $i=0, \ldots, n-2$, let $e_{i+1}$ be the edge connecting the smallest element of $U_i$ with the smallest element of $U_i'$ (in fact, one can connect any element of $U_i$ with any element of $U_i'$). Let $T_\Lambda$ be the graph with vertex set $\{1,\ldots, n\}$ and edge set $e_1, \ldots, e_{n-1}$. Thus $T_\Lambda$ has $n-1$ edges. We claim that $T_\Lambda$ is connected. We want to prove that any two vertices of $T_\Lambda$ are connected by a path. We will prove that if $s$ and $t$ are in the same class of $\lambda_i$ for some $i$ then they are connected by a path. We argue by downward induction on $i$. For $i=n-1$, $\lambda_i$ is the discrete partition, so it is obvious. Suppose it is true for $i+1$. Suppose $s$ and $t$ are in the same class of $\lambda_i$. If they are in the same class of $\lambda_{i+1}$ then they are connected by a path, by induction assumption. Suppose they are not in the same class of $\lambda_{i+1}$. Then necessarily one of them is in $U_i$ and the other in $U_i'$. By construction, there is an edge connecting an element of $U_i$ with an element of $U_i'$. Thus there is a path connecting $s$ and $t$. It follows that if $s$ and $t$ are in the same component of $\lambda_0$ then they are in the same path. But $\lambda_0$ is the indiscrete partition so any two vertices of $T_\Lambda$ are connected by a path. Since $T_\Lambda$ is a connected graph with $n$ vertices and $n-1$ edges, it must be a tree.

It remains to show that $\Lambda$ occurs in the cycle associated with $T_\Lambda$. This is equivalent to showing that $\Lambda$ occurs as a chain of partitions associated with a decreasing chain of subgraphs of $T_\Lambda$. This is obvious: just take the sequence of graphs $T_{\Lambda}\setminus\{e_1\}, T_{\Lambda}\setminus\{e_1, e_2\}, \dots, T_\Lambda\setminus\{e_1, \ldots, e_{n-2}\}$. An easy induction argument shows that the partition associated with $T_{\Lambda}\setminus\{e_1, \ldots, e_j\}$ is $\lambda_j$, and we are done.
\end{proof}

We want to describe a map (at least up to sign) $\Lierep_n\to \widetilde\HH^{n-3}(\Pi_n)$. As usual, we represent elements of $\Lierep_n$ by parenthesized monomials in $x_1, \ldots, x_n$. 
%We note that there is a close correspondence between monomials and chains of partitions of certain type.
%\begin{definition}\label{definition: saturated binary}
%Assume that $n\ge 3$. Let $\Lambda=(\lambda_1<\lambda_2<\cdots<\lambda_k)$ be an increasing chain of partitions of $\n$. We say that $\Lambda$ is {\it saturated binary} if
%\begin{itemize}
%\item $\lambda_1$ has two equivalence classes.
%\item every equivalence class of each $\lambda_i$ that has more than two elements is the union of exactly two equivalence classes of $\lambda_{i+1}$. If it has one or two elements, then it also is an equivalence class of $\lambda_{i+1}$.
%\item each equivalence class of $\lambda_k$ has one or two elements.
%\end{itemize}
%\end{definition}
%Let $w$ be a monomial in non-commuting and non-associative variables $x_1, \ldots, x_n$, containing each $x_i$ exactly once. 
There is a well-known bijection between such monomials and planar binary trees. More precisely, a planar binary tree is rooted tree, where every node has either zero or two children, called the left and right child. Nodes with zero children are called leaves, and nodes with two children are called internal. A labeling of a binary tree (by a set $S$) is a bijection between its set of leaves and $S$. 

%There is a bijection between the set of parenthesized monomials in the elements of a set $S$ that contain each variable once, and the set of planar binary trees labeled by $S$. It is defined inductively as follows. Let $w$ be a monomial in the elements o f$S$. If $S$ has one 
%
%Let $T$ be a binary tree. Then either $T$ consists of a single node, or it is uniquely determined by an ordered pair of binary trees $(T_L, T_R)$, where $T_L$ is the left subtree of $T$ (the tree of descendants of the left child of the root) and $T_R$ is the right subtrees. 
There is a bijection between the set of parenthesized multilinear monomials in a given non-empty finite set $S$, and the set of (isomorphism classes of) planar binary trees labeled with $S$. The bijection is defined as follows. If $S$ has one element, say $x$, the tree associated with the monomial $x$ is the tree with one vertex, labeled with $x$. Now suppose that the bijection was defined for all $S$ with fewer than $n$ elements. Let $w$ be a monomial in $n$ variables. Then necessarily $w=[u, v]$, where $u$ and $v$ are monomials in fewer than $n$ variables. By hypothesis, we have associated to $u$ and $v$ binary trees $T_u$ and $T_v$. Let $T_w$ be the unique binary tree whose left and right subtrees are $T_u$ and $T_v$ respectively. This defines the bijection.

Let $w$ be a monomial on $n$ letters, as above, and let $T_w$ the labeled binary tree associated with $w$. Let us call the labels $\{1, \ldots, n\}$. The set of internal nodes of $T_w$ has a natural partial ordering: we say that $s<t$ if $t$ is a descendant of $s$. 
\begin{definition}
A {\it linearization} of a binary tree is a linear order on the set of its internal nodes that is compatible with the natural order. 
\end{definition}
\begin{example}
The {\it preorder} is the linear order of internal nodes of a binary tree that is defined recursively as follows. Assuming the tree has more than one node, list the root first, then list the internal nodes of the right subtree in preorder, then list the internal nodes of the left subtree in preorder.
\end{example}

To a linearization of $T_w$ we associate a maximal chain of partitions of $\n$ as follows. Let $s_1, \ldots, s_{n-1}$ be the internal nodes of $T_w$, listed in the linear order (a binary tree with $n$ leaves has $n-1$ internal nodes). For $1\le j<n-1$, let $T_w\setminus \{s_1, \ldots, s_j\}$ be the graph obtained from $T_w$ by removing the nodes $s_1, \ldots, s_j$ and the edges adjacent to them. This graph is a forest that contains the leaves of $T_w$, which are $\{1, 2, \ldots, n\}$. Connected components $T_w\setminus \{s_1, \ldots, s_j\}$ define a partition of $\n$. Let us denote this partition $\lambda_j$. Clearly $\hat 0\le \lambda_1\le \cdots \le \lambda_{n-1}=\hat 1$. We claim that in fact these are strict inequalities, and moreover each $\lambda_{j+1}$ is obtained from $\lambda_j$ by braking a single equivalence class in two parts. Since the linear order is compatible with the natural order, $s_1$ must be the root of $T_w$. It follows that $T_w\setminus \{s_1\}$ has two connected components: the left subtree and the right subtree (we may as well assume that $n>1$). It follows that $\lambda_1$ has two equivalence classes. For each $j\ge 1$, the parent of $s_{j+1}$ occurs earlier than $s_{j+1}$ in the linear order, while descendants of $s_{j+1}$ occur later in the linear order. It follows that the connected component of $s_{j+1}$ in $T_w\setminus\{s_1, \dots, s_j\}$ is a binary tree with root $s_j$. It follows that $T_w\setminus\{s_1, \dots, s_{j+1}\}$ is obtained from $T_w\setminus\{s_1, \dots, s_j\}$ by braking up the connected component of $s_{j+1}$ into the left subtree and the right subtree. It follows that $\lambda_{j+1}$ is obtained from $\lambda_j$ by breaking a single class in two parts. It follows that we have associated with a linearization of $T_w$ a maximal simplex of $\Pi_n$, given by the chain $\lambda_1<\cdots < \lambda_{n-2}$. 

\begin{remark}
It is easy to show that the procedure that we just described has a kind of converse. For every maximal chain of $\Pi_n$ there exists a labeled planar binary tree, together with a linearization, that induces the given maximal chain. The binary tree is unique except for the planar structure.
\end{remark}

\begin{lemma}\label{lemma: order}
Let $w$ be a  monomial in $x_1, \ldots, x_n$, as before. The maximal chains of $\Pi_n$ obtained from different choices of linearization of the binary tree $T_w$ all project to the same element of $\HH^{n-3}(\Pi_n)$, up to sign.
\end{lemma}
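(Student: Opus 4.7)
My plan is to reduce to a single adjacent swap in the linear order, and then invoke the simplicial coboundary formula.

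First, a standard fact about finite posets: any two linear extensions are connected by a sequence of swaps of adjacent incomparable elements. Applied to the set of internal nodes of $T_w$ with its ``descendant'' partial order, this reduces the lemma to the following case. Suppose $s_1, \ldots, s_{n-1}$ is a linearization of $T_w$ and $s_j, s_{j+1}$ are incomparable in the tree; write $\Lambda=(\lambda_1<\cdots<\lambda_{n-2})$ for the associated maximal chain, and $\Lambda'=(\lambda'_1<\cdots<\lambda'_{n-2})$ for the chain coming from the linearization $s_1,\ldots,s_{j-1},s_{j+1},s_j,s_{j+2},\ldots,s_{n-1}$. I claim $\lambda_i=\lambda'_i$ for every $i\ne j$. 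Indeed, for $i<j$ the partition $\lambda_i$ only depends on $s_1,\ldots,s_{i}$, and for $i>j$ the partition $\lambda_i$ only depends on the unordered set $\{s_1,\ldots,s_i\}$ (it is the partition of $\n$ determined by the forest $T_w\setminus\{s_1,\ldots,s_i\}$). So $\Lambda$ and $\Lambda'$ differ at position $j$ and only at position $j$.

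Next, I need to understand the open interval $(\lambda_{j-1},\lambda_{j+1})$ in $\Pi_n$. Because $s_j,s_{j+1}$ are incomparable in $T_w$, neither is a descendant of the other. From the description in the excerpt, the connected component of $s_j$ in $T_w\setminus\{s_1,\ldots,s_{j-1}\}$ is a binary subtree rooted at $s_j$, and $s_{j+1}$ lies in a \emph{different} such component. Hence the splittings prescribed by $s_j$ and by $s_{j+1}$ act on two \emph{different} equivalence classes of $\lambda_{j-1}$. Thus $\lambda_{j+1}$ is obtained from $\lambda_{j-1}$ by splitting two distinct classes, each into two pieces, and by Remark~\ref{remark: more than one} the interval $(\lambda_{j-1},\lambda_{j+1})$ has exactly two elements, namely $\lambda_j$ and $\lambda'_j$.

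Now I apply the simplicial coboundary formula. Let $\mu$ be the $(n-4)$-chain obtained from $\Lambda$ (equivalently, from $\Lambda'$) by deleting the partition at position $j$, and let $c_\mu$ be the dual cochain. The top-dimensional simplices of $|\Pi_n|$ that have $\mu$ as a face are obtained by inserting a single partition into $\mu$ at some position. A rank count shows that any insertion above $\lambda_{n-2}$, below $\lambda_1$, or between two consecutive entries of $\mu$ other than $\lambda_{j-1}$ and $\lambda_{j+1}$ is impossible in $\Pi_n$, because all the other consecutive pairs in $\mu$ have rank difference exactly $1$. Insertions between $\lambda_{j-1}$ and $\lambda_{j+1}$ correspond precisely to elements of the interval $(\lambda_{j-1},\lambda_{j+1})$, so there are exactly two of them, and they yield $\Lambda$ and $\Lambda'$. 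Thus
\[
\delta c_\mu \;=\; (-1)^{j}\bigl(c_\Lambda+c_{\Lambda'}\bigr),
\]
which shows $[c_\Lambda]=-[c_{\Lambda'}]$ in $\widetilde{\HH}^{n-3}(\Pi_n)$. Iterating this across a sequence of adjacent-incomparable swaps connecting any two linearizations of $T_w$ gives the lemma.

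The main obstacle is conceptual rather than computational: one has to identify the correct $(n-4)$-chain $\mu$ whose coboundary is a two-term sum and verify that no other top-dimensional simplex contributes; both points rest on the combinatorial observation that incomparability in the tree translates into independence (i.e., disjoint action) of the two splittings, which keeps the relevant interval exactly two elements wide. The initial reduction to a single transposition is standard, and the sign bookkeeping is routine.
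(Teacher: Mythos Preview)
Your argument is correct. The paper's own proof is simply a citation to Lemma~5.2 of Wachs~\cite{wachs}, noting that Wachs even pins down the sign precisely; no independent argument is given. Your route is genuinely different in that it is self-contained: you reduce to a single adjacent swap of incomparable internal nodes (the standard fact that any two linear extensions of a finite poset are connected by such swaps), observe that incomparability forces the two splittings to act on distinct blocks of $\lambda_{j-1}$ so that the open interval $(\lambda_{j-1},\lambda_{j+1})$ has exactly two elements (this is precisely the ``two intermediate partitions'' case of Remark~\ref{remark: more than one}), and then read off $[c_\Lambda]=-[c_{\Lambda'}]$ from the coboundary of the codimension-one face $\mu$. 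The only place to tighten is the boundary indexing: when $j=n-2$ the symbol $\lambda_{j+1}$ should be read as $\hat 1$, so one is really counting refinements of $\lambda_{n-3}$ in $\Pi_n$ rather than an interval whose top lies in $\Pi_n$; your rank-count argument handles this case without change, but it is worth saying explicitly. (The case $j=1$ cannot occur, since $s_1$ is the root and hence comparable to every internal node.) What your approach buys is an elementary, paper-internal proof that avoids importing Wachs's sign bookkeeping; what the citation buys is brevity and a pointer to the sharper statement with signs, which the paper does not need.
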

\begin{proof}
This follows from Lemma 5.2 in the paper of Wachs~\cite{wachs}. Our statement is simpler than [loc. cit.], because Wachs identifies the sign precisely, while we don't need to. We remark that there are slight differences of notation and convention between our presentation and Wachs's. For example, we list chains of partitions from coarsest to finest, rather than the other way around. Because of this, the chain of partition that we get from ``preorder'' corresponds to the chain that Wachs gets out of a ``postorder''. Notwithstanding these unimportant differences, it should be clear that what we call ``different linearizations of $T_w$'' corresponds exactly to what Wachs calls ``linear extensions of the postorder of the internal nodes of $T$''.
\end{proof}
From now on, choose a linearization of every binary planar tree labeled by $\{x_1, \ldots, x_n\}$. Suppose that $w$ is a multilinear monomial in $x_1, \ldots, x_n$. Let $T_w$ be the planar binary tree obtained from $w$. Let $\Lambda_w$ be the maximal chain of $\Pi_n$ that we obtained from the tree $T_w$ using our chosen linearization. Let $[\Lambda_w]\in \HH^{n-3}(\Pi_n)$ be the cohomology class corresponding to $\Lambda_w$. By lemma~\ref{lemma: order}, the cohomology class $[\Lambda_w]$ is independent of the choice of linearization of $T_w$, up to sign. A monomial $w$ represents an element of $\Lierep_n$, but the assignment $w\mapsto [\Lambda_w]$ does not quite induce a homomorphism $\Lierep_n\to \HH^{n-3}(\Pi_n)^{\pm}$, because of the sign indeterminacy. But there is an isomorphism between these modules that is compatible with this assignment. This is the content of the following theorem.
\begin{theorem}\label{theorem: barcelo}
For all $n\ge 1$ there is an isomorphism of $\Sigma_n$-modules
\[
\epsilon\colon \Lierep_n\to \widetilde\HH^{n-3}(\Pi_n)^{\pm}
\]
At least for $n\ge 3$, the isomorphism sends every element of $\Lierep_n$ represented by a monomial $w$ either to $[\Lambda_w]$ or to $-[\Lambda_w]$. 
\end{theorem}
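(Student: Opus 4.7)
The plan is to construct the map $\epsilon$ on generators using a chosen linearization, check that the assignment $w\mapsto [\Lambda_w]$ descends to a well-defined Lie-algebra homomorphism into $\widetilde\HH^{n-3}(\Pi_n)^{\pm}$, and finally conclude by a rank argument that it is an isomorphism. Both sides are free abelian of rank $(n-1)!$: the source by the Witt formula~\eqref{equation: witt} applied to multidegree $(1,\ldots,1)$, and the target because $|\Pi_n|\simeq \bigvee_{(n-1)!}S^{n-3}$. Fix once and for all a linearization of every labeled planar binary tree (for concreteness, the preorder). For a multilinear monomial $w$ in $x_1,\ldots,x_n$, define $\tilde\epsilon(w)=[\Lambda_w]$, where $\Lambda_w$ is the maximal chain constructed from $T_w$ and the chosen linearization. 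By Lemma~\ref{lemma: order} the class $[\Lambda_w]$ depends on the choice of linearization only through an overall sign, so this gives a well-defined homomorphism from the free abelian group on multilinear monomials into $\widetilde\HH^{n-3}(\Pi_n)^{\pm}$.

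Next I would verify that $\tilde\epsilon$ descends to $\Lierep_n$, i.e.\ vanishes on the square-zero and Jacobi relations (the sign twist on the target is precisely what absorbs the Lie signs, and it also dictates how a transposition of subtrees changes the orientation of the associated top-dimensional simplex). For the square-zero relation this is immediate: swapping the two children of the root of $T_w$ produces a chain $\Lambda_{w'}$ that differs from $\Lambda_w$ by the transposition corresponding to that split, which acts on the top cohomology by the sign of that transposition. For the Jacobi relation, the three monomials $[[u,v],w]$, $[[v,w],u]$, $[[w,u],v]$ yield three maximal chains of $\Pi_n$ that share a common codimension-$1$ face (obtained by deleting the partition of rank two), and an elementary computation shows that, with the right sign conventions, the sum of the three corresponding cochains is the coboundary of the indicator cochain of that shared face. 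This bookkeeping is essentially Lemma 5.2 (and the surrounding discussion) of Wachs~\cite{wachs}; I would follow that reference closely.

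It remains to show that the resulting $\Sigma_n$-equivariant map $\epsilon\colon\Lierep_n\to \widetilde\HH^{n-3}(\Pi_n)^{\pm}$ is an isomorphism. Since the two sides are free abelian of the same rank, it suffices to prove either injectivity or surjectivity rationally, and then to upgrade to an integral isomorphism by checking that the image is not contained in any proper direct summand. For this I would take the right-normed basis $\{[x_{\sigma(1)},[x_{\sigma(2)},[\cdots,[x_{\sigma(n-1)},x_n]\cdots]]]:\sigma\in\Sigma_{n-1}\}$ of $\Lierep_n$, observe that these monomials produce pairwise distinct maximal chains $\Lambda_w$, and pair them with the dual tree-cycles constructed in the proof of Lemma~\ref{lemma: nonzero}: ordering the right-normed monomials by $\sigma$ and the tree-cycles by the corresponding linear trees, the pairing matrix is triangular with $\pm 1$ on the diagonal, forcing linear independence of the images and proving integral surjectivity. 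The main obstacle will be the verification of the Jacobi relation in cohomology, where the sign conventions coming from the linearization must be reconciled with those of the sign representation; this is the only step that is not essentially formal, and it is where I expect the most care will be required.
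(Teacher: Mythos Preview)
The paper does not prove this theorem at all: its entire proof is the sentence ``This is a theorem of Barcelo. It follows from~\cite[Theorem 5.4]{wachs}.'' So your proposal is not a variant of the paper's argument; it is an attempt to reprove a result that the paper imports as a black box. Your outline is in fact close in spirit to what Wachs does, so the right comparison is between your sketch and Wachs's actual construction.

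The one place your sketch is too optimistic is the handling of signs. You propose to fix the preorder linearization and set $\tilde\epsilon(w)=[\Lambda_w]$, then check the Lie relations. But the paper explicitly warns just before the theorem that ``the assignment $w\mapsto [\Lambda_w]$ does not quite induce a homomorphism $\Lierep_n\to \HH^{n-3}(\Pi_n)^{\pm}$, because of the sign indeterminacy.'' Concretely, your square-zero argument is not right as stated: if $w=[u,v]$ and $w'=[v,u]$, the preorder linearizations of $T_w$ and $T_{w'}$ do not produce chains that ``differ by a transposition''; they are two \emph{different} linearizations of the same unlabeled binary tree, and the relation $[\Lambda_w]=\pm[\Lambda_{w'}]$ comes from Lemma~\ref{lemma: order}, whose sign you have not computed. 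What Wachs actually does is attach to each monomial $w$ an explicit sign $\mathrm{sgn}(w)$ (depending on the combinatorics of $T_w$, not just on a global choice of linearization) and prove that $w\mapsto \mathrm{sgn}(w)\,[\Lambda_w]$ respects both the anti-commutativity and Jacobi relations after twisting by the sign representation. Your triangular-pairing argument for bijectivity is fine once the map is known to be well defined, but the well-definedness step needs this extra sign function, and both relations (not just Jacobi) require it. If you want to carry this through, you should follow Wachs's sign bookkeeping rather than hoping a single linearization choice makes everything work.
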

\begin{proof}
This is a theorem of Barcelo. It follows from~\cite[Theorem 5.4]{wachs}.
\end{proof}

\section{The collapse map}\label{section: collapse}
Throughout this section, fix positive integers $n=n_1+\cdots +n_k$. Let $\Sigma_{n_1}\times\cdots\times\Sigma_{n_k}\subset \Sigma_n$ be a fixed Young subgroup. In this section we construct the $\Sigma_{n_1}\times\cdots\times\Sigma_{n_k}$-equivariant map that appears in the statement of the main theorem
\begin{equation}\label{equation: collapse}
|\Pi_n| \longrightarrow \bigvee_{d|g} \left(\bigvee_{B(\frac{n_1}{d}, \ldots, \frac{n_k}{d})} \Sigma_{n_1}\times \cdots \times {\Sigma_{n_k}}_+ \wedge_{\Sigma_d} S^{n-d-1} \wedge |\Pi_d|^\diamond\right).
\end{equation}
This map will be constructed as a collapse map associated with an embedding of a codimension zero subcomplex into $\Pi_n$. Fix a positive $d$ that divides $n_1, \ldots, n_k$ and a monomial $w\in B(\frac{n_1}{d}, \ldots, \frac{n_k}{d})$. In Section~\ref{section: lie}, Definition~\ref{definition: resolution} we defined a function from $B(\frac{n_1}{d}, \ldots, \frac{n_k}{d})$ to the set of multilinear monomials in ${\frac{n}{d}}$ variables. In Section~\ref{section: lie to partitions} we showed how to associate with such a monomial a maximal chain of $\Pi_{\frac{n}{d}}$. Starting with $w\in B(\frac{n_1}{d}, \ldots, \frac{n_k}{d})$, let $\lambda_1<\cdots < \lambda_{\frac{n}{d}-2}$ be the associated maximal chain of $\Pi_{\frac{n}{d}}$. Let us add the indiscrete partition to this chain, to obtain the extended chain $\hat0=\lambda_0<\lambda_1<\cdots < \lambda_{\frac{n}{d}-2}$. Let us denote this extended chain $\Lambda_w$. Note that since $\Lambda_w$ is a maximal chain, it is, in particular a binary chain (Definition~\ref{definition: binary}).

Since $d$ divides $n$, we may associate with it a partition with $d$ equivalent classes, each of size $\frac{n}{d}$. Let us give a formal definition.
\begin{definition}
Suppose $0<d|n$. Then $\rho_d$ is the partition of $\n$ whose equivalence classes are
\begin{multline*}
\left\{1, 1+ d, 1 + 2d, \ldots, 1+\left(\frac{n}{d}-1\right)d\right\}, \\ \left\{2, 2+ d, 2 + 2d, \ldots, 2+  \left(\frac{n}{d}-1\right)d\right\}, \ldots \\\ldots, \{d, 2d, \ldots, n\}.
\end{multline*}
In particular, $\rho_d$ has $d$ equivalence classes, each canonically isomorphic to $\mathbf{\frac{n}{d}}$, via the order-preserving map.
\end{definition}
Suppose $\lambda$ is a partition of $\mathbf{\frac{n}{d}}$. Define $d\times \lambda$ to be the partition of $\n$ that agrees with $\lambda$ on each equivalence class of $\rho_d$, using the canonical isomorphism of the equivalence class with $\mathbf{\frac{n}{d}}$. In particular, if $\lambda_0$ is the indiscrete partition of $\mathbf{\frac{n}{d}}$ then $d\times \lambda_0=\rho_d$.

Applying this operation to the chain of partitions $\hat0=\lambda_0<\lambda_1<\cdots < \lambda_{\frac{n}{d}-2}$ of $\mathbf{\frac{n}{d}}$ that we defined above, we obtain the following chain of partitions of $\n$ $$d\times \Lambda_w=(\rho_d <d\times \lambda_1 <\cdots < d\times \lambda_{\frac{n}{d}-2}).$$
Note that $d\times\Lambda_w$ is still a binary chain of partitions. Note that in the case $d=1$, $\rho_d$ is the indiscrete partition, and therefore is not in $\Pi_n$. 
\begin{definition}
Let $\Pi_w$ be the star (in $\Pi_n$) of the chain $d\times \Lambda_w$ (for $d=1$, $\Pi_w$ is really just the  chain $\lambda_1<\cdots < \lambda_{n-2}$)
\end{definition}
Since $d\times \Lambda_w$ is a binary chain, whose initial term is $\rho_d$, which is a partition with $d$ equivalence classes, the following lemma is an immediate consequence of isomorphism~\eqref{equation: binary star} and Proposition~\ref{proposition: boundary}.
\begin{lemma}\label{lemma: homeomorphism}
There is a natural homeomorphism
\[
|\Pi_w| \cong  \Delta^{n-d-1}*|\Pi_d|.  
\]
Moreover, the boundary of $|\Pi_w|$ in $|\Pi_n|$ is $S^{n-d-2}*|\Pi_d|$, where $S^{n-d-2}$ is the topological boundary of $\Delta^{n-d-1}$. 
\end{lemma}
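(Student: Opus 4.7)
The strategy is to recognize $d\times\Lambda_w$ as a binary chain in the sense of Definition~\ref{definition: binary} whose minimum element $\rho_d$ has exactly $d$ equivalence classes, and then to appeal directly to the homeomorphism~\eqref{equation: binary star} and to Proposition~\ref{proposition: boundary}. Once the binary hypothesis is in place, both conclusions of the lemma are immediate consequences of results already proved in Section~\ref{subsection: partitions}.

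To verify the binary condition, I would argue in two steps. First, since $\lambda_1 < \cdots < \lambda_{n/d-2}$ arises from a linearization of the planar binary tree $T_w$, each $\lambda_{i+1}$ is obtained from $\lambda_i$ by splitting a single equivalence class into two parts; adjoining $\lambda_0 = \hat 0$ preserves this property, and the final partition $\lambda_{n/d-2}$ has $n/d - 1$ classes on a set of size $n/d$, so every class of $\lambda_{n/d-2}$ has at most two elements. Second, the operation $\lambda \mapsto d\times\lambda$ performs the same split simultaneously inside each of the $d$ equivalence classes of $\rho_d$. Hence every equivalence class of $d\times\lambda_i$ is either unchanged or split into exactly two classes of $d\times\lambda_{i+1}$, and each class of $d\times\lambda_{n/d-2}$ still has at most two elements. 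Thus $d\times\Lambda_w = (\rho_d < d\times\lambda_1 < \cdots < d\times\lambda_{n/d-2})$ is a binary chain in $\Pi_n$ whose initial element has $c = d$ equivalence classes of $\n$.

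With this in hand, applying~\eqref{equation: binary star} with $c = d$ immediately yields
\[
|\Pi_w| \;=\; |\star(d\times\Lambda_w)| \;\cong\; |\Pi_d| * \Delta^{n-d-1},
\]
which is the first assertion of the lemma. For the second assertion, Proposition~\ref{proposition: boundary} identifies the topological boundary of $|\star(d\times\Lambda_w)|$ inside $|\Pi_n|$ as $|\Pi_d| * \partial\Delta^{n-d-1} = |\Pi_d| * S^{n-d-2}$, exactly as claimed.

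I expect no real obstacle here; the proof is essentially bookkeeping on top of the machinery of Section~\ref{subsection: partitions}. The only point worth watching is the degenerate case $d = 1$, in which $\rho_1 = \hat 0 \notin \Pi_n$ and $\Pi_w$ reduces to the single maximal chain $\lambda_1 < \cdots < \lambda_{n-2}$. In that case the asserted formula $\Delta^{n-2} * |\Pi_1|$ collapses, via the convention $|\Pi_1| = S^{-2}$, to $\Delta^{n-3}$, which is precisely the realization of a top-dimensional simplex of $|\Pi_n|$; so the statement remains correctly interpreted.
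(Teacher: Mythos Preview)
Your proposal is correct and follows essentially the same approach as the paper: the paper notes just before the lemma that $d\times\Lambda_w$ is a binary chain whose initial term $\rho_d$ has $d$ equivalence classes, and then states that the lemma is an immediate consequence of isomorphism~\eqref{equation: binary star} and Proposition~\ref{proposition: boundary}. Your write-up simply makes the binary verification explicit and handles the $d=1$ edge case, which the paper defers to a remark after the lemma.
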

Moreover, the homeomorphism is $\Sigma_d$-equivariant, where $\Sigma_d$ acts on $|\Pi_w|$, $\Delta^{n-d-1}$ and $|\Pi_d|$ in a natural way.
\begin{remark}
In the case $d=1$ the lemma says that $|\Pi_w|\cong \Delta^{n-3}$. The reader may want to check this case ``by hand''.
\end{remark}

 The embedding $|\Pi_w|\hookrightarrow |\Pi_n|$ induces a ``collapse'' map 
 \[
 \Pi_n \to |\Pi_w|/\partial|\Pi_w| \stackrel{\cong}{\to} S^{n-d-1} * |\Pi_d|.\]
 Let us digress briefly to review the generalities of this construction. 
 
 Let $X$ be a topological space, and let $A_1, \ldots, A_m\subset X$ be closed subsets whose interiors are pairwise disjoint. Let $A$  be the union of the $A_i$s. There is a sequence of maps
\begin{equation}\label{equation: general collapse}
X \to X/\overline{X\setminus A} \stackrel{\cong}{\leftarrow} A / \partial A \to A/\cup_i \partial A_i \stackrel{\cong}{\leftarrow} \bigvee_{i=1}^m A_i/\partial A_i.
\end{equation}
Here the first map is the quotient map. The second map is induced by the map of pairs $(A, \partial A) \to (X, \overline{X\setminus A})$. The third map is induced by the inclusion $\partial A \hookrightarrow \cup_i \partial A_i$. The last map is induced by the map from the disjoint union to the actual union inside $X$
\[
\coprod_i A_i \to \bigcup_i A_i.
\]
It is easy to see that under the assumption that the $A_i$s only intersect at boundary points, the last map is a homeomorphism. Since  the ``wrong way'' maps in~\eqref{equation: general collapse} are homeomorphisms, we obtain a well defined map $X \to  \bigvee_{i=1}^m A_i/\partial A_i$. We refer to it as the collapse map associated with the inclusions $A_i\hookrightarrow X$.

To go back to the relevant example, the embedding $|\Pi_w|\hookrightarrow |\Pi_n|$ induces a $\Sigma_d$-equivariant quotient map
 \begin{multline*}
q_w\colon  \Pi_n \to |\Pi_w|/\partial|\Pi_w| \stackrel{\cong}{\to} \\ \stackrel{\cong}{\to}(\Delta^{n-d-1} * |\Pi_d|)/ (S^{n-d-2} * |\Pi_d|)\stackrel{\simeq}{\to}S^{n-d-1} * |\Pi_d|.
 \end{multline*}
Our next task is to compare the induced map in cohomology with the homomorphism $\phi_w\colon\Lierep_d\to \Lierep_n$ that we constructed for each $w\in B(\frac{n_1}{d}, \ldots, \frac{n_k}{d})$ ($\phi_w$ is a specialization of~\eqref{equation: lie diagonal}). Recall that there is an isomorphism (for each $n$) of $\Sigma_n$-modules $\epsilon_n\colon \Lierep_n \to \HH^{n-3}(\Pi_n)^{\pm}$ (Theorem~\ref{theorem: barcelo}). 

There are two natural suspension isomorphisms $\HH^{d-3}(|\Pi_d|) \to \HH^{n-3}(S^{n-d-1} * |\Pi_d|)$. Choose one and call it $\sigma$. 

Consider the following (not necessarily commutative) diagram 
%\[
%\begin{array}{ccccc}
%\Lierep_d & & \to & & \Lierep_n \\
%\downarrow & & & & \downarrow \\
%\HH^{d-3}(|\Pi_d|) &\to&   \HH^{n-3}(S^{n-d-1} * |\Pi_d|) & \to & \HH^{n-3}(|\Pi_n|)
%\end{array}
%\]
\[
\xymatrix{
\Lierep_d \ar[rr]^{\phi_w} \ar[d]^{\epsilon_d}  & &\Lierep_n  \ar[d]^{\epsilon_n}\\
\widetilde\HH^{d-3}(|\Pi_d|)  \ar[r]^-\sigma &  \widetilde\HH^{n-3}(S^{n-d-1} * |\Pi_d|) \ar[r]^-{q_w^*} & \widetilde\HH^{n-3}(|\Pi_n|)
}
\]

\begin{lemma}\label{lemma: commutes}
Let $w_d$ be a parenthesized multilinear monomial in $x_1, \ldots, x_d$. It represents an element of $\Lierep_d$. The images of this element in $\widetilde\HH^{n-3}(|\Pi_n|)$ under the two maps around the diagram are the same up to sign.
\end{lemma}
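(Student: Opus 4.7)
The plan is to evaluate both sides of the diagram on a single top-dimensional simplex $\Lambda$ of $|\Pi_n|$, chosen so that it lies inside $|\Pi_w|$ and factors well under the decomposition $|\Pi_w| \cong |\Pi_d| * K$ with $K := |\star(d\times\Lambda_w)_{\ge \rho_d}|$. First I would observe that $T_{\phi_w(w_d)}$ is obtained by grafting a copy of $T_{\tilde w}$ onto each of the $d$ leaves of $T_{w_d}$, so its internal nodes split into the $d - 1$ internal nodes of $T_{w_d}$ and $d$ disjoint blocks of $\frac{n}{d} - 1$ internal nodes, one per copy of $T_{\tilde w}$. I would use a ``lock-step'' linearization of $T_{\phi_w(w_d)}$: first list the internal nodes of $T_{w_d}$ using the chosen linearization of $T_{w_d}$ (the one giving the chain $\Lambda_{w_d}$), then for each $j = 1, \ldots, \frac{n}{d} - 1$ list the $j$-th internal node of $T_{\tilde w}$ in all $d$ copies consecutively. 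Tracking the connected components at each stage shows that the resulting max chain $\Lambda$ of $\Pi_n$ contains $d\times \Lambda_w = (\rho_d < d\times\lambda_1 < \cdots < d\times \lambda_{\frac{n}{d}-2})$ as a subchain, and its portion below $\rho_d$, transported through the isomorphism $(\hat 0, \rho_d) \cong \Pi_d$, is exactly $\Lambda_{w_d}$. Hence $\Lambda \subset |\Pi_w|$, and under the simplicial join $|\Pi_w| \cong |\Pi_d| * K$ it factors as $\Lambda = \Lambda_{w_d} * B_0$ for a specific top simplex $B_0$ of $K$. By Theorem~\ref{theorem: barcelo} together with Lemma~\ref{lemma: order}, this identifies $\epsilon_n(\phi_w(w_d)) = \pm [c_\Lambda]$ in $\widetilde \HH^{n-3}(|\Pi_n|)$.

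Next I would analyze the bottom path at the cochain level. By Lemma~\ref{lemma: homeomorphism} and the surrounding discussion, $|\Pi_w| \cong |\Pi_d| * K$ with $\partial|\Pi_w| \cong |\Pi_d| * \partial K$, where $K \cong \Delta^{n-d-1}$ and $\partial K \cong S^{n-d-2}$. Simplicially, the top simplices of $|\Pi_w|$ are exactly joins $A * B$ of top simplices $A$ of $|\Pi_d|$ and $B$ of $K$, so a cochain-level K\"unneth isomorphism for the relative join sends $\alpha \otimes \beta$ to the top cochain of $|\Pi_w|/\partial|\Pi_w|$ that evaluates on $A * B$ as $\alpha(A)\beta(B)$. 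Applying this with $\alpha = c_{\Lambda_{w_d}}$ and $\beta = c_{B_0}$, and using that $c_{B_0}$ represents $\pm$ a generator of $\HH^{n-d-1}(K, \partial K) \cong \mathbb{Z}$ (as seen by pairing with a relative fundamental cycle of the disk $(K, \partial K)$, in which every top simplex appears with coefficient $\pm 1$), one obtains $\sigma \epsilon_d(w_d) = \pm [c_{\Lambda_{w_d} * B_0}] = \pm [c_\Lambda]$ in $\widetilde \HH^{n-3}(|\Pi_w|/\partial|\Pi_w|)$.

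Finally the collapse map $q_w$ sends every top simplex of $|\Pi_n|$ outside $|\Pi_w|$ to the basepoint, so $q_w^*$ is extension-by-zero on top cochains and $q_w^*[c_\Lambda] = [c_\Lambda]$ in $\widetilde \HH^{n-3}(|\Pi_n|)$. Combining the three steps, both paths around the diagram send $w_d$ to $\pm [c_\Lambda]$, proving the lemma. The main obstacle will be the combinatorial bookkeeping in the first step, verifying that the lock-step linearization really does split $\Lambda$ as $\Lambda_{w_d} * B_0$ under the join decomposition $|\Pi_w| \cong |\Pi_d| * K$; a secondary technicality is the chain-level realization of $\sigma$ via the relative join K\"unneth isomorphism, where one must ensure that all sign conventions are absorbed into the sign ambiguity stated in the lemma.
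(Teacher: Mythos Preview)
Your proposal is correct and follows essentially the same approach as the paper: both arguments identify each side of the diagram as $\pm[c_\Lambda]$ for a maximal simplex $\Lambda$ of $|\Pi_n|$ arising from some linearization of the grafted tree $T_{w_d\circ\tilde w}$ (your ``lock-step'' linearization is just a specific choice making the join factorization $\Lambda=\Lambda_{w_d}*B_0$ explicit), and then invoke Lemma~\ref{lemma: order} to conclude equality up to sign. Your more detailed cochain-level treatment of $\sigma$ and $q_w^*$ fills in what the paper leaves implicit, but the underlying idea is the same.
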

\begin{proof}
In what follows, suppose that $d\ge 3$. The argument needs some minor adjustments for $d=1, 2$. We will do these cases at the end of the proof.

Consider first the image of $w_d$ in $\Lierep_n$ under $\phi_w$. By~\eqref{equation: explicit formula}, it is represented by the monomial $$w_d(\tilde w(x_1, x_{1+d},..., x_{1+n-d}), \tilde w(x_2, x_{2+d},..., x_{2+n-d}), \ldots, \tilde w(x_d, x_{2d},..., x_{n})),$$ where $\tilde w$ is the resolution of $w$ (Definition~\ref{definition: resolution}). The image of this monomial in $\HH^{n-3}(|\Pi_n|)$ is represented up to sign (throughout the proof, ``represented'' means ``represented up to sign'') by a maximal simplex associated with the planar binary tree associated with this monomial. Given a monomial $w$, let $T_w$ be the associated binary tree. It is easy to see that the binary tree for our monomial is obtained by grafting a copy of $T_{\tilde w}$ onto each leaf of $T_{w_d}$, and then labeling the leaves of the $i$-th copy of $T_{\tilde w}$ by $i, i + d, i + 2d, \ldots$. Let us call this tree $T_{w_d\circ \tilde w}$.

Going the other way around the diagram, the image of $w_d$ in $\widetilde\HH^{d-3}(|\Pi_d|) $ is represented by a maximal simplex associated with the planar binary tree $T_{w_d}$. Let's denote this maximal simplex $\Lambda_{w_d}$. The image of $[\Lambda_{w_d}]$ in $\widetilde\HH^{n-3}(S^{n-d-1} * |\Pi_d|)$ is represented by a maximal simplex that is the join of $\Lambda_{w_d}$ with a maximal simplex of $S^{n-d-1}$. The image in $ \widetilde\HH^{n-3}(|\Pi_n|)$ is then represented by a maximal simplex that is the join of $\Lambda_{w_d}$, which is now considered a maximal simplex of $(\Pi_w)_{<\rho_s}\cong \Pi_d$ and a maximal simplex of $(\Pi_w)_{\ge \rho_d}$. It is easy to see that any such join of maximal simplices is obtained from some linearization of the tree $T_{w_d\circ \tilde w}$. A linearization in which the interval vertices of $T_{w_d}$ precede the internal vertices of the copies of $\tilde w$. It follows that the two images of $w_d$ in $\Lierep_n$ are represented by two linearizations of the same planar binary tree $T_{w_d\circ \tilde w}$. It follows by Lemma~\ref{lemma: order} that they are the same up to sign.

In the case $d=1$ or $2$, observe that the image of a generator of $\Lierep_d$ in $\widetilde\HH^{n-3}(S^{n-d-1} * |\Pi_d|)$ is still represented by a maximal simplex of $\star(\Lambda_w)$. The rest of the proof remains the same as before.
\end{proof}
Now let's go back to the situation where $n=n_1+\cdots+n_k$. For every positive $d$ that divides $n_1, \ldots, n_k$ we have a $\Sigma_d$-equivariant homomorphism, induced by the collapse maps defined above.
\[
\bigoplus_{B(\frac{n_1}{d}, \ldots, \frac{n_k}{d})}  \HH^{n-3}(S^{n-d-1}*|\Pi_d|)\to \HH^{n-3}(|\Pi_n|).
\]
This homomorphism can be induced up to a $\Sigma_{n_1}\times \cdots \times \Sigma_{n_k}$-equivariant homomorphism. Taking direct sum over all $d$ that divide $n_1, \ldots, n_k$, one obtains the homomorphism referred to in the following proposition.
\begin{proposition}\label{proposition: collapse isomorphism}
The following homomorphism, induced by the collapse maps that we defined, is an isomorphism
\[
\bigoplus_{\underset{w\in B(\frac{n_1}{d}, \ldots, \frac{n_k}{d})}{ d|\gcd(n_1, \ldots, n_k)}}  \HH^{n-3}(S^{n-d-1}*|\Pi_d|)\otimes_{\Sigma_d} \integers[\Sigma_{n_1}\times\cdots\times \Sigma_{n_k}]\to \HH^{n-3}(|\Pi_n|).
\]
\end{proposition}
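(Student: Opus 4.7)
The plan is to compare the collapse-induced homomorphism with the algebraic branching rule for $\Lierep_n$ established in Proposition~\ref{proposition: algebraic branching}, transported across Barcelo's isomorphism (Theorem~\ref{theorem: barcelo}) and the suspension isomorphism $\sigma$, using the monomial-level compatibility supplied by Lemma~\ref{lemma: commutes}.

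First I would observe that both sides of the map are finitely generated free abelian groups. The target $\widetilde\HH^{n-3}(|\Pi_n|)$ is free of rank $(n-1)!$, and each summand on the source has the form $\widetilde\HH^{n-3}(S^{n-d-1}*|\Pi_d|)\otimes_{\Sigma_d}\integers[\Sigma_{n_1}\times\cdots\times\Sigma_{n_k}]$, which is free abelian because $\integers[\Sigma_{n_1}\times\cdots\times\Sigma_{n_k}]$ is free as a right $\integers[\Sigma_d]$-module. Applying $\sigma$ and $\epsilon_d$ summand-by-summand identifies the source, as an abelian group, with $\bigoplus_{d,w}\Lierep_d\otimes_{\Sigma_d}\integers[\Sigma_{n_1}\times\cdots\times\Sigma_{n_k}]$, whose total rank equals the rank of $\Lierep_n$ by Proposition~\ref{proposition: algebraic branching}, namely $(n-1)!$. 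Hence source and target have the same finite rank.

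Next I would prove surjectivity. Fix $d$, a basic monomial $w\in B(n_1/d,\ldots,n_k/d)$, and a multilinear monomial $w_d$ on $d$ letters. Lemma~\ref{lemma: commutes} asserts that $q_w^*\bigl(\sigma(\epsilon_d(w_d))\bigr)=\pm\,\epsilon_n(\phi_w(w_d))$ in $\widetilde\HH^{n-3}(|\Pi_n|)$, where $\phi_w\colon \Lierep_d\to \Lierep_n$ is the homomorphism from Section~\ref{section: lie}. Thus every element of the form $\epsilon_n(\phi_w(w_d))$ lies, up to sign, in the image of the collapse-induced homomorphism. Varying $w_d$, $w$ and $d$, and then inducing up along the diagonal inclusion $\Sigma_d\hookrightarrow\Sigma_{n_1}\times\cdots\times\Sigma_{n_k}$, we recover in $\Lierep_n$ exactly the image of the branching map of Proposition~\ref{proposition: algebraic branching}, which is all of $\Lierep_n$. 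Since $\epsilon_n$ is an isomorphism, these images span $\widetilde\HH^{n-3}(|\Pi_n|)$, so the collapse-induced homomorphism is surjective.

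The only delicate point is the sign indeterminacy in Lemma~\ref{lemma: commutes}: a priori the signs could vary across different monomials $w_d$, so one cannot directly conclude that the diagram commutes as a whole. However, the argument above only uses surjectivity, and surjectivity of a $\integers$-linear map is insensitive to sign changes on a generating set, so the sign ambiguity does not obstruct the conclusion. Finally, a surjection between free abelian groups of equal finite rank is automatically an isomorphism: any section splits off a free summand of full rank, forcing the kernel to have rank zero. Combining rank matching with surjectivity yields the proposition, and I expect the sign-bookkeeping to be the only subtle point worth highlighting in the written proof.
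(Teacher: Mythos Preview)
Your proposal is correct and follows essentially the same route as the paper: both arguments transport the algebraic branching isomorphism for $\Lierep_n$ across Barcelo's isomorphism and the suspension isomorphism, use Lemma~\ref{lemma: commutes} to see that the collapse map hits (up to sign) each element of a generating set, and then conclude by observing that a surjection of finitely generated free abelian groups of equal rank is an isomorphism. The paper packages this as a single diagram and argues in one paragraph, but the logical structure---including your remark that the sign ambiguity is harmless for surjectivity---is identical.
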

\begin{proof}
For typographical reasons, we introduce the following notation. Suppose $M$ is a (right) $\integers[\Sigma_d]$-module. Then $$M\!\uparrow\, :=M\otimes_{\integers[\Sigma_d]} \integers[\Sigma_{n_1}\times\cdots\times \Sigma_{n_k}].$$

Consider the following diagram of $\integers[\Sigma_{n_1}\times\cdots\times \Sigma_{n_k}]$-modules. It is closely related to the diagram preceding Lemma~\ref{lemma:  commutes}. For typographical reasons, we omit the indexing sets under the $\bigoplus$ signs. All the direct sums are taken over all positive $d|\gcd(n_1, \ldots, n_k)$, and for each such $d$, over all $w\in B(\frac{n_1}{d}, \ldots, \frac{n_k}{d})$
%\[
%\begin{array}{ccc}
%\bigoplus \Lierep_d \uparrow & \to &   \bigoplus\HH^{d-3}(|\Pi_d|)\uparrow \\
%& & \downarrow \\
%\downarrow & &  \bigoplus\HH^{n-3}(S^{n-d-1}*|\Pi_d|)\uparrow \\
%& & \downarrow \\
%\Lierep_n & \to & \HH^{n-3}(\Pi_n).
%\end{array}
%\]
\[
\xymatrix{
\bigoplus\Lierep_d\!\uparrow \ar[rr]^{\Sigma\phi_w} \ar[d]^{\oplus\epsilon_d}  & &\Lierep_n  \ar[d]^{\epsilon_n}\\
\bigoplus \HH^{d-3}(|\Pi_d|)\!\uparrow  \ar[r]^-\sigma &  \bigoplus \HH^{n-3}(S^{n-d-1} * |\Pi_d|)\!\uparrow \ar[r]^-{\Sigma q_w^*} & \HH^{n-3}(|\Pi_n|)
}
\]
Let $w\in B(\frac{n_1}{d}, \ldots, \frac{n_k}{d})$, $\theta_w$ a monomial representing an element of $\Lierep_d$, and $\sigma\in \Sigma_{n_1}\times\cdots\times\Sigma_{n_k}$. Then $[\theta_w]\otimes \sigma$ represents an element of the top left corner, and elements of this form provide a generating set for the top left corner. The top homomorphism in the diagram is an isomorphism by Theorem~\ref{theorem: hilton-milnor} (the Hilton-Milnor theorem). The vertical homomorphisms are induced by the standard isomorphism from $\Lierep_n$ to the cohomology of the partition poset (Theorem~\ref{theorem: barcelo}). It follows that a generating set for the top left corner will be mapped to a generating set of the bottom right corner via the top and right maps. It follows from Lemma~\ref{lemma: commutes} that elements of the form $[\theta_w\otimes \sigma]$ are mapped to the same element of $\HH^{n-3}(\Pi_n)$ by the top and right maps as by the left and bottom maps, except possibly for some negative signs. Therefore, the set of elements of the form $[\theta_w\otimes \sigma]$ is mapped to a generating set of the bottom right corner via the left and bottom maps. It follows that the composed map on the bottom is an epimorphism. Since it is a homomorphism between free abelian groups of the same rank, it is an isomorphism. Since the left bottom map is clearly an isomorphism, it follows that the right bottom map is an isomorphism as well. This is what we wanted to prove.
\end{proof}
Suppose that $d|\gcd(n_1, \ldots, n_k)$, and $w\in B(\frac{n_1}{d}, \ldots, \frac{n_k}{d})$. Then $\Pi_w$ is a $\Sigma_d$-invariant subposet of $\Pi_n$. Let $\sigma\in \Sigma_d \backslash\Sigma_{n_1}\times...\times \Sigma_{n_k}$ be a coset. Then $\Pi_w \cdot \sigma$ is a well-defined subposet of $\Pi_n$. 
\begin{lemma}\label{lemma: simplices}
Suppose that $d, d'|\gcd(n_1, \ldots, n_k)$, $w\in B(\frac{n_1}{d}, \ldots, \frac{n_k}{d})$, $w'\in B(\frac{n_1}{d'}, \ldots, \frac{n_k}{d'})$, $\sigma\in \Sigma_d \backslash\Sigma_{n_1}\times...\times \Sigma_{n_k}$, $\sigma'\in \Sigma_{d'}\backslash\Sigma_{n_1}\times...\times \Sigma_{n_k}$. Suppose, furthermore, that $(d, w, \sigma)\ne(d', w', \sigma')$. Then the subcomplexes $\Pi_w\cdot\sigma$ and $\Pi_{w'}\cdot\sigma'$ of $\Pi_n$ have disjoint sets of maximal simplices.
\end{lemma}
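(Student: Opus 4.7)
The plan is to prove the lemma by induction on $n$. Let $\Theta$ be a maximal simplex lying in both $\Pi_w\cdot\sigma$ and $\Pi_{w'}\cdot\sigma'$; equivalently, $\Theta$ is a maximal chain of $\Pi_n$ containing both $(d\times\Lambda_w)\cdot\sigma$ and $(d'\times\Lambda_{w'})\cdot\sigma'$ as subchains. Assume without loss of generality that $d\le d'$. Comparing the rank-$d$ element $\rho_d\cdot\sigma$ of $\Theta$ with its rank-$d'$ element $\rho_{d'}\cdot\sigma'$, both having uniformly sized classes and the latter refining the former, yields $d\mid d'$. Writing $e=d'/d$, each rank-$jd'$ identity $(d\times\lambda_{je-1})\cdot\sigma=(d'\times\lambda'_{j-1})\cdot\sigma'$ of elements of $\Theta$ can then be transported into $\Pi_{n/d}$ via the canonical, Young-block-preserving order-bijection between each class of $\rho_d$ and $\{1,\ldots,n/d\}$. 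The resulting statement is that $\Lambda_w$, viewed as a maximal chain of $\Pi_{n/d}$, contains at positions $e-1,2e-1,\ldots,(n/d'-1)e-1$ a subchain of the form $(e\times\Lambda_{w''})\cdot\tau$, with $w''\in B(n_1/d',\ldots,n_k/d')$ corresponding to $w'$ and with $\tau$ in the Young subgroup $\Sigma_{n_1/d}\times\cdots\times\Sigma_{n_k/d}$ of $\Sigma_{n/d}$.

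When $d\ge 2$ we have $n/d<n$, and $\Lambda_w$ is then a maximal simplex of $\Pi_{n/d}$ common to the subcomplexes indexed by the triples $(1,w,\mathrm{id})$ and $(e,w'',\tau)$. The inductive hypothesis forces these triples to agree, so their first coordinates must coincide, giving $e=1$ (hence $d=d'$); in the equality case it additionally yields $w=w'$ and the triviality of each class-restriction of $\sigma'\sigma^{-1}$. The latter, combined with the permutation $\pi\in\Sigma_d$ that $\sigma'\sigma^{-1}$ induces on the classes of $\rho_d$, identifies $\sigma'\sigma^{-1}$ with the image of $\pi$ under the diagonal embedding $\Sigma_d\hookrightarrow\Sigma_n$, whence $\sigma\equiv\sigma'\pmod{\Sigma_d}$.

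The case $d=1$ does not shrink $n$ and needs a direct argument. If $d=1<d'$, the $e$-fold substructure inside $\Lambda_w$ forces the resolution tree $T_{\tilde w}$ to decompose into $e$ subtrees that are congruent both as shapes and as Young-block labelings of their leaves; substituting the variables back from $\tilde w$ to $w$ then identifies all $e$ subtrees with a single Lie element $u$ and exhibits $w$ as a bracket expression built from $e\ge 2$ copies of $u$. Any such expression vanishes in the free Lie algebra: applying $[u,u]=0$ inductively, every bracket tree with at least two leaves all labeled $u$ evaluates to zero. This contradicts $w$ being a basic, hence nonzero, element of $\Lie[x_1,\ldots,x_k]$. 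If $d=d'=1$, then $\Theta=\Lambda_w\cdot\sigma=\Lambda_{w'}\cdot\sigma'$, and the argument reduces to injectivity of $(w,\sigma)\mapsto\Lambda_w\cdot\sigma$: the chain recovers the labeled unordered binary tree with leaves $\{1,\ldots,n\}$, Hall's defining conditions single out a unique planar structure realizing a basic monomial, and back-substitution through the resolution then determines $w$ and $\sigma$ individually.

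The hardest step I expect is making the $d=1$ direct arguments watertight, in particular pinning down precisely how the $e$-fold symmetry of $T_{\tilde w}$ produces the identical subtrees after substitution, and verifying the uniqueness of the Hall planar structure compatible with a prescribed unordered labeled tree.
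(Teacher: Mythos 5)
Your approach is genuinely different from the paper's. The paper proves the lemma in four lines by a cohomological argument: it invokes Lemma~\ref{lemma: nonzero} (every maximal simplex gives a nonzero class) and Proposition~\ref{proposition: collapse isomorphism} (the collapse maps induce a cohomology isomorphism, which was established earlier via the algebraic Hilton--Milnor theorem and Barcelo's theorem). If $\Pi_w\cdot\sigma$ and $\Pi_{w'}\cdot\sigma'$ shared a maximal simplex, the classes $[\Delta]\otimes\sigma$ and $[\Delta']\otimes\sigma'$, nonzero and living in distinct summands of the source, would map to the same element of $\HH^{n-3}(\Pi_n)$, contradicting injectivity. You instead attempt a purely combinatorial proof by induction on $n$, reducing to $\Pi_{n/d}$ by ``transport'' when $d\ge 2$ and arguing directly when $d=1$. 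That is a legitimately distinct route, and if carried through would give a proof that does not pass through the Lie-algebra machinery; but as written it has real gaps.

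The most serious gap is the base case $d=d'=1$. You need that $(w,\sigma)\mapsto\Lambda_w\cdot\sigma$ is injective, i.e.\ that for a Hall basic monomial $w$ the chain $\Lambda_w$ has trivial stabilizer in the Young subgroup, equivalently that the resolved unordered labeled tree $T_{\tilde w}$ admits no nontrivial Young-block-preserving automorphism. This is a genuine combinatorial fact about Hall bases (the naive expectation fails: a generic maximal chain of $\Pi_n$ can have a nontrivial stabilizer, e.g.\ $123/4<12/3/4$ is fixed by $(12)$). Your phrase ``Hall's defining conditions single out a unique planar structure realizing a basic monomial, and back-substitution then determines $w$ and $\sigma$ individually'' asserts this but gives no argument; even if one grants that the unordered labeled tree determines the basic monomial $w$, that leaves the rigidity of $T_{\tilde w}$ (hence uniqueness of $\sigma$) unproved. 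In the paper this rigidity is precisely what drops out of Hilton--Milnor, so you would be reproving a nontrivial piece of it from scratch.

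The reduction step for $d\ge 2$ also needs more care than you give it. The transport of a chain in $\Pi_n$ to a chain in $\Pi_{n/d}$ by restricting to a class of $\rho_d\cdot\sigma$ is only canonical after you decide which class to use and conjugate consistently by $\sigma$; moreover you must check that the transported partitions have the correct ranks at every step (a priori a generic $\mu\in\Theta$ that refines $\rho_d\cdot\sigma$ need not restrict to same-rank partitions on every class), and that the transported subchain really has the form $(e\times\Lambda_{w''})\cdot\tau$ with $\tau$ in the smaller Young subgroup and with $\Lambda_{w''}$ matching the \emph{chosen} linearization for $\Pi_{n/d'}$ (the chosen linearizations for different $n$ are independent choices). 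Finally, in the $d=1<d'$ case, your vanishing argument is the most promising part, but you need to reconcile unordered isomorphism of the $e$ subtrees of $T_{\tilde w}$ with the fact that $\tilde w$ is a planar monomial: the $e$ subtrees may differ by reflections, so what you actually get after back-substitution is $w=v(u_1,\ldots,u_e)$ with $u_i=\pm u_1$ in the free Lie algebra, and one must observe that $v(u,\ldots,u)=0$ since $\Lie[u]=\integers\cdot u$. That step is fine once stated, but the planar/unordered issue is currently elided.

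In short: this is a different and potentially interesting strategy, but it effectively front-loads the combinatorial content that the paper imports wholesale from the Hilton--Milnor isomorphism, and the places where you wave your hands are exactly the places where that content lives.
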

\begin{proof}
Suppose that $\Delta$   and $\Delta'$ are maximal simplices of $\Pi_w$ and $\Pi_{w'}$ respectively. It follows from Lemma~\ref{lemma: nonzero} that $\Delta$ and $\Delta'$ represent non-zero classes in the groups $\HH^{n-3}(S^{n-d-1}*|\Pi_d|)$ and $\HH^{n-3}(S^{n-d'-1}*|\Pi_{d'}|)$ corresponding to $w$ and $w'$. By Proposition~\ref{proposition: collapse isomorphism}, the following homomorphism is an isomorphism
\[
\bigoplus\HH^{n-3}(S^{n-d-1}*|\Pi_d|)\otimes_{\Sigma_d}\integers[\Sigma_{n_1}\times...\times \Sigma_{n_k}]  \to \HH^{n-3}(\Pi_n).
\]
Since $(d, w, \sigma)\ne(d', w', \sigma')$, $[\Delta]\otimes \sigma$ and $[\Delta']\otimes \sigma'$ represent non-zero elements in two different summands of the source. It follows that $\Delta\cdot \sigma$ and $\Delta'\cdot \sigma'$ represent different elements of $\HH^{n-3}(\Pi_n)$. Therefore, they can not be the same maximal simplex.
\end{proof}
\begin{proposition}\label{proposition: collapse map}
The inclusions $\Pi_w\hookrightarrow \Pi_n$  induce a quotient map
\begin{equation}\label{equation: join version}
|\Pi_n| \longrightarrow \bigvee_{d|g} \left(\bigvee_{B(\frac{n_1}{d}, \ldots, \frac{n_k}{d})} \Sigma_{n_1}\times \cdots \times {\Sigma_{n_k}}_+ \wedge_{\Sigma_d} S^{n-d-1} * |\Pi_d|\right).
\end{equation}
This map is $ \Sigma_{n_1}\times \cdots \times \Sigma_{n_k}$-equivariant, and it induces an isomorphism in cohomology.
\end{proposition}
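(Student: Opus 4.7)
The plan is to realize~\eqref{equation: join version} as the collapse map~\eqref{equation: general collapse} associated to the family of codimension-zero subcomplexes $\{|\Pi_w \cdot \sigma|\}$ of $|\Pi_n|$, indexed by triples $(d, w, \sigma)$ with $d \mid g$, $w \in B(\frac{n_1}{d}, \ldots, \frac{n_k}{d})$, and $\sigma \in \Sigma_d \backslash (\Sigma_{n_1} \times \cdots \times \Sigma_{n_k})$. To legitimately apply~\eqref{equation: general collapse}, one must check that the interiors of these subcomplexes are pairwise disjoint; since each is a subcomplex of the order complex $|\Pi_n|$, this reduces to pairwise disjointness of their sets of top-dimensional simplices, which is exactly the content of Lemma~\ref{lemma: simplices}. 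The collapse construction then produces a well-defined map $|\Pi_n| \to \bigvee_{(d, w, \sigma)} |\Pi_w\cdot\sigma|/\partial|\Pi_w\cdot\sigma|$.

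Next I would identify the summands and regroup. By Lemma~\ref{lemma: homeomorphism}, $|\Pi_w|/\partial|\Pi_w| \cong S^{n-d-1} * |\Pi_d|$, and translating by $\sigma$ yields the same identification for $|\Pi_w \cdot \sigma|/\partial|\Pi_w\cdot\sigma|$. Lemma~\ref{lemma: simplices} also implies that for each fixed $(d, w)$ the subcomplexes $\Pi_w \cdot \sigma$ are pairwise distinct as $\sigma$ ranges over cosets, so the stabilizer of $\Pi_w$ inside $\Sigma_{n_1} \times \cdots \times \Sigma_{n_k}$ is exactly $\Sigma_d$. This gives a $\Sigma_{n_1}\times\cdots\times\Sigma_{n_k}$-equivariant identification
\[
\bigvee_{\sigma} |\Pi_w \cdot \sigma|/\partial|\Pi_w\cdot\sigma| \;\cong\; \Sigma_{n_1} \times \cdots \times {\Sigma_{n_k}}_+ \wedge_{\Sigma_d} (S^{n-d-1} * |\Pi_d|),
\]
and wedging over $(d, w)$ produces the right-hand side of~\eqref{equation: join version}. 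Equivariance of the overall map is automatic, since the indexing family of subcomplexes is permuted by $\Sigma_{n_1} \times \cdots \times \Sigma_{n_k}$ and the construction~\eqref{equation: general collapse} is natural with respect to self-homeomorphisms that permute the chosen subcomplexes.

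For the cohomology statement, both source and target have reduced cohomology concentrated in degree $n-3$: the source is the well-known wedge of $(n-3)$-spheres, while each summand $S^{n-d-1} * |\Pi_d|$ is also a wedge of $(n-3)$-spheres, since $|\Pi_d| \simeq \bigvee S^{d-3}$ and $S^{n-d-1} * S^{d-3} \cong S^{n-3}$ (the conventions set up in Section~\ref{subsection: partitions} handle $d = 1, 2$). It therefore suffices to check an isomorphism in degree $n-3$. Using the standard identification of the cohomology of a wedge with the direct sum of cohomologies of its summands, the degree $n-3$ map is precisely the homomorphism of Proposition~\ref{proposition: collapse isomorphism}, which is an isomorphism.

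The main obstacle, essentially all of the non-formal content, has already been discharged by Proposition~\ref{proposition: collapse isomorphism}; what remains is the bookkeeping above, whose only nontrivial inputs are the $\Sigma_d$-equivariant form of Lemma~\ref{lemma: homeomorphism} and the rigidity provided by Lemma~\ref{lemma: simplices}. Of these, the more delicate point is verifying that the regrouping in the second paragraph really does assemble the wedge summands into the form $\Sigma_{n_1}\times\cdots\times{\Sigma_{n_k}}_+ \wedge_{\Sigma_d}(S^{n-d-1}*|\Pi_d|)$ with the correct $\Sigma_d$-action on $S^{n-d-1}*|\Pi_d|$, which is why the equivariance observation in Lemma~\ref{lemma: homeomorphism} is essential.
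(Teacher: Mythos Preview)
Your proposal is correct and follows essentially the same approach as the paper: use Lemma~\ref{lemma: simplices} to conclude that the subcomplexes $|\Pi_w\cdot\sigma|$ share no maximal simplices and hence meet only along boundaries, apply the collapse construction~\eqref{equation: general collapse}, identify each quotient via Lemma~\ref{lemma: homeomorphism}, and then invoke Proposition~\ref{proposition: collapse isomorphism} for the cohomology isomorphism. Your added remarks on why cohomology is concentrated in degree $n-3$ and on the equivariant regrouping are minor elaborations of steps the paper leaves implicit.
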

\begin{remark}
By Lemma~\ref{lemma: join}, $S^{n-d-1} * |\Pi_d|$ is naturally equivalent to $S^{n-d-1} \wedge |\Pi_d|^\diamond $. Therefore, the map~\eqref{equation: join version} determines a map as in~\eqref{equation: collapse}.
\end{remark}
\begin{proof}
Consider all the subcomplexes $\Pi_w\cdot\sigma\subset \Pi_n$, as $w$ ranges over all elements of $B(\frac{n_1}{d}, \ldots, \frac{n_k}{d})$ (for all $d$), and $\sigma$ ranges over all cosets $\Sigma_d \backslash\Sigma_{n_1}\times...\times \Sigma_{n_k}$. By Lemma~\ref{lemma: simplices}, these subcomplexes do not share any maximal simplices. It follows that they only may intersect at boundary points. Therefore we obtain a collapse map, as in~\eqref{equation: general collapse}
\[
|\Pi_n|\to \bigvee_{d|g} \bigvee_{B(\frac{n_1}{d}, \ldots, \frac{n_k}{d})} \Sigma_{n_1}\times \cdots \times {\Sigma_{n_k}}_+ \wedge_{\Sigma_d} |\Pi_w|/\partial|\Pi_w|.
\]
It follows from Lemmas~\ref{lemma: homeomorphism} and~\ref{lemma: boundary} that $ |\Pi_w|/\partial|\Pi_w|\cong S^{n-d-1} * |\Pi_d|$, and we obtain the desired map. The induced homomorphism in cohomology is the one of Proposition~\ref{proposition: collapse isomorphism}, and so it is an isomorphism.
\end{proof}
\begin{example}\label{example: n=4}
Let us describe explicitly the map~\eqref{equation: collapse} or equivalently the map~\eqref{equation: join version} for the case of the group $\Sigma_2\times \Sigma_2$ acting on $|\Pi_4|$. The possible values of $d$ are $1$ and $2$. The set $B(2,2)$ consists of one element: the monomial $w_{2}:=[[[x_2, x_1], x_1], x_2]$. The resolution of $w_{2}$ is the monomial $[[[x_3, x_1], x_2], x_4]$. The associated chain of partitions is $123/4 < 13/2/4$ (in this case the binary tree associated with the monomial has only one linearization, so there is only one possible maximal simplex to associate with $w_2$). Since this is a maximal chain, the star of the chain is the chain itself. So $\Pi_{w_2}$ is the chain $123/4 < 13/2/4$. The poset ${\Sigma_2\times\Sigma_2} \times \Pi_{w_2}$ is the union of the following four chains:
\[
(123/4 < 13/2/4), (123/4 < 23/1/4), (124/3 < 14/2/3), (124/3 < 24/1/3)
\]
   
The set $B(1,1)$ also consists of one element: the monomial $w_{1}:=[x_2, x_1]$. The associated partition $\rho_2$ is $13/24$. The poset $\Pi_{w_1}$ consists of all partitions comparable with this one. It can be described as follows
\[
1/3/24 > 13/24 < 13/2/4.
\]
As expected, $\Pi_{w_1}$ is a $\Sigma_2$-invariant subposet of $\Pi_4$. The poset $\Sigma_2\times \Sigma_2 \times_{\Sigma_2} \Pi_{w_1}$ is the union of the poset above with the poset
\[
2/3/14 > 23/14 < 23/1/4.
\]
The reader is invited to check that the geometric realization of the subcomplex 
\[
|\Sigma_2\times \Sigma_2\times \Pi_{w_2}\coprod \Sigma_2\times \Sigma_2 \times_{\Sigma_2} \Pi_{w_1}| \subset |\Pi_4|
\]
is exactly the subcomplex marked with fat lines on Figure~\ref{figure: n=4}. It follows that the map~\eqref{equation: join version} is, in this case, the same one as defined in that figure. Since the map is defined by collapsing a contractible subcomplex, it is in fact a homotopy equivalence (and not just a cohomology equivalence).
\end{example}

\section{Fixed points}\label{section: fixed points}
Let $G$ be a group acting (on the left) on the set $\n$. Specifying such an action is equivalent to specifying a homomorphism $G\to \Sigma_n$. Sometimes we will need to assume that the action is effective, which means that $G$ injects into $\Sigma_n$. The action of $G$ on $\n$ induces an action on $\Pi_n$. Our goal in this section is to describe the fixed points space $|\Pi_n|^G\cong |\Pi_n^G|$.

The orbits of the action of $G$ partition the set $\n$ into $G$-invariant subsets. Each orbit is isomorphic to a set of the form $G/H$, where $H$ is a subgroup of $G$, determined up to conjugation. We say that the action of $G$ on $\n$ is isotypical, if all the $G$-orbits are isomorphic as $G$-sets. We suspect that the following simple lemma was observed by others, but we do not know a reference.
\begin{lemma}\label{lemma: not isotypical}
If the action of $G$ on $\n$ is not isotypical, then $|\Pi_n|^G$ is contractible.
\end{lemma}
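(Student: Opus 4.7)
The plan is to exhibit a top element in the poset $\Pi_n^G$ of $G$-invariant proper nontrivial partitions of $\n$; this will make $|\Pi_n^G| \cong |\Pi_n|^G$ contractible, either directly as the realization of a poset with a maximum, or as an instance of Lemma~\ref{lemma: lattice}.

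The natural candidate is $\theta$, the partition of $\n$ whose blocks are the $G$-orbits. My first task is to check that $\theta$ actually belongs to $\Pi_n^G$, that is, that $\theta$ is $G$-invariant (tautologically so) and that $\hat 0 \neq \theta \neq \hat 1$. Since the action is non-isotypical there are at least two distinct orbit isomorphism types, hence at least two orbits, so $\theta$ is strictly finer than $\hat 0$. For the other inequality, note that if every orbit were a singleton, the action of $G$ on $\n$ would be trivial and hence isotypical (a single orbit type $G/G$); so some orbit has size at least two, forcing $\theta \neq \hat 1$.

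Next I will check that $\theta$ is the maximum of $\Pi_n^G$. For any $\lambda \in \Pi_n^G$, each block $B$ of $\lambda$ is $G$-invariant: if $x \in B$ and $g \in G$, then $gx$ is in the same $\lambda$-block as $x$ because $\lambda = g \cdot \lambda$. Hence $B$ is a union of $G$-orbits, which means $\theta$ refines $\lambda$, i.e., $\lambda \leq \theta$ in the refinement convention used here. Consequently $\lambda \vee \theta = \theta$ exists in $\Pi_n^G$ for every $\lambda$, and Lemma~\ref{lemma: lattice} yields $|\Pi_n|^G \simeq *$.

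The only delicate point is the case analysis verifying $\theta \in \Pi_n^G$; this is precisely where the non-isotypicality hypothesis is used, and the rest is formal.
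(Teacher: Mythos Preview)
There is a genuine gap in the argument. The claim that every block of a $G$-invariant partition $\lambda$ is itself a $G$-invariant subset of $\n$ is false: $G$-invariance of $\lambda$ only says that $G$ permutes the blocks of $\lambda$, not that it fixes each one setwise. Concretely, let $G=\integers/2$ act on $\{1,2,3,4,5\}$ by $(1\,2)(3\,4)$, fixing $5$. The orbits are $\{1,2\},\{3,4\},\{5\}$, so the action is non-isotypical. The partition $\lambda=\{\{1,3\},\{2,4\},\{5\}\}$ lies in $\Pi_5^G$ (the nontrivial element swaps the first two blocks), yet no block of $\lambda$ is a union of $G$-orbits. Thus your orbit partition $\theta$ is not comparable with $\lambda$, and in fact $\lambda\vee\theta$ is the discrete partition $\hat1$, so it does not even lie in $\Pi_5^G$. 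Neither the ``$\theta$ is a maximum'' claim nor the fallback via Lemma~\ref{lemma: lattice} goes through with this choice of $\theta$.

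The paper avoids this by taking $\theta$ to be the partition into \emph{isotypical components} (grouping together all orbits of a given isomorphism type) rather than into orbits. This $\theta$ is coarser than yours and is not a maximum of $\Pi_n^G$ either, but the paper shows that $\lambda\vee\theta$ is never discrete: if it were, a non-singleton block $c$ of $\lambda$ would contain elements $x,y$ with non-isomorphic orbits, hence distinct stabilizers, and one finds $g$ with $gx=x$, $gy\ne y$; then $y\sim_\lambda gy$ (via $x$) while $y,gy$ lie in the same isotypical component, contradicting discreteness of $\lambda\vee\theta$. This is exactly the step where non-isotypicality does real work, and it does not reduce to the purely formal observation you attempted.
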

\begin{proof}
Let $\theta$ be the canonical partition of $\n$ into isotypical components. Specifically, $x$ and $y$ belong to the same equivalence class of $\theta$ if the $G$-orbits of $x$ and $y$ are isomorphic $G$-sets. Since $G$ is not isotypical, $\theta$ has at least $2$ equivalence classes. It is also clear that $\theta$ has fewer than $n$ classes, because otherwise all the classes would be singletons, which would mean that $G$ acts trivially, and in particular isotypically.

Let $\lambda\in \Pi_n^G$ be a non-trivial $G$-invariant partition. Let $\theta \vee \lambda$ be the coarsest common refinement of $\theta$ and $\lambda$. It is the partition whose equivalence classes are all the intersections of classes of $\theta$ with classes of $\lambda$. We claim that $\theta \vee \lambda$ is not the discrete partition. Indeed, suppose that $\theta \vee \lambda$ is the discrete partition. Let $c$ be a non-singleton equivalence class of $\lambda$. If  $\theta \vee \lambda$ is the discrete partition then different elements of $c$ belong to different classes of $\theta$. Let $x, y$ be two elements of $c$. Since they belong to different classes of $\theta$, the $G$-orbits $Gx$ and $Gy$ are not isomorphic. It follows that the stabilizer groups of $x$ and $y$ are not the same. In particular, there exists a $g\in G$ that stabilizes one but not the other. Suppose that $gx=x$ but $gy\ne y$. Since $\lambda$ is $G$-invariant, it follows that $gx\sim gy$ in $\lambda$. But $gx=x$ and $x\sim y$, so $y\sim gy$ in $\lambda$. But $y$ and $gy$ are in the same class of $\theta$, so they are in the same class of $\lambda\vee\theta$, contradicting that $\lambda\cap \theta$ is discrete.

It follows that $\theta\vee \lambda$ is an element of $\Pi_n^G$, for all $\lambda\in \Pi_n^G$. By lemma~\ref{lemma: lattice}, the space $|\Pi_n^G|$ is contractible. 
\end{proof}
It remains to analyze the fixed points of isotypical actions. An important special case is that of transitive actions. Suppose $G$ acts transitively on $\n$. Then $\n$ can be identified (non-canonically) with the set $G/H$ for some subgroup $H\subset G$. So it is enough to describe $(\Pi_{G/H})^G$. The following is essentially~\cite[Lemma 3]{ww}.
\begin{lemma}\label{lemma: transitive}
The poset of $G$-invariant partitions of $G/H$ is isomorphic to the poset of subgroups $H\subseteq K \subseteq G$, ordered by reverse inclusion. The non-trivial partitions correspond to subgroups $H \subsetneq K \subsetneq G$.
\end{lemma}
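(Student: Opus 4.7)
The plan is to exhibit mutually inverse, order-reversing bijections between the poset of subgroups $K$ with $H \subseteq K \subseteq G$ and the poset of $G$-invariant partitions of $G/H$.

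In one direction, given $K$ with $H\subseteq K\subseteq G$, I would define $\lambda_K$ to be the partition of $G/H$ whose equivalence classes are the fibers of the canonical surjection $G/H \twoheadrightarrow G/K$; equivalently, $gH \sim_{\lambda_K} g'H$ iff $gK = g'K$. This is well defined because $H\subseteq K$, and it is $G$-invariant because left multiplication on $G/H$ is compatible with the quotient map to $G/K$. In the other direction, given a $G$-invariant partition $\lambda$ of $G/H$, I would set
\[
K_\lambda \definedas \{\, g \in G : gH \sim_\lambda H \,\}.
\]
The first thing to check is that $K_\lambda$ is a subgroup containing $H$: containment of $H$ is immediate since $hH = H$, while closure under multiplication and inversion follows from $G$-invariance of $\lambda$. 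Indeed, if $g_1, g_2 \in K_\lambda$, then applying the action of $g_1$ to $g_2 H \sim H$ yields $g_1 g_2 H \sim g_1 H \sim H$, and if $g \in K_\lambda$ then applying $g^{-1}$ to $gH \sim H$ yields $H \sim g^{-1}H$.

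Next I would check that the two constructions are mutually inverse. For $K_{\lambda_K} = K$: by definition $g \in K_{\lambda_K}$ iff $gK = K$ iff $g \in K$. For $\lambda_{K_\lambda} = \lambda$: using $G$-invariance of $\lambda$ once more, $gH \sim_\lambda g'H$ iff $g^{-1}g'H \sim_\lambda H$ iff $g^{-1}g' \in K_\lambda$ iff $gK_\lambda = g'K_\lambda$, which is the defining relation of $\lambda_{K_\lambda}$. Order-reversal follows directly from the fiber description: $K_1 \subseteq K_2$ iff each fiber of $G/H \twoheadrightarrow G/K_1$ is contained in a fiber of $G/H \twoheadrightarrow G/K_2$, iff $\lambda_{K_1}$ refines $\lambda_{K_2}$, i.e.\ $\lambda_{K_1} \geq \lambda_{K_2}$ in our convention. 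Finally, for the identification of non-trivial partitions, I observe that $K=H$ recovers the discrete partition (singleton classes) and $K=G$ recovers the indiscrete partition (single class), so the proper subgroups $H\subsetneq K\subsetneq G$ are exactly those corresponding to elements of $\Pi_{G/H}^G$.

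There is no real obstacle here: everything reduces to a careful but elementary use of the $G$-invariance of $\lambda$ to show $K_\lambda$ is a subgroup and to identify the equivalence classes of $\lambda$ as cosets of $K_\lambda$. The only thing that warrants attention is getting the direction of the order right, since the paper's convention is that refinement \emph{increases} a partition in $\Pi_n$, which forces the isomorphism to be contravariant on the subgroup side.
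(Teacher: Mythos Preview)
Your proof is correct and follows essentially the same approach as the paper: both define $K_\lambda=\{g\in G: gH\sim_\lambda H\}$ and, in the other direction, the partition by $K$-cosets (the paper writes the relation as $g_2^{-1}g_1\in K$, which is exactly your fiber condition $g_1K=g_2K$). Your write-up is, if anything, a bit more explicit about verifying that the two maps are inverse and about the direction of the order, but the argument is the same.
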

\begin{proof}
Let $\lambda$ be a $G$-invariant partition of $G/H$. Let $K=\{g\in G\mid g(eH)\sim_\lambda eH\}$. Then $K$ is a subgroup of $G$. Indeed, suppose that $eH\sim_\lambda g_1H\sim_\lambda g_2H$. Multiplying the first equivalence by $g_2$ we obtain that $g_2H \sim_\lambda g_2g_1H$. Since $g_2H\sim_\lambda eH$, it follows that $g_2g_1\in K$. This shows that $K$ is closed under multiplication. A similar argument shows that $K$ is closed under taking inverses. Clearly, $H\subseteq K$. It is easy to see that a finer partition will lead to a smaller group $K$.

Conversely, let $K$ be a group, $H\subseteq K\subseteq G$. Let $\lambda$ be the partition of $G/H$ defined by the rule $g_1H \sim_\lambda g_2H$ if and only if $g_2^{-1}g_1\in K$. It is easy to check that $\lambda$ is a well defined $G$-invariant partition, and that we defined two maps between posets that are inverse to each other. Finally, it is clear that $K=H, G$ correspond to the discrete and the indiscrete partition respectively. 
\end{proof}
\begin{example}\label{example: full symmetric}
Consider the standard action of $\Sigma_n$ on $\n$. This can be identified with the action of $\Sigma_n$ on $\Sigma_n/\Sigma_{n-1}$. Since there are no groups $\Sigma_{n-1}\subsetneq H \subsetneq \Sigma_n$, it follows that $\Pi_n^{\Sigma_n}=\emptyset$.
\end{example}
\begin{example}\label{example: wreath}
Suppose that $n=k_1\cdot k_2\cdots k_l$, with $l>1$ and all factors greater than $1$. Consider the iterated wreath product $W:=\Sigma_{k_1}\wr\cdots\wr\Sigma_{k_l}\subset \Sigma_n$. This group fits into a normal series
\[
\{e\}\mathrel{\unlhd} G_1 \mathrel{\unlhd} \cdots \mathrel{\unlhd} G_l=W
\]
where $G_i/G_{i-1}=(\Sigma_{k_i})^{k_{i+1}\cdots k_l}$.
$W$ acts transitively on $\n$, with stabilizer groups (conjugate to) $W\cap \Sigma_{n-1}$. 
\begin{lemma}\label{lemma: wreath}
Let $W$ be an iterated wreath product as above. The space $|\Pi_n|^W$ is contractible.
\end{lemma}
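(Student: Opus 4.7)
The plan is to describe all $W$-invariant partitions of $\n$ explicitly, show they form a chain, and conclude that $|\Pi_n^W|$ is a simplex (hence contractible).

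I would identify $\n$ with $\mathbf{k_1}\times\cdots\times\mathbf{k_l}$ so that the outer $\Sigma_{k_l}$ in $W$ permutes the last coordinate via $(a_1,\ldots,a_l)\mapsto(a_1,\ldots,a_{l-1},\sigma(a_l))$, while the base group $V^{k_l}$, where $V=\Sigma_{k_1}\wr\cdots\wr\Sigma_{k_{l-1}}$, acts independently on each fiber of the last coordinate via the natural action of $V$ on $\m:=\mathbf{k_1}\times\cdots\times\mathbf{k_{l-1}}$. For each $j\in\{0,1,\ldots,l\}$ let $\theta_j\in\Part_n$ be the partition in which $(a_1,\ldots,a_l)\sim(b_1,\ldots,b_l)$ iff $a_i=b_i$ for all $i>j$. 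Each $\theta_j$ is $W$-invariant, and these form a chain $\hat 1=\theta_0>\theta_1>\cdots>\theta_l=\hat 0$ in $\Part_n$. Since $l>1$ and each $k_i>1$, the partitions $\theta_1,\ldots,\theta_{l-1}$ give $l-1\ge 1$ distinct elements of $\Pi_n^W$.

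The core claim to prove is that these are the \emph{only} elements of $\Pi_n^W$; I would proceed by induction on $l$. The base $l=1$ is Example~\ref{example: full symmetric}. For the inductive step, given $\lambda\in\Pi_n^W$, compare it with the block partition $\theta_{l-1}$ whose classes are $X_c=\m\times\{c\}$ for $c\in\mathbf{k_l}$. If $\lambda$ refines $\theta_{l-1}$, then the restriction $\lambda_c$ of $\lambda$ to $X_c\cong\m$ is $V$-invariant (because the copy of $V$ in $V^{k_l}$ acting on $X_c$ preserves $\lambda$), the outer $\Sigma_{k_l}$-action identifies all $\lambda_c$ with a single partition $\mu$ of $\m$, and by the inductive hypothesis $\mu$ is one of the canonical $V$-invariant partitions, which lifts to $\lambda=\theta_j$ for some $j\le l-1$. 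If instead $\lambda$ does not refine $\theta_{l-1}$, pick $\lambda$-equivalent points $p\in X_c$ and $q\in X_{c'}$ with $c\ne c'$; the independent transitive actions of the two $V$-factors on $X_c$ and $X_{c'}$ force all of $X_c$ to be $\lambda$-equivalent to all of $X_{c'}$, and then the transitivity of $\Sigma_{k_l}$ on ordered pairs of distinct blocks collapses $\lambda$ to $\hat 0$, contradicting $\lambda\in\Pi_n^W$.

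Granted the claim, $\Pi_n^W$ is totally ordered with $l-1\ge 1$ elements, so its order complex is the simplex $\Delta^{l-2}$, and $|\Pi_n^W|$ is contractible. The main obstacle is the refining case of the induction: one has to verify precisely that the blockwise restrictions $\lambda_c$ really are $V$-invariant and that they agree across blocks under the outer $\Sigma_{k_l}$-action, which depends on a careful bookkeeping of the semidirect product structure $W=V^{k_l}\rtimes\Sigma_{k_l}$ and its compatibility with the action on $\n$.
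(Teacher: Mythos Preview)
Your proof is correct, and it takes a genuinely different route from the paper's. The paper invokes Lemma~\ref{lemma: transitive} to translate the question into one about the poset of subgroups $K$ with $W\cap\Sigma_{n-1}\subsetneq K\subsetneq W$, and then shows this poset has a least element: it argues by a short case analysis that any such $K$ must contain the transposition $(n-1,n)$, hence must contain the subgroup generated by $W\cap\Sigma_{n-1}$ and $(n-1,n)$. Contractibility then follows because the poset has an initial object.

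Your approach, by contrast, classifies the $W$-invariant partitions of $\n$ directly by induction on $l$, and shows they are exactly the chain $\hat 1=\theta_0>\theta_1>\cdots>\theta_l=\hat 0$. This yields more than the paper proves: not only is $|\Pi_n^W|$ contractible, it is homeomorphic to the simplex $\Delta^{l-2}$. The paper's argument is quicker and slots into the subgroup-poset framework already set up by Lemma~\ref{lemma: transitive}, whereas yours is self-contained and gives the exact combinatorial structure of the fixed-point poset. One small remark on your writeup: in the non-refining case you appeal to ``transitivity of $\Sigma_{k_l}$ on ordered pairs of distinct blocks''; this is $2$-transitivity, which $\Sigma_{k_l}$ certainly has, but in fact the availability of transpositions already suffices (send $c'$ to any $c''$ while fixing $c$).
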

\begin{proof}
By Lemma~\ref{lemma: transitive}, the poset $(\Pi_n)^W$ is (reverse) isomorphic to the poset of groups $G$ that satisfy $W\cap \Sigma_{n-1}\subsetneq G\subsetneq W$. So it is enough to show that this poset of groups has contractible nerve. We sill show that it has a minimal element.
%Explicitly, the stabilizer is the following group
%\[
% W\cap \Sigma_{n-1}= \Sigma_{k_1}\wr\cdots\wr\Sigma_{k_l-1}\times \Sigma_{k_1}\wr\cdots\wr\Sigma_{k_{l-1}-1}\times \cdots \times \Sigma_{k_1-1}.
%\]  

Let $H$ be the subgroup of $W$ generated by $W\cap \Sigma_{n-1}$ and the transposition $(n-1, n)$. Explicitly, 
\[
H= \Sigma_{k_1}\wr\cdots\wr\Sigma_{k_l-1}\times \Sigma_{k_1}\wr\cdots\wr\Sigma_{k_{l-1}-1}\times \cdots \Sigma_{k_1}\wr\Sigma_{k_2-1}\times \Sigma_{k_1}.
\]
We claim that every group $G$ satisfying $ W\cap \Sigma_{n-1}\subsetneq G \subset W$ contains $H$. Indeed, let $G$ be such a group. We need to show that $G$ contains the transposition $(n-1, n)$. By assumption, $G$ contains an element $\sigma\in W\setminus \Sigma_{n-1}$. Thus $\sigma(n)\ne n$. Suppose first that $\sigma(n)\in \{n-k_1+1, \ldots, n\}$. Then, since $\sigma\in W$, $\sigma$ leaves invariant the set $\{n-k_1+1, \ldots, n\}$. Let $\overline{\sigma}$ be the permutation of $\n$ that agrees with $\sigma$ on the set $ \{n-k_1+1, \ldots, n\}$, and is the identity on the complement of this set. Clearly $\overline{\sigma}\in W$ and $\sigma^{-1}\overline{\sigma}\in W\cap \Sigma_{n-1}\subset G$. It follows that $\overline{\sigma}\in G$. Thus $G$ contains all the permutations of the set $ \{n-k_1+1, \ldots, n-1\}$, plus an additional permutation of the set $ \{n-k_1+1, \ldots, n\}$. Therefore, $G$ contains the full group of permutations of $ \{n-k_1+1, \ldots, n\}$, which is what we wanted to show. 

Now suppose that $\sigma(n)\in \{1, \ldots, n-k_1\}$. Then $\sigma(n)\in \{jk_1+1, jk_1+2, \cdots, (j+1)k_1\}$, where $jk_1<n-k_1$. Then $\sigma(n-1)\in \{jk_1+1, jk_1+2, \cdots, (j+1)k_1\}$ as well. Let $\tau$ be the transposition $(\sigma(n), \sigma(n-1))$. Clearly, $\tau\in W\cap \Sigma_{n-1}\subset G$. Thus $\sigma^{-1}\tau\sigma\in G$. But $\sigma^{-1}\tau\sigma$ is the transposition $(n-1, n)$.
\end{proof}
\end{example}

It remains to analyze the fixed points of a general isotypical action. Our strategy is to first analyze the fixed points of groups that occur as isotropy groups  of the spaces in~\eqref{equation: main}. This will enable us to prove the main theorem. That, in turn, will enable us to describe the fixed points of $\Pi_n$ for all subgroups of $\Sigma_n$.

First, some easy generalities. Suppose $G$ acts on $\n$. Let $\lambda$ be a partition of $\n$ preserved by $G$. Let $S$ and $T$ be two $G$-orbits in $\n$. We say that $S$ and $T$ are connected by $\lambda$ if there exist $s\in S$ and $t\in T$ such that $s\sim_\lambda t$. The proof of the following elementary lemma is left to the reader.
\begin{lemma}\label{lemma: equivalence}
The relation of being connected by $\lambda$ is an equivalence relation on the set of $G$-orbits.
\end{lemma}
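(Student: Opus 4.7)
The plan is to verify the three axioms of an equivalence relation directly, with the only interesting case being transitivity, where $G$-invariance of $\lambda$ is the key ingredient.

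Reflexivity is immediate: for any orbit $S$, any element $s \in S$ satisfies $s \sim_\lambda s$. Symmetry is equally immediate, since the defining relation $s \sim_\lambda t$ is symmetric in $s$ and $t$ (and hence in $S$ and $T$). Neither of these uses anything about the $G$-action.

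The substantive step is transitivity. Suppose $S$ is connected to $T$ via $s \sim_\lambda t$ with $s \in S$, $t \in T$, and $T$ is connected to $U$ via $t' \sim_\lambda u$ with $t' \in T$, $u \in U$. First I would use transitivity of the $G$-action on $T$ to pick $g \in G$ with $g \cdot t = t'$. Then, because $\lambda$ is $G$-invariant, applying $g$ to $s \sim_\lambda t$ yields $g \cdot s \sim_\lambda g \cdot t = t'$. Since $g \cdot s \in S$ and $t' \sim_\lambda u$, the transitivity of $\lambda$ itself gives $g \cdot s \sim_\lambda u$, exhibiting a pair that connects $S$ to $U$.

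There is no real obstacle here; the lemma is essentially a bookkeeping exercise, and the only substantive input beyond the definitions is the ability to transport witnesses within a $G$-orbit using $G$-invariance of $\lambda$. I would present the argument in a single short paragraph devoted to transitivity, with a one-line acknowledgment of reflexivity and symmetry.
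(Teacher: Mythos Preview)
Your proof is correct; the paper leaves this lemma as an exercise to the reader, and your argument is exactly the natural one that the reader is expected to supply.
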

\begin{lemma}\label{lemma: isotypical}
Suppose $G$ acts isotypically on $\n$, $\lambda$ is a $G$-invariant partition, and $S$ and $T$ are orbits of $G$ connected by $\lambda$ then the restrictions of $\lambda$ to $S$ and $T$ are isomorphic.
\end{lemma}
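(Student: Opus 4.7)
The plan is to show that both restricted partitions have the same number of classes and the same multiset of class-sizes, which suffices for an isomorphism of partitions.

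First I would fix witnesses $s_0 \in S$ and $t_0 \in T$ with $s_0 \sim_\lambda t_0$, and let $c$ be the $\lambda$-class containing both. Let $H = \{g \in G : g\cdot c = c\}$ be the stabilizer of $c$. Then the $G$-orbit of $c$ inside the set of $\lambda$-classes has size $m := [G:H]$; call these translates $c_1 = c, c_2, \ldots, c_m$. The key observation is that every $\lambda$-class that meets $S$ lies in this orbit: indeed, if $s = g\cdot s_0 \in S$, then $G$-invariance of $\lambda$ gives $g\cdot s_0 \sim_\lambda g\cdot t_0$, so $s$ lies in the class $g\cdot c$. The same argument works for $T$. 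Hence the classes of $\lambda|_S$ are exactly the non-empty sets among $c_1\cap S,\ldots,c_m\cap S$, and similarly for $\lambda|_T$.

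Next I would show that none of these intersections is empty, and all of them have the same size within each of $S$ and $T$. Since $c_i\cap S = g_i\cdot(c\cap S)$ for some $g_i$ carrying $c_1$ to $c_i$, and since $S$ is $G$-invariant, the $G$-translates give bijections $c\cap S \leftrightarrow c_i\cap S$; the same holds for $T$. So $\lambda|_S$ consists of $m$ classes each of size $|c\cap S|$, and $\lambda|_T$ consists of $m$ classes each of size $|c\cap T|$.

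Finally, isotypicality enters: since $S$ and $T$ are isomorphic as $G$-sets, $|S|=|T|$, so
\[
|c\cap S| \;=\; \frac{|S|}{m} \;=\; \frac{|T|}{m} \;=\; |c\cap T|.
\]
Thus $\lambda|_S$ and $\lambda|_T$ are both partitions of equinumerous sets, into the same number of classes of the same common size, so any bijection $S\to T$ matching classes to classes is an isomorphism of partitions. The only conceptual step that needs a little care is the claim that every $\lambda$-class meeting $S$ is a $G$-translate of $c$; everything else is bookkeeping with the fact that $G$ permutes the translates of $c$ while preserving the orbits $S$ and $T$.
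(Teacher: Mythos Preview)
Your proof is correct, but it takes a somewhat different route from the paper's. The paper fixes $s\sim_\lambda t$ and considers the subgroups $K_s=\{g\in G\mid gs\sim_\lambda s\}$ and $K_t=\{g\in G\mid gt\sim_\lambda t\}$; since $gs\sim_\lambda gt$ for all $g$, one immediately gets $K_s=K_t$, and then Lemma~\ref{lemma: transitive} says that this common subgroup determines the restricted partitions on the transitive $G$-sets $S$ and $T$. In particular the paper obtains a $G$-\emph{equivariant} isomorphism of the two restricted partitions, not merely an isomorphism of abstract set-partitions. Your argument instead works directly with the orbit of the class $c$ and counts classes and class-sizes; your stabilizer $H$ is in fact exactly the paper's $K_s$, but you bypass Lemma~\ref{lemma: transitive} entirely, at the cost of concluding only that the two partitions have the same shape. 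For the single application of this lemma in the paper (the proof of Lemma~\ref{lemma: isotypical wreath}, which only needs to know whether the restricted partitions are discrete or indiscrete) your weaker conclusion is enough, so nothing is lost. The paper's argument is a bit slicker because it reuses the subgroup correspondence already established; yours is more self-contained.
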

\begin{proof}
Let $s\in S$ and $t\in T$ be such that $s\sim_\lambda t$. Let $K_s=\{g\in G\mid gs\sim_\lambda s\}$ and $K_t=\{g\in G\mid gt\sim_\lambda t\}$. Since $\lambda$ is $G$-invariant, and $s\sim_\lambda t$, it follows that for all $g\in G$, $gs\sim_\lambda gt$. Therefore, $gs\sim s$ if and only if $gt\sim t$. In other words, $K_s=K_t$. By the proof of Lemma~\ref{lemma: transitive}, $K_s$ and $K_t$ determine the restrictions of $\lambda$ to the transitive $G$-sets $S$ and $T$. 
\end{proof}
Let us suppose that $d|n$ and $d$ factors as a product of positive integers $d=d_1\cdots d_l$ with $l>1$. Consider the iterated wreath product $\Sigma_{d_1}\wr\cdots\wr\Sigma_{d_l}$ as a subgroup of $\Sigma_n$ via the inclusions
\[
\Sigma_{d_1}\wr\cdots\wr\Sigma_{d_l}\subset \Sigma_d\subset (\Sigma_d)^{\frac{n}{d}}\subset \Sigma_n.
\]
\begin{lemma}\label{lemma: isotypical wreath}
The fixed points space $|\Pi_n|^{\Sigma_{d_1}\wr\cdots\wr\Sigma_{d_l}}$ is contractible.
\end{lemma}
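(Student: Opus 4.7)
The plan is to invoke Lemma~\ref{lemma: lattice} by producing a $W$-invariant partition $\theta \in \Pi_n^W$ such that $\lambda \vee \theta$ stays in $\Pi_n^W$ for every $\lambda \in \Pi_n^W$, where $W=\Sigma_{d_1}\wr\cdots\wr\Sigma_{d_l}$. One may assume without loss of generality that each $d_i\ge 2$, since trivial factors can be discarded without altering $W$; then $l>1$ guarantees $d_1\ge 2$ and that the ``bottom level'' of the wreath product is non-trivial. Applying Lemma~\ref{lemma: wreath} (or rather its proof) to the transitive action of $W$ on $\mathbf{d}$, the poset $\Pi_d^W$ has a maximum element $\mu$; one checks its blocks are the orbits of the normal subgroup $\Sigma_{d_1}^{\,d_2\cdots d_l}\trianglelefteq W$, i.e.\ the bottom-level blocks of size $d_1$. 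Let $\pi\colon \n \to \mathbf{d}$ be the $W$-equivariant ``folding'' surjection that collapses the $n/d$ diagonally-embedded copies of $\mathbf{d}$ to one, and define $\theta$ to be the partition of $\n$ whose classes are the sets $\pi^{-1}(B)$ as $B$ ranges over the $d/d_1$ blocks of $\mu$. Each class of $\theta$ has $(n/d)\cdot d_1 \ge 2$ elements and there are $d/d_1\ge 2$ classes, so $\theta \in \Pi_n^W$.

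It remains to check $\lambda\vee\theta\ne \hat 1$ for every $\lambda \in \Pi_n^W$ ($\lambda\vee\theta>\hat 0$ is automatic from $\lambda\vee\theta\ge \theta>\hat 0$). The argument splits into two cases. First, suppose some orbit $O_i$ satisfies $\lambda|_{O_i}\ne \hat 1$. Let $\mu_i$ denote the transport of $\mu$ to $O_i$. Since $\mu_i$ is the maximum element of the $W$-invariant partition poset of $O_i$, we have $\lambda|_{O_i}\le \mu_i$, so any $\mu_i$-block $B\subseteq O_i$ is contained in a single $\lambda$-class as well as a single $\theta$-class. Therefore some class of $\lambda\vee\theta$ contains $B$ and has size $d_1\ge 2$, so $\lambda\vee\theta < \hat 1$.

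In the second case $\lambda|_{O_i}=\hat 1$ for every orbit $O_i$, so $\lambda$-classes contain at most one point from each orbit. Pick a $\lambda$-class $C$ with $|C|\ge 2$ (exists since $\lambda\ne \hat 1$); its elements lie in distinct orbits. The goal is to show $\pi(C)$ lies in a single $\mu$-block, for then $C$ is contained in a single $\theta$-class and itself constitutes a class of $\lambda\vee\theta$ of size at least two. Using $W$-invariance of $\lambda$, the $W$-translates $wC$ are pairwise disjoint or equal, and chasing the condition $wC \cap C \ne \emptyset \Rightarrow wC=C$ forces the images of the points of $C$ in $\mathbf{d}$ to share a common $W$-stabilizer. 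The main obstacle is then the following structural fact about iterated wreath products: two points of $\mathbf{d}$ with equal $W$-stabilizers either coincide or lie in a common bottom-level block. This reduces to a direct calculation with the wreath product structure — for $d_1=2$ the pointwise stabilizer of a bottom-level block coincides with the point-stabilizer of either of its two elements, while for $d_1\ge 3$ distinct points inside the same bottom-level block already have distinct stabilizers, so equality of stabilizers forces equality of points. In either subcase $\pi(a)$ and $\pi(b)$ lie in the same $\mu$-block for all $a,b\in C$, completing the second case. Once both cases are verified, Lemma~\ref{lemma: lattice} yields the contractibility of $|\Pi_n|^W$.
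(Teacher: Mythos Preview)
Your overall strategy is correct and is a genuine dual to the paper's argument. Both proofs invoke Lemma~\ref{lemma: lattice}, but the paper works with the \emph{meet}: it takes $\theta$ to be the partition of $\n$ into size-$d_1$ bottom-level blocks \emph{within each orbit} and shows $\lambda\wedge\theta$ (finest common coarsening) is never indiscrete, splitting into cases according to whether all $W$-orbits are connected by $\lambda$. You instead work with the \emph{join}: your $\theta$ is the pullback along the folding map $\pi$ of the bottom-level block partition of $\mathbf d$, and you show $\lambda\vee\theta$ is never discrete, splitting into cases according to whether $\lambda$ is discrete on each orbit. Your Case~2 stabilizer argument (showing all points of a $\lambda$-class share a $W$-stabilizer) is a nice alternative to the paper's use of Lemma~\ref{lemma: isotypical}.

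There is, however, a gap in your justification of the ``structural fact'' that two points of $\mathbf d$ with equal $W$-stabilizers lie in a common bottom-level block. Your sketch only discusses what happens \emph{within} a single block (distinguishing $d_1=2$ from $d_1\ge 3$); it never addresses why points in \emph{different} bottom-level blocks must have different stabilizers, which is the actual content of the claim. The fix is short: let $N_1=\Sigma_{d_1}^{\,d_2\cdots d_l}\trianglelefteq W$ be the base of the wreath product. For $x\in\mathbf d$ lying in bottom-level block $B_0$, one has
\[
\mathrm{Stab}_W(x)\cap N_1=\mathrm{Stab}_{N_1}(x)=\mathrm{Stab}_{\Sigma_{d_1}^{(B_0)}}(x)\times\prod_{B\ne B_0}\Sigma_{d_1}^{(B)},
\]
and since $d_1\ge 2$ the factor at $B_0$ is proper while all others are full. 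Thus $\mathrm{Stab}_W(x)\cap N_1$ determines $B_0$, and equal stabilizers force equal bottom-level blocks. With this in place your argument goes through.
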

\begin{proof}
The orbits of $\Sigma_{d_1}\wr\cdots\wr\Sigma_{d_l}$ are the sets \[\{1, \ldots, d\}, \{d+1, \ldots, 2d\},\ldots, \{n-d+1, \ldots, n\}.\] Let $\theta$ be the partition of $\n$ whose equivalence classes are $\{1, \ldots, d_1\}, \{d_1+1, \ldots, 2d_1\},\ldots, \{n-d_1+1, \ldots, n\}$. Then $\theta$ is a $\Sigma_{d_1}\wr\cdots\wr\Sigma_{d_l}$-invariant partition. It follows from the proof of Lemma~\ref{lemma: wreath} that the restriction of $\theta$ to each orbit of $\Sigma_{d_1}\wr\cdots\wr\Sigma_{d_l}$ is the finest non-discrete $\Sigma_{d_1}\wr\cdots\wr\Sigma_{d_l}$-invariant partition of the orbit. Let $\lambda$ be another $\Sigma_{d_1}\wr\cdots\wr\Sigma_{d_l}$-invariant partition of $\n$. Let $\lambda\wedge \theta$ be the finest common coarsening of $\lambda$ and $\theta$. Clearly, $\lambda\wedge \theta$ is a $\Sigma_{d_1}\wr\cdots\wr\Sigma_{d_l}$-invariant partition. We claim that if $\lambda$ is not indiscrete then $\lambda\wedge \theta$ is not indiscrete. To see this, suppose first that there exists two orbits $S$ and $T$ of $\Sigma_{d_1}\wr\cdots\wr\Sigma_{d_l}$ that are not connected by $\lambda$. Since each equivalence class of $\theta$ is contained in some orbit of $\Sigma_{d_1}\wr\cdots\wr\Sigma_{d_l}$, there do not exists elements $s\in S$ and $t\in T$ that are in same equivalence class of $\lambda\wedge \theta$. In particular, $\lambda\wedge \theta$ is not indiscrete. 

Now suppose that all orbits of $\Sigma_{d_1}\wr\cdots\wr\Sigma_{d_l}$ are connected by $\lambda$. Then by Lemma~\ref{lemma: isotypical} the restrictions of $\lambda$ to the orbits of $\Sigma_{d_1}\wr\cdots\wr\Sigma_{d_l}$ are pairwise isomorphic. If the restriction of $\lambda$ to each orbit is indiscrete, then (since we assume that all the orbits are connected) $\lambda$ is indiscrete. Conversely, if $\lambda$ is not indiscrete, then the restrictions of $\lambda$ to the orbits of $\Sigma_{d_1}\wr\cdots\wr\Sigma_{d_l}$ are not indiscrete. Since the restriction of $\theta$ to an orbit of  $\Sigma_{d_1}\wr\cdots\wr\Sigma_{d_l}$ is the finest non-discrete invariant partition of the orbit, it follows that the restriction of $\lambda$ to each orbit of $\Sigma_{d_1}\wr\cdots\wr\Sigma_{d_l}$ is either discrete, or is coarser than the restriction of $\theta$ to the orbit. In the first case, the restriction of $\lambda\wedge\theta$ to each orbit of $\Sigma_{d_1}\wr\cdots\wr\Sigma_{d_l}$ is the same as the restriction of $\theta$ to the orbit. In the second case, it is the same as the restriction of $\lambda$ to the orbit. In any case, the restriction of $\lambda\wedge\theta$ to each orbit of $\Sigma_{d_1}\wr\cdots\wr\Sigma_{d_l}$ is not indiscrete. It follows that $\lambda\wedge\theta$ is not indiscrete.

It follows from the claim that $\lambda\wedge \theta$ is an element of $\Pi_n^{\Sigma_{d_1}\wr\cdots\wr\Sigma_{d_l} }$ for all $\lambda\in \Pi_n^{\Sigma_{d_1}\wr\cdots\wr\Sigma_{d_l}}$. By lemma~\ref{lemma: lattice} it follows that the geometric realization of $\Pi_n^{\Sigma_{d_1}\wr\cdots\wr\Sigma_{d_l}}$ is contractible.
\end{proof}

Suppose $G$ acts isotypically on $\n$, so all $G$-orbits are isomorphic. Let us say that each orbit has $d$ elements, so there are $\frac{n}{d}$ orbits. Let us fix a transitive action of $G$ on $\mathbf{d}$, isomorphic to the action of $G$ on one of the orbits. In particular, we have fixed a homomorphism $G\to \Sigma_d$. Let us also fix a $\Sigma_d$-equivariant isomorphism $\mathbf{d}\times\mathbf{\frac{n}{d}}\stackrel{\cong}{\to}\n$. Without loss of generality we can take this isomorphism to be $(i, j)\mapsto i+(j-1)d$. Recall that $\Part_n$ is the full poset of partitions of $\n$. Consider the following composition of $\Sigma_d$-equivariant maps of posets
\[
\Part_d\times\Part_{\frac{n}{d} }\to \Part_{\mathbf{d}\times\mathbf{\frac{n}{d}}}\to \Part_n.
\]
Here the first map takes cartesian product of equivalence relations, and the second map uses the bijection $\mathbf{d}\times\mathbf{\frac{n}{d}}\stackrel{\cong}{\to}\n$.
Removing the initial and final elements, we obtain a map of posets 
\[
\overline{\Part_d\times\Part_{\frac{n}{d}}}\to \Pi_n.
\]
By Proposition~\ref{proposition: fancy join}, there are homeomorphisms $$|\overline{\Part_d\times\Part_{\frac{n}{d}}}|\cong \Sigma | \Pi_d| * |\Pi_{\frac{n}{d} }|%\cong | \Pi_d|^\diamond * |\Pi_{\frac{n}{d} }|.
$$
%\begin{remark}
%By Lemma~\ref{lemma: join} there is a natural equivalence \[| \Pi_d|^\diamond * |\Pi_{\frac{n}{d} }|\simeq | \Pi_d|^\diamond \Smash |\Pi_{\frac{n}{d} }|^\diamond.\] Some readers may find the latter space more pleasant to think about. But we do not make the equivalence between them explicit. 
%\end{remark}
Thus
we obtain a $\Sigma_d$-equivariant map of spaces
\[
\Sigma |\Pi_d| * |\Pi_{\frac{n}{d} }|\to |\Pi_n|.
\]
Note that the partition of $\n$ into the orbits of $\Sigma_d$ determines a $\Sigma_d$-invariant point of $|\overline{\Pcal_d\times\Pcal_{\frac{n}{d}}}|$, and a $(\Sigma_d)^{\frac{n}{d}}$-invariant point of $|\Pi_n|$. The map above preserves these basepoints. Therefore it induces a $(\Sigma_d)^{\frac{n}{d}}$-equivariant map 
\[
\Sigma_d\times \cdots \times \Sigma_d{_+} \wedge_{\Sigma_d}\Sigma| \Pi_d * \Pi_{\frac{n}{d} }| \to |\Pi_n|.
\]
Viewing $G$ as a subgroup of $(\Sigma_d)^{\frac{n}{d}}$, we obtain a map of $G$-fixed points
\begin{equation}\label{equation: Gmap}
(\Sigma_d\times \cdots \times \Sigma_d{_+} \wedge_{\Sigma_d}\Sigma| \Pi_d * \Pi_{\frac{n}{d} }|)^G \to |\Pi_n|^G.
\end{equation}
Recall that $C_{\Sigma_d}(G)$ is the centralizer of $G$ in $\Sigma_d$. Clearly, the centralizer of $G$ in $(\Sigma_d)^{\frac{n}{d}}$ is $C_{\Sigma_d}(G)^{\frac{n}{d}}$. Note also that $\Sigma| \Pi_d * \Pi_{\frac{n}{d} }|^G\cong \Sigma | \Pi_d|^G * \Pi_{\frac{n}{d} }|.$ By corollary~\ref{corollary: simplified fixed points}, there is a homeomorphism
\begin{equation}
\label{equation: fixed}
C_{\Sigma_d}(G)^{\frac{n}{d}}_+\wedge_{C_{\Sigma_d}(G)}  \Sigma | \Pi_d|^G * |\Pi_{\frac{n}{d} }| \stackrel{\cong}{\to} \left((\Sigma_d)^{\frac{n}{d}}_+ \wedge_{\Sigma_d} \Sigma| \Pi_d * \Pi_{\frac{n}{d} }|\right)^G.
\end{equation}
Combine~\eqref{equation: fixed} with~\eqref{equation: Gmap} to obtain the following map:
\begin{equation}\label{equation: fixed points}
{C_{\Sigma_d}(G)^{\frac{n}{d}}}_+ \wedge_{C_{\Sigma_d}(G)}\Sigma |\Pi_d|^G * |\Pi_{\frac{n}{d} }| \to |\Pi_n|^G.
\end{equation}

\begin{lemma}\label{lemma: fixed points}
Suppose $G=\Sigma_{d_1}\wr\cdots\wr\Sigma_{d_l}$, where $d=d_1\cdots d_l$, $l\ge 1$ (note that we include $l=1$). The map~\eqref{equation: fixed points} is a homotopy equivalence.
\end{lemma}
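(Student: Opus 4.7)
When $l>1$, I would argue that both sides of~\eqref{equation: fixed points} are contractible, so any map between them is an equivalence. For the target, Lemma~\ref{lemma: isotypical wreath} applies directly. For the source, Lemma~\ref{lemma: wreath} applied to the transitive action of $G=\Sigma_{d_1}\wr\cdots\wr\Sigma_{d_l}$ on $\mathbf{d}$ gives $|\Pi_d|^G\simeq *$, so $\Sigma|\Pi_d|^G *|\Pi_{n/d}|$ is contractible (join with a contractible space is contractible); moreover this space has the south-pole of the unreduced suspension as a $C_{\Sigma_d}(G)$-fixed basepoint, so the induction $C_{\Sigma_d}(G)^{n/d}_+\wedge_{C_{\Sigma_d}(G)}(-)$ preserves contractibility.

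The interesting case is $l=1$, where $G=\Sigma_d$. The plan is to analyze $\Pi_n^{\Sigma_d}$ explicitly. The orbits of $\Sigma_d$ on $\n$ are the $n/d$ blocks of size $d$, and the classes of any $\Sigma_d$-invariant partition $\lambda$ fall into $\Sigma_d$-orbits of two kinds: a \emph{merged} orbit is a single class equal to a union of whole blocks, and a \emph{level-preserving} orbit consists of $d$ classes permuted freely by $\Sigma_d$, each meeting every block in its support in at most one element. Consequently $\lambda$ is recorded by a block-level partition $\mu\in\Part_{n/d}$ (the union of blocks under the orbit of its class), a type $\tau(S)\in\{M,L\}$ for each sector $S$ of $\mu$, and — only when $d=2$, since $Z(\Sigma_d)=1$ for $d\ge 3$ — a $Z(\Sigma_2)$-torsor of bijections on each $L$-sector. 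Call $\lambda$ \emph{pure} if $\tau$ is constant.

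I would then define $r\colon \Pi_n^{\Sigma_d}\to\Pi_n^{\Sigma_d}$ by $r(\mu,\tau)=(\mu,\equiv M)$ whenever $\tau$ is mixed (so $\mu\neq\hat 0$, ensuring $(\mu,\equiv M)\in\Pi_n$), and $r(\lambda)=\lambda$ for pure $\lambda$. A case-by-case verification of the compatibility rules — in particular, that refining an $M$-sector is unconstrained while refining an $L$-sector forces subsectors to remain $L$ — shows $r$ is order-preserving and satisfies $r(\lambda)\le \lambda$. The standard natural-transformation argument (as in the proof of Lemma~\ref{lemma: lattice}) then produces a deformation retraction of $|\Pi_n^{\Sigma_d}|$ onto the realization of its pure subposet.

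It remains to identify the pure subposet with the source. For $d\ge 3$, the pure subposet is poset-isomorphic to $\overline{\Part_d^{\Sigma_d}\times\Part_{n/d}}=\overline{\{\hat 0_d,\hat 1_d\}\times\Part_{n/d}}$ via $(\hat 0_d,\mu)\leftrightarrow(\mu,\equiv M)$ and $(\hat 1_d,\mu)\leftrightarrow(\mu,\equiv L)$, and Proposition~\ref{proposition: fancy join} realizes this as $\Sigma|\Pi_d|^{\Sigma_d}*|\Pi_{n/d}|$, exactly matching the source (since $C_{\Sigma_d}(\Sigma_d)=\{e\}$). For $d=2$, the $(\Sigma_2)^{n/2}$-action on bijection data on $L$-sectors with block partition $\mu$ is transitive with stabilizer the sector-wise flips $(\Z/2)^{|\mu|}$, which contains the diagonal $\Sigma_2$; so the pure subposet is built from $(\Sigma_2)^{n/2}/\Sigma_2$-translates of the "canonical" pure subposet (identity bijections), and assembles into $(\Sigma_2)^{n/2}_+\wedge_{\Sigma_2}|\overline{\{\hat 0,\hat 1\}\times\Part_{n/2}}|$, matching the source. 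The main obstacle is the $d=2$ bookkeeping: one must verify that the order relations between pure $L$ partitions of differing refinement (which require compatibility of bijections on subsectors) correspond correctly to the poset structure on the induced source.
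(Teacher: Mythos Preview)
Your treatment of the case $l>1$ coincides with the paper's. For $l=1$ your approach is genuinely different from the paper's, and for $d\ge 3$ it is in fact cleaner: the paper instead singles out the set $T$ of $\Sigma_d$-invariant partitions meeting every orbit in a single element (your pure-$L$ partitions with $\mu=\hat 0$), proves $|\Pi_n^{\Sigma_d}\setminus T|$ is contractible via $\lambda\mapsto\lambda\wedge\theta$ (with $\theta$ the orbit partition), and then runs a pushout/cofiber computation over the elements of $T$. Your poset retraction $r$ onto the pure subposet is a nice shortcut: for $d\ge 3$ the pure subposet is \emph{exactly} the image of the poset map $\overline{\Part_d^{\Sigma_d}\times\Part_{n/d}}\hookrightarrow \Pi_n^{\Sigma_d}$ underlying~\eqref{equation: fixed points}, so that map is an inclusion admitting a retraction, hence a homotopy equivalence.

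There is, however, a real gap at $d=2$. Your retraction onto the pure subposet is still correct, but the assertion that the pure subposet ``assembles into'' $(\Sigma_2)^{n/2}_+\wedge_{\Sigma_2}|\overline{\{\hat 0,\hat 1\}\times\Part_{n/2}}|$ is not: the map from that wedge to the pure subposet is \emph{not} injective. Every translate hits the entire pure-$M$ part (which is $(\Sigma_2)^{n/2}$-fixed), and distinct translates also collide in the pure-$L$ part whenever two global bijection choices in $(\Sigma_2)^{n/2}/\Sigma_2$ restrict to the same sector-wise data for some $\mu\neq\hat 0$. So for $d=2$ the pure subposet is a nontrivial quotient of the source, not the source itself, and showing that \eqref{equation: fixed points} is a homotopy equivalence still requires an argument. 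This is precisely where the paper's method earns its keep: the set $T$ consists of the $2^{n/2-1}$ pure-$L$ partitions with $\mu=\hat 0$, its complement (inside your pure subposet, or inside all of $\Pi_n^{\Sigma_2}$) is contractible, and the resulting cofiber splits as $\bigvee_{t\in T}\Sigma|\Pi_{<t}|^{\Sigma_2}*|\Pi_{>t}|^{\Sigma_2}$, with the map~\eqref{equation: fixed points} matching this decomposition term by term. Your ``bookkeeping'' concern is therefore not cosmetic---without something like this collapse of the shared region, the $d=2$ case is incomplete.
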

\begin{remark} We will need Lemma~\ref{lemma: fixed points} to prove our main result. Later we will use the main result to prove that the conclusion of the lemma holds when $G$ is any transitive subgroup of $\Sigma_d$. 
\end{remark}
%To justify this point of view, we give a slightly more explicit description of the source of the map. First let's consider the general situation. Suppose $K$ is a group, $G, H\subset K$ are subgroups, and $X$ is a  space with a pointed action of $H$. Then a standard calculation shows that 
%\[
%(K_+ \wedge_H X)^G \cong \bigvee_{kH\in W_K(G, H)} X^{G_k}.
%\]
%Here $W_K(G, H)=\{k\in K\mid k^{-1}G k\subset H\}/H$, and $G_k=k^{-1}Gk$.
%
%In our case, $K=(\Sigma_d)^{\frac{n}{d}}$, $H=\Sigma_d$ and $G$ is $G$. The set $W_{(\Sigma_d)^{\frac{n}{d}}}(G, \Sigma_d)$ can be identified (by choosing representatives of $\Sigma_d$-orbits) with the subset of $(\Sigma_d)^{\frac{n}{d}}$ consisting of elements of the form $(1, c_1, c_2, \ldots, c_{\frac{n}{d}-1})$ where each element $c_i$ belongs to the centralizer of $G$ in $\Sigma_d$. Equivalently, $c_i$ is a $G$-equivariant bijection from the first to the $i$-th orbit of the action of $G$ on $\n$. Thus the set $W_{(\Sigma_d)^{\frac{n}{d}}}(G, \Sigma_d)$ can be identified with the set of compatible $G$-equivariant bijections of the orbits of the action of $G$ on $\n$. 
\begin{lemma}\label{lemma: invariant partitions}
Suppose, as usual, that $d|n$ and $G$ is a transitive subgroup of $\Sigma_d$, embedded diagonally in $\Sigma_n$. Let $T$ be the set of $G$-invariant partitions of $\n$ with the property that each equivalence class intersects each $G$-orbit at a single element. There are bijections 
\[
T\cong C_{\Sigma_d}(G)^{\frac{n}{d}-1}\cong C_{\Sigma_d}(G)^{\frac{n}{d}}/C_{\Sigma_d}(G).
\]
\end{lemma}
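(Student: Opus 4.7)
My plan is to fix the identification $\n \cong \mathbf{d}\times\mathbf{\frac{n}{d}}$ via $(i,j)\mapsto i+(j-1)d$ set up just before the lemma, so that the $G$-orbits become the columns $O_j := \mathbf{d}\times\{j\}$, each carrying the standard transitive $G$-action on $\mathbf{d}$. The one elementary fact I rely on throughout is that, since $G$ is transitive on $\mathbf{d}$, the set of $G$-equivariant bijections $\mathbf{d}\to\mathbf{d}$ is exactly $C_{\Sigma_d}(G)$: commuting with every element of $G$ inside $\Sigma_d$ is the same thing as $G$-equivariance, and a $G$-equivariant self-map of a finite transitive $G$-set is automatically bijective.

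For the first bijection I would assign to each $\lambda\in T$ the tuple of its ``class connectors''. For $j=2,\ldots,\frac{n}{d}$, define $\alpha_j^\lambda\colon O_1\to O_j$ by letting $\alpha_j^\lambda(y)$ be the unique element of $O_j$ lying in the same $\lambda$-class as $y$. The defining property of $T$ makes $\alpha_j^\lambda$ well-defined and bijective, and the $G$-invariance of $\lambda$ makes it $G$-equivariant, so under the fixed identifications $O_j\cong\mathbf{d}$ we obtain $c_j^\lambda\in C_{\Sigma_d}(G)$. The inverse construction takes $(c_2,\ldots,c_{n/d})$ to the partition whose equivalence classes are the sets $\{(y,1),(c_2(y),2),\ldots,(c_{n/d}(y),n/d)\}$ as $y$ runs over $\mathbf{d}$; this partition is $G$-invariant precisely because each $c_j$ commutes with $G$, and it obviously lies in $T$. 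Verifying that the two assignments are mutually inverse is routine.

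For the second bijection I would use the shift map
\[
\phi\colon C_{\Sigma_d}(G)^{\frac{n}{d}}\to C_{\Sigma_d}(G)^{\frac{n}{d}-1},\qquad (c_1,\ldots,c_{n/d})\mapsto(c_1^{-1}c_2,\,c_1^{-1}c_3,\,\ldots,\,c_1^{-1}c_{n/d}).
\]
It is visibly surjective (lift $(d_2,\ldots,d_{n/d})$ to $(e,d_2,\ldots,d_{n/d})$), and a short calculation shows $\phi(c_1,\ldots,c_{n/d})=\phi(c_1',\ldots,c_{n/d}')$ if and only if there exists $c\in C_{\Sigma_d}(G)$ with $c_j' = c c_j$ for every $j$; that is, the fibres of $\phi$ are exactly the orbits of the left diagonal action. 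Hence $\phi$ descends to the desired bijection $C_{\Sigma_d}(G)^{\frac{n}{d}}/C_{\Sigma_d}(G)\stackrel{\cong}{\to}C_{\Sigma_d}(G)^{\frac{n}{d}-1}$.

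I do not anticipate a genuine obstacle; the whole argument is an exercise in unpacking definitions after the identifications are in place. The step that deserves the most care is the opening identification of $G$-equivariant bijections of $\mathbf{d}$ with $C_{\Sigma_d}(G)$, because it is precisely where transitivity of $G$ is used and it is what makes the ``class connector'' data live in the centralizer rather than in some larger set.
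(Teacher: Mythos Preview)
Your argument is correct and follows the paper's approach essentially verbatim: the paper also constructs the bijection $T\to C_{\Sigma_d}(G)^{\frac{n}{d}-1}$ by sending $\lambda$ to the tuple of $G$-equivariant bijections $O_1\to O_i$ determined by the equivalence classes, leaving the verification that this is a bijection to the reader. You supply more detail than the paper (the explicit inverse and the shift map for the second bijection), but the method is the same.
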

\begin{proof}
By our assumptions the action of $G$ on $\n$ has $\frac{n}{d}$ orbits, each isomorphic to $\mathbf d$. Let $O_1, \ldots, O_{\frac{n}{d}}$ be the orbits of $G$. Let $\lambda\in T$ be a partition of $\n$ that intersects each orbit at a single element. Then for each $i=2, \ldots, \frac{n}{d}$, $\lambda$ determines a $G$-equivariant bijection $O_1 \to O_i$ which associates to each element of $O_1$ the unique element of $O_i$ that is in the same equivalence class of $\lambda$. The set of $G$-equivariant bijections from $O_1$ to $O_i$ is canonically isomorphic to $C_{\Sigma_d}(G)$. It follows that $\lambda$ determines an element of $C_{\Sigma_d}(G)^{\frac{n}{d}-1}$. We have defined a map $T\to C_{\Sigma_d}(G)^{\frac{n}{d}-1}$. It is easy to check that this map is a bijection.
\end{proof}

% Thus we obtained a homotopy equivalence
%\begin{equation}\label{equation: explicit fixed points}
%T_+ \wedge \Sigma |\Pi_d|^G * |\Pi_{\frac{n}{d} }| \stackrel{\simeq}{\to} |\Pi_n|^G.
%\end{equation}
%The map is defined as the composition
%\[
%T_+ \wedge \Sigma |\Pi_d|^G * |\Pi_{\frac{n}{d} }| {\to} T_+\wedge |\Pi_n|^G{\to} |\Pi_n|^G.
%\]
%where the first map is induced by the standard map $\Sigma |\Pi_d|^G * |\Pi_{\frac{n}{d} }| \to |P_n|^G$, and the second map is induced by the $G$-equivariant bijection $\n\to \n$ defined by an element of $T$. 
\begin{proof}[Proof of Lemma~\ref{lemma: fixed points}]
Consider first the case $l>1$, so that we have a wreath product of at least two factors. In that case $|\Pi_d|^G$ is contractible by Lemma~\ref{lemma: wreath}, and $|\Pi_n|^G$ is contractible by Lemma~\ref{lemma: isotypical wreath}. It follows that the source and the target of the map~\eqref{equation: fixed points} are contractible, and so the map is a homotopy equivalence.

It remains to analyze the case $l=1$, or in other words, $G=\Sigma_d$. Our goal is to prove that the map~\eqref{equation: fixed points} is an equivalence. We will now analyze the homotopy type of the target of the map, i.e, of the space $|\Pi_n|^{\Sigma_d}$.  

Before we proceed, it is worth noting that $C_{\Sigma_d}(\Sigma_d)$ is the center of $\Sigma_d$. We sometimes may write it simply as $C(\Sigma_d)$. This group is trivial if $d\ne 2$ and is $\Sigma_2$ when $d=2$. However, we will not separate the cases, but just write $C(\Sigma_d)$ as per the general case. Note also that $C(\Sigma_d)$ acts trivially on  $|\Pi_d|^{\Sigma_d}*|\Pi_{\frac{n}{d}}|$. Therefore, there is a natural isomorphism
\begin{equation}\label{equation: T}
{C(\Sigma_d)^{\frac{n}{d}}}_+ \wedge_{C(\Sigma_d)}\Sigma |\Pi_d|^{\Sigma_d} * |\Pi_{\frac{n}{d} }|\cong T_+ \wedge \Sigma |\Pi_d|^{\Sigma_d} * |\Pi_{\frac{n}{d} }|.
\end{equation}
Recall that $T\subset (\Pi_n)^{\Sigma_d}$ is the set of ${\Sigma_d}$-invariant partitions that intersect each ${\Sigma_d}$ orbit at a single element (in particular, every $\lambda\in T$ has $d$ equivalence classes, with $\frac{n}{d}$ elements in each). Let $\Pi_{\le T\le}^{\Sigma_d}$ be the poset of ${\Sigma_d}$-invariant partitions of $\n$ that are either finer or coarser than at least one element of $T$. 

Let $\lambda_0<\cdots <\lambda_n$ be a chain of ${\Sigma_d}$-invariant partitions. We claim that either the entire chain is contained in the poset $\Pi_n^{\Sigma_d}\setminus T$ or it is contained in $\Pi_{\le T\le}^{\Sigma_d}$. Indeed, 
suppose the entire chain is not contained in $(\Pi_n)^{\Sigma_d}\setminus T$. Then $\lambda_i\in T$ for some $\lambda_i$ in the chain. But then it is clear that the entire chain is in $\Pi_{\le T\le}^{\Sigma_d}$.

It follows from this claim that $|\Pi_n|^{\Sigma_d}\cong |\Pi_n^{\Sigma_d}\setminus T| \cup |\Pi_{\le T\le}^{\Sigma_d}|$, and moreover that $|\Pi_n|^{\Sigma_d}$ is equivalent to the homotopy pushout of the diagram
\begin{equation}\label{equation: pushout}
 |\Pi_n^{\Sigma_d}\setminus T| \leftarrow |(\Pi_n^{\Sigma_d}\setminus T) \cap \Pi_{\le T\le}^{\Sigma_d}|\to  |\Pi_{\le T\le}^{\Sigma_d}|.
\end{equation}

Next we analyze $|\Pi_n^{\Sigma_d}\setminus T|$. Specficially, we will show that it is contractible. Let $\lambda\in \Pi_n^{\Sigma_d}\setminus T$. Then in particular $\lambda$ is a $\Sigma_d$-invariant partition of $\n$. The restriction of $\lambda$ to an orbit of $\Sigma_d$ defines a $\Sigma_d$-invariant partition of this orbit. Since each orbit is isomorphic to $\mathbf{d}$ with the standard action of $\Sigma_d$, the restriction of $\lambda$ to each orbit is either the discrete or the indiscrete partition of that orbit (this follows from Example~\ref{example: full symmetric}). We claim that not all the orbits of $\Sigma_d$ are connected by $\lambda$. Indeed, suppose that all orbits are connected. Then the restriction of $\lambda$ to each orbit is either indiscrete for all orbits, or discrete for all orbits. In the first case, $\lambda$ is itself indiscrete, in which case it is not in $\Pi_n$. In the second case, $\lambda\in T$, contradicting our assumption. Let $\theta$ be the partition of $\n$ into $\Sigma_d$-orbits, and let $\lambda\wedge \theta$ be the finest common coarsening of $\lambda$ and $\theta$. Since not all the classes of $\theta$ are connected by $\lambda$, it follows that $\lambda\wedge \theta$ is not the indiscrete partition. By Lemma~\ref{lemma: lattice} $|\Pi_n^{\Sigma_d}\setminus T|$ is a contractible space.
 
 Since $|\Pi_n^{\Sigma_d}\setminus T|$ is contractible, the space $|\Pi_n|^{\Sigma_d}$, which is equivalent to the homotopy pushout on line~\eqref{equation: pushout}, is homotopy equivalent to the cofiber (as well as homotopy cofiber) of the inclusion map
 \[
  |(\Pi_n^{\Sigma_d}\setminus T) \cap \Pi_{\le T\le}^{\Sigma_d}|\to  |\Pi_{\le T\le}^{\Sigma_d}|.
  \]
For an element $t\in T$. Let $\Pi_{\le t \le}$ be the subposet of $(\Pi_n)^{\Sigma_d}$ consisting of partitions that are comparable with $t$. Let $\Pi_{< t <}=\Pi_{\le t \le}\setminus\{t\}$. Then clearly we have decompositions
\[
|\Pi_{\le T\le}^{\Sigma_d}|=\bigcup_{t\in T} |\Pi_{\le t\le}^{\Sigma_d}|
\]
and
\[
( |\Pi_n^{\Sigma_d}\setminus T) \cap \Pi_{\le T\le}^{\Sigma_d}|\cong |\Pi_{< T<}^{\Sigma_d}|=\bigcup_{t\in T} |\Pi_{< t<}^{\Sigma_d}|.
\]

It is clear that if $t_1, t_2$ are distinct elements of $T$ then \[\Pi_{\le t_1\le}^{\Sigma_d}\cap \Pi_{\le t_2\le}^{\Sigma_d}\subset (\Pi_n)^{\Sigma_d}\setminus T.\] It follows that there is a homeomorphism
\[
|\Pi_{\le T\le}^{\Sigma_d}|/  |\Pi_n^{\Sigma_d}\setminus T) \cap \Pi_{\le T\le}^{\Sigma_d}|\cong \bigvee_{t\in T}|\Pi_{\le t\le}^{\Sigma_d}|/|\Pi_{< t<}^{\Sigma_d}|.
\]
There are decompositions 
\[
|\Pi_{\le t\le}^{\Sigma_d}|\cong \{t\} *|\Pi_{<t}|^{\Sigma_d} * |\Pi_{>t}|^{\Sigma_d}
\]
and
\[
|\Pi_{< t<}^{\Sigma_d}|\cong |\Pi_{<t}|^{\Sigma_d} * |\Pi_{>t}|^{\Sigma_d}.
\]
It follows that 
\[
|\Pi_{\le t\le}^{\Sigma_d}|/|\Pi_{< t<}^{\Sigma_d}|\cong \Sigma |\Pi_{<t}|^{\Sigma_d} * |\Pi_{>t}|^{\Sigma_d}.
\]
Therefore, we obtain a homeomorphism, 
%and moreover $\Pi_{\le t_1\le}^{\Sigma_d}\cap \Pi_{\le t_2\le}^G=\Pi_{< t_1<}^{\Sigma_d}\cap \Pi_{< t_2<}^{\Sigma_d}$. It follows that the following square is both a pushout at homotopy pushout.
%\[
%\begin{array}{ccc}
% \underset{t\in T}{ \coprod} | \Pi_{< t<}^{\Sigma_d}|& \to&  \underset{t\in T}{ \coprod} |\Pi_{\le t\le}^{\Sigma_d}| \\
% \downarrow & & \downarrow \\
%  |((\Pi_n)^{\Sigma_d}\setminus T) \cap \Pi_{\le T\le}^{\Sigma_d}|& \to & |\Pi_{\le T\le}^{\Sigma_d}|.
%\end{array}
%\]
%It follows that $|\Pi|^{\Sigma_d}$ is homotopy equivalent to the homotopy cofiber of the top map, which is homeomorphic to
\[
|\Pi_{\le T\le}^{\Sigma_d}|/  |(\Pi_n)^{\Sigma_d}\setminus T) \cap \Pi_{\le T\le}^{\Sigma_d}|\cong \bigvee_{t\in T} \Sigma |\Pi_{t<}|^{\Sigma_d}*|\Pi_{>t}|^{\Sigma_d}.
\]
Recall that each $t\in T$ is a ${\Sigma_d}$-invariant partition of $\n$ with the property that each class of the partition intersects each ${\Sigma_d}$-orbit at exactly one element. In particular, $t$ has $d$ equivalence classes, that are permuted transitively by ${\Sigma_d}$, with each class contaning $\frac{n}{d}$ elements. It follows that $\Pi_{<t}^{\Sigma_d}$, the poset of ${\Sigma_d}$-invariant coarsening of $t$, is isomorphic to $\Pi_d^{\Sigma_d}$, which is in fact empty. On the other hand $\Pi_{t<}^{\Sigma_d}$, the poset of $G$-invariant refinements of $t$, is, by Proposition~\ref{proposition: fancy join}, isomorphic to $(\Sigma^{d-2}*\Pi_{\frac{n}{d}}^{* d} )^{\Sigma_d}$, where ${\Sigma_d}$ acts on $\Sigma^{d-2}$ with no fixed points, and acts on $\Pi_{\frac{n}{d}}^{* d}$ by permuting the $d$ copies of $\Pi_{\frac{n}{d}}$. The fixed points space of this action is $\Pi_{\frac{n}{d}}$. It follows that 
\[
|\Pi_n|^{\Sigma_d}\simeq T_+\wedge \Sigma |\Pi_d|^{\Sigma_d} * |\Pi_{\frac{n}{d}}|.
\]
In view of isomorphism~\eqref{equation: T}, we proved that in the case $G=\Sigma_d$, the space $|\Pi_n|^G$, which is the target of the map~\eqref{equation: fixed points} is homotopy equivalent to the source of that same map. The source of the map also can be decomposed as a pushout analogous to~\eqref{equation: pushout}. The map  respects the decomposition and induces a termwise equivalence of the pushout diagrams, therefore it induces an equivalence between homotopy pushouts.
\end{proof}

\section{Proof of the main theorem}\label{section: main proof}
As usual, let $\Sigma_{n_1}\times \cdots \times\Sigma_{n_k}\subset \Sigma_n$ be a Young subgroup. We need to classify the isotypical isotropy groups of $\Pi_n$, considered as a space with an action of $\Sigma_{n_1}\times \cdots \times\Sigma_{n_k}\subset \Sigma_n$.
\begin{lemma}\label{lemma: isotropy}
Consider the action of $\Sigma_{n_1}\times \cdots \times\Sigma_{n_k}$ on $\Pi_n$. Let $G\subset \Sigma_{n_1}\times \cdots \times\Sigma_{n_k}$ be an isotropy group of a simplex in $\Pi_n$. Suppose that $G$ acts isotypically on $\n$. Then $G$ is conjugate to a group of the form $\Sigma_{d_l}\wr\cdots\wr \Sigma_{d_1}$, where $l\ge 1$, and $d_1\cdots d_l | \gcd(n_1, \ldots, n_k)$. As usual, the group $G$ acts diagonally on blocks of size $d_1\cdots d_l$. The possibility that $G$ is the trivial group is included.
\end{lemma}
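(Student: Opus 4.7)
The plan is to prove the lemma in three stages.

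\emph{Step 1 (divisibility).} Because $G\subset \Sigma_{n_1}\times\cdots\times\Sigma_{n_k}$, every $G$-orbit is contained in a single Young block $I_j$. Isotypicality gives a common orbit size $d$, so each $I_j$ is a disjoint union of $n_j/d$ orbits of size $d$, forcing $d\mid n_j$ for every $j$ and hence $d\mid\gcd(n_1,\ldots,n_k)$. This will be $d=d_1\cdots d_l$ in the statement.

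\emph{Step 2 (diagonalization inside the Young subgroup).} Isotypicality provides a $G$-equivariant isomorphism $\n\cong \mathbf{d}\times(\n/G)$ in which $G$ acts only on the first factor via a fixed transitive embedding $G\hookrightarrow \Sigma_d$. Arranging the orbits of each $I_j$ consecutively and choosing $G$-equivariant bijections with $\mathbf{d}$, this isomorphism is realized by conjugation by an element $\tau$ of the Young subgroup. After replacing $G$ by $\tau G\tau^{-1}$ and $\Lambda$ by $\tau\Lambda$, I may assume $G$ is a transitive subgroup of the diagonal $\Sigma_d\subset (\Sigma_d)^{n/d}\subset \Sigma_n$.

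\emph{Step 3 (wreath-product structure).} Let $O=\mathbf{d}\times\{1\}$ and let $\mu_i$ be the restriction of $\lambda_i$ to $O$, producing a chain $\mu=(\mu_0\le\cdots\le\mu_r)$ of $G$-invariant partitions of $\mathbf{d}$. By Lemma~\ref{lemma: transitive}, each $\mu_i$ corresponds to an intermediate subgroup above the point stabilizer of $G$. Transitivity of $G$ makes it act transitively on the blocks of $\mu_i$, forcing all blocks to share a common size and each block of $\mu_i$ to split into the same number of blocks of $\mu_{i+1}$. Thus $\mu$ has uniform tree structure, and its $\Sigma_d$-stabilizer $W$ is an iterated wreath product $\Sigma_{d_l}\wr\cdots\wr\Sigma_{d_1}$ with $d_1\cdots d_l=d$. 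Because $G$ preserves $\mu$, we immediately have $G\subseteq W$.

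\emph{Main obstacle.} The reverse inclusion $W\subseteq G$ is the substantive step. Given $\sigma\in W$, its diagonal lift $\sigma_\Delta\in \Sigma_d\subset(\Sigma_d)^{n/d}\subset \Sigma_n$ lies in the Young subgroup (it preserves each $G$-orbit, hence each $I_j$), so by maximality of $G$ as the stabilizer of $\Lambda$ in the Young subgroup it suffices to show that $\sigma_\Delta$ preserves every $\lambda_i$. The delicate case is a block $B$ of $\lambda_i$ that straddles several orbits: each piece $B\cap O_j$ is a block of $\mu_i$ under the identification $O_j\cong\mathbf{d}$, but the cross-orbit linkage $j\mapsto B\cap O_j$ must be shown to be respected by the uniform diagonal action of $\sigma$. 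I expect this to follow from the fact that $G$-invariance of $\lambda_i$ forces the linkage to be ``$G$-equivariant'' in an appropriate sense, combined with the fact that $\sigma$ permutes the blocks of $\mu_i$ in a compatible way. Once $\sigma_\Delta$ is shown to permute blocks of $\lambda_i$, it lies in $G$, so $\sigma\in G$, yielding $W\subseteq G$ and completing the proof. The trivial-group case is subsumed by taking $l=1$ and $d_1=1$.
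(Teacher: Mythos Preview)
Your argument has a genuine gap at what you call the ``main obstacle'': you correctly reduce $W\subseteq G$ to showing that $\sigma_\Delta$ preserves each $\lambda_i$ for every $\sigma\in W$, but then offer only an expectation, not a proof. The difficulty is real. The restricted chain $\mu$ records only what each $\lambda_i$ does inside a single orbit; it discards the cross-orbit linkage, i.e.\ the data of which $\mu_i$-blocks in the various orbits $O_j$ assemble into a single $\lambda_i$-block $B$. You note that this linkage is $G$-equivariant, which is correct, but an element $\sigma\in W$ need not lie in $G$ nor commute with the $G$-action on $\mu_i$-blocks, so $G$-equivariance of the linkage does not by itself force $\sigma_\Delta$ to respect it. You have also not verified that the restrictions $\lambda_i|_{O_j}$ agree for different $j$, which your sentence ``each piece $B\cap O_j$ is a block of $\mu_i$'' already presupposes. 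The conclusion $G=\operatorname{diag} W$ is in fact true under your hypotheses, but proving it seems to require exploiting that $G$ is the \emph{full} Young-stabilizer in a more structural way than your sketch provides.

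The paper takes a different and shorter route that sidesteps this obstacle: it ignores the orbit decomposition entirely and inducts on the length of the chain. For any chain $\lambda_1<\cdots<\lambda_i$, the full Young-stabilizer decomposes as a product $\prod_s K_s\wr\Sigma_{t_s}$, the product running over $G$-orbits on the set of blocks of the coarsest partition $\lambda_1$, with $K_s$ the Young-stabilizer of the restricted chain $\lambda_2<\cdots<\lambda_i$ on one block of type $s$. Isotypicality of $G$ on $\mathbf n$ forces either $G$ trivial or a single factor $K\wr\Sigma_t$; then $K$ is again an isotypical isotropy group for the shorter chain on a single block, and the induction runs. Your Steps~1--2 are correct but are not needed for this argument; the divisibility falls out at the end.
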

\begin{proof}
Let $\lambda_1 < \cdots < \lambda_i$ be a chain of partitions of $\n$. Let $G$ be the stabilizer of this chain in $\Sigma_{n_1}\times \cdots \times\Sigma_{n_k}\subset \Sigma_n$. Let us analyze the general form of $G$. The group $G$ acts on the set of equivalence classes of each $\lambda_j$. Recall that $\lambda_1$ is the coarsest partition of the chain. Let us say that two equivalence classes of $\lambda_1$ are of the same type, if they are in the same orbit of the action of $G$. This is an equivalence relation on the set of equivalence classes of $\lambda_1$. Let us say that there are $s$ different types of equivalence classes of $\lambda_1$, and there are $t_1$ classes of type $1$, $t_2$ classes of type $2$ and so forth. It is easy to see that in this case $G$ is isomorphic to a group of the following form
\[
G\cong K_1\wr\Sigma_{t_1} \times \cdots \times K_s\wr \Sigma_{t_s}
\]
where $K_j$ is the group of automorphisms (in $\Sigma_{n_1}\times \cdots \times\Sigma_{n_k}$) of the restriction of the chain  $\lambda_1 < \cdots < \lambda_i$ to an equivalence class of $\lambda_1$ of type $j$. We conclude that if $G$ acts isotypically on $\n$ then either $G$ is trivial, or $s=1$ and $G\cong K_1\wr \Sigma_{t_1}$. Furthermore, if $G$ acts isotypically on $\n$ then $K_1$ has to act isotypically on a component of $\lambda_1$. The lemma follows by an induction on $i$.
%
%Suppose that $G$ acts isotypically on $\n$. Then necessarily $G$ embeds as a transitive subgroup of $\Sigma_d$, where $d|\gcd(n_1,\ldots, n_k)$, and $\Sigma_d$ embeds diagonally in $\Sigma_{n_1}\times \cdots \times\Sigma_{n_k}\subset \Sigma_n$. The group $G$ acts on the set of equivalence classes of each $\lambda_i$ in the chain. Recall that $\lambda_1$ is the coarsest partition in the chain. It is easy to check that since $G$ act isotypically, then either $G$ is the trivial group or it acts transitively on the set of classes of $\lambda_1$. Supposing $G$ is not the trivial group, it follows that the restrictions of the chain $\lambda_1<\cdots<\lambda_i$ to the equivalence classes of $\lambda_1$ are pairwise isomorphic. Then $G$ acts on the equivalence classes of $\lambda_1$ as the full symmetric group. Let us say that $\lambda_1$ has $d_1$ equivalence classes, with $\frac{n}{d_1}$ elements in each. Each equivalence class of $\lambda_1$ must intersect the set $\{n_1+\cdots + n_{l-1} +1, n_1+\cdots + n_{l-1} +2, \ldots, n_1+\cdots + n_{l-1} +n_l\}$ at $\frac{n_l}{d_1}$ elements, for $l=1, \ldots, k$. Then $G\cong K\wr \Sigma_{d_1}$, where $K$ is the stabilizer group of the restriction of the chain $\lambda_1<\cdots < \lambda_i$ to a single equivalence class of $\lambda_1$. The group $K$ is an isotropy group of the action of $\Sigma_{\frac{n_1}{d_1}}\times \cdots \times\Sigma_{\frac{n_k}{\,d_1}}$ on $\Pi_{\frac{n}{d_1}}$. If $G$ acts isotypically on $\n$, then $K$ must act isotypically on $\mathbf{\frac{n}{\,d_1}}$. The lemma follows by downward induction.
\end{proof}
Now we are ready to prove the main theorem.
\begin{proof}[Proof of Theorem~\ref{theorem: main}]
We need to show that the $\Sigma_{n_1}\times\cdots\times\Sigma_{n_k}$-equivariant map
\begin{equation}\label{equation: main map again}
\Pi_n \longrightarrow \bigvee_{d|g} \left(\bigvee_{B(\frac{n_1}{d}, \ldots, \frac{n_k}{d})} \Sigma_{n_1}\times \cdots \times {\Sigma_{n_k}}_+ \wedge_{\Sigma_d} S^{n-d-1} * \Pi_d\right)
\end{equation}
that we constructed in Proposition~\ref{proposition: collapse map} is a $\Sigma_{n_1}\times\cdots\times\Sigma_{n_k}$-equivariant homotopy equivalence. This means that we need to prove that the map induces a homotopy equivalence of $G$-fixed points, for every subgroup $G\subset \Sigma_{n_1}\times\cdots\times\Sigma_{n_k}$. By Lemma~\ref{lemma: dwyer lemma}, it is enough to prove it for groups $G$ that occur as isotropy groups of either $\Pi_n$ (with respect to the action of $\Sigma_{n_1}\times\cdots\times\Sigma_{n_k}$) or of $ \Sigma_{n_1}\times \cdots \times {\Sigma_{n_k}}_+ \wedge_{\Sigma_d} S^{n-d-1} * \Pi_d$. 

Suppose first that $G$ is not an isotypical subgroup of $\Sigma_{n_1}\times\cdots\times\Sigma_{n_k}$. Then $|\Pi_n|^G$ is contractible by Lemma~\ref{lemma: nondiagonal}. On the other hand, the fixed point space $\left(\Sigma_{n_1}\times \cdots \times {\Sigma_{n_k}}_+ \wedge_{\Sigma_d} S^{n-d-1} * \Pi_d\right)^G$ is contractible if $G$ is not conjugate to a subgroup of $\Sigma_d$. If $G$ is a subgroup of $\Sigma_d$ then $\left(\Sigma_{n_1}\times \cdots \times {\Sigma_{n_k}}_+ \wedge_{\Sigma_d} S^{n-d-1} * |\Pi_d|\right)^G$ is homeomorphic to a wedge sum of some copies of $\left(S^{n-d-1} * |\Pi_d|\right)^G$ (Corollary~\ref{corollary: simplified fixed points}). If $G\subset \Sigma_d\subset\Sigma_n$ does not act isotypically on $\n$, then it does not act isotypically on $\mathbf d$, and therefore $\left(S^{n-d-1} * |\Pi_d|\right)^G\cong (S^{n-d-1})^G * |\Pi_d|^G$ is contractible. We conclude that if $G$ is not isotypical, then both the source and the target of the map~\eqref{equation: main map again} are contractible, and in particular the map is a homotopy equivalence.

It remains to consider the case when $G$ is an isotypical group that occurs as an isotropy group of the action of $\Sigma_{n_1}\times \cdots \times {\Sigma_{n_k}}$ on $\Pi_n$ and on $\Sigma_{n_1}\times \cdots \times {\Sigma_{n_k}}_+ \wedge_{\Sigma_d} S^{n-d-1} * |\Pi_d|$. By Lemma~\ref{lemma: isotropy}, isotypical isotropy groups of $|\Pi_n|$ are groups of the form $\Sigma_{d_1}\wr\cdots \wr\Sigma_{d_l}$. On the other side of the map, the isotropy groups of the action of $\Sigma_{n_1}\times \cdots \times {\Sigma_{n_k}}$ on $\Sigma_{n_1}\times \cdots \times {\Sigma_{n_k}}_+ \wedge_{\Sigma_d} S^{n-d-1} * |\Pi_d|$ are the same as the isotropy groups of the action of $\Sigma_d$ on  $S^{n-d-1} * |\Pi_d|$. The isotropy groups of $S^{n-d-1}$ are groups of the form $\Sigma_{d_1}\times\cdots\times\Sigma_{d_j}$ (Young subgroups of $\Sigma_d$). Therefore the isotropy groups of the action of $\Sigma_d$ on $S^{n-d-1} * |\Pi_d|$ are the isotropy groups of the action of Young subgroups of $\Sigma_d$ on $|\Pi_d|$. Again by Lemma~\ref{lemma: isotropy}, the only isotypical isotropy groups are groups of the form $\Sigma_{d_1}\wr\cdots \wr\Sigma_{d_l}$. So we need to check that the map~\eqref{equation: main map again} induces an equivalence of fixed points of groups of the form $\Sigma_{d_1}\wr\cdots \wr\Sigma_{d_l}$ (including the trivial group).

For trivial isotopy group, the map is a cohomology isomorphism by Proposition~\ref{proposition: collapse map}. Since both the source and the target of the map are equivalent to wedges of spheres of dimension $n-3$, it follows that the map is a homotopy equivalence except possibly when $n=4$. But this case can easily be checked ``by hand''. Indeed, Figure~\ref{figure: n=4} illustrates the case of the group $\Sigma_2\times\Sigma_2$, which is the most interesting Young subgroup of $\Sigma_4$.

Assume therefore that the isotropy group is non-trivial.

If $l>1$, then $|\Pi_n|^{\Sigma_{d_1}\wr\cdots \wr\Sigma_{d_l}}$ is contractible by Lemma~\ref{lemma: isotypical wreath}. On the other hand $\left(\Sigma_{n_1}\times \cdots \times {\Sigma_{n_k}}_+ \wedge_{\Sigma_d} S^{n-d-1} * |\Pi_d|\right)^{\Sigma_{d_1}\wr\cdots \wr\Sigma_{d_l}}$ is contractible if $\Sigma_{d_1}\wr\cdots \wr\Sigma_{d_l}$ is not (conjugate to) a subgroup of $\Sigma_d$, or is equivalent to a wedge sum of copies of $\left(S^{n-d-1} * \Pi_d\right)^{\Sigma_{d_1}\wr\cdots \wr\Sigma_{d_l}}$, which is, again, contractible by Lemma~\ref{lemma: isotypical wreath}.

It remains to consider isotropy groups of the form $\Sigma_{d_1}$, where $d_1$ is some number greater than $1$ that divides $n_1, \ldots, n_k$. Fix such a number. We need to show that the map~\eqref{equation: main map again} induces a homotopy equivalence of $\Sigma_{d_1}$ fixed points spaces:
\begin{equation}\label{equation: goal}
|\Pi_n|^{\Sigma_{d_1}} \longrightarrow \bigvee_{d|g} \bigvee_{B(\frac{n_1}{d}, \ldots, \frac{n_k}{d})} \left(\Sigma_{n_1}\times \cdots \times {\Sigma_{n_k}}_+ \wedge_{\Sigma_d} S^{n-d-1} * |\Pi_d|\right)^{\Sigma_{d_1}}.
\end{equation}
By Lemma~\ref{lemma: fixed points}, $|\Pi_n|^{\Sigma_{d_1}}$ is homotopy equivalent to 
\[
\left(C(\Sigma_{d_1})^{\frac{n}{d_1}}\right)_+\wedge_{C(\Sigma_{d_1})} \Sigma |\Pi_{\frac{n}{d_1}}| .
\]
Here $C(\Sigma_{d_1})$ is the center of $\Sigma_{d_1}$, which is trivial if $d_1>2$ and is $\Sigma_2$ if $d_1=2$.

On the other hand $ \left(\Sigma_{n_1}\times \cdots \times {\Sigma_{n_k}}_+ \wedge_{\Sigma_d} S^{n-d-1} * |\Pi_d|\right)^{\Sigma_{d_1}}$ is contractible if $d_1$ does not divide $d$. If $d_1|d$, then by Corollary~\ref{corollary: simplified fixed points} the fixed points space is equivalent to 
\begin{equation}\label{equation: first fixed}
C_{\Sigma_{n_1}\times\cdots\times\Sigma_{n_k}}(\Sigma_{d_1})_+\wedge_{C_{\Sigma_d}(\Sigma_{d_1})} (S^{n-d-1} * |\Pi_d|)^{\Sigma_{d_1}}.
\end{equation}
The centralizer $C_{\Sigma_d}(\Sigma_{d_1})$ is isomorphic to $\Sigma_{\frac{d}{d_1}}\ltimes C(\Sigma_{d_1})^{\frac{d}{d_1}}$. Similarly,
\[
C_{\Sigma_{n_1}\times\cdots\times\Sigma_{n_k}}(\Sigma_{d_1}) \cong {\Sigma_{\frac{n_1}{d_1}}\times\cdots\times\Sigma_{\frac{n_k}{d_1}}}\ltimes C(\Sigma_{d_1})^{\frac{n}{d_1}}.
\]
It follows that the fixed points space~\eqref{equation: first fixed} is equivalent to
\begin{equation}\label{equation: second fixed}
 {\Sigma_{\frac{n_1}{d_1}}\times\cdots\times\Sigma_{\frac{n_k}{d_1}}\ltimes C(\Sigma_{d_1})^{\frac{n}{d_1}}}_+\wedge_{\Sigma_{\frac{d}{d_1}}\ltimes C(\Sigma_{d_1})^{\frac{d}{d_1}}}  (S^{n-d-1} * |\Pi_d|)^{\Sigma_{d_1}}.
 \end{equation}
Clearly $(S^{n-d-1})^{\Sigma_{d_1}}\cong S^{\frac{n}{d_1}-\frac{d}{d_1} -1}$. By Lemma~\ref{lemma: fixed points}, 
\[
|\Pi_d|^{\Sigma_{d_1}}\cong C(\Sigma_{d_1})^{\frac{d}{d_1}}_+\wedge_{C(\Sigma_{d_1})} \Sigma |\Pi_{\frac{d}{d_1}}|
\]
It follows that~\eqref{equation: second fixed} can be rewritten as
\[
 {\Sigma_{\frac{n_1}{d_1}}\times\cdots\times\Sigma_{\frac{n_k}{d_1}}\ltimes C(\Sigma_{d_1})^{\frac{n}{d_1}}}_+\wedge_{\Sigma_{\frac{d}{d_1}}\times C(\Sigma_{d_1})}  (S^{\frac{n}{d_1}-\frac{d}{d_1}-1} * \Sigma |\Pi_{\frac{d}{d_1}}|).
 \]
 This can be rewritten, as a space, in the following form
 \[
C(\Sigma_{d_1})^{\frac{n}{d_1}}/{C(\Sigma_{d_1})}_+ \wedge  {\Sigma_{\frac{n_1}{d_1}}\times\cdots\times\Sigma_{\frac{n_k}{d_1}}}_+\wedge_{\Sigma_{\frac{d}{d_1}}}  (S^{\frac{n}{d_1}-\frac{d}{d_1}-1} * \Sigma |\Pi_{\frac{d}{d_1}}|).
 \]
%\[
%\left(C(\Sigma_{d_1})^{\frac{n}{d_1}}\right)_+\wedge_{C(\Sigma_{d_1})} \Sigma_{\frac{n_1}{d_1}}\times \cdots \times {\Sigma_{\frac{n_k}{d_1}}}_+ \wedge_{\Sigma_{\frac{d}{d_1}}} S^{\frac{n}{d_1}-\frac{d}{d_1}-1} * \Pi_{\frac{d}{d_1}}
% \]
It follows that the map~\eqref{equation: goal} is equivalent to a map of the following form, suspended, and (if $d_1=2$) smashed with $C(\Sigma_{d_1})^{\frac{n}{d_1}}_+$ over $C(\Sigma_{d_1})$
\begin{equation}\label{equation: identified}
 \Pi_{\frac{n}{d_1}}\to  \bigvee_{d'|\frac{g}{d_1}} \bigvee_{B(\frac{n_1}{d'd_1}, \ldots, \frac{n_k}{d'd_1})}\Sigma_{\frac{n_1}{d_1}}\times \cdots \times {\Sigma_{\frac{n_k}{d_1}}}_+ \wedge_{\Sigma_{d'}} S^{\frac{n}{d_1}-d'-1} * |\Pi_{d'}|.
\end{equation}
We showed in Proposition~\ref{proposition: collapse map} that there is a homotopy equivalence between these spaces (more precisely, we showed that there is a cohomology equivalence, which is a homotopy equivalence because the spaces are simply-connected except in the case $\frac{n}{d_1}=4$, which can be checked directly). A tedious but straightforward inspection of all the maps involved shows the map~\eqref{equation: goal} is the same as the one that was proved to be an equivalence in Proposition~\ref{proposition: collapse map}.
\end{proof}
\section{Application: fixed points}\label{section: application fixed}
Let $G\subset \Sigma_n$ be a subgroup. We know that if $G$ is not isotypical then $|\Pi_n|^G$ is contractible. Suppose $G$ is isotypical. Then we may assume that $G$ is a transitive subgroup of $\Sigma_d\subset(\Sigma_d)^{\frac{n}{d}}\subset \Sigma_n$, for some $d$ that divides $n$. The following proposition describes the fixed points of an action of an isotypical group in terms of the fixed points of an action of a transitive group (which was analyzed in Lemma~\ref{lemma: transitive})
\begin{proposition}\label{proposition: fixed points}
The map that was constructed in~\eqref{equation: fixed points} 
\[
C_{\Sigma_d}(G) \times\cdots \times C_{\Sigma_d}(G)\wedge_{C_{\Sigma_d}(G)}\Sigma| \Pi_d|^G * |\Pi_{\frac{n}{d} }| \to |\Pi_n|^G.
\]
is a homotopy equivalence for all transitive subgroups $G$ of $\Sigma_d$.
\end{proposition}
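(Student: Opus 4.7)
The plan is to deduce this from Theorem~\ref{theorem: main} by taking $G$-fixed points. Apply the main theorem to the Young subgroup $\Sigma_d\times\cdots\times\Sigma_d\subset\Sigma_n$ with $n/d$ factors. This produces a $(\Sigma_d)^{n/d}$-equivariant equivalence
\[
|\Pi_n|\;\xrightarrow{\simeq}\;\bigvee_{e\mid d}\;\bigvee_{B(d/e,\ldots,d/e)} \bigl((\Sigma_d)^{n/d}\bigr)_+\wedge_{\Sigma_e} S^{n-e-1}\wedge|\Pi_e|^\diamond,
\]
and since $G\subset\Sigma_d\subset(\Sigma_d)^{n/d}$, restricting to $G$-fixed points produces a homotopy equivalence between $|\Pi_n|^G$ and the $G$-fixed points of the right-hand side.

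First I would show that only the $e=d$ summand contributes nontrivially. The diagonally embedded $G$ acts on $\mathbf n$ with all orbits of size $d$, whereas any diagonally embedded $\Sigma_e$ has orbits of size at most $e$; so for $e<d$ no conjugate of $G$ can lie inside $\Sigma_e$ and the corresponding wedge factor has contractible $G$-fixed points by Lemma~\ref{lemma: double cosets}. For $e=d$, Corollary~\ref{corollary: simplified fixed points} together with the identification $C_{(\Sigma_d)^{n/d}}(G)=C_{\Sigma_d}(G)^{n/d}$ identifies the $G$-fixed points of the $e=d$ summand with
\[
\bigvee_{B(1,\ldots,1)}\;C_{\Sigma_d}(G)^{n/d}{}_+\wedge_{C_{\Sigma_d}(G)} \bigl(S^{n-d-1}\wedge|\Pi_d|^\diamond\bigr)^{G}.
\]
Using the $\Sigma_d$-equivariant decomposition $S^{n-d-1}\cong S^{(n-d)/d-1}\wedge(\widehat S^{d-1})^{\wedge(n-d)/d}$, the fact that $G$ being transitive forces $(\widehat S^{d-1})^{G}=S^0$ with trivial residual $C_{\Sigma_d}(G)$-action, and $(|\Pi_d|^\diamond)^{G}=(|\Pi_d|^{G})^\diamond$, one computes $(S^{n-d-1}\wedge|\Pi_d|^\diamond)^G=S^{n/d-2}\wedge(|\Pi_d|^G)^\diamond$.

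Next, I would match this with the source of~\eqref{equation: fixed points}. The count $|B(1,\ldots,1)|=(n/d-1)!$ equals the number of top-dimensional spheres in $|\Pi_{n/d}|$. Two applications of Lemma~\ref{lemma: join}---first the rewriting $\Sigma|\Pi_d|^G*|\Pi_{n/d}|\simeq|\Pi_{n/d}|^\diamond\wedge(|\Pi_d|^G)^\diamond$, then the general identification $|\Pi_{n/d}|^\diamond\simeq\bigvee_{(n/d-1)!}S^{n/d-2}$ (which follows from path-connectedness of $|\Pi_{n/d}|$ for $n/d\ge 4$ and by direct inspection for $n/d=2,3$)---produce a $C_{\Sigma_d}(G)$-equivariant equivalence, since $C_{\Sigma_d}(G)$ acts trivially on both the $S^{n/d-2}$ factor and on $|\Pi_{n/d}|$. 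This exhibits the source and target of~\eqref{equation: fixed points} as abstractly $C_{\Sigma_d}(G)^{n/d}$-homotopy equivalent.

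The hard part will be to verify that the explicit map~\eqref{equation: fixed points}---which is the $G$-fixed points of the $(\Sigma_d)^{n/d}$-equivariant map $(\Sigma_d)^{n/d}{}_+\wedge_{\Sigma_d}\Sigma|\Pi_d|*|\Pi_{n/d}|\to|\Pi_n|$ induced by the poset inclusion $\Part_d\times\Part_{n/d}\hookrightarrow\Part_n$---actually implements this abstract equivalence, rather than merely existing alongside it. My approach would be to show that~\eqref{equation: fixed points} is a cohomology isomorphism by a direct argument paralleling Proposition~\ref{proposition: collapse isomorphism}: top-dimensional simplices of $|\Pi_w|$ for $w\in B(1,\ldots,1)$, translated by coset representatives of $C_{\Sigma_d}(G)^{n/d}/C_{\Sigma_d}(G)$, push forward to top-dimensional simplices of $|\Pi_n|^G$ whose classes in cohomology are nonzero by Lemma~\ref{lemma: nonzero}, and a dimension count shows they form a free basis. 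Once~\eqref{equation: fixed points} is known to be a cohomology isomorphism of finite CW complexes with matching homotopy type, Whitehead's theorem completes the proof.
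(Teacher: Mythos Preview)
Your overall strategy matches the paper's exactly: apply Theorem~\ref{theorem: main} to the Young subgroup $(\Sigma_d)^{n/d}$, pass to $G$-fixed points, observe that summands with $e<d$ vanish because $G$ is transitive, and identify the $e=d$ summand via Corollary~\ref{corollary: simplified fixed points}. Your computations of $(S^{n-d-1}\wedge|\Pi_d|^\diamond)^G$ and the abstract identification of both sides are correct.

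The gap is in your ``hard part.'' Your proposed cohomology argument does not work as stated. Lemma~\ref{lemma: nonzero} concerns top simplices of $|\Pi_n|$, not of $|\Pi_n|^G$; there is no reason the analogous statement holds for the fixed-point complex, and in any case you would be presupposing knowledge of $H^*(|\Pi_n|^G)$ to run the dimension count, which is circular. Moreover, the map~\eqref{equation: fixed points} points \emph{into} $|\Pi_n|^G$, whereas Proposition~\ref{proposition: collapse isomorphism} concerns a collapse map \emph{out of} $|\Pi_n|$, so the parallel you invoke is not available.

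The paper resolves this much more simply by a two-out-of-three argument. You already have, by passing the main theorem to $G$-fixed points, a homotopy equivalence
\[
|\Pi_n|^G \xrightarrow{\ \simeq\ } \bigvee_{B(1,\ldots,1)} C_{\Sigma_d}(G)^{n/d}_+\wedge_{C_{\Sigma_d}(G)} S^{n/d-2}*|\Pi_d|^G.
\]
Compose~\eqref{equation: fixed points} with this. A short diagram chase (tracing how the product-of-partitions embedding interacts with the collapse map) shows the composite is, after smashing with $(|\Pi_d|^G)^\diamond$ and suspending, exactly the collapse map
\[
|\Pi_{n/d}| \longrightarrow \bigvee_{B(1,\ldots,1)} S^{n/d-3}
\]
that sends the complement of the maximal simplices indexed by $B(1,\ldots,1)$ to a point. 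This map is the $k=n/d$, all $n_i=1$ instance of the main theorem (or just Proposition~\ref{proposition: collapse map}), hence an equivalence. Since the composite and the second factor are equivalences, so is~\eqref{equation: fixed points}. This bypasses any need to analyze the cohomology of $|\Pi_n|^G$ directly.
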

\begin{proof}
By theorem~\ref{theorem: main} there is a $(\Sigma_d)^{\frac{n}{d}}$-equivariant equivalence
\[
\Pi_n \longrightarrow \bigvee_{d'|d} \bigvee_{B(\frac{d}{d'}, \ldots, \frac{d}{d'})} \Sigma_{d}\times \cdots \times {\Sigma_{d}}_+ \wedge_{\Sigma_{d'}} S^{n-d'-1} * |\Pi_{d'}|.
\]
Since $G\subset (\Sigma_d)^{\frac{n}{d}}$, this map induces an equivalence of $G$-fixed points. Since $G$ is a transitive subgroup of $\Sigma_d$, the fixed point space \[ \left(\Sigma_{d}\times \cdots \times {\Sigma_{d}}_+ \wedge_{\Sigma_{d'}} S^{n-d'-1} * |\Pi_{d'}|\right)^G\] is contractible for $d'<d$. On the other hand, for $d'=d$ the fixed points space is homeomorphic to
\[
\bigvee_{B(1, \ldots, 1)} C_{\Sigma_d}(G)^{\frac{n}{d}}_+ \wedge_{C_{\Sigma_d}(G)} (S^{n-d-1} * \Pi_{d})^G\cong \bigvee_{B(1, \ldots, 1)} C_{\Sigma_d}(G)^{\frac{n}{d}}_+ \wedge_{C_{\Sigma_d}(G)} S^{\frac{n}{d}-2} * |\Pi_{d}|^G.
\]
So we need to show that the following composed map is a homotopy equivalence
\[
{C_{\Sigma_d}(G)^{\frac{n}{d}}}_+ \wedge_{C_{\Sigma_d}(G)}\Sigma |\Pi_{\frac{n}{d} }|*| \Pi_d|^G  \to \bigvee_{B(1, \ldots, 1)} C_{\Sigma_d}(G)^{\frac{n}{d}} \wedge_{C_{\Sigma_d}(G)} S^{\frac{n}{d}-2} * |\Pi_{d}|^G.
\]
A bit of diagram chasing confirms that this map is induced by the map \[|\Pi_{\frac{n}{d} }|\to \bigvee_{B(1, \ldots, 1)} S^{\frac{n}{d}-3}\] that collapses to a point the complement of the maximal simplices of $|\Pi_{\frac{n}{d}}|$ that correspond to the elements of $B(1, \ldots, 1)$. The collapse map is a homotopy equivalence, and therefore the map on the fixed points is a homotopy equivalence.
\end{proof}
\begin{remark}
It is interesting to compare our results with those of~\cite{adl2}. In that paper, the authors analyzed the fixed point space of $|\Pi_n|^P$ where $P$ is a $p$-subgroup of $\Sigma_n$. More precisely, the following result was proved: the only $p$-subgroups $P\subset \Sigma_n$ for which the fixed points space $|\Pi_n|^P$ is not contractible are elementary abelian subgroups acting freely on $\n$.

It also was observed in [loc. cit.] that if $n=p^k$ and $P\cong({\integers}/p)^k$ is an elementary abelian group acting freely and transitively on $\n$, then ${\Pi_n}^P$ is isomorphic to the poset of proper non-trivial subgroups of $P$, which is closely related to the Tits building for $\GL_k(\field_p)$. Let $T_k$ be the geometric realization of the poset of proper non-trivial subgroups of $P$. It is well-known that $$T_k\simeq \bigvee_{p^{k\choose 2}} S^{k-2}.$$ Now we can complete the calculation of fixed points for all $p$-subgroups of $\Sigma_n$. The following is an immediate consequence of Proposition~\ref{proposition: fixed points}.
\begin{corollary}\label{corollary: el abelian}
Suppose $P\cong({\integers}/p)^k$ is an elementary abelian group acting freely on $\n$. Write $n=mp^k$ (note that $p$ may divide $m$). The fixed points space of the action of $P$ on $|\Pi_n|$ is given by any one of the following formulas
\[
|\Pi_n|^P\simeq ({\integers}/p)^{km}_+\wedge_{({\integers}/p)^k} \Sigma T_k  *|\Pi_{m}|\simeq \bigvee_{p^{k(m-1)+{k\choose 2}}(m-1)!} S^{m+k-3}.
\]
\end{corollary}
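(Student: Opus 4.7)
The plan is to derive the corollary as a direct application of Proposition~\ref{proposition: fixed points}, using the standard identification of $|\Pi_{p^k}|^P$ with the subgroup poset of $P$ as the main input.

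First I would set up the data for Proposition~\ref{proposition: fixed points}. Since $P\cong(\integers/p)^k$ acts freely on $\n$, every orbit has size $d=p^k$, and there are exactly $m=n/d$ orbits, each isomorphic as a $P$-set to $P$ with the regular (left multiplication) action. Thus $P$ is an isotypical subgroup of $\Sigma_n$, realized as the diagonal inclusion of a transitive subgroup $P\subset \Sigma_d$. This is the precise setup of Proposition~\ref{proposition: fixed points}, which gives
\[
|\Pi_n|^P \simeq C_{\Sigma_d}(P)^{\frac{n}{d}}_+ \wedge_{C_{\Sigma_d}(P)} \Sigma |\Pi_d|^P * |\Pi_m|.
\]
Since $P$ acts on itself regularly, its centralizer in $\Sigma_{|P|}$ is the group of right translations, which coincides with $P$ itself because $P$ is abelian; hence $C_{\Sigma_d}(P)=P$. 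Substituting yields
\[
|\Pi_n|^P \simeq P^{m}_+ \wedge_{P} \Sigma |\Pi_d|^P * |\Pi_m|.
\]

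Next I would invoke the observation from~\cite{adl2}, cited in the paragraph preceding the corollary: for $P$ acting freely and transitively on $\mathbf{p^k}$, the poset $(\Pi_{p^k})^P$ is isomorphic to the poset of proper nontrivial subgroups of $P$, so $|\Pi_d|^P\cong T_k$. Therefore $\Sigma|\Pi_d|^P * |\Pi_m|\simeq \Sigma T_k * |\Pi_m|$, establishing the first equivalence in the statement. The key point here is that the $P$-action on $T_k$ is trivial: it is induced by conjugation on the subgroup lattice of $P$, which is trivial because $P$ is abelian. Consequently $P$ also acts trivially on $\Sigma T_k * |\Pi_m|$, so the twisted smash product reduces to a plain smash:
\[
P^{m}_+ \wedge_P \bigl(\Sigma T_k * |\Pi_m|\bigr) \;\cong\; \bigl(P^{m}/P\bigr)_+ \wedge \bigl(\Sigma T_k * |\Pi_m|\bigr) \;\cong\; P^{m-1}_+ \wedge \bigl(\Sigma T_k * |\Pi_m|\bigr).
\]

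Finally I would count spheres. Using the well-known equivalences $T_k\simeq\bigvee_{p^{\binom{k}{2}}} S^{k-2}$ and $|\Pi_m|\simeq \bigvee_{(m-1)!} S^{m-3}$, together with the join formula $(\bigvee_a S^{i})*(\bigvee_b S^{j})\simeq \bigvee_{ab} S^{i+j+1}$, one obtains
\[
\Sigma T_k * |\Pi_m| \;\simeq\; \bigvee_{p^{\binom{k}{2}}(m-1)!} S^{m+k-3}.
\]
Smashing with $P^{m-1}_+$, which has $|P|^{m-1}=p^{k(m-1)}$ free generators, multiplies the number of spheres by $p^{k(m-1)}$, giving the claimed total of $p^{k(m-1)+\binom{k}{2}}(m-1)!$ copies of $S^{m+k-3}$.

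I do not anticipate a genuine obstacle; the only care required is in handling the low-dimensional boundary cases ($k=0$ or $m\le 2$) where $T_k$ or $|\Pi_m|$ degenerates to $S^{-1}$ or $S^{-2}$, but the join and smash conventions established in Section~\ref{section: generalities} make these cases go through automatically, so the formula remains valid throughout.
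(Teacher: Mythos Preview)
Your proposal is correct and follows exactly the approach the paper intends: the paper gives no explicit proof, stating only that the corollary is ``an immediate consequence of Proposition~\ref{proposition: fixed points}'', and you have filled in precisely those implicit details---identifying $C_{\Sigma_{p^k}}(P)=P$, invoking $|\Pi_{p^k}|^P\cong T_k$, noting the triviality of the $P$-action on $T_k$, and counting spheres.
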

\end{remark}

\section{Application: orbit spaces} \label{section: application orbit}
In the previous section we showed how Theorem~\ref{theorem: main} can be used to analyze the fixed points of the action of a subgroup of $\Sigma_n$ on $\Pi_n$. In this section we will apply the theorem to obtain information about the orbit space of the action of a Young subgroup on $\Pi_n$. Indeed, Theorem~\ref{theorem: main} has the following immediate corollary.
\begin{proposition}\label{proposition: orbits again}
Suppose $n=n_1+\cdots+n_k$. Let $g=\gcd(n_1, \ldots, n_k)$. There is a homotopy equivalence
\[
_{\Sigma_{n_1}\times\cdots\times\Sigma_{n_k}}\!\!\backslash\Pi_n \longrightarrow \bigvee_{d|g}\bigvee_{B(\frac{n_1}{d}, \ldots, \frac{n_k}{d})} {_{\Sigma_d}}\backslash \! \left(S^{n-d-1} \Smash |\Pi_d|^\diamond \right).
\]
\end{proposition}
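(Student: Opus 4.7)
The plan is to apply the orbit functor ${}_{G}\!\backslash(-)$ with $G := \Sigma_{n_1}\times\cdots\times\Sigma_{n_k}$ to the $G$-equivariant homotopy equivalence of Theorem~\ref{theorem: main} and then simplify the right hand side using standard manipulations with induced spaces and wedge sums.

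First, the equivalence~\eqref{equation: main} is a map between $G$-CW complexes: the source $|\Pi_n|$ is the geometric realization of a $G$-simplicial complex, while each wedge summand on the target is obtained from a free $G$-orbit smashed (over $\Sigma_d$) with the pointed $\Sigma_d$-CW complex $S^{n-d-1}\wedge|\Pi_d|^\diamond$. Because an equivariant weak equivalence between $G$-CW complexes is automatically a $G$-homotopy equivalence (Bredon's equivariant Whitehead theorem, applied in light of Lemma~\ref{lemma: dwyer lemma}), it admits a $G$-equivariant homotopy inverse, and both the inverse and the homotopies descend to the orbit spaces. In particular, applying ${}_G\!\backslash(-)$ preserves the equivalence.

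Second, for every subgroup $H\subset G$ and every pointed $H$-space $X$, left multiplication of $G$ on itself yields a natural pointed homeomorphism
\[
{}_G\!\backslash\bigl(G_+\wedge_H X\bigr) \;\cong\; {}_H\!\backslash X,
\]
since a set of $H$-coset representatives freely indexes the $G$-orbits in $G_+\wedge_H X$. Applied to each summand in~\eqref{equation: main}, with $H=\Sigma_d$ and $X=S^{n-d-1}\wedge|\Pi_d|^\diamond$, this produces precisely the factors $_{\Sigma_d}\!\backslash(S^{n-d-1}\wedge|\Pi_d|^\diamond)$ appearing on the right hand side of the proposition. Combining this with the elementary fact that ${}_G\!\backslash(-)$ commutes with wedge sums of pointed $G$-spaces (whose wedge point is $G$-fixed) and with indexed wedges over a $G$-trivial indexing set $B(\tfrac{n_1}{d},\ldots,\tfrac{n_k}{d})$ completes the identification of the right hand side.

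The only conceptual obstacle is that equivariant weak equivalences do not pass to orbit spaces in general; this is handled by the $G$-CW remark above, so the proposition reduces to a routine formal consequence of Theorem~\ref{theorem: main}. Hence no extra combinatorial or geometric input is needed beyond what has already been established.
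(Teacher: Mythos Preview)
Your proof is correct and is exactly the argument the paper has in mind: the paper states this proposition as ``an immediate corollary'' of Theorem~\ref{theorem: main} with no further proof, and your write-up simply fleshes out the standard steps (passing to orbits along a $G$-homotopy equivalence, the identity ${}_G\backslash(G_+\wedge_H X)\cong {}_H\backslash X$, and commuting orbits with wedges). The invocation of the equivariant Whitehead theorem is even slightly more than needed, since Theorem~\ref{theorem: main} already asserts an equivariant \emph{homotopy} equivalence rather than merely a weak one.
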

\begin{corollary}\label{corollary: gcd one}
If $\gcd(n_1, \ldots, n_k)=1$ then 
\[
_{\Sigma_{n_1}\times\cdots\times\Sigma_{n_k}}\!\!\backslash|\Pi_n| \simeq \bigvee_{B({n_1}, \ldots, {n_k})} S^{n-3}.
\]
\end{corollary}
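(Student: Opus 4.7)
The plan is to apply Proposition~\ref{proposition: orbits again} directly and observe that the hypothesis $\gcd(n_1,\ldots,n_k)=1$ collapses the wedge decomposition to a single summand. Indeed, with $g=1$, the only positive divisor of $g$ is $d=1$, so the outer wedge $\bigvee_{d\mid g}$ has exactly one term, namely the one corresponding to $d=1$.

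Next I would unpack that single summand. For $d=1$, the inner index set becomes $B(n_1,\ldots,n_k)$. The group $\Sigma_1$ is trivial, so taking $\Sigma_1$-orbits does nothing. Finally, I invoke the convention established early in the paper for $|\Pi_1|$: the expression $S^{l}\wedge|\Pi_1|^\diamond$ is defined to be $S^{l-1}$ for $l\ge 0$. Applied with $l = n-2$, this gives $S^{n-1-1}\wedge|\Pi_1|^\diamond = S^{n-3}$. Hence
\[
_{\Sigma_1}\backslash\!\left(S^{n-1-1}\wedge|\Pi_1|^\diamond\right) \;\cong\; S^{n-3}.
\]

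Combining these observations, Proposition~\ref{proposition: orbits again} yields
\[
_{\Sigma_{n_1}\times\cdots\times\Sigma_{n_k}}\!\!\backslash|\Pi_n| \;\simeq\; \bigvee_{B(n_1,\ldots,n_k)} S^{n-3},
\]
as desired. There is no real obstacle here; the result is a bookkeeping consequence of the main theorem once the $d=1$ conventions are correctly interpreted.
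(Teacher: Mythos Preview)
Your proof is correct and is exactly the intended argument: the paper states this corollary immediately after Proposition~\ref{proposition: orbits again} without a separate proof, treating it as the direct specialization to $g=1$ using the $d=1$ convention for $S^{n-2}\wedge|\Pi_1|^\diamond=S^{n-3}$ spelled out in the introduction.
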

The proposition reduces the problem of analyzing the space $_{\Sigma_{n_1}\times\cdots\times\Sigma_{n_k}}\!\!\backslash|\Pi_n|$ to spaces of the form $_{\Sigma_d}\backslash \! \left(S^{ld-1} \Smash |\Pi_d|^\diamond \right)$, where $l\ge 1$. Lemma~\ref{lemma: contractible} below will tell us that we may as well assume $l\ge 2$ (in particular, we may assume that ${_{\Sigma_d}}\backslash \! \left(S^{ld-1} \Smash |\Pi_d|^\diamond \right)$ is simply connected). %, and so mod $p$ homology equivalence is the same as $p$-local equivalence). 
First we need an auxiliary lemma. 
\begin{lemma}\label{lemma: preliminary contractible}
Let $Y:=\Sigma_{d_1}\times\cdots\times \Sigma_{d_i}$ be a non-trivial Young subgroup of of $\Sigma_d$, where $d=d_1+\cdots+d_i$. Let $N$ be the normalizer of $Y$ and let $W$ be any group satisfying $Y\subseteq W \subseteq N$. Equip $S^{d-1}$ with the standard action of $\Sigma_d$. Then the orbit space $_{W}\backslash S^{d-1}$ is contractible.
\end{lemma}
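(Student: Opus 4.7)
The plan is to identify $S^{d-1}/W$ with a quotient of a fundamental domain and then contract it. I would set up
\[
F_Y = \{x \in S^{d-1} : \text{the coordinates of } x \text{ are weakly increasing within each block of } Y\}.
\]
Since $Y = \Sigma_{d_1}\times\cdots\times\Sigma_{d_i}$ acts by permutations within blocks, $F_Y$ contains a unique representative of each $Y$-orbit, so the quotient map restricts to a homeomorphism $F_Y \xrightarrow{\cong} S^{d-1}/Y$. The residual action of $W$ descends, via the canonical order-preserving lifts of block permutations in $W/Y$, to a $W/Y$-action on $F_Y$, yielding $S^{d-1}/W \cong F_Y/(W/Y)$. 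Polar decomposition block by block further identifies $F_Y$ with the topological join
\[
F_Y \;\cong\; F_{(1)} * F_{(2)} * \cdots * F_{(i)},
\]
where $F_{(j)} \subset S^{d_j-1}$ is the spherical simplex of sorted unit vectors in block $j$; it is homeomorphic to $D^{d_j-1}$ when $d_j \geq 2$ and equals $S^0$ when $d_j = 1$. The group $W/Y$ permutes factors $F_{(j)}, F_{(j')}$ precisely when $d_j = d_{j'}$, and the hypothesis that $Y$ is non-trivial forces at least one $d_j \geq 2$.

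Next, for each size $a \geq 2$ appearing among the $d_j$, I would fix once and for all a strong deformation retract of $F_{(a)}$ onto its peak $v_0^{(a)} = (1,\ldots,1)/\sqrt{a}$, and use the same formula uniformly on every block of that size. Combining factor-wise on the join then produces a $W/Y$-equivariant strong deformation retract of $F_Y$ onto $S^{k_1-1} * \Delta^{i-k_1-1}$, where $k_1$ is the number of size-$1$ blocks, $\Delta^{i-k_1-1}$ is the join of the peaks $v_0^{(a)}$ corresponding to blocks of size $\geq 2$, and $S^{-1}$ is interpreted as $\emptyset$ when $k_1 = 0$. The action of $W/Y$ on $\Delta^{i-k_1-1}$ permutes vertices and fixes the barycenter, so linear retraction to the barycenter is $W/Y$-equivariant and reduces $F_Y$ further to a cone $S^{k_1-1} * \{\text{barycenter}\}$; this cone retracts equivariantly to its apex, giving a deformation retract of $F_Y$ to a single point that descends to prove $S^{d-1}/W \cong F_Y/(W/Y)$ contractible.

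The main obstacle is the existence of the chosen factor retracts. The naive radial retract of $F_{(a)}$ to $v_0^{(a)}$ along great-circle arcs fails because the antipode $-v_0^{(a)}$ also lies in $F_{(a)}$, creating a $0/0$ singularity at the midpoint of the straight-line homotopy. I would therefore appeal to the abstract fact that a contractible compact CW pair admits a strong deformation retract (equivalently, use any homeomorphism $F_{(a)} \cong D^{a-1}$ carrying $v_0^{(a)}$ to the origin followed by linear disk retraction); making a single uniform choice per block size then makes equivariance of the join-level retract automatic.
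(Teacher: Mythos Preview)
Your argument is correct. Both your proof and the paper's hinge on the same structural fact---the $Y$-equivariant join decomposition $S^{d-1}\cong S^{d_1-1}*\cdots*S^{d_i-1}$---but diverge from there. The paper passes immediately to the quotient, writing $_Y\backslash S^{d-1}\cong {_{\Sigma_{d_1}}}\!\backslash S^{d_1-1}*\cdots*{_{\Sigma_{d_i}}}\!\backslash S^{d_i-1}$, observes via a poset model that each factor with $d_j\ge 2$ is contractible, and then checks that every $H$-fixed subspace (for $H\subseteq N/Y$) is again a join of the same type, hence contractible; this gives $N/Y$-equivariant contractibility and the conclusion for $_W\backslash S^{d-1}$ follows. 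You instead stay on the sphere, identify $_Y\backslash S^{d-1}$ with the explicit fundamental domain $F_Y$, and build an equivariant strong deformation retraction factor by factor.

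The trade-off: the paper's fixed-point argument is cleaner and avoids having to name any retraction at all, but it implicitly invokes the principle that a $G$-CW complex with all fixed-point sets contractible is $G$-contractible. Your approach is more explicit and self-contained, at the cost of the small nuisance you flag about the antipode; your fix (choose any homeomorphism $F_{(a)}\cong D^{a-1}$ uniformly per block size and retract linearly) is perfectly adequate, since what matters is only that the same choice is made for all blocks of a given size so that the join-level homotopy is $W/Y$-equivariant.
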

\begin{proof}
To begin with, we claim that the orbits space $_{\Sigma_d}\backslash S^{d-1}$ is contractible for $d>1$. This is well-known. One way to see this is to identify $S^{d-1}$ with $|\overline{\Bcal_d}|^\diamond$, where $\overline{\Bcal_d}$ is the poset of proper, non-trivial subsets of ${\mathbf d}$ (Example~\ref{example: sphere}). Then $$_{\Sigma_d}\backslash S^{d-1}\cong {_{\Sigma_d}}\backslash|\overline{\Bcal_d}|^\diamond.$$ It is easy to see that quotient of the simplicial nerve of ${\Bcal_d}$ by the action of $\Sigma_d$ is isomorphic to the nerve of the linear poset $1<2<\cdots< d-1$. In particular, its geometric realization is contractible.

For the general case, observe that there is a $Y$-equivariant homeomorphism $S^{d-1}\cong S^{d_1-1}*\cdots*S^{d_i-1}$. It follows that 
\[
_{Y}\backslash S^{d-1}\cong {_{\Sigma_{d_1}}}\backslash S^{d_1-1}*\cdots*{_{\Sigma_{d_i}}}\backslash S^{d_i-1}.
\]
By our assumption at least one of the $d_j$s is greater than one, and so the right hand side is a contractible space. Moreover, we claim that it is contractible as a $N/Y$-equivariant space. In other words, we claim that for every subgroup $H\subset N/Y$, the fixed point space 
\begin{equation}\label{equation: contractible fix}
({_{\Sigma_{d_1}}}\backslash S^{d_1-1}*\cdots*{_{\Sigma_{d_i}}}\backslash S^{d_i-1})^H
\end{equation}
is contractible. To see this, observe that $N/Y$ is a Young subgroup of $\Sigma_i$ (recall that $i$ is the number of factors $\Sigma_{d_j}$ of $Y$). $N/Y$ acts on the space  
$$S^{d-1}\cong {_{\Sigma_{d_1}}}\backslash S^{d_1-1}*\cdots*{_{\Sigma_{d_i}}}\backslash S^{d_i-1}$$ by permuting join factors that happen to be homeomorphic. It follows that the fixed point space~\eqref{equation: contractible fix} is a join of factors of the form ${_{\Sigma_{d_j}}}\backslash S^{d_j-1}$. Once again, at least one of the $d_j$ is greater than $1$, so at least one of these factors is contractible, and therefore the whole space is contractible.

It follows that the orbit space
$$_{H}\backslash \left({_Y}\backslash S^{d-1}\right)$$
is contractible for every $H\subset N/Y$. Finally, it follows that for every $Y\subset W \subset N$, the orbit space $_{W}\backslash S^{d-1}$
is contractible.
\end{proof}

\begin{lemma}\label{lemma: contractible}
The space ${_{\Sigma_d}}\backslash \! \left(S^{d-1} \Smash |\Pi_d|^\diamond \right)$ is contractible for $d>1$.
\end{lemma}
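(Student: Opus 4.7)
The case $d=2$ is immediate: $|\Pi_2|=\emptyset$, so $|\Pi_2|^\diamond\cong S^0$, and the space in question is ${_{\Sigma_2}}\backslash(S^1\wedge S^0)={_{\Sigma_2}}\backslash S^1$, which is contractible by Lemma~\ref{lemma: preliminary contractible} (take $W=Y=\Sigma_2$).

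For $d\geq 3$, the plan is to smash the standard pointed cofiber sequence $|\Pi_d|_+\to S^0\to |\Pi_d|^\diamond$ with $S^{d-1}$ and then pass to $\Sigma_d$-orbits. Since orbit spaces commute with the colimits defining smash and cofiber, this produces a cofiber sequence
\[
{_{\Sigma_d}}\backslash(S^{d-1}\wedge|\Pi_d|_+)\to {_{\Sigma_d}}\backslash S^{d-1}\to {_{\Sigma_d}}\backslash(S^{d-1}\wedge|\Pi_d|^\diamond).
\]
The middle term is contractible by Lemma~\ref{lemma: preliminary contractible}, so if the first term is also contractible, the third term is contractible as well (the cofiber of a map between two contractible spaces has vanishing homotopy groups by the long exact sequence).

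To analyze the first term, since the basepoint $\infty\in S^{d-1}$ is $\Sigma_d$-fixed, $S^{d-1}\wedge|\Pi_d|_+\cong (S^{d-1}\times|\Pi_d|)/(\{\infty\}\times|\Pi_d|)$; after taking $\Sigma_d$-orbits this becomes ${_{\Sigma_d}}\backslash(S^{d-1}\times|\Pi_d|)$ modulo the image of the $\infty$-section, which is ${_{\Sigma_d}}\backslash|\Pi_d|$. So it suffices to show that the projection ${_{\Sigma_d}}\backslash(S^{d-1}\times|\Pi_d|)\to {_{\Sigma_d}}\backslash|\Pi_d|$ is a weak equivalence: the $\infty$-section is then a section of an equivalence, hence itself an equivalence, and the cofiber of an equivalence is contractible.

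To verify that the projection is a weak equivalence I plan to induct over the $\Sigma_d$-equivariant CW filtration of $|\Pi_d|$. A typical equivariant cell is $\Sigma_d\times_H D^k$, where $H$ is the stabilizer of some chain of partitions $\lambda_0<\cdots<\lambda_m$ in $\Pi_d$. The key observation, which I expect to be the main technical point, is that any such $H$ satisfies $Y\subseteq H\subseteq N_{\Sigma_d}(Y)$ for the Young subgroup $Y$ of within-block permutations of the finest partition $\lambda_m$: the inclusion $Y\subseteq H$ holds because elements of $Y$ preserve each coarser $\lambda_i$ (coarser blocks are unions of finer ones), and $H\subseteq N_{\Sigma_d}(Y)$ because any element preserving $\lambda_m$ merely permutes its blocks of equal size. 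Since $\lambda_m$ is proper nontrivial, $Y$ is a nontrivial Young subgroup, so Lemma~\ref{lemma: preliminary contractible} gives ${_H}\backslash S^{d-1}\simeq *$. Cellwise, the orbit-level map ${_H}\backslash S^{d-1}\times D^k\to D^k$ is therefore an equivalence, and the gluing lemma for pushouts propagates these cellwise equivalences through the skeletal filtration to the desired global equivalence.
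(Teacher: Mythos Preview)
Your proof is correct and takes essentially the same approach as the paper: both arguments reduce to showing that ${_H}\backslash S^{d-1}$ is contractible for every isotropy group $H$ of $|\Pi_d|^\diamond$, and both use the observation that such an $H$ (when not all of $\Sigma_d$) sits between the Young subgroup $Y$ of within-block permutations of the finest partition in the chain and its normalizer, so that Lemma~\ref{lemma: preliminary contractible} applies. The only organizational difference is that the paper treats ${_{\Sigma_d}}\backslash(S^{d-1}\wedge|\Pi_d|^\diamond)$ directly as a pointed homotopy colimit over the isotropy groups of $|\Pi_d|^\diamond$ (the cone points contributing $G=\Sigma_d$), whereas you split off those cone points first via the cofiber sequence $|\Pi_d|_+\to S^0\to |\Pi_d|^\diamond$ and then run the cell-by-cell argument on $|\Pi_d|$; this is a cosmetic rearrangement of the same content.
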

\begin{proof}
The space ${_{\Sigma_d}}\backslash \! \left(S^{d-1} \Smash |\Pi_d|^\diamond \right)$ is a pointed homotopy colimit of spaces of the form $_G\backslash S^{d-1}$, where $G$ is an isotropy group of $ |\Pi_d|^\diamond $. We claim that the space $_G\backslash S^{d-1}$ is contractible for every $G$ that occurs. From the claim it follows that the pointed homotopy colimit is contractible.

It remains to prove the claim. Let $G$ be an isotropy group of $|\Pi_d|^\diamond$. Then either $G=\Sigma_d$ or $G$ is the stabilizer group of a chain of proper non-trivial partitions of $\mathbf d$. In the first case, $$_G\backslash S^{d-1}={_{\Sigma_d}}\backslash S^{d-1}\simeq *.$$ In the second case, suppose that $G$ is the stablizer of the chain of partitions $\lambda_0<\cdots <\lambda_l$. By our convention, $\lambda_l$ is the finest partition in the chains. Suppose $\lambda_l$ has $i$ equivalence classes, of sizes $d_1, \ldots, d_i$. Let $Y\cong \Sigma_{d_1}\times\cdots\times \Sigma_{d_i}$ be the group that leaves invariant the equivalence classes of $\lambda_l$. Then $G$ contains $Y$ and is contained in the normalizer of $Y$, and $_G \backslash S^{d-1}$ is contractible by Lemma~\ref{lemma: preliminary contractible}.
\end{proof}

The following result concerns rational homology of these spaces. 
\begin{lemma}\label{lemma: rational}
The space ${_{\Sigma_d}}\backslash \! \left(S^{ld-1} \Smash |\Pi_d|^\diamond\right)$ has the rational homology of a point, unless $d=1$ or $d=2$ and $l$ is even.
\end{lemma}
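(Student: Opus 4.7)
The plan is to pass from the orbit space to $\Sigma_d$-invariants of rational homology via the transfer, then identify the $\Sigma_d$-representation structure on that homology using the K\"unneth formula and Barcelo's theorem, and finally compute the invariants using Klyachko's rational description of the Lie representation.

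Since $\Sigma_d$ is finite and acts on $Y := S^{ld-1} \wedge |\Pi_d|^\diamond$ fixing the basepoint, the standard transfer argument gives a natural isomorphism
\[
\widetilde \HH_*(_{\Sigma_d}\backslash Y;\rationals) \cong \widetilde \HH_*(Y;\rationals)^{\Sigma_d}.
\]
It thus suffices to show that this invariant vanishes whenever $d\ge 3$, or $d=2$ and $l$ is odd. To identify the $\Sigma_d$-representation on $\widetilde \HH_*(Y;\rationals)$, I would apply the equivariant K\"unneth formula. Using the $\Sigma_d$-equivariant identification $S^{ld-1} \cong S^{l-1}\wedge (\hat S^{d-1})^{\wedge l}$ (with $\Sigma_d$ trivial on $S^{l-1}$) recalled in the introduction, together with the fact that the top reduced homology of $\hat S^{d-1}$ is the determinant of the reduced standard representation, which equals $\mathrm{sgn}$, one obtains $\widetilde \HH_{ld-1}(S^{ld-1};\rationals) \cong \mathrm{sgn}^{\otimes l}$. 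Combining this with Barcelo's identification $\widetilde \HH_{d-3}(\Pi_d;\rationals) \cong \Lierep_d\otimes \mathrm{sgn}$ (Theorem~\ref{theorem: barcelo}) together with the suspension isomorphism $\widetilde \HH_{d-2}(|\Pi_d|^\diamond) \cong \widetilde \HH_{d-3}(\Pi_d)$, we find that $\widetilde \HH_*(Y;\rationals)$ is concentrated in degree $n-3$ and isomorphic, as a $\Sigma_d$-module, to $\mathrm{sgn}^{\otimes(l+1)} \otimes_{\rationals} \Lierep_d$. (The boundary case $d=2$, where $|\Pi_2|^\diamond = S^0$ and $\Lierep_2\cong\mathrm{sgn}$, fits into the same formula via the conventions for negative-dimensional spheres.)

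Finally, I would invoke Klyachko's theorem: over $\rationals$, $\Lierep_d \cong \operatorname{Ind}_{C_d}^{\Sigma_d}\chi$, where $C_d\subset \Sigma_d$ is the cyclic subgroup generated by a $d$-cycle and $\chi$ is a faithful one-dimensional character of $C_d$. Frobenius reciprocity yields
\[
(\mathrm{sgn}^{\otimes(l+1)} \otimes \Lierep_d)^{\Sigma_d} \cong \Hom_{C_d}\bigl(\mathrm{sgn}^{l+1}|_{C_d},\, \chi\bigr).
\]
Since a $d$-cycle has sign $(-1)^{d-1}$, the restriction $\mathrm{sgn}|_{C_d}$ is trivial when $d$ is odd and is the unique order-two character $\epsilon$ of $C_d$ when $d$ is even. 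When $l$ is odd, $\mathrm{sgn}^{l+1}|_{C_d}$ is trivial while $\chi$ is nontrivial for $d\ge 2$, so the Hom vanishes. When $l$ is even and $d$ is odd, the restriction is again trivial and the Hom vanishes. When $l$ and $d$ are both even, $\mathrm{sgn}^{l+1}|_{C_d} = \epsilon$ has order $2$ while $\chi$ has order $d$, so the Hom is nonzero precisely when $d=2$. This yields exactly the exceptions in the statement.

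The main obstacle will be sign-tracking: reconciling the $\mathrm{sgn}$-twists arising from the Barcelo identification, the determinant of the reduced standard representation, and the equivariant K\"unneth isomorphism. Once those are pinned down, the rest of the argument is purely character-theoretic, with Klyachko's theorem as the only serious external input.
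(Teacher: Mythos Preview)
Your argument is correct and takes a genuinely different route from the paper's. The paper passes to the homotopy orbit space (rationally equivalent to the strict quotient), decomposes it as a pointed homotopy colimit of spaces $_{hG}S^{ld-1}$ over the isotropy groups $G$ of $|\Pi_d|^\diamond$, and then argues case by case: for odd $l$ each isotropy group contains a transposition, which acts by $-1$ on the sphere, so a Serre spectral sequence argument kills the rational homology; for even $l$ the Thom isomorphism reduces to the rational homology of $_{h\Sigma_d}|\Pi_d|^\diamond \simeq_{\rationals} {_{\Sigma_d}}\backslash |\Pi_d|^\diamond$, which vanishes by Kozlov's contractibility theorem. Your approach, by contrast, bypasses the isotropy decomposition entirely: you compute the $\Sigma_d$-representation on $\widetilde\HH_*(Y;\rationals)$ in one shot as $\mathrm{sgn}^{\otimes(l+1)}\otimes\Lierep_d$ and then read off the invariants via Klyachko's formula $\Lierep_d\cong\operatorname{Ind}_{C_d}^{\Sigma_d}\chi$ and Frobenius reciprocity. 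Your method trades one external input (Kozlov's theorem) for another (Klyachko's theorem) and is arguably more direct, handling both parities of $l$ uniformly. Two small cosmetic points: the degree you call ``$n-3$'' should be $(l+1)d-3$, since there is no $n$ in this statement; and Klyachko's isomorphism is literally over a field containing a primitive $d$-th root of unity, but since you only use it to compute the dimension of the invariant subspace, working over $\complexes$ (or with characters) suffices and the conclusion for the rational representation follows.
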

\begin{proof}

%In fact, for the reader's amusement we will give two different proofs! The first proof relies on Kozlov's theorem on the contractibility of $_{\Sigma_d}\backslash \Pi_d$ for $d>2$~\cite{kozlov}. The second one uses Stanley's description of the representation $\HH_{d-3}(|\Pi_d|; \complexes)$ as a representation induced up from a $1$-dimensional representation of the cyclic group $\integers/d$.

Let us form the pointed homotopy orbit space
\[
_{h\Sigma_d} \! \left(S^{ld-1} \Smash |\Pi_d|^\diamond \right):=E\Sigma_{d_+}\wedge _{\Sigma_d}\left(S^{ld-1} \Smash |\Pi_d|^\diamond \right).
\]
There is a natural map
\[
_{h\Sigma_d} \! \left(S^{ld-1} \Smash |\Pi_d|^\diamond \right) \longrightarrow {_{\Sigma_d}}\backslash \! \left(S^{ld-1} \Smash |\Pi_d|^\diamond \right)
\]
which is well-known to induce an isomorphism on rational homology. Therefore it is enough to prove the lemma with the homotopy orbit space replacing the strict orbit space. This was done in~\cite{A-M, arone}. For the sake of completeness we will sketch a proof.

The space $_{h\Sigma_d} \! \left(S^{ld-1} \Smash |\Pi_d|^\diamond \right)$ is a pointed homotopy colimit of spaces of the form $_{hG} S^{ld-1}$, where $G$ is an isotropy group of $ |\Pi_d|^\diamond $. By an easy Serre spectral sequence argument, such a space is rationally trivial if $l$ is odd and $G$ contains a transposition. It is easy to see that every isotropy group of $|\Pi_d|^\diamond$ contains a transposition, so we are done in the case when $l$ is odd.

If $l$ is even then the homology of $_{h\Sigma_d} \! \left(S^{ld-1} \Smash |\Pi_d|^\diamond \right)$ is isomorphic, with dimension shift of $ld-1$, to the homology of $_{h\Sigma_d} |\Pi_d|^\diamond $, by Thom isomorphism. The space $_{h\Sigma_d}  |\Pi_d|^\diamond $ is rationally equivalent to $_{\Sigma_d} \!\backslash  |\Pi_d|^\diamond $, and the latter space is contractible by Kozlov's theorem~\cite{kozlov}.
\end{proof}
\begin{remark}
Let us consider the case $d=2$. In this case $|\Pi_2|=\emptyset$, so $${_{\Sigma_2}}  (S^{2l-1}\Smash \Pi_2|^\diamond)\cong {_{\Sigma_2}} S^{2l-1} \cong \Sigma^l \reals P^{l-1}.$$ This space is rationally contractible if $l$ is odd.
Note that if $l$ is even, the reduced homology of $\Sigma^l \reals P^{l-1}$ has a copy of $\integers$ in dimension $2l-1$, and is $2$-torsion otherwise.
\end{remark}
\begin{corollary}\label{corollary: rational}
The torsion free part of $\widetilde\HH_*(_{\Sigma_{n_1}\times\cdots\times\Sigma_{n_k}}\!\!\backslash\Pi_n)$ is concentrated in degree $n-3$. If $2|\gcd(n_1, \ldots, n_k)$ and $\frac{n}{2}$ is odd, then the torsion-free part is isomorphic to 
\[
\integers^{B(n_1, \ldots, n_k)}\oplus \integers^{B(\frac{n_1}{2}, \ldots, \frac{n_k}{2})}.
\]
In all other cases, the torsion-free part is $\integers^{B(n_1, \ldots, n_k)}$.
\end{corollary}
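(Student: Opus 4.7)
The plan is to combine Proposition~\ref{proposition: orbits again} with Lemma~\ref{lemma: rational} and its surrounding remark, then read off which wedge summands contribute free abelian groups to the homology.

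First, I would apply Proposition~\ref{proposition: orbits again} to write
\[
{}_{\Sigma_{n_1}\times\cdots\times\Sigma_{n_k}}\!\!\backslash|\Pi_n| \simeq \bigvee_{d|g}\bigvee_{B(\frac{n_1}{d}, \ldots, \frac{n_k}{d})} {}_{\Sigma_d}\!\backslash\!\left(S^{n-d-1}\wedge|\Pi_d|^\diamond\right),
\]
so that it suffices to identify the torsion-free part of $\widetilde\HH_*$ of each summand. Write each summand in the form treated by Lemma~\ref{lemma: rational}, by setting $l=\frac{n-d}{d}=\frac{n}{d}-1$, so the summand becomes ${}_{\Sigma_d}\!\backslash(S^{ld-1}\wedge|\Pi_d|^\diamond)$.

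Next I would go through the possible values of $d$:
\begin{itemize}
\item For $d=1$, the conventions of Section~\ref{subsection: partitions} give $S^{n-2}\wedge|\Pi_1|^\diamond=S^{n-3}$, and $\Sigma_1$ is trivial, so each of the $|B(n_1,\ldots,n_k)|$ summands is homeomorphic to $S^{n-3}$ and contributes a copy of $\integers$ in degree $n-3$.
\item For $d\ge 3$, Lemma~\ref{lemma: rational} says the summand is rationally contractible, hence contributes no torsion-free homology.
\item For $d=2$, Lemma~\ref{lemma: rational} again gives rational contractibility unless $l=\frac{n}{2}-1$ is even, i.e.\ unless $\frac{n}{2}$ is odd. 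When $2\mid g$ and $\frac{n}{2}$ is odd, the remark following Lemma~\ref{lemma: rational} identifies the summand with $\Sigma^l\reals P^{l-1}$, whose top cell sits in dimension $2l-1=n-3$; since $l$ is even, its reduced homology is $\integers$ in degree $n-3$ together with $2$-torsion in lower degrees, so the torsion-free contribution is a single $\integers$ in degree $n-3$ per summand, for a total of $\integers^{B(n_1/2,\ldots,n_k/2)}$.
\end{itemize}

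Summing the free parts across $d$ then gives exactly the stated answer: always $\integers^{B(n_1,\ldots,n_k)}$ in degree $n-3$ from the $d=1$ summands, plus an extra $\integers^{B(n_1/2,\ldots,n_k/2)}$ in the same degree precisely when $2\mid g$ and $\frac{n}{2}$ is odd. There is no serious obstacle; the only point that warrants explicit verification is that in the exceptional $d=2$ case the free part of $\widetilde\HH_*(\Sigma^l\reals P^{l-1})$ really is concentrated in the top degree $n-3$ (so that it merges cleanly with the $d=1$ contribution), which is immediate from the cellular homology of $\reals P^{l-1}$ for $l$ even.
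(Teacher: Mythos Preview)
Your proposal is correct and is exactly the argument the paper has in mind: the paper's proof consists of the single sentence ``This follows easily from Proposition~\ref{proposition: orbits again} and Lemma~\ref{lemma: rational},'' and you have spelled out that deduction accurately, including the identification $l=\frac{n}{d}-1$, the case analysis on $d$, and the use of the remark after Lemma~\ref{lemma: rational} to locate the free $\integers$ from $\Sigma^l\reals P^{l-1}$ in degree $2l-1=n-3$.
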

\begin{proof}
This follows easily from Proposition~\ref{proposition: orbits again} and Lemma~\ref{lemma: rational}.
\end{proof}
We also have the following companion to the rational calculation.
\begin{lemma}\label{lemma: torsion}
The groups $\widetilde\HH_*(_{\Sigma_{n_1}\times\cdots\times\Sigma_{n_k}}\!\!\backslash\Pi_n)$ have $p$-primary torsion only for primes that satisfy $p\le \gcd(n_1, \ldots, n_k)$.
\end{lemma}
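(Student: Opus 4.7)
The plan is to reduce the statement to each wedge summand appearing in Proposition~\ref{proposition: orbits again}, and then to run a standard transfer/spectral-sequence argument on each one. Explicitly, by Proposition~\ref{proposition: orbits again}, $\widetilde\HH_*({_{\Sigma_{n_1}\times\cdots\times\Sigma_{n_k}}}\!\!\backslash\Pi_n)$ splits as a direct sum of groups of the form $\widetilde\HH_*({_{\Sigma_d}}\backslash(S^{ld-1}\wedge|\Pi_d|^\diamond))$ with $d \mid g$ and $l = n/d$. Since every such $d$ satisfies $d \le g$, it suffices to prove the following: for every $d \ge 1$ and $l \ge 1$, the reduced homology of $X_{d,l} := {_{\Sigma_d}}\backslash(S^{ld-1}\wedge|\Pi_d|^\diamond)$ has no $p$-primary torsion for any prime $p>d$.

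Fix such a prime $p > d$, so in particular $p \nmid d! = |\Sigma_d|$. First I would replace the strict orbit space by the Borel construction: the natural map
\[
Y_{d,l} := E{\Sigma_d}_+\wedge_{\Sigma_d}(S^{ld-1}\wedge|\Pi_d|^\diamond) \longrightarrow X_{d,l}
\]
is a $\integers[1/d!]$-homology equivalence (this is the standard fact that the homotopy fibers are classifying spaces of isotropy subgroups of $\Sigma_d$, hence $\integers[1/d!]$-acyclic in positive degrees). In particular, it induces an isomorphism on $\integers_{(p)}$-homology.

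Next I would apply the homotopy-orbit (Serre) spectral sequence of the fibration
\[
S^{ld-1}\wedge|\Pi_d|^\diamond \longrightarrow Y_{d,l} \longrightarrow B\Sigma_d,
\]
whose $E^2$-page in reduced homology is $E^2_{i,q} = H_i(\Sigma_d;\widetilde H_q(S^{ld-1}\wedge|\Pi_d|^\diamond))$. Since for every $d\ge 1$ the space $|\Pi_d|^\diamond$ has the homotopy type of a wedge of spheres (using the paper's conventions for $d=1,2$), the coefficient module is a \emph{free} abelian group concentrated in a single degree. For $i>0$ the group cohomology $H_i(\Sigma_d;M)$ is annihilated by $|\Sigma_d|=d!$, and therefore vanishes after tensoring with $\integers_{(p)}$ because $p\nmid d!$. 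Thus the $p$-localized $E^2$-page is concentrated in the column $i=0$, where it equals the coinvariants tensored with $\integers_{(p)}$ — a torsion-free $\integers_{(p)}$-module. Hence $\widetilde H_*(Y_{d,l})_{(p)}$, and by the previous step $\widetilde H_*(X_{d,l})_{(p)}$, is torsion-free.

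The hard part is really no more than bookkeeping: one must be careful with the conventions at $d=1,2$ (where $|\Pi_d|^\diamond$ is defined to be $\emptyset$ or $S^0$ so that $S^{ld-1}\wedge|\Pi_d|^\diamond$ is still a sphere with free homology), and one must use reduced/pointed versions of the Borel construction and Serre spectral sequence throughout. Given these, combining the reduction to wedge summands with the bound $d\le g$ finishes the proof.
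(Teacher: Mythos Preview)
Your proof is correct and takes a genuinely different route from the paper's. Both arguments begin the same way, reducing via Proposition~\ref{proposition: orbits again} to showing that ${_{\Sigma_d}}\backslash(S^{ld-1}\wedge|\Pi_d|^\diamond)$ has no $p$-torsion for $p>d$, and both pass to the homotopy orbit space using that $p\nmid d!$. After that they diverge. The paper runs a transfer argument through $\Sigma_{d-1}\subset\Sigma_d$: the composite transfer followed by projection is multiplication by $d$, and the main theorem (applied to the Young subgroup $\Sigma_{d-1}\times\Sigma_1$) identifies ${_{h\Sigma_{d-1}}}\backslash(S^{ld-1}\wedge|\Pi_d|^\diamond)$ with a single sphere, so the middle term has no torsion and hence all torsion in the endpoint is $d$-torsion. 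Your argument instead runs the homotopy-orbit spectral sequence directly: after localizing at $p$, positive-degree group homology of $\Sigma_d$ vanishes, and the surviving $i=0$ column is the coinvariants of a free module, which is torsion-free over $\integers_{(p)}$ because the norm map identifies $(M_{\Sigma_d})_{(p)}$ with $(M^{\Sigma_d})_{(p)}$. You should perhaps say this last step explicitly rather than asserting it, but it is standard. Your route has the advantage of using only the non-equivariant homotopy type of $|\Pi_d|$ (a wedge of spheres), so it is logically independent of the main theorem; the paper's route reuses the transfer idea that appears elsewhere in the section and yields the slightly sharper statement that the torsion is actually $d$-torsion.
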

\begin{proof}
The space $_{\Sigma_{n_1}\times\cdots\times\Sigma_{n_k}}\!\!\backslash\Pi_n$ is a wedge sum of spaces of the form $_{\Sigma_d}\backslash (S^{ld-1}\wedge |\Pi_d|^\diamond)$, where $d$ divides $\gcd(n_1, \ldots, n_k)$. So it is enough to show that this space has no $p$-primary torsion for $p> d$. The cases $d=1, 2$ can easily be checked ``by hand''. Let us suppose that $d>2$ and $p>d$. To prove that the homology of $_{\Sigma_d}\backslash (S^{ld-1}\wedge |\Pi_d|^\diamond)$ has no $p$-primary torsion it is enough to prove that its reduced homology groups with $\integers/p$ coefficients are zero. Since $p>d$, the map
$$_{h\Sigma_d}\backslash (S^{ld-1}\wedge |\Pi_d|^\diamond) \to_{\Sigma_d}\backslash (S^{ld-1}\wedge |\Pi_d|^\diamond)$$
induces an isomorphism on homology with $\integers/p$ coefficients. So it is enough to prove that the homotopy orbits space  $_{h\Sigma_d}\backslash (S^{ld-1}\wedge |\Pi_d|^\diamond)$ has no reduced homology with $\integers/p$ coefficients. In fact, it follows from~\cite{ad} that this space is contractible unless $d$ is a prime power, and is $p$-local if $d=p^k$. But for the reader's convenience we will give a relatively elementary self-contained proof of the result that we need. We saw in the proof of Lemma~\ref{lemma: rational} that the reduced homology of this space is all torsion, so it is enough to show that it has no $p$-primary torsion. To see this consider the following homomorphisms
\begin{equation}\label{equation: transfer}
\widetilde\HH_*\left(_{h\Sigma_d}\backslash \! \left(S^{ld-1} \Smash |\Pi_d|^\diamond\right)\right) \to \widetilde\HH_*\left(_{h\Sigma_{d-1}}\backslash \! \left(S^{ld-1} \Smash |\Pi_d|^\diamond\right)\right)\to \widetilde\HH_*\left(_{h\Sigma_d}\backslash \! \left(S^{ld-1} \Smash |\Pi_d|^\diamond\right)\right).
\end{equation}
Here the first homomorphism is the transfer in homology associated with the inclusion $\Sigma_{d-1}\hookrightarrow \Sigma_d$, and the second homomorphism is induced by the quotient map. It is well-known that the complosed homomorphism is multiplication by $d=|\Sigma_d/\Sigma_{d-1}|$. By the main theorem, $|\Pi_d|$ is $\Sigma_{d-1}$-equivariantly equivalent to $\Sigma_{{d-1}_+} \wedge S^{d-3}$. Therefore \[_{h\Sigma_{d-1}}\backslash \! \left(S^{ld-1} \Smash |\Pi_d|^\diamond\right)\simeq S^{ld+d-3}.\] The reduced homology of this space is $\integers$ in dimension $ld+d-3$ and zero in all other dimensions. It follows that the composed homomorphism~\eqref{equation: transfer} is zero on torsion elements. So multiplication by $d$ is zero on the torsion elements of the group $\widetilde \HH_*\left({_{h\Sigma_d}}\backslash \! \left(S^{ld-1} \Smash |\Pi_d|^\diamond\right)\right)$. It follows that it is a $d$-torsion group and in particular it has no $p$-primary torsion for $p>d$.
\end{proof}

Our next task is to analyze the space $_{\Sigma_p}\backslash \! \left(S^{lp-1} * |\Pi_p|\right)$ in the case when $p$ is a prime. In the case $p=2$ we already saw that 
$$_{\Sigma_2}\backslash \! \left(S^{2l-1} * |\Pi_2|\right)\cong \Sigma^l\reals P^{l-1}.$$

%
%Recall that there is a $\Sigma_p$-equivariant homeomorphism
%\[
%S^{lp-1}\cong S^{l-1}\wedge S^{(p-1)l}\cong S^{l-1} * S^{(p-1)l-1}.\] 
%Note that this sphere has a $\Sigma_p$-equivariant basepont (we assume that $l\ge 1$). The fixed point space $(S^{lp-1})^{\Sigma_p}$ is homeomorphic to $S^{l-1}$. In what follows, we identify $S^{l-1}$ with this fixed points space.
%
%Let us first dispose of the case $p=2$, which can be done directly.
%\begin{lemma}\label{lemma: prime two}
%There is a homeomorphism
%\[
%_{\Sigma_2}\backslash \! \left(S^{2l-1} * |\Pi_2|\right) \cong S^{l-1}*\reals P^{l-1}.
%\]
%\end{lemma}
%\begin{proof}
%Recall that $|\Pi_2|=\emptyset$, the unit for $*$. Next recall the $\Sigma_2$-equivariant homeomorphism $S^{2l-1}\cong S^{l-1}*\widetilde S^{l-1}$, where $\widetilde S^{l-1}$ denotes $S^{l-1}$ with the antipodal action of $\Sigma_2$. We conclude that there are homeomorphisms
%\[
%_{\Sigma_2}\backslash \! \left(S^{2l-1} * |\Pi_2|\right) \cong _{\Sigma_2}\backslash \! \left(S^{l-1}*\widetilde S^{l-1}\right)\cong S^{l-1}*\reals P^{l-1}.
%\]
%\end{proof}
Recall that there is a $\Sigma_p$-equivariant homeomorphism $S^{lp-1}\cong S^{l-1} \Smash (S^{p-1})^{l}$, where $\Sigma_p$ acts on $S^{p-1}$ via the reduced standard representation, and acts trivially on $S^{l-1}$. The fixed-points space ${S^{lp-1}}^{\Sigma_p}$ is homeomorphic to $S^{l-1}$. Let $S^{l-1}\hookrightarrow S^{lp-1}$ be the inclusion of fixed points.
The key to analyzing $_{\Sigma_p}\backslash \! \left(S^{lp-1} \Smash |\Pi_p|^\diamond \right)$ for a general prime $p$ is the following proposition.
\begin{proposition}\label{proposition: primal pushout}
The following is a homotopy pushout square
\begin{equation}\label{equation: primal pushout}
\begin{array}{ccc}
_{h\Sigma_p}\backslash \! \left(S^{l-1} \Smash |\Pi_p|^\diamond\right) & \to & _{\Sigma_p}\backslash \! \left(S^{l-1} \Smash |\Pi_p|^\diamond\right)\\
\downarrow & & \downarrow \\
_{h\Sigma_p}\backslash \! \left(S^{lp-1} \Smash |\Pi_p|^\diamond\right) & \to & _{\Sigma_p}\backslash \! \left(S^{lp-1} \Smash |\Pi_p|^\diamond\right)
\end{array}.
\end{equation}
\end{proposition}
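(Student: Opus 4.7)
The plan is to reduce the claim to showing that the natural comparison map from pointed homotopy orbits to pointed strict orbits
\[
W_{h\Sigma_p}\longrightarrow {_{\Sigma_p}}\backslash W
\]
is a weak equivalence, where $W:=(S^{lp-1}/S^{l-1})\wedge|\Pi_p|^\diamond$. Indeed, the inclusion of fixed points $S^{l-1}\hookrightarrow S^{lp-1}$ is a $\Sigma_p$-cofibration, so smashing with $|\Pi_p|^\diamond$ yields a $\Sigma_p$-cofibration $X:=S^{l-1}\wedge|\Pi_p|^\diamond\hookrightarrow Y:=S^{lp-1}\wedge|\Pi_p|^\diamond$ with pointed cofiber $W$. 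Both pointed strict and pointed homotopy orbit functors preserve cofiber sequences, so the cofibers of the left and right vertical maps of the square are $W_{h\Sigma_p}$ and $_{\Sigma_p}\backslash W$ respectively, connected by this natural comparison. The square is therefore a homotopy pushout exactly when the induced map on vertical cofibers is a weak equivalence.

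To establish this, I will verify that the pointed fixed-point space $W^H$ is weakly contractible for every non-trivial subgroup $H\subseteq\Sigma_p$, and then conclude by a standard $E\Sigma_p$-argument. Fix a non-trivial $H\subseteq\Sigma_p$ and consider its action on $\mathbf p$. If $H$ acts transitively, the reduced standard representation $V$ of $\Sigma_p$ has $V^H=0$ (a transitive action forces $H$-invariant vectors to be constant, and the only sum-zero constant vector is $0$), so $(\widehat S^{p-1})^H=S^0$ and $(S^{lp-1})^H=S^{l-1}\wedge(S^0)^{\wedge l}=S^{l-1}$; hence $(S^{lp-1}/S^{l-1})^H=*$ and $W^H=*$. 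If $H$ acts non-transitively, then since $p$ is prime the action cannot be isotypical --- equal orbit sizes would have to divide $p$ and hence be $1$ or $p$, contradicting either non-triviality of $H$ or non-transitivity --- so by Lemma~\ref{lemma: not isotypical} the space $|\Pi_p|^H$ is non-empty and contractible. Then $|\Pi_p|^{\diamond H}=(|\Pi_p|^H)^\diamond$ is pointed contractible, and $W^H$, being a smash product with a pointed contractible factor, is pointed contractible.

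Granted these fixed-point vanishings, the projection $E\Sigma_{p+}\to S^0$ induces a $\Sigma_p$-equivariant map $W\wedge E\Sigma_{p+}\to W$ that is a weak equivalence on $H$-fixed points for every $H\subseteq\Sigma_p$: for $H\ne 1$ the source is literally the basepoint since $(E\Sigma_{p+})^H=*$, while the target is weakly contractible by the previous paragraph; for $H=1$ it is the classical equivalence $W\wedge E\Sigma_{p+}\simeq W$. As a $\Sigma_p$-equivariant weak equivalence between $\Sigma_p$-CW complexes, this map is a $\Sigma_p$-equivariant homotopy equivalence, and so is preserved by passage to strict orbits. Since $E\Sigma_p$ is $\Sigma_p$-free away from the basepoint, strict and homotopy orbits of $W\wedge E\Sigma_{p+}$ coincide, yielding the chain
\[
W_{h\Sigma_p}\simeq (W\wedge E\Sigma_{p+})_{h\Sigma_p}\simeq {_{\Sigma_p}}\backslash(W\wedge E\Sigma_{p+})\simeq {_{\Sigma_p}}\backslash W,
\]
which is the comparison we wanted. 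The main obstacle is the fixed-point case analysis of the middle paragraph, in particular recognizing that primality of $p$ is exactly what forces every non-trivial intransitive subgroup to act non-isotypically, so that Lemma~\ref{lemma: not isotypical} applies.
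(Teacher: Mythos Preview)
Your proof is correct and follows the same opening move as the paper: reduce to showing that the comparison map $W_{h\Sigma_p}\to{_{\Sigma_p}}\backslash W$ on the vertical cofiber $W=(S^{lp-1}/S^{l-1})\wedge|\Pi_p|^\diamond$ is a weak equivalence. From there, however, you take a cleaner and more elementary route than the paper.

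The paper rewrites the cofiber as $S^l\wedge(S^{l(p-1)-1})_+\wedge|\Pi_p|^\diamond$, decomposes the sphere $S^{l(p-1)-1}$ as a homotopy colimit over orbits $\Sigma_p/G$, observes that every isotropy group $G$ lies in a proper Young subgroup of $\Sigma_p$, and then invokes the \emph{main theorem} (Theorem~\ref{theorem: main}) to conclude that such $G$ act essentially pointed-freely on $|\Pi_p|^\diamond$. Your argument instead checks directly that $W^H\simeq *$ for every non-trivial $H\subseteq\Sigma_p$, splitting into the transitive case (where the sphere factor already has $H$-fixed points equal to the collapsed $S^{l-1}$) and the non-transitive case (where primality of $p$ forces the action to be non-isotypical, so Lemma~\ref{lemma: not isotypical} alone gives contractibility of $|\Pi_p|^H$). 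You then finish with the standard $E\Sigma_{p+}$ comparison.

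The payoff is that your argument uses only the elementary Lemma~\ref{lemma: not isotypical} and a representation-sphere computation, whereas the paper's version appeals to the full strength of Theorem~\ref{theorem: main}. Both are valid since the main theorem has been established by this point, but your version makes the logical dependence lighter and the role of primality more transparent.
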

\begin{proof}
The homotopy cofiber of the map $S^{l-1}\to S^{lp-1}$ is equivalent to $S^{l}\Smash (S^{l(p-1)-1})_+$. Therefore, taking homotopy cofibers of the vertical maps we obtain the map
\[
_{h\Sigma_p}\backslash \! \left(S^{l}\Smash(S^{l(p-1)-1})_+ \Smash |\Pi_p|^\diamond\right)  \to  {_{\Sigma_p}}\backslash \! \left(S^{l}\Smash(S^{l(p-1)-1})_+ \Smash |\Pi_p|^\diamond\right).
\]
We want to prove that this map is a homotopy equivalence. For this, it is enough to prove that the following map is an equivalence
\[
_{h\Sigma_p}\backslash  (S^{l(p-1)-1})_+ \Smash |\Pi_p|^\diamond  \to  {_{\Sigma_p}}\backslash  (S^{l(p-1)-1})_+ \Smash |\Pi_p|^\diamond.
\]
%Here it will be convenient to use the homeomorphism $(S^{l(p-1)-1})_+ * |\Pi_p|\cong (S^{l(p-1)-1})_+ \wedge \Sigma |\Pi_p|$. So we need to show that the following map is an equivalence
%\[
%_{h\Sigma_p}\backslash  (S^{l(p-1)-1})_+ \wedge \Sigma |\Pi_p| \to  {_{\Sigma_p}}\backslash  (S^{l(p-1)-1})_+ \wedge \Sigma |\Pi_p|.
%\]
The $\Sigma_p$-space $S^{l(p-1)-1}$ can be written as a homotopy colimit of sets of the form $\Sigma_p/G$, where $G$ is an isotropy group of $S^{l(p-1)-1}$. Therefore, it is enough to prove that for every group $G$ that occurs as an isotropy of $S^{l(p-1)-1}$, the map $_{h\Sigma_p}\backslash  (\Sigma_p/G)_+ \wedge |\Pi_p|^\diamond \to  {_{\Sigma_p}}\backslash  (\Sigma_p/G)_+ \wedge  |\Pi_p|^\diamond$ is an equivalence. We claim that every isotropy group of $S^{l(p-1)-1}$ is is contained in a group of the form $\Sigma_{p_1}\times \cdots\times \Sigma_{p_l}$, where $l>1$ and $p_1+\cdots+p_l=p$. Indeed, for $l=1$, $S^{p-2}$ is the boundary of the $p-1$-simplex, and it is easy to see that the isotropy groups have the form $\Sigma_{p_1}\times \cdots\times \Sigma_{p_l}$, where $l>1$ and $p_1+\cdots+p_l=p$. For $l>1$ one may argue by induction on $l$. Since $p$ is a prime, $\gcd(p_1, \ldots, p_l)=1$. It follows, using our main theorem, that the action of $\Sigma_{p_1}\times \cdots\times \Sigma_{p_l}$ on $\Pi_p$ is essentially pointed-free, in the sense that for every subgroup $H\subset \Sigma_{p_1}\times \cdots\times \Sigma_{p_l}$, the fixed points space $\Pi_p^H$ is contractible. It follows that the action of $G$ on $\Pi_p$ is essentially pointed-free. It follows that the map $_{h\Sigma_p}\backslash  (\Sigma_p/G)_+ \wedge |\Pi_p|^\diamond \to  {_{\Sigma_p}}\backslash  (\Sigma_p/G)_+ \wedge |\Pi_p|^\diamond$, which is the same as the map $_{hG}\backslash  |\Pi_p|^\diamond \to  {_G}\backslash  |\Pi_p|^\diamond$, is an equivalence.
\end{proof}
\begin{corollary}\label{corollary: odd cofibration}
If $p$ is an odd prime then there is a homotopy cofibration sequence
\[
_{h\Sigma_p}\backslash \! \left(S^{l-1} \Smash |\Pi_p|^\diamond\right) \to {_{h\Sigma_p}}\backslash \! \left(S^{lp-1} \Smash |\Pi_p|^\diamond\right)  \to  {_{\Sigma_p}}\backslash \! \left(S^{lp-1} \Smash |\Pi_p|^\diamond\right).
\]
\end{corollary}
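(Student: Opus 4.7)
The plan is to deduce the cofibration directly from the homotopy pushout square of Proposition~\ref{proposition: primal pushout}, by showing that for odd $p$ the top right corner
\[
{_{\Sigma_p}}\backslash \! \left(S^{l-1} \Smash |\Pi_p|^\diamond\right)
\]
is contractible. Once one corner of a homotopy pushout square is contractible, the opposite corner is naturally identified with the homotopy cofiber of the parallel edge, and this yields precisely the desired three-term cofibration sequence $A \to C \to D$ with $A = {_{h\Sigma_p}}\backslash(S^{l-1} \wedge |\Pi_p|^\diamond)$, $C = {_{h\Sigma_p}}\backslash(S^{lp-1} \wedge |\Pi_p|^\diamond)$, $D = {_{\Sigma_p}}\backslash(S^{lp-1} \wedge |\Pi_p|^\diamond)$.

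To verify the contractibility I would first observe that $S^{l-1}$ is embedded in $S^{lp-1}$ as the $\Sigma_p$-fixed points (this was the whole point of the construction preceding Proposition~\ref{proposition: primal pushout}), so $\Sigma_p$ acts trivially on $S^{l-1}$. Since the canonical south-pole basepoint of $|\Pi_p|^\diamond$ is $\Sigma_p$-fixed, the strict orbit functor commutes both with the smash with $S^{l-1}$ and with the unreduced suspension, giving a natural identification
\[
{_{\Sigma_p}}\backslash \! \left(S^{l-1} \Smash |\Pi_p|^\diamond\right) \cong S^{l-1} \Smash \left({_{\Sigma_p}}\backslash |\Pi_p|\right)^\diamond.
\]
I would then invoke Kozlov's theorem~\cite{kozlov}, which asserts that ${_{\Sigma_n}}\backslash |\Pi_n|$ is contractible for every $n \ge 3$. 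Since $p$ is an odd prime, in particular $p \ge 3$, so ${_{\Sigma_p}}\backslash |\Pi_p|$ is contractible; its unreduced suspension is therefore contractible as a pointed space, and smashing with $S^{l-1}$ preserves this. Hence the top right corner is indeed contractible.

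I do not expect any real obstacle here, since the argument is essentially a formal consequence of Proposition~\ref{proposition: primal pushout} together with Kozlov's result. The only delicate point is keeping the pointed/unpointed distinction straight — specifically noting that $\Sigma_p$ fixes both cone points of the unreduced suspension, so the orbit space identifies cleanly with $\left({_{\Sigma_p}}\backslash |\Pi_p|\right)^\diamond$ — and observing why oddness of $p$ is essential: for $p = 2$ one has $|\Pi_2| = \emptyset$, so ${_{\Sigma_2}}\backslash |\Pi_2|^\diamond = S^0$ is not contractible, the pushout fails to degenerate, and one gets the extra $\Sigma^l \reals P^{l-1}$ summand instead.
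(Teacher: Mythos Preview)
Your proposal is correct and follows essentially the same approach as the paper: both observe that the top right corner ${_{\Sigma_p}}\backslash(S^{l-1}\wedge|\Pi_p|^\diamond)\cong S^{l-1}\wedge\left({_{\Sigma_p}}\backslash|\Pi_p|^\diamond\right)$ is contractible by Kozlov's theorem (using $p\ge 3$), and then read off the cofibration sequence from the homotopy pushout square of Proposition~\ref{proposition: primal pushout}. Your write-up is in fact somewhat more detailed than the paper's, making explicit why the orbit functor commutes with the smash and the unreduced suspension, and why the argument fails for $p=2$.
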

\begin{proof}
Consider the upper right corner of~\eqref{equation: primal pushout}. Since $p>2$, ${_{\Sigma_p}}\backslash \! \left(S^{l-1}\Smash |\Pi_p|^\diamond\right)$, which is homeomorphic to $S^{l-1}\Smash \left({_{\Sigma_p}}\backslash |\Pi_p|^\diamond \right)$, is contractible by Kozlov's theorem~\cite{kozlov}. The corollary now follows from Proposition~\ref{proposition: primal pushout}.
\end{proof}
\begin{corollary}\label{corollary: p-local}
If $p$ is an odd prime then the space ${_{\Sigma_p}}\backslash \! \left(S^{lp-1} \Smash |\Pi_p|^\diamond\right)$ is $p$-local.
\end{corollary}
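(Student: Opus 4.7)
My plan is to apply Corollary~\ref{corollary: odd cofibration}, which presents $Y:={_{\Sigma_p}}\backslash(S^{lp-1}\wedge|\Pi_p|^\diamond)$ as the homotopy cofiber of the natural map
\[
(S^{l-1}\wedge|\Pi_p|^\diamond)_{h\Sigma_p}\longrightarrow (S^{lp-1}\wedge|\Pi_p|^\diamond)_{h\Sigma_p}.
\]
Writing $A:=S^{l-1}\wedge|\Pi_p|^\diamond$ and $B:=S^{lp-1}\wedge|\Pi_p|^\diamond$, one equivalently has $Y\simeq(B/A)_{h\Sigma_p}$, where the equivariant cofiber is $B/A \simeq (S^{lp-1}/S^{l-1})\wedge|\Pi_p|^\diamond$.

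The first step is to show that the $\Sigma_p$-action on $B/A$ is essentially free, in the sense that $(B/A)^H\simeq *$ for every non-trivial subgroup $H\subset\Sigma_p$. For $H$ acting transitively on $\mathbf p$, the $H$-fixed and $\Sigma_p$-fixed subspaces of $\mathbb R^{lp}$ both coincide with the diagonal $\mathbb R^l$, so $(S^{l-1})^H\hookrightarrow(S^{lp-1})^H$ is an isomorphism; moreover Lemma~\ref{lemma: transitive}, together with the primality of $p$, yields $|\Pi_p|^H=\emptyset$. Hence $(B/A)^H = *$. For $H\neq 1$ acting non-isotypically on $\mathbf p$ (equivalently, $H$ has at least one fixed point), Lemma~\ref{lemma: not isotypical} gives $|\Pi_p|^H\simeq *$, so $(|\Pi_p|^\diamond)^H\simeq *$ and $(B/A)^H\simeq *$ again. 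Since for $p$ prime these cases cover all non-trivial subgroups, $B/A$ is equivariantly equivalent to a free pointed $\Sigma_p$-CW complex, and therefore $Y\simeq(B/A)_{h\Sigma_p}\simeq(B/A)/\Sigma_p$.

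The second step is to verify that $Y$ is $p$-local. I use the Borel spectral sequence $E_2^{s,t}=H^s(\Sigma_p;\widetilde H^t(B/A))\Rightarrow\widetilde H^{s+t}(Y)$. By K\"unneth and Barcelo's theorem, $\widetilde H^*(B/A)$ is concentrated in two degrees, each a sign twist of the Lie representation. The key algebraic input is that, as a $\Sigma_{p-1}$-module, $\Lierep_p$ is a free $\mathbb Z[\Sigma_{p-1}]$-module of rank one; this is the cohomological reflection of the $\Sigma_{p-1}$-equivariant equivalence $|\Pi_p|\simeq_{\Sigma_{p-1}}{\Sigma_{p-1}}_+\wedge S^{p-3}$ stated earlier in the paper. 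For any prime $q<p$, the $q$-Sylow $Q\subset\Sigma_p$ is conjugate into $\Sigma_{p-1}$ (since $|Q|$ is coprime to $p$, forcing a fixed point), so $\Lierep_p|_Q$ is $\mathbb Z[Q]$-free and $H^s(Q;\Lierep_p)=0$ for $s>0$. The standard fact that the $q$-primary part of $H^*(\Sigma_p;-)$ injects into $H^*(Q;-)$ via restriction (split by the transfer up to the $q$-local unit $[\Sigma_p:Q]$) then kills all $q$-primary torsion in positive degrees, leaving only $p$-primary contributions; the rational part is trivial by Lemma~\ref{lemma: rational}. Hence $Y$ is $p$-local.

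The main obstacle is the careful identification of the two $\Sigma_p$-module summands in $\widetilde H^*(B/A)$, including the sign twists coming from the orientation action on $S^{lp-1}$, and the verification that the Borel spectral sequence converges compatibly with the $p$-locality conclusion. An alternative, less self-contained route would bypass some of this bookkeeping by directly invoking the $p$-locality of $(|\Pi_p|^\diamond)_{h\Sigma_p}$ established in~\cite{ad} (cited in the remark preceding Lemma~\ref{lemma: rational}) and propagating $p$-locality through the cofibration of Corollary~\ref{corollary: odd cofibration}.
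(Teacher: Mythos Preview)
Your argument is correct and rests on the same idea as the paper's: both use the cofibration of Corollary~\ref{corollary: odd cofibration} together with the fact that $|\Pi_p|\simeq_{\Sigma_{p-1}}{\Sigma_{p-1}}_+\wedge S^{p-3}$ (equivalently, $\Lierep_p$ is free as a $\integers[\Sigma_{p-1}]$-module), and a transfer. The paper packages this more tersely: it treats the two homotopy-orbit spaces $A_{h\Sigma_p}$ and $B_{h\Sigma_p}$ separately and invokes the space-level transfer argument already worked out in Lemma~\ref{lemma: torsion} (restrict to $\Sigma_{p-1}$, observe the middle term is a single sphere, conclude the torsion is $p$-torsion). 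You instead pass to the cofiber $(B/A)_{h\Sigma_p}$, identify $\widetilde H_*(B/A)$ explicitly as sign twists of $\Lierep_p$, and run the group-cohomology transfer inside the Borel spectral sequence. Both routes are equivalent in content; yours is more explicit but longer.

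Three small points. First, your ``essentially free'' paragraph is correct but unused: after establishing $(B/A)_{h\Sigma_p}\simeq(B/A)/\Sigma_p$ you never invoke it, so it can be dropped. Second, the Borel spectral sequence for homotopy orbits naturally converges to \emph{homology}, $E^2_{s,t}=H_s(\Sigma_p;\widetilde H_t(B/A))\Rightarrow\widetilde H_{s+t}(Y)$; your transfer argument works verbatim there (for $s>0$ use $\integers[Q]$-freeness, for $s=0$ note that $H_0(Q;M)$ is free abelian and the $q$-primary part of $H_0(\Sigma_p;M)$ is a retract of it). Third, to pass from ``$\widetilde H_*(Y)$ is $p$-torsion'' to ``$Y$ is $p$-local'' you should note, as the paper does, that $Y$ is simply connected (clear since $S^{lp-1}\wedge|\Pi_p|^\diamond$ is at least $2$-connected for $l\ge 1$, $p\ge 3$).
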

\begin{proof}
By Corollary~\ref{corollary: odd cofibration} it is enough to prove that the spaces $_{h\Sigma_p}\backslash \! \left(S^{l-1} \Smash |\Pi_p|^\diamond\right)$ and ${_{h\Sigma_p}}\backslash \! \left(S^{lp-1} \Smash |\Pi_p|^\diamond\right)$ are each $p$-local. Since $l\ge 1$ and $p>2$ both of these spaces are simply connected. Therefore, it is enough to show that the integral homology of these spaces with $\integers$ coefficients is $p$-torsion. This can be done using a transfer argument similar to the one in the proof of Lemma~\ref{lemma: torsion}.
\end{proof}

\begin{proposition}\label{proposition: prime}
Let $p$ be an odd prime. If $l$ is odd, then the orbits space ${_{\Sigma_p}}\backslash \! \left(S^{lp-1} \Smash |\Pi_p|^\diamond\right)$ is equivalent to the $p$-localization of  ${_{\Sigma_p}}\!\backslash S^{lp-1}$. If $l$ is even then $_{\Sigma_p}\backslash \! \left(S^{lp-1} \Smash |\Pi_p|^\diamond\right)$ is equivalent to the $p$-localization of the homotopy cofiber of the quotient map  $S^{lp-1}\to {_{\Sigma_p}}\!\backslash S^{lp-1}$.
\end{proposition}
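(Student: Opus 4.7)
The plan is to work $p$-locally throughout (justified by Corollary~\ref{corollary: p-local}) and exploit the cofibration from Corollary~\ref{corollary: odd cofibration}:
\[
(S^{l-1}\wedge|\Pi_p|^\diamond)_{h\Sigma_p} \to (S^{lp-1}\wedge|\Pi_p|^\diamond)_{h\Sigma_p} \to {_{\Sigma_p}}\backslash(S^{lp-1}\wedge|\Pi_p|^\diamond),
\]
comparing it term-by-term with the corresponding cofibration arising from $S^0$ in place of $|\Pi_p|^\diamond$. The $\Sigma_p$-equivariant inclusion of the two cone points gives a map $S^0\hookrightarrow|\Pi_p|^\diamond$, and my first step would be to show that after smashing with $S^{lp-1}$ (resp.\ $S^{l-1}$) and passing to $\Sigma_p$-homotopy orbits, this inclusion becomes a $p$-local equivalence. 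Since $[\Sigma_p:C_p]=(p-1)!$ is invertible in $\mathbb{Z}_{(p)}$, a transfer argument reduces the comparison to one involving $C_p$-homotopy orbits; and by Lemma~\ref{lemma: transitive} the group $C_p$ has no proper non-trivial subgroups, so the only $C_p$-invariant partitions of $\mathbf{p}$ are the discrete and indiscrete ones. Consequently $(|\Pi_p|^\diamond)^{C_p}=S^0$ and $C_p$ acts freely on $|\Pi_p|$.

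Next I would use this free $C_p$-action to identify $(|\Pi_p|^\diamond)_{hC_p}$ $p$-locally. Because $|\Pi_p|\simeq \bigvee_{(p-1)!}S^{p-3}$ with the $C_p$-orbits permuted freely by $C_p$ (up to a sign-representation twist coming from the orientation behaviour of the top cells under the normaliser $(\mathbb{Z}/p)^\times$), the $C_p$-space $|\Pi_p|^\diamond$ is $p$-locally a representation sphere. Taking Weyl-group fixed points, one gets a $p$-local $\Sigma_p$-equivariant equivalence of $|\Pi_p|^\diamond$ with $S^{\epsilon(p-1)}$ where $\epsilon$ is the sign character of $(\mathbb{Z}/p)^\times$. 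This identification means that $p$-locally, the homotopy-orbit terms in the cofibration above agree with the corresponding homotopy-orbit terms for $S^0$, up to a sign twist that records whether $l$ is odd or even.

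Finally, one compares with the parallel sequence obtained by applying the pushout-square reasoning of Proposition~\ref{proposition: primal pushout} (or its direct analogue) to $S^{lp-1}$ in place of $S^{lp-1}\wedge|\Pi_p|^\diamond$, which produces a cofibration of the form $(S^{l-1})_{h\Sigma_p}\to (S^{lp-1})_{h\Sigma_p}\to \mathrm{cofib}(S^{l-1}\to{_{\Sigma_p}}\backslash S^{lp-1})$. When $l$ is odd the sign character on $S^{l-1}$ is non-trivial under $(\mathbb{Z}/p)^\times$, so $(S^{l-1})_{h\Sigma_p}$ is $p$-locally contractible and the comparison collapses to the stated equivalence ${_{\Sigma_p}}\backslash(S^{lp-1}\wedge|\Pi_p|^\diamond)\simeq_{(p)}{_{\Sigma_p}}\backslash S^{lp-1}$. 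When $l$ is even the character becomes trivial; the $(\mathbb{Z}/p)^\times$-fixed point computation no longer kills $(S^{l-1})_{h\Sigma_p}$, and tracing the cofibers through the comparison produces the extra $S^{lp-1}$-summand, yielding ${_{\Sigma_p}}\backslash(S^{lp-1}\wedge|\Pi_p|^\diamond)\simeq_{(p)}\mathrm{cofib}(S^{lp-1}\to{_{\Sigma_p}}\backslash S^{lp-1})$.

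The main obstacle will be pinning down the $p$-local $\Sigma_p$-equivariant homotopy type of $|\Pi_p|^\diamond$ precisely enough to make the sign-character bookkeeping rigorous and to justify that the comparison map of cofibrations induces an equivalence on the rightmost terms. This reduces essentially to understanding the action of $(\mathbb{Z}/p)^\times$ on the top homology $\widetilde{H}^{p-3}(|\Pi_p|)\cong\mathrm{Lie}_p$, which is classical: the induced character is the sign, matching the parity dichotomy in the statement.
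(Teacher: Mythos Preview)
There is a genuine gap in your first step. You claim that the inclusion $S^0\hookrightarrow |\Pi_p|^\diamond$ becomes a $p$-local equivalence after smashing with $S^{l-1}$ and taking $\Sigma_p$-homotopy orbits, and later that $(S^{l-1})_{h\Sigma_p}$ is $p$-locally contractible when $l$ is odd. Both assertions are false, for the same reason: the $\Sigma_p$-action on $S^{l-1}$ is \emph{trivial} (it is the fixed-point sphere inside $S^{lp-1}$), so $(S^{l-1})_{h\Sigma_p}\simeq S^{l-1}\wedge B\Sigma_{p+}$, which has plenty of $p$-local homology. Likewise the cofiber of $(S^{l-1})_{h\Sigma_p}\to (S^{l-1}\wedge|\Pi_p|^\diamond)_{h\Sigma_p}$ is $\Sigma\bigl(S^{l-1}\wedge|\Pi_p|_+\bigr)_{h\Sigma_p}\simeq S^l\wedge (|\Pi_p|_+)_{h\Sigma_p}$, and since the isotropy groups of $|\Pi_p|$ have order prime to $p$ while $_{\Sigma_p}\backslash|\Pi_p|$ is contractible (Kozlov), this cofiber is $p$-locally $S^l$, not a point. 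Your ``sign character on $S^{l-1}$'' reasoning conflates the (nontrivial) action of a transposition on $S^{lp-1}$ with the (trivial) action on $S^{l-1}$; the Weyl-group bookkeeping you allude to does not rescue this.

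The paper's argument avoids Corollary~\ref{corollary: odd cofibration} entirely and instead uses the simpler cofibration $|\Pi_p|_+\to S^0\to |\Pi_p|^\diamond$ smashed with $S^{lp-1}$ and passed to \emph{strict} $\Sigma_p$-orbits:
\[
{_{\Sigma_p}}\backslash(S^{lp-1}\wedge|\Pi_p|_+)\to {_{\Sigma_p}}\backslash S^{lp-1}\to {_{\Sigma_p}}\backslash(S^{lp-1}\wedge|\Pi_p|^\diamond).
\]
The point is that every isotropy group of $|\Pi_p|$ is a non-transitive subgroup of $\Sigma_p$, hence has order prime to $p$ and contains a transposition. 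The first condition lets one replace strict orbits by homotopy orbits $p$-locally; the second, together with the fact that a transposition acts on $S^{lp-1}$ with degree $(-1)^l$, gives that the left-hand term is $p$-locally contractible when $l$ is odd and $p$-locally $S^{lp-1}$ when $l$ is even (via Thom isomorphism over $E\Sigma_p\times_{\Sigma_p}|\Pi_p|$). This is exactly the parity dichotomy in the statement, obtained without any representation-sphere identification of $|\Pi_p|^\diamond$.
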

\begin{proof}
Once again we use the homotopy cofibration sequence
\[
{_{\Sigma_p}}\backslash \! \left(S^{lp-1} \wedge |\Pi_p|_+\right)\to {_{\Sigma_p}}\!\backslash S^{lp-1}\to {_{\Sigma_p}}\backslash \! \left(S^{lp-1} \Smash |\Pi_p|^\diamond\right).
\]
Our task reduces to showing that if $l$ is odd then the $p$-localization of the space ${_{\Sigma_p}}\backslash \! \left(S^{lp-1} \wedge |\Pi_p|_+\right)$ is trivial, and if $l$ is even then for any choice of point in $ |\Pi_p|$, the induced map $S^{lp-1}\to {_{\Sigma_p}}\backslash \! \left(S^{lp-1}\wedge |\Pi_p|_+\right)$ is a $p$-local equivalence.

Consider the pointed $\Sigma_p$-space $S^{lp-1} \wedge |\Pi_p|_+$. It is easy to check that every isotropy group of the space (aside from basepoint) is a non-transitive subgroup of $\Sigma_p$. It follows that $p$ does not divide the order of any of the isotropy groups. It follows that for every isotropy group $G$, the map $BG\to *$ induces an isomorphism in homology with $\integers/p$-coefficients. It follows that the natural map ${_{h\Sigma_p}}\backslash \! \left(S^{lp-1} \wedge |\Pi_p|_+\right)\to {_{\Sigma_p}}\backslash \! \left(S^{lp-1} \wedge |\Pi_p|_+\right)$ is a mod $p$-homology isomorphism and therefore a $p$-local isomorphism. The space ${_{h\Sigma_p}}\backslash \! \left(S^{lp-1} \wedge |\Pi_p|_+\right)$ is a pointed homotopy colimit of spaces of the form ${_{hG}}\backslash S^{lp-1}$, where $G$ is an isotropy group of $|\Pi_p|$. The relevant properties of $G$ are that (1) $p$ does not divide the order of $G$ and (2) $G$ contains a transposition. Suppose that $l$ is odd. Then $\widetilde\HH_*\left({_{hG}}\backslash S^{lp-1};\integers/p\right)\cong \{0\}$, and so ${_{hG}}\backslash S^{lp-1}$ is $p$-locally trivial. It follows that ${_{h\Sigma_p}}\backslash \! \left(S^{lp-1} \wedge |\Pi_p|_+\right)$ is $p$-locally trivial. Now suppose $l$ is even. The space ${_{h\Sigma_p}}\backslash \! \left(S^{lp-1} \wedge |\Pi_p|_+\right)$ is a Thom space of a bundle over $E\Sigma_p\times_{\Sigma_p} |\Pi_p|$. Since $l$ is even, the bundle is orientable, and therefore Thom isomorphism holds. Any map $*\to E\Sigma_p\times_{\Sigma_p} |\Pi_p|$ is a mod $p$ homology isomorphism. Therefore by Thom isomorphism it induces a mod $p$ homology isomorphism of Thom spaces $S^{lp-1}\to  {_{h\Sigma_p}}\backslash \! \left(S^{lp-1} \wedge |\Pi_p|_+\right)$. 
\end{proof}
\begin{corollary}\label{corollary: also l=2}
Let $p$ be a prime. For $l> 2$ the space ${_{\Sigma_p}}\backslash \! \left(S^{lp-1} \Smash |\Pi_p|^\diamond\right)$ is not equivalent to a wedge of spheres. For $l=2$ it is equivalent to $S^{2l-1}$ if $p=2$ and is contractible if $p$ is odd.
\end{corollary}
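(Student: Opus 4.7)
The strategy has two parts, dictated by Proposition~\ref{proposition: orbits omnibus}. The case $l=2$ is immediate: item~(6) gives contractibility when $p$ is odd, and item~(2) identifies the space with $\Sigma^{2}\reals P^{1}=\Sigma^{2}S^{1}=S^{3}=S^{2l-1}$ when $p=2$, using $\reals P^{1}\cong S^{1}$.

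For $l>2$, I plan to exhibit a nontrivial cohomology operation (Bockstein or $\mathrm{Sq}^{1}$) on mod $p$ cohomology, which rules out the simply connected space being a wedge of $p$-local spheres, since such a wedge has free cohomology on which the Bockstein is zero. When $p=2$, Proposition~\ref{proposition: orbits omnibus}(2) presents the space as $\Sigma^{l}\reals P^{l-1}$, whose mod $2$ cohomology carries nontrivial $\mathrm{Sq}^{1}$ for $l\ge 3$, because $\mathrm{Sq}^{1}x^{k}=x^{k+1}$ whenever $k$ is odd and Steenrod operations commute with suspension.

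When $p$ is odd and $l>2$, Proposition~\ref{proposition: orbits omnibus}(3)--(4) identifies the space with the $p$-localization of either $_{\Sigma_{p}}\backslash S^{lp-1}$ (for $l$ odd) or $\cof(S^{lp-1}\to{_{\Sigma_{p}}}\backslash S^{lp-1})$ (for $l$ even). The $\mathbb{Z}/p$-equivariant decomposition $S^{lp-1}\cong S^{l-1}\ast S(V)$ with $V=l\bar\rho|_{\mathbb{Z}/p}$ (where $\bar\rho$ is the reduced standard representation of $\Sigma_{p}$), combined with Lemma~\ref{lemma: join}, yields a $(\mathbb{Z}/p)^{*}$-equivariant equivalence $_{\mathbb{Z}/p}\backslash S^{lp-1}\simeq \Sigma^{l}L^{l(p-1)-1}_{p}$ with a suspended lens space. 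The Weyl group $(\mathbb{Z}/p)^{*}$ acts on $H^{*}(L^{l(p-1)-1}_{p};\mathbb{F}_{p})=\mathbb{F}_{p}[y]/(y^{n})\otimes\Lambda(x)$ (with $|x|=1$, $|y|=2$, and $n=l(p-1)/2$) via $x^{\epsilon}y^{k}\mapsto a^{\epsilon+k}\cdot x^{\epsilon}y^{k}$, so the invariants contain the Bockstein pair $\beta(xy^{p-2})=y^{p-1}$, which is nonzero precisely when $p-1<n$, i.e., when $l\ge 3$. Since $[\Sigma_{p}:N_{\Sigma_{p}}(\mathbb{Z}/p)]=(p-2)!$ is prime to $p$, the transfer identifies $H^{*}(_{\Sigma_{p}}\backslash S^{lp-1};\mathbb{F}_{p})$ with these invariants, so the Bockstein pair produces a nontrivial Bockstein in $H^{*}(_{\Sigma_{p}}\backslash S^{lp-1};\mathbb{F}_{p})$. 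After the $\Sigma^{l}$ suspension the pair sits in degrees $l+2p-3$ and $l+2p-2$, both strictly less than $lp-1$ for $l\ge 3$, so in the $l$ even case it survives the cofiber sequence with $S^{lp-1}$.

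The main obstacle will be the transfer identification between $(\mathbb{Z}/p)^{*}$-invariants on $_{\mathbb{Z}/p}\backslash S^{lp-1}$ and $H^{*}(_{\Sigma_{p}}\backslash S^{lp-1};\mathbb{F}_{p})$: the coprime-index transfer for $N_{\Sigma_{p}}(\mathbb{Z}/p)\hookrightarrow\Sigma_{p}$ gives a formal mod $p$ isomorphism, but verifying compatibility with the Bockstein (needed to conclude the pair is genuinely nontrivial downstairs) and verifying that the explicit classes $xy^{p-2}$ and $y^{p-1}$ really are $(\mathbb{Z}/p)^{*}$-invariant as claimed requires a direct character computation. These are standard equivariant-cohomology bookkeeping steps, but they are the delicate heart of the argument, particularly for $p\ge 5$ where $\mathbb{Z}/p$ is not normal in $\Sigma_{p}$.
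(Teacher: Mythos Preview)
Your invocation of Proposition~\ref{proposition: orbits omnibus}(6) for the case $l=2$, $p$ odd is circular: item~(6) \emph{is} precisely this part of the corollary, and the paper proves it here, not elsewhere. The other items you cite are established earlier (item~(2) is elementary and items~(3)--(4) are Proposition~\ref{proposition: prime}), but item~(6) is not. The paper handles $l=2$, $p$ odd directly: it invokes Nakaoka's computation of $\widetilde H_*\bigl({_{\Sigma_p}}\backslash(S^{lp-1}\wedge|\Pi_p|^\diamond);\integers/p\bigr)$, described by symbols $\{i\}$ with $i>1$, $i\equiv 0,1\pmod{2(p-1)}$, and $i<(p-1)l$; for $l=2$ no such $i$ exists, so the space is mod~$p$ acyclic, hence (being simply connected and $p$-local by Corollary~\ref{corollary: p-local}) contractible. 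Your lens-space machinery could be made to cover this case too: for $l=2$ the only nontrivial $(\integers/p)^*$-invariant in $H^*(L_p^{2p-3};\field_p)$ is $xy^{p-2}$ in top degree, which after suspension sits in degree $2p-1=lp-1$ and is therefore killed in the cofiber with $S^{lp-1}$; but you would have to argue this explicitly, including that the map $S^{2p-1}\to{_{\Sigma_p}}\backslash S^{2p-1}$ is a mod~$p$ isomorphism in degree $2p-1$.

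For $l>2$ your argument is sound and takes a genuinely different route from the paper. The paper again quotes Nakaoka's enumeration and observes that for $l>2$ some $\{i\}$ exists, so the space is nontrivial; since it is also $p$-local with finitely generated homology, that homology is entirely $p$-torsion, hence the space is not a wedge of spheres. You instead exhibit an explicit Bockstein pair $xy^{p-2}\mapsto y^{p-1}$ in the $(\integers/p)^*$-invariants of the lens-space cohomology, transported to ${_{\Sigma_p}}\backslash S^{lp-1}$ via the prime-to-$p$ index transfer, and check the degrees are below $lp-1$ so the pair survives to the cofiber when $l$ is even. This is a more hands-on argument that avoids citing Nakaoka; what it buys is self-containment, at the cost of the bookkeeping you flag (the Weyl-group action and transfer compatibility with $\beta$, both of which are standard). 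The paper's route is shorter but relies on an external reference. Either works for $l>2$; your gap is solely at $l=2$, $p$ odd.
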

\begin{remark}
Let us recall that for $l=1$ the space is contractible by Lemma~\ref{lemma: contractible}.
\end{remark}
\begin{proof}
Consider first the case $p=2$. In this case we identified ${_{\Sigma_p}}\backslash \! \left(S^{lp-1} \Smash |\Pi_p|^\diamond\right)$ with $\Sigma^l\reals P^{l-1}$. For $l=2$ this space is equivalent to $S^{3}$. For $l>2$ it is not equivalent to a wedge of spheres since its homology has $2$-torsion.

Suppose now that $p>2$. By Proposition~\ref{proposition: prime} we need to analyze the space ${_{\Sigma_p}}\!\backslash S^{lp-1}$ or the homotopy cofiber of the quotient map  $S^{lp-1}\to {_{\Sigma_p}}\!\backslash S^{lp-1}$. Note that the space ${_{\Sigma_p}}\!\backslash S^{lp-1}$ is a desuspension of $(S^{l})^{\Smash p}_{\Sigma_p}$. The homology with $\integers/p$ coefficients of these spaces is known by work of Nakaoka~\cite{nakaoka}. Applying Nakaoka's result to our situation, we find that the The homology with $\integers/p$ coefficients of ${_{\Sigma_p}}\backslash \! \left(S^{lp-1} \Smash |\Pi_p|^\diamond\right)$ is generated by symbols $\{i\}$, where $i$ is an integer satisfying the following constraints
\begin{itemize}
\item $i>1$
\item $i\equiv 0\mbox{ or } 1 \,(\mbox{mod } 2(p-1))$
\item $i<(p-1)l$
\end{itemize}
The degree of $\{i\}$ is $l+i-1$. It is easy to see that for $l\le 2$ there are no integers $i$ satisfying these constraints. It follows that for $l\le 2$ the mod $p$ homology of ${_{\Sigma_p}}\backslash \! \left(S^{lp-1} \Smash |\Pi_p|^\diamond\right)$ is trivial. Since the space is $p$-local, it follows that it is contractible. 

On the other hand, for $l>2$ it is easy to see that there is (more than one) value of $i$ satisfying the constraints. It follows that the space is not contractible. Since it is $p$-local, it is not equivalent to a wedge of spheres.
\end{proof}
\begin{corollary}\label{corollary: wedge of spheres}
The quotient space $_{\Sigma_{n_1}\times\cdots\times\Sigma_{n_k}}\!\!\backslash\Pi_n$ is homotopy equivalent to a wedge of spheres if and only if one of the following holds:
\begin{itemize}
\item $\gcd(n_1, \ldots, n_k)=1$.
\item $p$ is a prime, $n=2p$ or $3p$, and $\gcd(n_1, \ldots, n_k)=p$.
\end{itemize}
\end{corollary}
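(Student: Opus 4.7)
The plan is to use Proposition~\ref{proposition: orbits again} to reduce the question to analyzing the individual wedge summands $_{\Sigma_d}\backslash(S^{n-d-1}\wedge|\Pi_d|^\diamond)$ for each $d\mid g$, and then to dispatch the two implications using Proposition~\ref{proposition: orbits omnibus} (for sufficiency) and Corollary~\ref{corollary: also l=2} together with a torsion argument (for necessity). Throughout I shall write $l=n/d-1$, so that the summand at $d$ matches the notation of the omnibus proposition. The key general fact to keep in mind is that if $_{\Sigma_{n_1}\times\cdots\times\Sigma_{n_k}}\!\backslash |\Pi_n|$ is homotopy equivalent to a wedge of spheres then each wedge summand from Proposition~\ref{proposition: orbits again} has free integral homology (since direct summands of free abelian groups are free).

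For sufficiency, if $g=1$ then only the $d=1$ summand survives, and the equivalence is exactly Corollary~\ref{corollary: gcd one}. If $g=p$ is prime, the divisors of $g$ are $1$ and $p$, so it is enough to show that the summand at $d=p$ is a wedge of spheres (possibly trivially, i.e.\ contractible). For $n=2p$ one has $l=1$, and item (5) of Proposition~\ref{proposition: orbits omnibus} yields contractibility. For $n=3p$ with $p$ odd one has $l=2$, and item (6) yields contractibility. Finally, for the remaining subcase $p=2$, $n=6$, the summand at $d=2$ is $\Sigma^2\mathbb{R} P^1\simeq S^3$ by item (2), which is a sphere; combined with the $d=1$ summand (copies of $S^{n-3}=S^3$), the total is again a wedge of spheres.

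For necessity, assume neither of the two conditions holds. Set aside the trivial case $k=1$, where $_{\Sigma_n}\backslash|\Pi_n|\simeq *$ by Kozlov; otherwise $k\ge 2$. The failure of both conditions splits into two cases: either (A) $g=p$ is prime with $n\ge 4p$, or (B) $g$ is composite. In case (A), take the divisor $d=p$ and note $l=n/p-1\ge 3$. In case (B), let $p$ be the smallest prime dividing $g$; since $g$ is composite we have $g/p\ge 2$, and since $k\ge 2$ we have $n\ge kg\ge 2g$, so $l=n/p-1\ge 2g/p-1\ge 3$. In both cases, $l>2$, so Corollary~\ref{corollary: also l=2} tells us that the $d=p$ summand is \emph{not} a wedge of spheres.

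To promote this into the desired conclusion I need that this summand in fact has integral torsion (so the total, which contains it as a wedge factor, cannot have free homology). For $p=2$ the summand is $\Sigma^l\mathbb{R} P^{l-1}$ by item (2), which has $2$-torsion in positive degrees whenever $l>1$. For $p$ odd the summand is $p$-local by Corollary~\ref{corollary: p-local} and rationally trivial by Lemma~\ref{lemma: rational}, while the Nakaoka-style calculation carried out in the proof of Corollary~\ref{corollary: also l=2} shows its mod-$p$ homology is nonzero for $l>2$; combining these forces integral $p$-torsion. In either scenario the summand at $d=p$ has torsion homology, so the total wedge has torsion homology and therefore cannot be a wedge of spheres, completing the necessity direction. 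The main obstacle in executing the plan is the bookkeeping in Case (B), specifically verifying that the inequality $l\ge 3$ really does hold for every composite $g$ and every permissible partition $(n_1,\ldots,n_k)$ with $k\ge 2$, and then organising the torsion deduction uniformly across the parities of $p$ and $l$ so that Corollary~\ref{corollary: also l=2} can be leveraged as a black box.
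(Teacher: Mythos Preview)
Your proof is correct and follows essentially the same route as the paper's: reduce via Proposition~\ref{proposition: orbits again}, handle sufficiency with Lemma~\ref{lemma: contractible} and Corollary~\ref{corollary: also l=2}, and for necessity take the smallest prime $p\mid g$ and check that $l=n/p-1>2$ so that the $d=p$ summand is not a wedge of spheres. Your write-up is in fact slightly more careful than the paper's in two respects: you separate the $k=1$ case explicitly, and you close the logical gap between ``the $d=p$ summand is not a wedge of spheres'' and ``the total is not a wedge of spheres'' by arguing that the summand carries integral torsion (via $p$-locality, rational triviality, and nonvanishing mod-$p$ homology), which then propagates to the total.
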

\begin{proof}
The case $\gcd(n_1, \ldots, n_k)=1$ was dealt with in Corollary~\ref{corollary: gcd one}. In the second case, Propositioin~\ref{proposition: orbits again} tells us that $_{\Sigma_{n_1}\times\cdots\times\Sigma_{n_k}}\!\!\backslash\Pi_n$ is equivalent to a wedge sum of spaces of the form $S^{n-3}$ and ${_{\Sigma_p}}\backslash \! \left(S^{n-p-1} \Smash |\Pi_p|^\diamond\right)$. Since $n=2p$ or $3p$, $n-p=lp$ where $l=1, 2$. By Lemma~\ref{lemma: contractible} and Corollary~\ref{corollary: also l=2} all these spaces are either equivalent to a sphere or are contractible.
 
In all other cases, let $p$ be the smallest prime that divides $\gcd(n_1, \dots, n_k)$. Then $p$ divides $n$ and $\frac{n}{p}>3$. It follows that $_{\Sigma_{n_1}\times\cdots\times\Sigma_{n_k}}\!\!\backslash\Pi_n$ has a wedge summand equivalent to 
\[
\bigvee_{B(\frac{n_1}{p}, \ldots, \frac{n_k}{p})} {_{\Sigma_p}}\backslash (S^{lp-1}\Smash|\Pi_p|^\diamond)
\]
where $l>2$. By Corollary~\ref{corollary: also l=2} this space is not equivalent to a wedge of spheres. 
\end{proof}
\begin{example}
To illustrate our results, let us analyze the homotopy type and the homology groups of $_{\Sigma_4\times \Sigma_4}\backslash |\Pi_8|^\diamond$. By Proposition~\ref{proposition: orbits again} there is a homotopy equivalence
\[
_{\Sigma_4\times \Sigma_4}\backslash |\Pi_8|^\diamond\simeq \bigvee_{B(4, 4)} S^5 \vee \bigvee_{B(2, 2)} {_{\Sigma_2}}\!\backslash S^5 \vee \bigvee_{B(1,1)} {_{\Sigma_4}}\!\backslash (S^3 \Smash |\Pi_4|^\diamond).
\]
The last factor is contractible by Lemma~\ref{lemma: contractible}. A quick calculation shows that $|B(4, 4)|=8$ and $|B(2,2)|=1$. We already observed that  ${_{\Sigma_2}}\backslash S^5\cong \Sigma^3\reals P^2$. We conclude that there is a homotopy equivalence
\[
_{\Sigma_4\times \Sigma_4}\backslash |\Pi_8|^\diamond\simeq \Sigma^3\reals P^2\vee \bigvee_{8} S^5 .
\]
It follows that the reduced homology of $_{\Sigma_4\times \Sigma_4}\backslash |\Pi_8|^\diamond$ is isomorphic to $\integers/2$ in dimension $4$, to $\integers^8$ in dimension $5$, and is zero otherwise. This is consistent with computer calculations done by Donau~\cite{donau1}. 
\end{example}

%\bibliographystyle{amsplain}
%\bibliography{Bredon-v13}

\end{document}